\newtheorem{lemma}[subsection]{Lemma}
\newtheorem{thm}[subsection]{Theorem}
\newtheorem{prop}[subsection]{Proposition}
\newtheorem{rem}[subsection]{Remark}
\newtheorem{defn}[subsection]{Definition}
\newtheorem{cond}[subsection]{Condition}
\newcommand{\ra}{\rightarrow}
\newcommand{\mone}{\mo_{\p^2}(1)}
\newcommand{\mmon}{\mo_{\p^2}(-1)}
\newcommand{\mo}{\mathcal{O}}
\newcommand{\mg}{\mathcal{G}}
\newcommand{\ma}{\mathcal{A}}
\newcommand{\mb}{\mathcal{B}}
\newcommand{\ms}{\mathcal{S}}
\newcommand{\ts}{\mathtt{S}}
\newcommand{\mc}{\mathcal{C}}
\newcommand{\mr}{\mathcal{R}}
\newcommand{\z}{\theta}
\newcommand{\e}{\epsilon}
\newcommand{\ee}{\eta}
\newcommand{\p}{\mathbb{P}}
\newcommand{\bc}{\mathbb{C}}
\newcommand{\bl}{\mathbb{L}}
\newcommand{\km}{\mathfrak{m}}
\begin{document}
\fontsize{12pt}{14pt} \textwidth=14cm \textheight=21 cm
\numberwithin{equation}{section}
\title{Moduli spaces of semistable sheaves of dimension 1 on $\mathbb{P}^2$.}
\author{Yao YUAN}
\date{\small\textsc 
MSC, Tsinghua University, Beijing 100084, China
\\ yyuan@mail.math.tsinghua.edu.cn.}
\maketitle
\begin{flushleft}{\textbf{Abstract.}}
Let $M(d,\chi)$ be the moduli space of semistable sheaves of rank 0, Euler characteristic $\chi$ and first Chern class $dH~(d>0)$, with $H$ the hyperplane class in $\p^2$.  We give a description of $M(d,\chi)$,  viewing each sheaf as a class of matrices with entries in $\bigoplus_{i\geq0}H^0(\mo_{\p^2}(i))$.  We show that there is a big open subset of $M(d,1)$ isomorphic to a projective bundle over an open subset of a Hilbert scheme of points on $\p^2.$  Finally we compute the classes of $M(4,1)$, $M(5,1)$ and $M(5,2)$ in the Grothendieck ring of varieties,  especially we conclude that $M(5,1)$ and $M(5,2)$ are of the same class. 
\end{flushleft}

\section{Introduction.}
Moduli spaces $M$ of semistable sheaves of dimension 1 on surfaces are very interesting and many people have studied on them.  On $K3$ or abelian surfaces,  for a large number of $M$, Yoshioka has given explicitly the deformation classes of them in \cite{ky}.  Le Potier studied a lot on $M$ for $\p^2$ such as their Picard groups and rationalities in \cite{lee}. Dr\'ezet and Maican studied sheaves of dimension 1 on $\p^2$ with multiplicity 4,5 and 6, via their locally free resolutions (see \cite{jmdmm},\cite{mmo} and \cite{mmt}).  But except few trivial cases, the classes of $M$ for $\p^2$ in the Grothendieck group of varieties are not known.  

Let $M(d,\chi)$ be the moduli space of semistable sheaves of rank 0, first Chern class $dH~(d>0)$ and Euler characteristic $\chi$ on $\p^2$.  $M(d,\chi)\simeq M(d,\chi')$ if $\chi\equiv \pm\chi'$ (mod $d$).
There is a map $\pi:M(d,\chi)\ra |dH|$ sending each sheaf to its support.  Fibers of $\pi$ over integral curves are isomorphic to their (compactified) Jacobians.  But fibers of $\pi$ over non-integral curves are not well understood.

In this paper we build a 1-1 correspondence between pure sheaves of dimension 1 on $\p^2$ and pairs $(E,f)$ with $E$ direct sums of line bundles on $\p^2$ and $f:E\otimes\mmon\hookrightarrow E$ injective, then after putting a stability condition on these pairs we can view $M(d,\chi)$ as the moduli space of semistable pairs $(E,f)$.  From this point of view, we somehow avoid studying fibers of $\pi$ over non-integral curves.  However for a general $d$, the moduli space is still very complicated.  We are only able to describe a big open set of $M(d,\chi)$ with $\chi=1$.  We have the following proposition which is a generalization of Proposition 3.3.1 in \cite{jmdmm} to all multiplicities.
\begin{prop}[\textbf{Proposition \ref{bosih}}]\label{Ino}There is an open subset $W^d\subset M(d,1)$ with $M(d,1)-W^d$ of codimension $\geq2$, and $W^d\simeq \mathbb{P}(\mathcal{V}^d)$, where $\mathcal{V}^d$ is a vector bundle of rank $3d$ over $N_0^d:=Hilb^{[\frac{(d-1)(d-2)}2]}(\p^2)-\Omega_{d-3}^{[\frac{(d-1)(d-2)}2]}$ with $Hilb^{[n]}(\p^2)$ the Hilbert scheme of $n$-points on $\p^2$ and $\Omega_{k}^{[n]}$ the closed subscheme of $Hilb^{[n]}(\p^2)$ parametrizing $n$-points lying on a curve of class $kH.$

\end{prop}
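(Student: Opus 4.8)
The plan is to exploit the pair description $(E,f)$ of sheaves established in the paper. For a pure $1$-dimensional sheaf $\mf$ on $\p^2$ with $[\mf]=(d,1)$, one takes a minimal graded free resolution of the $\mo_{\p^2}$-module structure; genericity ($\chi=1$ and small codimension conditions) should force $E$ to be of the simplest possible shape. So first I would pin down the resolution type of a generic semistable sheaf in $M(d,1)$: I expect that on a big open set $W^d$ one has a presentation $0\to \mo_{\p^2}(-2)\oplus\mo_{\p^2}(-1)^{\,a}\xrightarrow{\,f\,}\mo_{\p^2}(-1)\oplus\mo_{\p^2}^{\,a}\to\mf\to 0$ for the appropriate $a$ (or an analogous shape dictated by $d$ and $\chi=1$), and that the locus where the resolution jumps is exactly $M(d,1)-W^d$; the task is to check this jumping locus has codimension $\geq 2$. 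This is the analogue of Proposition 3.3.1 in \cite{jmdmm}, so the multiplicity-$4$ case is the template.

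**The core identification.** Granting the resolution shape on $W^d$, a sheaf in $W^d$ is determined by the matrix $f$ up to the action of $\mathrm{Aut}(E\otimes\mmon)\times\mathrm{Aut}(E)$ acting by $(g,h)\cdot f = h\circ f\circ g^{-1}$. The key observation is that the "scheme-theoretic" data in $f$ splits into two parts: the part of $f$ landing in $\mo_{\p^2}$-summands from $\mo_{\p^2}(-1)$-summands consists of linear forms, and modulo the group action these linear-form blocks cut out a length-$\binom{d-1}{2}$ subscheme $Z\subset\p^2$ — giving the map $W^d\to N_0^d\subset Hilb^{[(d-1)(d-2)/2]}(\p^2)$. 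The open condition defining $N_0^d$ (points not on a curve of class $(d-3)H$) should be precisely what guarantees that the normalized resolution does not degenerate further, i.e. that the classifying map lands in $N_0^d$ and is well-defined there. Then I would show the remaining entries of $f$ — the block of higher-degree forms, read modulo the stabilizer of $Z$ — form the fibers: a projective space of dimension $3d-1$. Concretely, after fixing $Z$ one computes the space of admissible "extension" data is a $3d$-dimensional vector space $\mathcal{V}^d_Z$ modulo scalars, and these glue to a rank-$3d$ vector bundle $\mathcal{V}^d$ over $N_0^d$; the scalars that are quotiented out are exactly the overall $\mathbb{C}^*$ in the automorphism group, producing $\mathbb{P}(\mathcal{V}^d)$ rather than a vector bundle. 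I would verify $\mathcal{V}^d$ is locally free by a cohomology-and-base-change / semicontinuity argument on $N_0^d$ (the relevant $H^1$ vanishes exactly over $N_0^d$), and check the dimension count $\dim M(d,1)=d^2+1=3d+\dim Hilb^{[(d-1)(d-2)/2]}(\p^2) - 1$ as a sanity check.

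**Codimension estimate.** The main obstacle is the codimension bound for $M(d,1)-W^d$. This complement is the union of strata where the minimal resolution of $\mf$ acquires extra generators (equivalently, where $h^0(\mf(-1))$ or $h^1$ of some twist jumps), together with the preimage under the classifying map of the bad Hilbert-scheme locus $\Omega_{d-3}^{[(d-1)(d-2)/2]}$. I would stratify $M(d,1)$ by resolution type and estimate the dimension of each non-generic stratum by a parameter count (counting the extra matrix entries against the enlarged automorphism group), showing each drops the dimension by at least $2$; separately, $\Omega_{d-3}^{[n]}$ has codimension $\geq 2$ in $Hilb^{[n]}(\p^2)$ for the relevant $n$ by a direct count (a curve of degree $d-3$ moves in a $\binom{d-1}{2}-1$-dimensional system and imposing $n=\binom{d-1}{2}$ points on it is too many conditions by a comfortable margin), and this transfers to $W^d$ since the classifying map is a projective bundle. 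Handling the semistability constraint inside each stratum — making sure the "bad" sheaves are actually semistable and hence really in $M(d,1)$, and that none of the strata I'm discarding is secretly large — is the delicate bookkeeping step, and I expect the argument to proceed by induction on $d$ with the base cases $d\leq 4$ supplied by \cite{jmdmm}.
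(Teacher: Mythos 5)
Your overall architecture matches the paper's: restrict to the stratum where $E\simeq\mo_{\p^2}\oplus\mmon^{\oplus d-1}$ (note that your guessed shape $\mo_{\p^2}(-1)\oplus\mo_{\p^2}^{\oplus a}$ has the wrong $c_1$ and gives $\chi=d-1$ for $d>2$), extract a length-$\binom{d-1}{2}$ subscheme from the linear block via Hilbert--Burch, and recover the sheaf from the residual quadratic data. But your codimension estimate contains a concrete error that would sink the proof as proposed: $\Omega_{d-3}^{[\bar d]}$, with $\bar d=(d-1)(d-2)/2$, has codimension exactly $1$ in $Hilb^{[\bar d]}(\p^2)$, not $\geq 2$. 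The parameter count is $\dim|(d-3)H|+\bar d=(\bar d-1)+\bar d=2\bar d-1$ against $\dim Hilb^{[\bar d]}(\p^2)=2\bar d$, and the paper's own computations confirm this: $[\Omega_1^{[3]}]=[\p^2\times\p^3]$ is a divisor in the six-dimensional $Hilb^{[3]}(\p^2)$, and $[\Omega_2^{[6]}]$ has leading term $\bl^{11}$ in the twelve-dimensional $Hilb^{[6]}(\p^2)$. So you cannot ``transfer'' a codimension-$2$ bound through the projective bundle; the bundle structure only exists over $N_0^d$ in the first place, and a naive extension of it over $\Omega_{d-3}^{[\bar d]}$ would produce a divisor in $M(d,1)$, not a codimension-$2$ subset.

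What actually saves the statement, and what the paper does, is different: over the bad locus the cokernel $Q_f$ of the transposed linear block acquires torsion supported on a curve of degree $\leq d-3$, and a diagram chase shows the corresponding sheaf $F$ then has non-integral support; since non-integral curves form a codimension-$\geq 2$ subset of $|dH|$, Le Potier's results (Proposition 2.8 and Lemma 3.2 of \cite{lee}) give that such sheaves form a closed codimension-$\geq 2$ subset of $M(d,1)$. (The other half of the complement, where $E$ is not of the generic splitting type, is likewise handled by citing Le Potier, Proposition 3.14, rather than by your proposed stratification-and-induction count, which you have not carried out.) A secondary gap: your fiber identification --- quadratic block modulo the stabilizer of $Z$ equals a $3d$-dimensional space mod scalars --- is asserted, not derived. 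The paper obtains it by dualizing: the column of quadrics becomes, after composing with the quotient onto $Q_f\simeq I_{\bar d}(d-2)$, a section $\sigma_f\in H^0(I_{\bar d}(d))$ (a space of dimension $3d$), and the pair $(Q_f,\sigma_f)$ determines $F^{\vee}$ and hence $F$. This duality step is what makes the fiber a genuine $\p^{3d-1}$ and what produces the universal family exhibiting $W^d\simeq\p(\mathcal{V}^d)$ as schemes rather than merely as sets of isomorphism classes.
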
  

Denote by $[X]$ the class of a variety $X$ in the Grothendieck ring of varieties.  For $d\leq 5$ and $g.c.d.(d,\chi)=1$, we compute $[M(d,\chi)]$ and get the following three theorems, with $\bl:=[\mathbb{A}^1]$ the class of the affine line.  
\begin{thm}[\textbf{Theorem \ref{mtott}}]\label{Inth}For $d\leq3$, $M(d,1)=W^d$.  Moreover $W^d\simeq|dH|\simeq \p^{3d-1}$ for $d=1,2$; $W^3\simeq \mc_3$ with $\mc_3$ the universal curve in $\p^2\times |3H|$.
\end{thm}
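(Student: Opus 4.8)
The plan is to leverage Proposition \ref{Ino} (the stated Proposition \ref{bosih}), which already gives $W^d \simeq \mathbb{P}(\mathcal{V}^d)$ with $\mathcal{V}^d$ a rank-$3d$ bundle over $N_0^d = Hilb^{[(d-1)(d-2)/2]}(\p^2) - \Omega_{d-3}^{[(d-1)(d-2)/2]}$, and then to analyze what happens for small $d$ both on the open set $W^d$ and on its complement. The assertion has two parts: first, that $M(d,1) = W^d$ for $d \le 3$ (i.e. the ``bad'' complement is actually empty in these cases), and second, the explicit identifications of $W^d$ for $d = 1, 2, 3$.

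For the first part, I would argue that the complement $M(d,1) - W^d$, which is known to have codimension $\ge 2$, is in fact empty when $d \le 3$. The natural way is to recall the precise description of which pairs $(E,f)$ fail to lie in $W^d$: these are the sheaves whose minimal resolution (in the matrix-of-sections picture developed in the paper) is not of the ``generic'' shape, equivalently those whose associated point-scheme either has the wrong length or lies on a curve of class $(d-3)H$. For $d = 1$ and $d = 2$ the relevant Hilbert scheme is $Hilb^{[0]}$, a point, and $\Omega_{d-3}^{[0]}$ is empty since $d - 3 < 0$ forces no constraint; so $N_0^d$ is already everything and one checks directly that every semistable sheaf admits the generic resolution. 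For $d = 3$, the arithmetic genus is $(d-1)(d-2)/2 = 1$, so $N_0^3 = Hilb^{[1]}(\p^2) - \Omega_0^{[1]}$; but $\Omega_0^{[1]}$ parametrizes a single point lying on a curve of class $0\cdot H$, which is again empty, so $N_0^3 = Hilb^{[1]}(\p^2) = \p^2$. The key remaining point is to show that for $d \le 3$ there is no room for any non-generic resolution type to occur — this should follow from a case analysis of the possible elementary divisor types of an injective $f : E \otimes \mmon \hookrightarrow E$ with $E$ a sum of line bundles giving the right Chern class and Euler characteristic, ruling out everything but the generic type by stability (destabilizing subsheaves would appear otherwise).

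For the explicit identifications: once $M(d,1) = W^d = \mathbb{P}(\mathcal{V}^d)$ over $N_0^d$, for $d = 1$ we have $N_0^1 = Hilb^{[0]}(\p^2) = \mathrm{pt}$ and $\mathcal{V}^1$ has rank $3$, so $W^1 \simeq \p^2 = |H|$, matching $\p^{3d-1} = \p^2$. For $d = 2$, $N_0^2 = \mathrm{pt}$ and $\mathcal{V}^2$ has rank $6$, giving $W^2 \simeq \p^5 = |2H|$, matching $\p^{3d-1} = \p^5$; one should also note the map to $|dH|$ sending a sheaf to its support realizes this isomorphism, since a degree-$d$ curve with $d \le 2$ has a unique generalized theta-characteristic / the fiber of $\pi$ is a point. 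For $d = 3$, $N_0^3 = \p^2$ and $\mathcal{V}^3$ is a rank-$9$ bundle; I would identify $W^3 \simeq \mathbb{P}(\mathcal{V}^3)$ with the universal cubic $\mc_3 \subset \p^2 \times |3H|$ by exhibiting the incidence correspondence explicitly — a point of $N_0^3 = \p^2$ together with a semistable sheaf supported on a cubic through it, and checking that the projective bundle structure matches the projection $\mc_3 \to \p^2$ fiberwise ($\mathbb{P}(\mathcal{V}^3)_p$ should be the $\p^8 \subset |3H|$ of cubics through $p$, which is exactly the fiber of $\mc_3 \to \p^2$).

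The main obstacle I expect is the $d = 3$ case: both verifying that $M(3,1) = W^3$ (i.e. genuinely no non-generic sheaves appear, which requires care with the stability condition on pairs and with sheaves supported on reducible or non-reduced cubics) and, more substantially, pinning down the identification $W^3 \simeq \mc_3$ — this demands understanding the vector bundle $\mathcal{V}^3$ concretely enough to see that $\mathbb{P}(\mathcal{V}^3)$, as a variety over $\p^2$, coincides with the universal curve rather than merely having abstractly isomorphic fibers. Establishing this likely goes through the support map $\pi : M(3,1) \to |3H| = \p^9$ and showing the composite $W^3 \to \p^2 \times |3H|$ (Hilbert-scheme coordinate times support) lands isomorphically onto $\mc_3$; I would check it is a bijective morphism between smooth projective varieties and invoke Zariski's main theorem, or alternatively construct the inverse morphism directly from the universal family on $\mc_3$.
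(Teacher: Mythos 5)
Your proposal is correct and takes essentially the same route as the paper: stability forces $E\simeq\mo_{\p^2}\oplus\mmon^{\oplus d-1}$ for $d\leq3$ (the consecutive-twist constraint of Lemma \ref{ncss}), the torsion of $Q_f$ would have to be supported on a curve of degree $\leq d-3$ and so cannot exist, and the identifications of $W^d=\mathbb{P}(\mathcal{V}^d)$ for $d\leq 3$ are then read off from Proposition \ref{bosih}. Your worry about the $d=3$ identification is unnecessary: $\mathbb{P}\bigl(p_{*}(\mathcal{I}_{1}\otimes q^{*}\mo_{\p^2}(3))\bigr)$ over $Hilb^{[1]}(\p^2)=\p^2$ is by construction the incidence variety $\{(x,C): x\in C\}$, which is exactly $\mc_3$.
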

\begin{thm}[\textbf{Theorem \ref{mtf}}]\label{Infou}$[M(4,1)]=\sum_{i=0}^{17}b_{2i}\bl^{i}$ and 
\begin{eqnarray}
&&b_0=b_{34}=1,~~ b_2=b_{32}=2, ~~b_4=b_{30}=6,\nonumber\\&& b_6=b_{28}=10,~b_8=b_{26}=14,~ b_{10}=b_{24}=15,\nonumber\\&& b_{12}=b_{14}=b_{16}=b_{18}=b_{20}=b_{22}=16.\nonumber\end{eqnarray}
In particular the Euler number $e(M(4,1))$ of the moduli space is $192.$ 
\end{thm}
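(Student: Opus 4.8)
The plan is to cut $M(4,1)$ into finitely many locally closed pieces whose classes in the Grothendieck ring are transparent, sum the classes, and then read off the $b_{2i}$ and the Euler number. Note first that $g.c.d.(4,1)=1$, so $M(4,1)$ is a smooth projective fine moduli space of dimension $d^2+1=17$; in particular $[M(4,1)]$ will be a polynomial in $\bl$ of degree $17$, and the palindromic symmetry $b_{2i}=b_{34-2i}$ is then forced by Poincar\'e duality (equivalently by $M(4,1)\simeq M(4,-1)$ together with hard Lefschetz). So the real content is to compute the class, and the middle values $b_{12}=\cdots=b_{22}=16$ and $e(M(4,1))=192$ come out at the end.

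The open piece is supplied by Proposition \ref{Ino}. For $d=4$ one has $(d-1)(d-2)/2=3$ and $3d-1=11$, so $W^4\simeq\p(\mathcal V^4)$ is a $\p^{11}$-bundle over $N_0^4=Hilb^{[3]}(\p^2)-\Omega_1^{[3]}$, whence
\[[W^4]=[N_0^4]\,(1+\bl+\cdots+\bl^{11}),\qquad [N_0^4]=[Hilb^{[3]}(\p^2)]-[\Omega_1^{[3]}].\]
Here $[Hilb^{[3]}(\p^2)]=1+2\bl+5\bl^2+6\bl^3+5\bl^4+2\bl^5+\bl^6$ (G\"ottsche's formula, or directly the Bia\l ynicki--Birula cell decomposition attached to a torus action with isolated fixed points), and $[\Omega_1^{[3]}]$ is computed from the fact that a length-$3$ subscheme of $\p^2$ lies on at most one line (two distinct lines meet in a scheme of length $\le1$): thus $\Omega_1^{[3]}$ is isomorphic to the $Hilb^{[3]}(\p^1)=\p^3$-bundle of length-$3$ subschemes of lines over $(\p^2)^\vee$, giving $[\Omega_1^{[3]}]=(1+\bl+\bl^2)(1+\bl+\bl^2+\bl^3)$. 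This yields $[N_0^4]=2\bl^2+3\bl^3+3\bl^4+\bl^5+\bl^6$ and a completely explicit polynomial for $[W^4]$, supported in degrees $2,\dots,17$ with top coefficient $1$.

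It remains to compute $Z:=M(4,1)-W^4$, which by Proposition \ref{Ino} has codimension $\ge2$, hence dimension $\le15$. Geometrically, the points of $W^4$ are the sheaves $\mo_C(D)$ with $C$ a (generically smooth) integral quartic and $D\subset C$ a length-$3$ subscheme not lying on a line — equivalently, the stable sheaves on integral quartics with $h^0=1$ — and $Z$ breaks up as (i) sheaves on integral quartics with $h^0=2$, which are exactly those of the form $\mo_C(1)(-p)$ and which up to the obvious identifications form an open subset of the universal quartic $\mc_4\subset\p^2\times|4H|$; and (ii) the stable sheaves supported on non-integral quartics (line $+$ cubic, two lines $+$ conic, double conic, four lines, and their degenerations). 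I would stratify $Z$ according to this list into finitely many locally closed subsets and show that each is, Zariski-locally, a fibration over an open subset of a Hilbert scheme of points or of an incidence variety in $\p^2\times|eH|$ with $e\le4$, with fibre an affine or a projective space; here the description of $M(d,\chi)$ through pairs $(E,f)$, i.e.\ through the Dr\'ezet--Maican locally free resolutions of \cite{jmdmm}, is what produces the finite list of types. Then $[Z]=\sum_i[\mathrm{base}_i]\cdot[\mathrm{fibre}_i]$, $[M(4,1)]=[W^4]+[Z]$, comparison of coefficients gives the $b_{2i}$, and — every piece having polynomial class — evaluation at $\bl=1$ gives the Euler number $\sum_i b_{2i}=192$.

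The principal obstacle is the analysis of $Z$: enumerating all the degenerate types, proving the stratification is exhaustive and disjoint and overlaps neither $W^4$ nor, internally, its two parts, and, for each stratum, pinning down the fibration structure and the class of the base. The heaviest case is that of sheaves on non-reduced quartics — double conics and multiple lines — where one must simultaneously control the multiple structure of the support, the sheaf living on it, and the stability condition; that is where the bookkeeping is most delicate. A useful running check is that $[Z]$ must come out a palindromic polynomial of degree $15$ with $e(Z)=72$, consistent with the self-duality $M(4,1)\simeq M(4,-1)$.
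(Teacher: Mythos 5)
Your treatment of the open stratum is correct and agrees with the paper: $[W^4]=[N_0^4]\cdot[\p^{11}]$ with $[\Omega_1^{[3]}]=[\p^2\times\p^3]$ and $[N_0^4]=2\bl^2+3\bl^3+3\bl^4+\bl^5+\bl^6$. But the complement $Z=M(4,1)-W^4$, which is where the actual content of the theorem lies, is both misdescribed and not computed. The misdescription: $W^4$ is by definition the locus where $Q_f$ is torsion free, i.e.\ the $\p^{11}$-bundle over $N_0^4$ whose fibre over $[I_3]$ is $\p(H^0(I_3(4)))$, and a section of $I_3(4)$ through three non-collinear points is very often a \emph{reducible or non-reduced} quartic; so $W^4$ contains many sheaves with non-integral support. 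Consequently your item (ii), ``the stable sheaves supported on non-integral quartics,'' is not contained in $Z$, and your proposed stratification of $Z$ by support configuration (line $+$ cubic, double conic, four lines, \dots) is not a decomposition of $Z$ at all. Even setting that aside, you defer the entire computation of $[Z]$ to an unexecuted plan and concede that the non-reduced cases are ``where the bookkeeping is most delicate'' --- but that bookkeeping is precisely the proof.

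The paper takes a different and much shorter route. By Lemma \ref{ncss} the bundle $E$ in a stable pair has only two possible splitting types, giving $M(4,1)=M_1\sqcup M_2$. The stratum $M_2$ (the sheaves with $h^0(F)=2$) is identified via the pencil $(b_1,b_2)$ with a $\p^{13}$-bundle over $Hilb^{[1]}(\p^2)=\p^2$, so $[M_2]=[\p^2\times\p^{13}]$ with no case analysis of the support whatsoever; and $M_1-W^4$ is controlled by the torsion $T_f$ of $Q_f$, which is shown to be forced to equal $\mo_{H}(-1)$ with $Q_f$ the unique non-split extension of $\mo_{\p^2}(1)$ by $\mo_H(-1)$, yielding $[M_1-W^4]=[\p^2\times(\p^{11}-\p^1)]$. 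Summing the three pieces gives the stated coefficients. One further caution: your ``running check'' that $[Z]$ must be palindromic does not follow from the self-duality of $M(4,1)$, since $[W^4]$ itself is not palindromic; the palindromicity of $[Z]$ is an output of the computation, not an a priori constraint.
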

\begin{thm}[\textbf{Theorem \ref{mtfi}}]\label{Infiv}$[M(5,1)]=[M(5,2)]=\sum_{i=0}^{26}b_{2i}\bl^i$ and 
\begin{eqnarray}
&&b_0=b_{52}=1,~~ b_2=b_{50}=2, ~~b_4=b_{48}=6,\nonumber\\&& b_6=b_{46}=13,~b_8=b_{44}=26,~ b_{10}=b_{42}=45,\nonumber\\&& b_{12}=b_{40}=68,~b_{14}=b_{38}=87,~ b_{16}=b_{36}=100,\nonumber\\&& b_{18}=b_{34}=107,~b_{20}=b_{32}=111,~ b_{22}=b_{30}=112,\nonumber\\&& b_{24}=b_{26}=b_{28}=113.\nonumber\end{eqnarray}
In particular the Euler number of both moduli spaces is $1695.$ 
\end{thm}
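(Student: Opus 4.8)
My plan is to compute $[M(5,1)]$ and $[M(5,2)]$ separately as classes in the Grothendieck ring by the stratification method already used (implicitly) for Theorems~\ref{Inth} and~\ref{Infou}. Using the pair description $(E,f)$ of semistable sheaves, a sheaf with $d=5$ and $\chi\in\{1,2\}$ has a resolution $0\to E_{-1}\otimes\mmon\xrightarrow{f} E_0\to\mf\to 0$ where $E_{-1},E_0$ are direct sums of line bundles on $\p^2$ determined by $\chi$; the type of the resolution varies in a finite list. So the first step is to enumerate all admissible pairs $(E_{-1},E_0)$ for $(5,1)$ and for $(5,2)$, and for each type identify the locally closed stratum $M_{\text{type}}\subset M(d,\chi)$ it parametrizes, so that $[M(d,\chi)]=\sum_{\text{type}}[M_{\text{type}}]$.

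**Computing each stratum.** For the ``generic'' type, Proposition~\ref{Ino} already gives $W^d\simeq\p(\mathcal V^d)$ over $N_0^d=Hilb^{[6]}(\p^2)-\Omega_2^{[6]}$, whose class is $([Hilb^{[6]}(\p^2)]-[\Omega_2^{[6]}])\cdot\frac{\bl^{3d}-1}{\bl-1}$; here $[Hilb^{[6]}(\p^2)]$ is classical (G\"ottsche's formula) and $[\Omega_2^{[6]}]$ can be computed as a projective-bundle–type class over $|2H|=\p^5$ (the locus of length-$6$ schemes on a conic, stratified by how the conic degenerates and by collinear subschemes). For the remaining strata of codimension $\ge 2$, each corresponds to sheaves supported on non-integral quintics or to special resolution types; I would describe each such stratum as a fibration (Zariski-locally trivial, or at least piecewise-trivial, which suffices in the Grothendieck ring) over a base built from smaller Hilbert schemes or linear systems $|kH|$, with fibers affine spaces or projective spaces or Jacobian-type pieces, and add up. The Dr\'ezet--Maican locally free resolutions for multiplicity $5$ (\cite{mmo},\cite{mmt}) give the finite list of strata and their geometry, and one translates each into a class in $\bl$.

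**The two cases agree.** Finally, having the polynomials $[M(5,1)]=\sum b_{2i}\bl^i$ and $[M(5,2)]=\sum b'_{2i}\bl^i$ in hand, I would compare coefficients stratum by stratum. The key structural input is that the map $\pi:M(d,\chi)\to|dH|$ has the same image and that over each non-integral-curve stratum of $|5H|$ the fibers for $\chi=1$ and $\chi=2$, while not obviously isomorphic, have equal class — one expects a stratum-by-stratum matching coming from a duality/autoduality or from a common description of the fibers in terms of the components of the support curve. Reading off the coefficients then yields the stated list $b_0=1,\dots,b_{24}=113$ and the Euler number $e=\sum b_{2i}=1695$ (obtained by setting $\bl=1$, legitimate since $M(d,\chi)$ is smooth projective so its class specializes to the topological Euler characteristic).

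**Main obstacle.** The hard part will be the non-integral locus: controlling the strata of $M(5,\chi)$ lying over reducible/non-reduced quintics, computing the class of each fiber of $\pi$ there, and—most delicately—proving the stratum-by-stratum equality of classes for $\chi=1$ versus $\chi=2$ rather than merely observing that the final totals match. A secondary technical point is verifying that every fibration that appears is piecewise trivial so that multiplicativity of classes applies; and computing $[\Omega_2^{[6]}]\subset Hilb^{[6]}(\p^2)$ correctly, since an error there propagates into every $b_{2i}$.
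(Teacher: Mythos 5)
Your overall frame---stratify $M(5,\chi)$ by the splitting type of the resolution $(E,f)$, compute each stratum as a (piecewise trivial) fibration over Hilbert schemes and linear systems, and sum in the Grothendieck ring---is indeed the strategy of the paper, and your remarks about $W^5\simeq\p(\mathcal V^5)$, about $[\Omega_2^{[6]}]$, and about recovering the Euler number by setting $\bl=1$ are all consistent with what is actually done. But as it stands the proposal has a genuine gap on both of the points you yourself flag as the hard part. First, essentially all of the mathematical content of the theorem lives in the stratum computations you defer: the paper needs the explicit list of admissible $E$ (forced by Lemma \ref{ncss}: three types for each of $\chi=1,2$), a stability criterion in terms of direct summands (Lemma \ref{scffi}, Lemma \ref{scffiv}, proved case by case in Appendix A), a careful analysis of the strata where the auxiliary quotient $Q_f$ acquires torsion (the pieces $\Pi_1,\Pi_2,\Pi_3$, including ruling out the a priori possible case $T_f\simeq\mo_H(-3)$), the exclusion loci where $\mathrm{Im}(\sigma_f)$ or $\mathrm{Im}(\omega_f)$ lands in the torsion of a tensor product of ideal sheaves, and the computation of $[\Omega_2^{[6]}]$ via relative Hilbert schemes of points on conics with explicit ideal tables at singular and non-reduced points (Appendix B). None of this is routine, and several of the correction terms (e.g.\ $[\mathcal X]$, the $\p^2\times\p^2\times\p^2$ and $\p^1\times\p^1$ subtractions) are exactly where an error would silently corrupt every coefficient $b_{2i}$.

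Second, and more structurally, your proposed mechanism for $[M(5,1)]=[M(5,2)]$---stratifying over $|5H|$ and matching the fibers of $\pi$ for $\chi=1$ and $\chi=2$ over each non-integral stratum by some duality---is precisely the route the paper is designed to avoid: the introduction states that fibers of $\pi$ over non-integral curves are not well understood, and no fiberwise comparison over the non-integral locus is established anywhere. The strata used are not pullbacks of strata of $|5H|$ (they are cut out by the splitting type of $E$, equivalently by $h^0(F(n))$), and they do not match up between $\chi=1$ and $\chi=2$: the two computations produce visibly different intermediate decompositions (e.g.\ $[M_3]=[\p^2\times\p^{19}]$ on one side versus $[M_3']=[Hilb^{[2]}(\p^2)\times\p^{18}]$ on the other), and the equality of the two polynomials is observed only after adding everything up. So the ``key structural input'' you invoke is not a known input but would itself be a new theorem; without it, your argument for the first equality in the statement does not close, and you would need to fall back on carrying out both computations in full, as the paper does.
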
  
\begin{rem}The Euler numbers
$e(M(d,\chi))$ have been computed in \cite{kkv} partially using physics arguments for $M(d,\chi)$ smooth.  They have $e(M(d,\chi))=(-1)^{dim(M(d,\chi))}n^0_d$ with $n^0_d$ so-called BPS states of weight 0 for the local $\p^2$ (see Equation (4.2) and Table 4 in Section 8.3 in \cite{kkv}).  We see that our result accords with theirs for $d\leq 5$.    
\end{rem}
\begin{rem}Theorem \ref{Infou} and Theorem \ref{Infiv} give the motive decompositions of $M(d,r)$ for $d=4,5$, $r$ coprime to $d$.  But according to the result in \cite{yfour} that these moduli spaces admit affine pavings, we also get cell decompositions of them.
\end{rem}
The structure of the paper is arranged as follows.  In Section 2, we construct a 1-1 correspondence between pure sheaves of dimension 1 and pairs $(E,f)$. The stability condition of $(E,f)$ is given in Section 3.  In Section 4 we study the big open set $W^d$ in $M(d,1)$ and prove Proposition \ref{Ino}. 
Theorem \ref{Inth} and Theorem \ref{Infou} are proved in Section 5 while Theorem \ref{Infiv} is proved in the last section---Section 6.  We have Appendix A and B where we prove some technical lemmas used in Section 6.      

\begin{flushleft}{\textbf{Acknowledgments.}}I was partially supported by NSFC grant 11301292.  The first version of the paper was written in 2012 when I was a post-doc at Max-Plank Institute in Bonn.  The paper was revised in 2013 when I was a post-doc at MSC in Tsinghua University in Beijing.  I thank D. Maulik for telling me his prediction from GW/DT/PT theory that the Euler number of $M(d,\chi)$ only depends on $d$ given $g.c.d.(d,\chi)=1$ in 2009 in Hangzhou, which motivated me to study $[M(d,\chi)]$.  Finally I thank L. G\"ottsche, Y. Hu and Z. Zhang for some helpful discussions.
\end{flushleft}      
\section{Pure sheaves of dimension 1 on $\p^2$.}
From now on except otherwise stated, a pair $(E,f)$ on $\p^2$ always satisfies the following two conditions:
\begin{equation}\label{dsl}
(1) E\simeq\bigoplus_i \mo_{\p^2}(n_i)~i.e.E~is~a~direct~sum~of~line~bundles~on~\p^2;
\end{equation}  
\begin{equation}\label{inm}
(2) f\in Hom(E\otimes \mo_{\p^2}(-1),E)~and~moreover~f~is~injective.~~~~~~
\end{equation} 

\begin{defn}\label{doi}We say two pairs $(E,f)$ and $(E',f')$ are isomorphic if $E\simeq E'$ and there exist two isomorphisms $\varphi$ and $\phi$ from $E$ to $E'$ such that the following diagram commutes
\begin{equation}\label{comd}
\xymatrix{
E\otimes \mo_{\p^2}(-1)  \ar[r]^{~~~~~f}\ar[d]_{\varphi\otimes id_{\mmon}}
                & E \ar[d]^{\phi}  \\
                E'\otimes \mo_{\p^2}(-1) \ar[r]^{~~~~~~f'}& E' .
               }
\end{equation}

\end{defn}
 
Define two sets as follows
 \[\mathcal{A}:=\{Isomorphism~classes~of~pure~sheaves~of~dimension~1\};~~~~~~~~~~~~\]
 \[\mathcal{B}:=\{Isomorphism~classes~of~pairs~(E,f)
 \}.~~~~~~~~~~~~~~~~~~~~~~~~~~~~~~~~~~~~\] 
We have a set-map $\z$ from $\mb$ to $\ma$ sending each pair to its cokernel.  We want to prove that $\z$ is bijective.  First we have the following lemma.
\begin{lemma}\label{ioi} Let $F$ be a sheaf of rank 0 and first Chern class $dH~(d>0)$ on $\p^2$,  then $F$ is pure of dimension $1$ if and only if $F$ lies in the following exact sequence with $E_{F}$ a direct sum of line bundles.
\begin{equation}\label{lfr}
0\ra E_{F}\otimes \mo_{\p^2}(-1)\ra E_{F}\ra F\ra 0.
\end{equation}
\end{lemma}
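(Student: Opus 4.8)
The plan is to prove each direction separately. For the "if" direction, suppose $F$ fits in the exact sequence \eqref{lfr} with $E_F$ a direct sum of line bundles. Then $F$ is a quotient of $E_F$, hence coherent with a $1$-dimensional support (its first Chern class is $dH$ with $d>0$). To see $F$ is pure of dimension $1$, I would argue that $F$ has no nonzero subsheaf supported in dimension $0$. Indeed, let $T\subset F$ be the maximal such subsheaf; pulling back $T$ along the surjection $E_F\twoheadrightarrow F$ gives a subsheaf $G\subset E_F$ containing $E_F\otimes\mmon$ with $G/(E_F\otimes\mmon)\simeq T$. Since $E_F$ is locally free and $G$ has the same rank, $G$ is reflexive (indeed locally free on a smooth surface, being torsion-free of full rank inside a locally free sheaf — here I would invoke that a torsion-free sheaf on a smooth surface is locally free outside a finite set, and a rank-$r$ subsheaf $G$ with locally free quotient-by-a-$0$-dimensional-sheaf is actually saturated), so $T$ must be $0$. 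This shows $F$ is pure.

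For the "only if" direction, suppose $F$ is pure of dimension $1$. The strategy is to build a resolution of the form \eqref{lfr} by choosing $E_F$ appropriately. First, twisting by $\mo_{\p^2}(m)$ for $m\gg0$, I can assume $F(m)$ is globally generated with $H^1(F(m))=0$; but more to the point I want a surjection $E_F\twoheadrightarrow F$ from a sum of line bundles whose kernel is again a sum of line bundles twisted by $\mmon$. The key observation is a self-duality/rigidity of the situation: since $F$ has homological dimension $1$ on the smooth surface $\p^2$ (a pure sheaf of dimension $1$ on a smooth surface has projective dimension $1$, as $\mathcal{E}xt^2(F,\mo)=0$ by purity), $F$ admits a locally free resolution $0\to A\to B\to F\to 0$ of length $1$. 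The content of the lemma is that one can take $B=E_F$ and $A=E_F\otimes\mmon$ with the \emph{same} bundle $E_F$.

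To get the specific shape, I would proceed as follows: start from any resolution $0\to A\xrightarrow{g} B\to F\to 0$ with $A,B$ sums of line bundles (possible since every coherent sheaf on $\p^2$ has such a resolution, and here it has length $1$). Comparing ranks, $A$ and $B$ have the same rank, say $r$. Now I want to modify this to make $A\simeq B\otimes\mmon$. Twisting and using elementary modifications: I can add a trivial summand $\mo_{\p^2}(a)$ to $B$ and correspondingly $\mo_{\p^2}(a-1)$ to $A$ mapping by the canonical section of $\mo(1)$ (times the identity pattern), i.e. replace $(A,B)$ by $(A\oplus\mo(a-1),\, B\oplus\mo(a))$ with $g$ extended by multiplication by a linear form — this doesn't change the cokernel. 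Dually I can cancel summands. The claim is that by a finite sequence of such moves one can arrange $B\simeq E_F$ and $A\simeq E_F\otimes\mmon$; this is essentially a statement about the matrix $g$ (with entries polynomials) being equivalent, under row and column operations over $\bigoplus_{i\ge0}H^0(\mo_{\p^2}(i))$ and stabilization, to one of the required block form, and it uses that $\det g$ (an element of $H^0(\mo_{\p^2}(d))$, nonzero since $g$ is injective) has degree $d$ equal to the rank $r$ after the normalization $A=B\otimes\mmon$.

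The main obstacle is this last normalization step — showing that the resolution can be chosen with $A = E_F\otimes \mmon$ for the very same $E_F$ appearing as $B$, rather than merely $A$ and $B$ being unrelated sums of line bundles of equal rank. I expect this to hinge on a careful bookkeeping argument: minimize the rank $r$ of a length-$1$ locally-free resolution (a minimal such resolution is essentially unique), show the minimal one already has $\det = $ a degree-$d$ form, and then match up the twists by an Euler-characteristic / Riemann–Roch count combined with the fact that $f$ being injective forces each line-bundle summand of the kernel to be $\mmon$-twist-compatible with a summand of $B$. If a cleaner route exists, it would be to cite the standard fact that a pure $1$-dimensional sheaf on $\p^2$ with $c_1 = dH$ has a resolution $0\to\bigoplus\mo(a_i-1)\to\bigoplus\mo(a_i)\to F\to0$ directly (this is classical, and is implicitly how Beilinson-type or Drézet–Maican resolutions are set up), in which case the whole lemma reduces to that citation plus the easy "if" direction above.
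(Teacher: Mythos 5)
Your ``if'' direction is essentially correct and matches the paper's one\nobreakdash-line argument (cleanest phrasing: a length-one locally free resolution forces $\mathcal{E}xt^2(F,\mo_{\p^2})=0$, which on a smooth surface is exactly the absence of $0$-dimensional subsheaves). The problem is the ``only if'' direction, where you have not actually produced a proof. You correctly reduce to the existence of \emph{some} length-one resolution by direct sums of line bundles (this does follow from purity via a depth/Auslander--Buchsbaum argument), but the entire content of the lemma is the \emph{balanced} shape of that resolution --- the sub must be the quotient bundle tensored by $\mo_{\p^2}(-1)$ --- and this is precisely the step you defer to ``careful bookkeeping'' without carrying it out. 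Your fallback of citing ``the standard fact'' that a balanced resolution $0\to\bigoplus\mo(a_i-1)\to\bigoplus\mo(a_i)\to F\to0$ exists is circular, since that fact \emph{is} the lemma. For the record, your stabilization strategy can be completed: if $0\to\bigoplus_j\mo_{\p^2}(-b_j)\to\bigoplus_i\mo_{\p^2}(-a_i)\to F\to0$ is the minimal resolution, one needs a multiset $C$ of twists for the padding summand $K=\bigoplus_{c\in C}\mo_{\p^2}(-c)$, and solving the recursion shows $C$ exists iff $\#\{j:b_j\geq n+1\}\geq\#\{i:a_i\geq n\}$ for every $n$; this inequality follows from minimality (no constant entries, so summands $\mo_{\p^2}(-a_i)$ with $a_i\geq n$ receive nonzero maps only from summands $\mo_{\p^2}(-b_j)$ with $b_j\geq n+1$) combined with $\mathrm{rank}(F)=0$. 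That argument is exactly what is missing from your proposal.

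The paper avoids the issue entirely by a different construction, due to Le Potier: choose $x\in\p^2-Supp(F)$, so that $U=\p^2-\{x\}$ is the total space of $\mo_{\p^1}(1)$ with projection $p:U\to\p^1$; purity makes $p_{*}F$ a direct sum of line bundles on $\p^1$, the $p_{*}\mo_U$-module structure gives $f_1:p_{*}F\to p_{*}F\otimes\mo_{\p^1}(1)$, and $F$ is recovered as the cokernel of $(p^{*}f_1-\lambda\, id)\otimes p^{*}id_{\mo_{\p^1}(-1)}$ on $\widetilde{E_F}:=p^{*}p_{*}F$, extended across the missing point (codimension $2$). Here the same bundle appears as source and target by construction, so no a posteriori normalization of a resolution is ever needed. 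You should either adopt this construction or supply the combinatorial inequality above; as written, the lemma is not proved.
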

\begin{proof}The ``if" part is obvious: $F$ in (\ref{lfr}) is of rank 0 and has a locally free resolution of length 1,  hence $F$ is pure of dimension 1.  To show the ``only if", it is enough to construct the sequence (\ref{lfr}) for every pure sheaf $F$.  We first follow the construction given by Le Potier in Proposition 3.10 in \cite{lee}.

Denote by $Supp(F)$ the support of $F$.  Since $F$ is a torsion sheaf, we can take a point $x\in\p^2-Supp(F)$.  Let $U:=\p^2-\{x\}$, then $U$ is isomorphic to the total space of $\mo_{\p^1}(1)$ on $\p^1$ with a projection $p:U\ra \p^1$.  $F$ is a sheaf of $\mo_U$-modules.  $F$ is pure, hence $p_{*}F$ is pure and hence of form $\bigoplus_{i}\mo_{\p^1}(n_i)$.  $p_{*}F$ has a structure of $p_{*}\mo_U$-module which gives a morphism $f_1:p_{*}F\ra p_{*}F\otimes\mo_{\p^1}(1)$.  Let $\widetilde{E_F}:=p^{*}(p_{*}F)$.  Pull $f_1$ back to $U$ and define the following morphim
\begin{equation}\label{pb}\tilde{f}:=(p^{*}f_1-\lambda id_{\widetilde{E_F}})\otimes p^{*}id_{\mo_{\p^1}(-1)}:\widetilde{E_F}\otimes p^{*}\mo_{\p^1}(-1)\ra\widetilde{E_F},
\end{equation}
where $\lambda$ is the canonical section of $p^{*}\mo_{\p^1}(1)$.  $\tilde{f}$ is injective and the cokernel is the sheaf $F$. 

On the other hand, $\p^2-U=\{x\}$ is of codimension 2 and $\widetilde{E_F}$ is a direct sum of line bundles on $U$, hence both $\tilde{f}$ and $\widetilde{E_F}$ can be extended to the whole $\p^2$ and we get a resolution of $F$ on $\p^2$ as in (\ref{lfr}) and $E_F\simeq j_{*}\widetilde{E_F}\simeq\bigoplus_{i} \mo_{\p^2}(n_i)$ with $j:U\ra\p^2$ the open immersion.  Hence the lemma.
\end{proof}
\begin{rem}\label{spsh}The form of $E$ in $(E,f)$ is determined by $h^0(F(n))$ with a finite number of $n$, where $F=coker(f)$ and $F(n):=F\otimes\mo_{\p^2}(n)$.  
Moreover $E_1\simeq E_2$ iff $h^0(F_1(n))=h^0(F_2(n))$ for all $n$.   
\end{rem}
Lemma \ref{ioi} implies that $\z$ is surjective, then we have the injectivity by the following lemma.
\begin{lemma}\label{inje} Take any two exact sequences 
\[\xymatrix{0\ar[r] &E_{1}\otimes \mo_{\p^2}(-1)\ar[r]^{~~~~~f_1} &E_{1}\ar[r]^{g_1} &F_1\ar[r] &0 \\
0\ar[r] &E_{2}\otimes \mo_{\p^2}(-1)\ar[r]^{~~~~~f_2} &E_{2}\ar[r]^{g_2} &F_2\ar[r] &0,}\]
with $E_{i}$ direct sums of line bundles,  then $F_1\simeq F_2$ if and only if $(E_{1},f_1)\simeq (E_{2},f_2)$.
\end{lemma}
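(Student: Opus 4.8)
The plan is to prove both implications directly from the universal property of cokernels, exploiting that $E_i$ is a direct sum of line bundles (hence a projective object in a suitable sense, and in particular has no higher self-Ext in the relevant degrees). The ``only if'' direction is Definition~\ref{doi} unwound: if $(E_1,f_1)\simeq(E_2,f_2)$ via isomorphisms $\varphi,\phi$ making diagram~(\ref{comd}) commute, then $\phi$ descends to an isomorphism $\bar\phi:F_1\to F_2$ on cokernels, so $F_1\simeq F_2$. This is immediate and requires no work.

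For the ``if'' direction, suppose $\psi:F_1\xrightarrow{\sim} F_2$. I would first lift $\psi\circ g_1:E_1\to F_2$ through the surjection $g_2:E_2\to F_2$. The obstruction to such a lift lies in $\mathrm{Ext}^1(E_1, E_2\otimes\mo_{\p^2}(-1))=\bigoplus_{i,j}\mathrm{Ext}^1(\mo_{\p^2}(n_i),\mo_{\p^2}(m_j-1))=\bigoplus H^1(\mo_{\p^2}(m_j-1-n_i))$, which vanishes since $H^1(\mo_{\p^2}(k))=0$ for all $k$. So there is $\phi:E_1\to E_2$ with $g_2\circ\phi=\psi\circ g_1$. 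By commutativity and surjectivity of $g_1$, $\phi$ carries the image of $f_1$ into the image of $f_2$, hence (using injectivity of $f_2$, i.e. that $f_2$ is a monomorphism from $E_1\otimes\mo_{\p^2}(-1)$ onto its image) induces a unique $\varphi:E_1\otimes\mo_{\p^2}(-1)\to E_2\otimes\mo_{\p^2}(-1)$ with $f_2\circ\varphi=\phi\circ f_1$; and $\varphi$ is of the form $\varphi_0\otimes\mathrm{id}_{\mmon}$ for a map $\varphi_0:E_1\to E_2$, because twisting by $\mo_{\p^2}(-1)$ is an equivalence. Running the same argument with $\psi^{-1}$ produces $\phi':E_2\to E_1$, $\varphi':E_2\to E_1$ fitting into the reverse diagram. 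Then $\phi'\circ\phi$ and $\varphi'\circ\varphi$ together form an endomorphism of the resolution of $F_1$ inducing the identity on $F_1$; I must argue this forces $\phi'\circ\phi$ and $\varphi'\circ\varphi$ to be isomorphisms. One clean way: the pair $(\phi'\circ\phi-\mathrm{id}, \varphi'\circ\varphi-\mathrm{id})$ is a chain map inducing $0$ on cokernels, hence factors through the image of $f_1$; a diagram chase (again using $H^1(\mo_{\p^2}(k))=0$ to lift, and using that the $n_i$ are bounded so that an endomorphism which is ``lower triangular plus nilpotent'' in the appropriate sense has the correct determinant) shows $\phi'\circ\phi$ is invertible. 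Therefore $\phi$ is an isomorphism, and symmetrically $\varphi_0$ is an isomorphism, giving $(E_1,f_1)\simeq(E_2,f_2)$ via $(\varphi_0,\phi)$ — note the diagram~(\ref{comd}) commutes precisely by the construction $f_2\circ(\varphi_0\otimes\mathrm{id})=\phi\circ f_1$.

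An alternative, perhaps more robust, route for the last step: observe that Remark~\ref{spsh} already tells us $E_1\simeq E_2$ (as $h^0(F_1(n))=h^0(F_2(n))$ for all $n$ once $F_1\simeq F_2$), so we may assume $E_1=E_2=:E$ from the outset. Then $\phi\in\mathrm{Hom}(E,E)$ is an endomorphism inducing an isomorphism on the cokernel $F$; since $\mathrm{Hom}(E,E)$ is a finite-dimensional algebra and the kernel of $\mathrm{Hom}(E,E)\to\mathrm{Hom}(E,F)$ consists of maps factoring through $f:E\otimes\mmon\hookrightarrow E$, one checks that a $\phi$ whose reduction mod this ideal is invertible is itself invertible, because that ideal is nilpotent modulo the ``diagonal'' blocks — more precisely, for line bundle summands $\mo_{\p^2}(a)\to\mo_{\p^2}(b)$ a map factoring through $\mo_{\p^2}(b-1)$ must have $a\le b-1<b$, so on each graded piece of the natural filtration of $\mathrm{End}(E)$ by the values $n_i$ the factoring-through-$f$ ideal is strictly ``degree-lowering'' and hence nilpotent. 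This makes $\phi$ invertible and then $\varphi$ is forced to be invertible by the snake lemma.

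The main obstacle I anticipate is precisely this last point: upgrading ``the lifted map $\phi$ induces an isomorphism on cokernels'' to ``$\phi$ is itself an isomorphism of the bundles $E_1,E_2$.'' The vanishing $H^1(\mo_{\p^2}(k))=0$ makes all the lifting steps automatic, so that is routine; but one genuinely needs to use the special structure of $\p^2$ (no global negative-or-higher cohomology interference) together with the injectivity of $f_i$ and the fact that $f_i$ ``lowers degree'' in order to rule out the cokernel-isomorphism coming from a non-invertible endomorphism of $E$. Everything else is a formal diagram chase.
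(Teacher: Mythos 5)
The trivial direction, the reduction $E_1\simeq E_2$ via Remark \ref{spsh}, and the lifting of $\psi\circ g_1$ through $g_2$ (using $H^1(\mo_{\p^2}(k))=0$ so that $\mathrm{Ext}^1(E_1,E_2\otimes\mmon)=0$) are all fine. The gap is exactly at the step you flag as the crux, and your proposed fix fails: the ideal of endomorphisms of $E$ factoring through $f$ is \emph{not} nilpotent, because your componentwise bound is wrong. A component $\mo_{\p^2}(a)\to\mo_{\p^2}(b)$ of $u=f\circ v$ factors through the whole of $E\otimes\mmon$, not through the single summand $\mo_{\p^2}(b-1)$; it can pass through a summand $\mo_{\p^2}(c-1)$ with $a\le c-1$ and $c-1\le b$, which permits $a=b$ whenever $f$ has constant entries. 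Concretely, take $E=\mo_{\p^2}\oplus\mmon$, $f=\left(\begin{smallmatrix}x_0&x_1x_2\\1&x_0\end{smallmatrix}\right)$ (injective, since $\det f=x_0^2-x_1x_2\neq0$) and $v=\left(\begin{smallmatrix}0&-1\\0&0\end{smallmatrix}\right):E\to E\otimes\mmon$; then $\mathrm{id}+f\circ v=\left(\begin{smallmatrix}1&-x_0\\0&0\end{smallmatrix}\right)$ induces the identity on $F=\mathrm{coker}(f)$ but is not invertible. So ``induces an isomorphism on the cokernel'' does not imply ``isomorphism of $E$'', and since your lifts $\phi,\phi'$ are each well defined only up to adding a map of the form $f\circ v$, the composite $\phi'\circ\phi$ can genuinely fail to be invertible for a bad (but legitimate) choice of lifts. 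Both of your routes rely on this false nilpotence claim.

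What is missing is the normalization the paper performs first: by invertible row and column operations split $f_i$ into $\mathbb{I}\oplus\mathtt{T}_i$ with $\mathtt{T}_i$ having entries only in $\bigoplus_{k\geq1}H^0(\mo_{\p^2}(k))$, i.e.\ pass to the minimal free resolution $0\to N_i\to M_i\to F_i\to0$. On the minimal part your argument becomes correct --- with no constant entries, $f\circ v$ is strictly degree-raising and hence nilpotent, so a lift of an isomorphism is an isomorphism --- and this is precisely the content of the uniqueness of minimal free resolutions (\cite{bud}, Theorem 1.6) that the paper invokes; one then glues back the identity blocks using $E_1\simeq E_2$ and $M_1\simeq M_2$ to get $K_1\simeq K_2$. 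Without this reduction your proof only covers pairs whose matrices have no invertible constant entries, which excludes most of the pairs actually used later (e.g.\ the matrices in Section 4 contain the entry $1$).
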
 
\begin{proof}We only need to show the ``only if".  $E_{1}\simeq E_{2}$ if $F_1\simeq F_2$ by Remark \ref{spsh} .  We then want to construct the commutative diagram (\ref{comd}).  $f_i$ can be represented by square matrices with entries in $\bigoplus_{i\geq 0}H^0(\mo_{\p^2}(i))$.  After some invertible transformation, we can ask $f_i$ to have the following form
\[f_i=\left(\begin{array}{cc}\mathbb{I}_{i}&0\\0& \mathtt{T}_i\end{array}\right),\]
with $\mathbb{I}_{i}$ the identity matrix and $\mathtt{T}_i$ a square matrix with entries in $\bigoplus_{i\geq 1}H^0(\mo_{\p^2}(i)).$  Hence we can write $E_i\simeq K_i\oplus M_i$ and $E_i\otimes\mmon\simeq K_i\oplus N_i$, such that $f_i$ splits into the direct sum of an identity on $K_i$ and a morphism $t_i:N_i\ra M_i$ represented by $\mathtt{T}_i$.  We then have the following exact sequence which is a minimal free resolution of $F_i$ (see \cite{bud} Page 5 Definition)
\[\xymatrix{0\ar[r] &N_i\ar[r]^{t_i} &M_i\ar[r]^{g_i|_{M_i}} &F_i\ar[r] &0.}\]

Because of the uniqueness of the minimal free resolution (see \cite{bud} Page 6 Theorem 1.6), we have the following commutative diagram
\begin{equation}\label{cdtw}\xymatrix@C=1.0cm{0\ar[r] &N_1\ar[r]^{t_1}
\ar[d]_{\beta}^{\simeq} 
&M_{1}\ar[r]^{g_1|_{M_1}}\ar[d]_{\alpha}^{\simeq} &F_1\ar[r] \ar[d]^{\simeq}&0 \\
0\ar[r] &N_{2}\ar[r]^{t_2} &M_{2}\ar[r]^{g_2|_{M_2}} &F_2\ar[r] &0.}\end{equation}
Hence we have $K_1\simeq K_2$ because $E_1\simeq E_2$ and $M_1\simeq M_2$.  We define a map $\phi:E_1\ra E_2$ to be $\mathrm{I}_K\oplus \alpha$ with $\mathrm{I}_K$ an isomorphism from $K_1$ to $K_2$, and similarly we define the other map $\varphi\otimes id_{\mmon}:E_1\otimes\mmon\ra E_2\otimes\mmon$ to be $\mathrm{I}_K\oplus \beta$.  Then we have the following commutative diagram
\begin{equation}\label{cdth}\xymatrix{0\ar[r] &E_{1}\otimes \mo_{\p^2}(-1)\ar[r]^{~~~~~f_1}
\ar[d]_{\varphi\otimes id_{\mmon}}^{\simeq} 
&E_{1}\ar[r]^{g_1}\ar[d]_{\phi}^{\simeq} &F_1\ar[r] \ar[d]^{\simeq}&0 \\
0\ar[r] &E_{2}\otimes \mo_{\p^2}(-1)\ar[r]^{~~~~~f_2} &E_{2}\ar[r]^{g_2} &F_2\ar[r] &0.}\end{equation}
This finishes the proof of the lemma.   
\end{proof}
We finally get the following proposition.
\begin{prop}\label{oto}There is a 1-1 correspondence between isomorphism classes of pure sheaves of dimension 1 and isomorphism classes of pairs $(E,f)$. 
\end{prop}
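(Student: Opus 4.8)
The plan is to assemble the proposition directly from Lemma \ref{ioi} and Lemma \ref{inje}, once we have checked that the cokernel assignment $\z:\mb\ra\ma$ descends to a well-defined map on isomorphism classes. For well-definedness I would start with an isomorphism of pairs $(E,f)\simeq(E',f')$, i.e.\ the commutative square (\ref{comd}) with $\varphi$ and $\phi$ isomorphisms. Taking cokernels of the two horizontal injections and using that $\varphi\otimes id_{\mmon}$ and $\phi$ are isomorphisms, a short diagram chase (the snake lemma applied to the vertical maps) produces a canonical isomorphism $\mathrm{coker}(f)\simeq\mathrm{coker}(f')$. Thus $\z$ indeed sends isomorphic pairs to isomorphic sheaves, so it induces a map $\mb/\!\!\simeq\ \ra\ \ma$.

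Next I would prove surjectivity. Let $F$ be an arbitrary pure sheaf of dimension $1$ on $\p^2$. Since $\dim F=1<2$ its rank is $0$, and since its support is a nonempty effective curve its first Chern class is $dH$ for some $d>0$; hence $F$ falls under the hypotheses of Lemma \ref{ioi}. That lemma furnishes an exact sequence of the form (\ref{lfr}) with $E_F$ a direct sum of line bundles, in which the map $E_F\otimes\mmon\ra E_F$ is injective with cokernel $F$. Therefore this map is a legitimate pair $(E_F,f)\in\mb$ with $\z(E_F,f)=[F]$, so every class in $\ma$ is attained.

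For injectivity, suppose $\z(E_1,f_1)=\z(E_2,f_2)$, meaning the two cokernels $F_1,F_2$ are isomorphic pure sheaves of dimension $1$. The defining short exact sequences $0\ra E_i\otimes\mmon\ra E_i\ra F_i\ra 0$ are exactly of the shape handled by Lemma \ref{inje}, with the $E_i$ direct sums of line bundles, so that lemma yields $(E_1,f_1)\simeq(E_2,f_2)$ in $\mb$. Combining the two halves, the induced map on isomorphism classes is a bijection, which is the assertion of the proposition.

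I do not anticipate a real obstacle: the substance of the statement has already been isolated into Lemmas \ref{ioi} and \ref{inje}, the latter resting on uniqueness of the minimal free resolution. The only genuinely new verification is the well-definedness of $\z$ on isomorphism classes, i.e.\ the diagram chase showing that isomorphic pairs have isomorphic cokernels, and this is entirely routine.
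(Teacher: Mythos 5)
Your proposal is correct and follows the paper's own route exactly: surjectivity of $\z$ is Lemma \ref{ioi} and injectivity is Lemma \ref{inje}, whose ``if'' direction already contains the well-definedness check you perform separately via the snake lemma. The paper treats the proposition as an immediate corollary of those two lemmas, just as you do.
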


\section{The stability condition.}

We put a stability condition on our pairs $(E,f)$, so that the map $\z$ induces a bijection from semistable pairs to semistable sheaves.
Given a pair $(E,f)$ and its image $F$ via $\z$, we write down the exact sequence 
\begin{equation}\label{lfrt}\xymatrix{0\ar[r] &E\otimes\mmon\ar[r]^{~~~~~~~f}&E\ar[r]^{g}&F\ar[r]&0}.
\end{equation}

Recall that the slop of a torsion free sheaf $E$, $\mu(E)$, is defined as follows
\[\mu(E):=\frac{deg(E)}{rank(E)};\] 
and for a sheaf $F$ of dimension 1 we have 
\[\mu(F):=\frac{\chi(F)}{deg(F)}.\]
We then have $\mu(E)+1=\mu(F)$ for $E,F$ in the sequence (\ref{lfrt}).
\begin{defn}We say a pair $(E,f)$ is (semi)stable if for any subsheaf $E'\subsetneq E$ and $E'$ a direct sum of line bundles such that $f^{-1}(E')\simeq E'\otimes\mmon$,  we have $\mu(E')(\leq)<\mu(E)$.
\end{defn}
\begin{lemma}\label{ssi}$\z$ induces a bijection from semistable pairs to semistable sheaves.
\end{lemma}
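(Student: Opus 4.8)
The plan is to use the bijection $\z$ of Proposition \ref{oto}: it is enough to check that a pair $(E,f)$ is (semi)stable if and only if its cokernel $F=\z(E,f)$ is a (semi)stable sheaf, and the lemma then follows by restricting $\z$ to (semi)stable objects. The bridge between the two stability notions is the identity $\mu(E')+1=\mu(F')$, obtained from the same Chern class/Euler characteristic computation that gives $\mu(E)+1=\mu(F)$ whenever $0\ra E'\otimes\mmon\ra E'\ra F'\ra0$ is a resolution by a direct sum of line bundles. So I would build a dictionary between the test sub-objects on the two sides.

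One direction of the dictionary is easy. Given a subsheaf $E'\subsetneq E$ which is a direct sum of line bundles with $f^{-1}(E')\simeq E'\otimes\mmon$, set $F':=g(E')$; since $f$ is injective one gets $\ker(g|_{E'})=E'\cap\mathrm{im}(f)=f(f^{-1}(E'))$, hence a short exact sequence $0\ra E'\otimes\mmon\ra E'\ra F'\ra0$. Thus $F'$ is a subsheaf of $F$, pure of dimension $1$ (a length-one locally free resolution with injective differential kills $\mathcal{E}xt^2(F',\mo)$), it is proper and nonzero (if $g(E')=F$ then $E'\simeq E$ by Proposition \ref{oto}, and then $E'=E$ since $E/E'$ would be $0$-dimensional with $\chi=0$), and $\mu(E')=\mu(F')-1$.

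The reverse direction is the substantial one: every nonzero subsheaf $F_0\subsetneq F$ with $\deg F_0<\deg F$ (the only kind that can matter, since a subsheaf of full degree has $\mu\le\mu(F)$ automatically) arises from such an $E'$. Take the resolution $0\ra E_{F_0}\otimes\mmon\xrightarrow{f_0}E_{F_0}\xrightarrow{g_0}F_0\ra0$ of Lemma \ref{ioi}. Because every line bundle on $\p^2$ has vanishing $H^1$, the group $\mathrm{Ext}^1(E_{F_0},E\otimes\mmon)=\mathrm{Ext}^1(E_{F_0},\ker g)$ vanishes, so the composite $E_{F_0}\xrightarrow{g_0}F_0\hookrightarrow F$ lifts through $g$ to a map $\tilde g_0\colon E_{F_0}\ra E$. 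A general such lift is injective: the lifts form a torsor under $\mathrm{Hom}(E_{F_0},E\otimes\mmon)$, $f$ is an isomorphism at the generic point of $\p^2$, and the inequalities $h^0(F_0(n))\le h^0(F(n))$ — which control the twists occurring in $E_{F_0}$ versus $E$ — leave enough room to choose the lift generically of maximal rank, hence injective as $E_{F_0}$ is torsion-free. Put $E':=\tilde g_0(E_{F_0})\simeq E_{F_0}$, a direct sum of line bundles and a proper subsheaf of $E$ (its rank is $\deg F_0<\deg F$). A short diagram chase, using the injectivity of $f$ and of $\tilde g_0$, gives $g(E')=F_0$ and $f^{-1}(E')\simeq E_{F_0}\otimes\mmon\simeq E'\otimes\mmon$, so $E'$ is a legitimate test subsheaf and $\mu(E')=\mu(F_0)-1$.

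With the dictionary the equivalence is immediate. If $F$ is (semi)stable and $E'\subsetneq E$ is any test subsheaf, then $\mu(E')+1=\mu(g(E'))\,(\le)<\,\mu(F)=\mu(E)+1$, so $(E,f)$ is (semi)stable. Conversely, if $(E,f)$ is (semi)stable and $F_0\subsetneq F$ is nonzero, then either $\deg F_0=\deg F$ and $\mu(F_0)\le\mu(F)$ trivially, or $\deg F_0<\deg F$ and the test subsheaf built from $F_0$ yields $\mu(F_0)-1=\mu(E')\,(\le)<\,\mu(E)=\mu(F)-1$; so $F$ is (semi)stable. Hence $\z$ restricts to a bijection from (semi)stable pairs to (semi)stable sheaves. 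I expect the main obstacle to be the reverse direction of the dictionary — realising an arbitrary subsheaf of $F$ as the cokernel of a sub-pair — and within it the lifting step, whose engine is exactly the vanishing $H^1(\p^2,\mo_{\p^2}(k))=0$ for all $k$ (this is where $\p^2$, as opposed to a general surface, is used); everything else is bookkeeping with the relation $\mu(\cdot)+1=\mu(\cdot)$ and the uniqueness from Proposition \ref{oto}.
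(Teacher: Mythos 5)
Your overall strategy -- reduce the lemma to a dictionary between test sub-objects on the two sides via $\mu(E')+1=\mu(F')$ -- is the right one, and your easy direction (from $E'$ to $F'=g(E')$, including the check that $F'$ is nonzero and proper) is fine and in fact more detailed than what the paper records. The problem is exactly where you predicted it: the reverse direction. Your argument produces a lift $\tilde g_0\colon E_{F_0}\ra E$ of $E_{F_0}\ra F_0\hookrightarrow F$ using $\mathrm{Ext}^1(E_{F_0},E\otimes\mmon)=0$, and then asserts that a \emph{general} lift is injective because the lifts form a torsor under $\mathrm{Hom}(E_{F_0},E\otimes\mmon)$ and there is ``enough room.'' This is not a proof. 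Genericity in a torsor only helps if the non-injective locus is a \emph{proper} closed subset, i.e.\ if at least one lift is already injective -- and you never establish that. Worse, the ``room'' can be empty: writing $E_{F_0}=\bigoplus\mo_{\p^2}(a_i)$ and $E=\bigoplus\mo_{\p^2}(b_j)$, the torsor group is $\bigoplus H^0(\mo_{\p^2}(b_j-1-a_i))$, which vanishes (or fails to move the generic fibre enough) whenever the $a_i$ are not small compared with the $b_j$; the inequalities $h^0(F_0(n))\le h^0(F(n))$ only give $\max_i a_i\le\max_j b_j$, which does not rescue the argument. Since injectivity of $\tilde g_0$ is also what you need for the subsequent diagram chase ($f^{-1}(E')\simeq E'\otimes\mmon$ and $g(E')=F_0$ both degenerate if $\tilde g_0$ has a kernel), this is a genuine gap rather than an omitted routine verification.

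The paper sidesteps the issue by making the sub-pair functorially: with the notation of the proof of Lemma \ref{ioi}, choose $x\notin Supp(F)$, so that $F$ and every subsheaf $F'$ live on $U=\p^2-\{x\}$ with its affine projection $p\colon U\ra\p^1$; then $p_{*}F'\subset p_{*}F$ is an inclusion of direct sums of line bundles on $\p^1$, and $E':=j_{*}p^{*}p_{*}F'\subset j_{*}p^{*}p_{*}F\simeq E$ is automatically an injective map of direct sums of line bundles (flatness of $p$ plus left-exactness of $j_{*}$), with $f(E'\otimes\mmon)\subset E'$ because $F'$ is an $\mo_U$-submodule and $f$ is built from the $p_{*}\mo_U$-module structure; the snake lemma applied to the two resolutions then gives $f^{-1}(E')=E'\otimes\mmon$ and $coker(f|_{E'\otimes\mmon})\simeq F'$. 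If you want to keep your lifting formulation, the clean repair is to observe that this functorial construction exhibits \emph{one} injective lift (after transporting through the isomorphism of pairs from Lemma \ref{inje}), whence your genericity argument becomes valid -- but at that point you have essentially reproduced the paper's proof.
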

\begin{proof}Look at the sequence (\ref{lfrt}).  To prove the lemma, we only need to prove that $\forall F'\subsetneq F$,  $\exists E'$ a direct sum of line bundles and $f^{-1}(E')\simeq E'\otimes\mmon$, such that $F\simeq coker(f|_{f^{-1}(E')})$.  Keep notations the same as in the proof of Lemma \ref{ioi}, and we see that $p_{*}F'$ is a direct sum of line bundles on $\p^1$ and $p_{*}F'\subset p_{*}F$.  By following the construction in the proof of Lemma \ref{ioi}, we get $E'\simeq j_{*}p^{*}p_{*}F'$ a direct sum of line bundles and moreover $f^{-1}(E')\simeq E'\otimes\mmon$, hence the lemma.\end{proof}

\begin{rem}\label{mni}By Remark \ref{spsh}, we see that $E_1$ might not isomorphic to $E_2$ while $F_1$ is only S-equivalent to $F_2$.\end{rem}

From now on let $H$ be the hyperplane class in $\p^2$ and $u_{d,\chi}$ the class in the Grothendieck group of coherent sheaves on $\p^2$, which is of rank 0, first Chern class $dH$ and Euler characteristic $\chi$.  Denote by $M(d,\chi)$ the moduli space parametrizing semistable sheaves of class $u_{d,\chi}$.  Then $M(d,\chi)$ is irreducible (see \cite{lee} Theorem 3.1) and the stable locus $M(d,\chi)^s$ is smooth.  $M(d,\chi)\simeq M(d,\chi')$ if $\chi\equiv\pm\chi'$ (mod $d$).  $M(d,\chi)=M(d,\chi)^s$ if and only if $g.c.d.(d,\chi)=1$.  Hence $M(d,1)=M(d,1)^s$ and the moduli space is smooth of dimension $d^2+1$ for all $d\geq 1$.  Moreover there is a universal sheaf on $M(d,1)\times\p^2$ by Theorem 3.19 in \cite{lee}.

Let $g.c.d(d,\chi)=1$, then we can assign every point $F$ in $M(d,\chi)$ uniquely to a pair $(E,f)$ such that $rank(E)=d$ and $c_1(E)=(\chi-d)H$.   We view every point in $M(d,\chi)$ as a pair $(E,f)$ and stratify $M(d,\chi)$ by the form of $E$, then every stratum is a constructible set in $M(d,\chi).$ 

We write down the following lemma for future use.
\begin{lemma}\label{ncss}Let $(E,f)$ be a semistable pair.  Let $D',D''$ be two direct summands of $E$ such that $D'\simeq D''$ and $f(D'\otimes\mmon)\subset D''$.  Then we have $\mu(D')\leq \mu(E)$.  In particular $E$ must have the form $\oplus_{i=1}^k\mo_{\p^2}(n+i)^{\oplus a_i}$ with $n$ some integer and $a_i>0$ for all $1\leq i\leq k$. 
\end{lemma}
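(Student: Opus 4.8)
The plan is to extract, from the isomorphic summands $D'\simeq D''$ of $E$, an honest $f$-invariant subsheaf of $E$ of the type allowed in the stability definition, and then apply semistability directly. First I would fix splittings $E = D'\oplus R'$ and $E = D''\oplus R''$ and write $f$ as a matrix of morphisms between the line-bundle summands; the hypothesis $f(D'\otimes\mmon)\subset D''$ says that in the block decomposition the component of $f$ mapping $D'\otimes\mmon$ into $R''$ vanishes, so $f$ restricts to a map $f_0\colon D'\otimes\mmon\to D''$. Composing with the chosen isomorphism $\psi\colon D''\xrightarrow{\sim}D'$ gives an endomorphism $\psi\circ f_0\colon D'\otimes\mmon\to D'$, i.e. a $d'\times d'$ matrix ($d'=\mathrm{rank}\,D'$) with entries in $H^0(\mo_{\p^2}(1))$ after twisting appropriately (all summands of $D'$ have the same degree since $D'$ is a direct sum of copies of one line bundle — or, if $D'$ is a sum of distinct line bundles, one argues degree by degree). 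The key point is to produce inside $D'$ a nonzero $f$-invariant sub-direct-sum-of-line-bundles $E'$ with $f^{-1}(E')\simeq E'\otimes\mmon$, which then violates nothing only if $\mu(E')\le\mu(E)$.

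The cleanest route: set $E' := \sum_{n\ge 0} (\psi f_0)^n(D'\otimes\mmon(\text{twist}))$ — more carefully, let $E'\subseteq D'$ be the saturation/sum of the images of all iterates of the composite endomorphism. Because $D'$ is a direct sum of line bundles on $\p^2$ and the iterates are eventually stationary (the ranks are bounded), $E'$ is again a subbundle that is a direct sum of line bundles, $f(E'\otimes\mmon)\subseteq D''$ maps isomorphically (via $\psi$) onto a subsheaf of $E'$ twisted down, and one checks $f^{-1}(E')\cap (D'\otimes\mmon)$ is exactly $E'\otimes\mmon$; extend to all of $E\otimes\mmon$ to get the honest invariant subsheaf. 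Then semistability of $(E,f)$ gives $\mu(E')\le\mu(E)$. Finally, since $E'\subseteq D'$ with $E'$ and $D'$ both built from the same line bundle (or comparing slopes degree-wise) and $\mu(E')$ is squeezed between $\mu(D')$-related quantities, one concludes $\mu(D')\le\mu(E)$; the elementary slope bookkeeping handles the passage from $\mu(E')$ back to $\mu(D')$.

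For the ``in particular'' clause: write $E=\bigoplus_j \mo_{\p^2}(m_j)$ with $m_1\le\cdots\le m_d$. If the multiset of $m_j$ had a gap, say some value $m$ with $m_j<m$ for some $j$ and $m_{j'}>m$ for some $j'$ but no summand equal to some intermediate integer, I would isolate the bottom summand $D'=\mo_{\p^2}(m_1)$ of minimal degree. Since entries of the matrix $f$ lie in $\bigoplus_{i\ge 0}H^0(\mo_{\p^2}(i))$ and $f$ is injective, the column of $f$ hitting the minimal summand forces, after an invertible change of basis, a $\mathrm{Hom}(\mo_{\p^2}(m_1-1),\mo_{\p^2}(m_1))$-entry, i.e. the diagonal block behaves like the generic case; pushing this through shows that if the degrees are not consecutive one can build $D',D''$ as above with $\mu(D')>\mu(E)$, contradicting the first part. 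Concretely, if $E$ contained $\mo_{\p^2}(n+1)$ and $\mo_{\p^2}(n+k)$ but not $\mo_{\p^2}(n+i)$ for some $1<i<k$, the summand $\mo_{\p^2}(n+k)$ would be too positive: $f$ maps $\mo_{\p^2}(n+k-1)$ into $E$, and injectivity plus the absence of middle-degree summands would trap a high-slope $f$-invariant piece, breaking the slope inequality just established.

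The main obstacle I anticipate is the bookkeeping in the first part: verifying that the ``sum of iterates'' construction really yields a subsheaf that is (i) again a direct sum of line bundles and (ii) satisfies exactly $f^{-1}(E')\simeq E'\otimes\mmon$ rather than merely $f(E'\otimes\mmon)\subseteq E'$. The subtlety is that invariance under $f$ as stated in the stability definition is a saturation-type condition, and one must be careful that the constructed $E'$ is not a proper subsheaf of its $f$-preimage. This is where the hypothesis $D'\simeq D''$ is essential — it lets one genuinely iterate a single endomorphism rather than just observe a one-step inclusion — and checking the preimage condition will require tracking the matrix entries of $f$ restricted to the relevant summands with some care, though it remains elementary linear algebra over the graded ring $\bigoplus_{i\ge 0}H^0(\mo_{\p^2}(i))$.
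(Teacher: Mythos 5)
The first (and main) part of your argument has a genuine gap. The iterated-image construction does not produce anything usable: since $\psi\circ f_0$ maps $D'\otimes\mmon$ into $D'$, the images of its iterates are nested decreasing, so your $E'=\sum_{n\ge 0}(\psi f_0)^n(\cdots)$ is either all of $D'$ (if the $n=0$ term is $D'$ itself, in which case no invariance has been gained --- the preimage condition $f^{-1}(D')\simeq D'\otimes\mmon$ is exactly what you do not know, because $f$ sends $D'\otimes\mmon$ into $D''$, not into $D'$), or else the proper subsheaf $\mathrm{Im}(\psi f_0)\subseteq D'$ of strictly smaller degree. In the latter case the deferred ``elementary slope bookkeeping'' does not exist: from $\mu(E')\le\mu(E)$ for a proper subsheaf $E'\subseteq D'$ one can conclude nothing about $\mu(D')$, since a subsheaf of $D'$ can have arbitrarily small slope. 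The point you are missing is that no construction is needed at all: the hypothesis already exhibits $D''$ itself as one of the subsheaves quantified over in the definition of semistability. Writing $f$ in block form with respect to $E=D'\oplus R'$ on the source and $E=D''\oplus R''$ on the target, the hypothesis kills the block $D'\otimes\mmon\to R''$; injectivity of $f$ then forces the block $R'\otimes\mmon\to R''$ to be injective, whence $f^{-1}(D'')=D'\otimes\mmon\simeq D''\otimes\mmon$ (this is where $D'\simeq D''$ is really used). Semistability applied to $E'=D''$ gives $\mu(D')=\mu(D'')\le\mu(E)$ at once; this is the paper's one-line argument.

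The ``in particular'' clause is also aimed at the wrong end of $E$. If the degrees occurring in $E\simeq\oplus_{i=1}^k\mo_{\p^2}(n_i)^{\oplus a_i}$ with $n_1>\cdots>n_k$ have a gap $n_{i_0}-n_{i_0+1}\ge 2$, the relevant piece is the top partial sum $D=\oplus_{i\le i_0}\mo_{\p^2}(n_i)^{\oplus a_i}$: since $\mathrm{Hom}(\mo_{\p^2}(n_i-1),\mo_{\p^2}(n_j))=0$ for $i\le i_0<j$ (as $n_j\le n_i-2$), one gets $f(D\otimes\mmon)\subset D$ for free, and $\mu(D)>\mu(E)$ then contradicts the first part applied with $D'=D''=D$. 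Your sketch instead isolates the minimal-degree summand and appeals to basis changes and injectivity of $f$ without ever producing the required pair $D'\simeq D''$ with $f(D'\otimes\mmon)\subset D''$; that route does not obviously close.
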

\begin{proof}Since $D'\simeq D''$ and they are both direct summands of $E$, $f(D'\otimes\mmon)\subset D''\Leftrightarrow f^{-1}(D'')=D'\otimes\mmon\simeq D''\otimes\mmon$.  Hence the first statement.

Write $E\simeq \oplus_{i=1}^k\mo_{\p^2}(n_i)^{\oplus a_i}$ with $a_i>0$ and $n_1>n_2>\ldots >n_k$, then we want to show that $n_i-n_{i+1}=1$ for all $1\leq i\leq k-1$.  Assume $\exists i_0$ such that $n_{i_0}-n_{i_0+1}\geq 2$, then $f((\oplus_{i=1}^{i_0}\mo_{\p^2}(n_i)^{\oplus a_i})\otimes\mmon)\subset \oplus_{i=1}^{i_0}\mo_{\p^2}(n_i)^{\oplus a_i}$ and $\mu(\oplus_{i=1}^{i_0}\mo_{\p^2}(n_i)^{\oplus a_i})>\mu(E)$, which is a contradiction.  Hence the statement.   
\end{proof}

\section{A big open subset in $M(d,1)$.}
We want to give a concrete description of an open subset in $M(d,1)$, where the pairs $(E,f)$ satisfy that $E\simeq\mo_{\p^2}\oplus\mmon^{\oplus d-1}$.  We first have the following lemma.
\begin{lemma}\label{roc}If $E\simeq\mo_{\p^2}\oplus\mmon^{\oplus d-1}$, then $(E,f)$ is stable if and only if for any two direct summands $D',D''$ of $E$ such that $D'\simeq D''$ and $f(D'\otimes\mmon)\subset D''$,  we have $\mu(D')<\mu(E)$.
\end{lemma}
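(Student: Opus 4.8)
The plan is to deduce this from the general stability definition together with Lemma \ref{ncss}. The ``only if'' direction is immediate: if $(E,f)$ is stable and $D'\simeq D''$ are direct summands with $f(D'\otimes\mmon)\subset D''$, then as observed in the proof of Lemma \ref{ncss} this forces $f^{-1}(D'')=D'\otimes\mmon$, so taking $E'=D''$ in the definition of stability (note $D''\subsetneq E$ since $D'\simeq D''$ is a proper summand, and $D''$ is a direct sum of line bundles) gives $\mu(D')=\mu(D'')<\mu(E)$. So the content is the ``if'' direction: assuming the numerical inequality holds for all such pairs of isomorphic summands, we must show it holds for every destabilizing-candidate subsheaf $E'\subsetneq E$, i.e. every $E'$ that is a direct sum of line bundles with $f^{-1}(E')\simeq E'\otimes\mmon$.

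The key step is to exploit the very special shape $E\simeq\mo_{\p^2}\oplus\mmon^{\oplus d-1}$. First I would argue that any line-bundle subsheaf $E'\subset E$ with $f^{-1}(E')\simeq E'\otimes\mmon$ must itself be (isomorphic to) a direct summand of $E$ of the form $\mmon^{\oplus a}$ or $\mo_{\p^2}\oplus\mmon^{\oplus a}$ for some $a$. Indeed, writing $E'\simeq\bigoplus\mo_{\p^2}(m_j)$, the inclusion $E'\hookrightarrow E$ shows $m_j\in\{0,-1\}$ for all $j$ (a nonzero map $\mo_{\p^2}(m)\to\mo_{\p^2}\oplus\mmon^{\oplus d-1}$ forces $m\le 0$, and $m\le -1$ would make $\mu(E')$ even smaller, but we need the summand structure), and more precisely the condition $f^{-1}(E')\simeq E'\otimes\mmon$ together with injectivity of $f$ and a rank count pins down $E'$ as a subsheaf of the $\mmon$-isotypic part plus possibly the $\mo_{\p^2}$-summand. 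Then the composite $E'\otimes\mmon\simeq f^{-1}(E')\hookrightarrow E\otimes\mmon\xrightarrow{f} E$ lands in $E'$, and by comparing with the summand decomposition one identifies $E'$ up to isomorphism with a genuine direct summand $D''$ of $E$, with a corresponding summand $D'$ (the one mapping into it) isomorphic to $D''$; the hypothesis then yields $\mu(E')=\mu(D')<\mu(E)$, so $(E,f)$ is stable.

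The main obstacle I anticipate is the bookkeeping in that key step: showing that an arbitrary $f$-compatible line-bundle subsheaf $E'$ of $E\simeq\mo_{\p^2}\oplus\mmon^{\oplus d-1}$ can be replaced, without changing its slope, by an honest direct summand of $E$ to which Lemma \ref{ncss} (or rather its hypothesis here) applies. One must be careful that $E'$ need not literally equal a coordinate summand — it sits inside $E$ via some matrix of scalars and degree-$1$ forms — so the argument is really that the subbundle spanned by the $\mmon$-components of $E'$, being a subsheaf of $\mmon^{\oplus d-1}$ isomorphic to $\mmon^{\oplus a}$, is automatically a direct summand of that isotypic piece (a saturated subbundle of a trivial-up-to-twist bundle of the same slope splits off), and likewise for the $\mo_{\p^2}$-part; then the $f$-compatibility transports to the summand. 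Once this is in place the slope comparison is purely formal, and the final sentence of the lemma about the form of $E$ is already furnished by Lemma \ref{ncss}.
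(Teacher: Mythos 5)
Your proposal is correct and follows essentially the same route as the paper: the ``only if'' direction is delegated to Lemma \ref{ncss}, and the ``if'' direction rests on the observation that any subsheaf $E'\subset\mo_{\p^2}\oplus\mmon^{\oplus d-1}$ which is a direct sum of line bundles with $\mu(E')>\mu(E)$ must already be a direct summand of $E$ (and likewise $f^{-1}(E')$ a summand of $E\otimes\mmon$), which the paper dispatches as ``by direct observation'' and you spell out via the isotypic decomposition. The only caveat is that your intermediate claim should be stated for destabilizing $E'$ only (the slope bound is what forces the twists to lie in $\{0,-1\}$), but your argument does use it that way, so there is no real gap.
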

\begin{proof}The lemma is equivalent to Claim 4.2 in \cite{mmz}.  We also prove it here.
Because of Lemma \ref{ncss}, we only need to prove the ``if".  By direct observation we see that if $E'\subset E$ is a direct sum of line bundles such that $\mu(E')> \mu(E)$, then $E'$ is a direct summand of $E$.  Hence the lemma.  
\end{proof}

Define $\widetilde{W}^d:=\{(E,f)|E\simeq\mo_{\p^2}\oplus\mmon^{\oplus d-1}\}\subset M(d,1)$, we then have the following lemma.
\begin{lemma}\label{codt}$M(d,1)-\widetilde{W}^d$ is of codimension $\geq 2$ in $M(d,1)$.
\end{lemma}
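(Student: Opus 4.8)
The plan is to show that the complement $M(d,1)-\widetilde W^d$ is a finite union of locally closed strata, each of dimension at most $\dim M(d,1)-2 = d^2-1$, by enumerating the possible forms of $E$ for a stable pair $(E,f)$ and bounding the dimension of the stratum corresponding to each such form. By Lemma \ref{ncss}, any stable pair has $E\simeq\bigoplus_{i=1}^k\mo_{\p^2}(n+i)^{\oplus a_i}$ with all $a_i>0$; since $c_1(E)=(\chi-d)H=(1-d)H$ and $\operatorname{rank}(E)=d$ are fixed, only finitely many such forms occur. The form $E\simeq\mo_{\p^2}\oplus\mmon^{\oplus d-1}$ is the ``generic'' one, giving the open set $\widetilde W^d$; every other admissible form $E_0$ determines a locally closed stratum $S_{E_0}\subset M(d,1)$, and it suffices to prove $\dim S_{E_0}\le d^2-1$ for each.

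First I would set up the dimension count for a fixed admissible $E_0\not\simeq\mo_{\p^2}\oplus\mmon^{\oplus d-1}$. A pair with that $E$ is given by a matrix $f\in\operatorname{Hom}(E_0\otimes\mmon,E_0)$ which is injective, modulo the action of $\operatorname{Aut}(E_0)\times\operatorname{Aut}(E_0)$ via the two sides of diagram (\ref{comd}); so the expected dimension of the stratum is
\begin{equation*}
\dim\operatorname{Hom}(E_0\otimes\mmon,E_0)-2\dim\operatorname{Aut}(E_0)+\dim(\text{common stabilizer}).
\end{equation*}
Writing $E_0\simeq\bigoplus_i\mo_{\p^2}(n+i)^{\oplus a_i}$, the space $\operatorname{Hom}(E_0\otimes\mmon,E_0)$ decomposes into blocks $\operatorname{Hom}(\mo_{\p^2}(n+i-1)^{\oplus a_i},\mo_{\p^2}(n+j)^{\oplus a_j})=H^0(\mo_{\p^2}(j-i+1))^{\oplus a_ia_j}$, and $\operatorname{Aut}(E_0)$ is a parabolic-type group whose dimension is $\sum_{i\le j}a_ia_j\,h^0(\mo_{\p^2}(j-i))$. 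Since $h^0(\mo_{\p^2}(m))=\binom{m+2}{2}$, all these are explicit combinatorial expressions in the $a_i$'s. The Euler characteristic constraint $\chi(F)=1$ with $F=\operatorname{coker}(f)$ pins down $\sum a_i(n+i)$ relative to $\sum a_i=d$; in fact $\chi(E_0)-\chi(E_0\otimes\mmon)=1$ forces a single linear relation. The clean way to organize this is to compute the alternating sum: $\chi(\operatorname{coker}(f))=\chi(E_0)-\chi(E_0(-1))$ automatically, and $\operatorname{ext}^1$-type terms vanish since $E_0$ and $E_0\otimes\mmon$ are sums of line bundles of nearby twists, so the stratum dimension is governed by $\operatorname{hom}-2\operatorname{aut}+(\text{stab})$, and one checks this equals $d^2+1-2$ or less exactly when $E_0$ is not the generic form.

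The cleanest execution would compare each stratum to $\widetilde W^d$ via a deformation argument rather than a raw matrix count: a pair $(E_0,f_0)$ with $E_0$ ``unbalanced'' can be deformed to one with $E\simeq\mo_{\p^2}\oplus\mmon^{\oplus d-1}$, and the obstruction/tangent computation shows the locus where $E$ jumps to $E_0$ is cut out by at least two conditions. Concretely, the jumping of $E$ to a more special form is detected by the drop of $\operatorname{rank}$ of a suitable map built from $f$ (equivalently by $h^0(F(m))$ jumping, per Remark \ref{spsh}); I would argue that the smallest such jump already imposes codimension $\ge 2$ because a single extra section of $F(m)$ forces, by Serre duality / Riemann--Roch on the curve, a corresponding extra section elsewhere, so jumps come in pairs. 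Alternatively — and this is probably what the paper does — one shows directly that the ``first'' special stratum, where $E\simeq\mo_{\p^2}\oplus\mmon^{\oplus d-2}\oplus\mo_{\p^2}(-2)$ (if $d\ge 3$; for $d=1,2$ the statement is vacuous or trivial since $\widetilde W^d=M(d,1)$), has codimension exactly $2$, and all other strata have strictly larger codimension, by monotonicity of the combinatorial dimension formula as the partition $(a_i)$ gets ``more spread out.''

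The main obstacle I anticipate is the bookkeeping for the group action: $\operatorname{Aut}(E_0)$ does not act freely on the space of injective $f$'s, so one must identify the generic stabilizer of the pair $(\varphi,\phi)$ acting via (\ref{comd}) — for a stable pair this should be just the scalars $\bc^*$ (acting as $\varphi=\phi=\lambda\cdot\mathrm{id}$), giving stabilizer dimension $1$, but proving that genericity within each stratum, and ruling out larger stabilizers, needs the stability hypothesis and a short argument analogous to Lemma \ref{roc}. Once that is in hand, the inequality $\operatorname{hom}(E_0\otimes\mmon,E_0)-2\operatorname{aut}(E_0)+1\le d^2-1$ for every non-generic admissible $E_0$ is a finite, if slightly tedious, combinatorial verification — made manageable by the observation that it suffices to check it for the partitions obtained from the generic one by a single ``elementary move'' (merging adjacent parts or extending the range of twists), since any further move only decreases the dimension.
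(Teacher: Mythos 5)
Your strategy---stratify $M(d,1)$ by the isomorphism type of $E$ (finitely many types by Lemma \ref{ncss}) and bound each non-generic stratum's dimension---is workable and is genuinely different from the paper's argument, which disposes of the lemma in three lines by covering $M(d,1)$ with the open sets $Y_x=\{(E,f):x\notin Supp(coker(f))\}$ and quoting Proposition 3.14 of \cite{lee} for the codimension bound on each $Y_x$. However, as written your dimension formula fails at the key step. The stabilizer of a stable pair $(E_0,f)$ in $\mathrm{Aut}(E_0)\times\mathrm{Aut}(E_0)$ is \emph{not} just the scalars: any $(\varphi,\phi)$ fixing $f$ induces an automorphism of $F=coker(f)$, and the kernel of $\mathrm{Stab}(f)\to \mathrm{Aut}(F)\simeq\bc^{*}$ consists of all $\phi=\mathrm{id}+f\circ g$ with $g\in\mathrm{Hom}(E_0,E_0\otimes\mmon)$ (with $\varphi\otimes \mathrm{id}=\mathrm{id}+g\circ f$), so $\dim\mathrm{Stab}(f)=1+\hom(E_0,E_0(-1))$, which is strictly larger than $1$ for every $d\geq 2$. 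With your formula the generic stratum itself would have dimension $\hom(E_0(-1),E_0)-2\,\mathrm{aut}(E_0)+1=d^2-d+2<d^2+1$, an immediate contradiction; with the corrected stabilizer term the count does come out right (e.g.\ for $d=4$ and $E_0=\mo_{\p^2}^{\oplus2}\oplus\mmon\oplus\mo_{\p^2}(-2)$ one gets $59-54+9+1=15=d^2-1$, matching $[M_2]=[\p^2\times\p^{13}]$ in Section 5).

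Two further points need repair. First, your proposed ``first special stratum'' $E\simeq\mo_{\p^2}\oplus\mmon^{\oplus d-2}\oplus\mo_{\p^2}(-2)$ cannot occur in $M(d,1)$: that bundle has degree $-d$, while $\chi(coker(f))=\deg E+\mathrm{rank}(E)$ forces $\deg E=1-d$; the correct nearest stratum is $E\simeq\mo_{\p^2}^{\oplus2}\oplus\mmon^{\oplus d-3}\oplus\mo_{\p^2}(-2)$, exactly the strata called $M_2$ in Sections 5 and 6. Second, the monotonicity claim (that each further ``elementary move'' on the partition only decreases the stratum dimension, so only one inequality needs checking) is asserted rather than proved, and it is precisely where the uniform-in-$d$ content of the lemma lives; you would still owe a combinatorial verification over all admissible types. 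None of this is unfixable, but the proof as proposed does not yet close, whereas the paper's route requires no computation at all beyond the citation.
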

\begin{proof}For any point $x\in\p^2$, denote $Y_x$ to be the open subset of $M(d,1)$ where the pair $(E,f)$ satisfies that $x\not\in Supp(coker(f))$.  $M(d,1)$ can be covered by finitely many $Y_x$.  According to Proposition 3.14 in \cite{lee}, $Y_x\cap(M(d,1)-\widetilde{W}^d)$ is of codimension $\geq 2$ in $Y_x$, hence the lemma. 
\end{proof}
Now we look at a pair $(E,f)$ in $\widetilde{W}^d$.  $f(\mo_{\p^2}\otimes\mmon)\not\subset \mo_{\p^2}$ by Lemma \ref{roc}, hence the restriction $f_{restr}:\mo_{\p^2}\otimes\mmon\ra\mmon^{\oplus d-1}$ is nonzero.  Therefore we can ask $f$ to identify $\mo_{\p^2}\otimes\mmon$ with a summand $\mmon$ and then $f$ can be represented by the following matrix
\[\left(\begin{array}{ccc}0&1&0\\ A&0 &B\end{array}\right),\]
where $A$ is a $(d-1)\times 1$ matrix with entries in $H^0(\mo_{\p^2}(2))$ and $B$ a $(d-1)\times (d-2)$ matrix with entries in $H^0(\mone)$.  $B$ provides a morphism $f_{B}:\mmon^{\oplus d-1}\ra\mo_{\p^2}^{\oplus d-2}$.  By Lemma \ref{roc} the stability condition is equivalent to the following condition
\begin{cond}\label{scom}For any $1<d'<d$, $f_{B}(\mmon^{\oplus d'-1})\nsubseteq\mo_{\p^2}^{\oplus d'-2}$. \end{cond} 

Let $f_{B^t}:\mmon^{\oplus d-2}\ra\mo_{\p^2}^{\oplus d-1}$ be a morphism represented by the transform of $B,B^t$.  Then Condition \ref{scom} is equivalent to the following condition
\begin{cond}\label{scodm}For any $1<d''\leq d-2$, $f_{B^t}^{-1}(\mo_{\p^2}^{\oplus d''})\not\simeq\mmon^{\oplus d''}.$\end{cond}

We have the following diagram
\begin{equation}\label{boscd}\xymatrix{ 0\ar[r]&\mmon^{\oplus d-2}\ar[r]^{~~~f_{B^t}}&\mo_{\p^2}^{\oplus d-1}\ar[r]^{f_q}& Q_f\ar[r]&0\\
&&\mo_{\p^2}(-2)\ar[u]^{f_{A^t}}\ar[ru]_{\sigma_f:=f_q\circ f_{A^t}}&&}.\end{equation}
The injectivity of $f_{B^t}$ is because of the injectivity of $f$.  Let $F:=coker(f)$ and $F^{\vee}:=\mathcal{E}xt^1(F,\mo_{\p^2})$, then $F^{\vee\vee}\simeq F$ and moreover $F$ and $F^{\vee}$ are determined by each other (see \cite{yuan} Lemma A.0.13).
We write down a commutative diagram as follows
\begin{equation}\label{bigcd}\xymatrix@C=0.3cm@R=0.6cm{& &0\ar[d] &0\ar[d]\\
0\ar[r]&\mmon^{\oplus d-2}\ar[r]\ar[d]^{\simeq}&\mmon^{\oplus d-2}\oplus \mo_{\p^2}(-2)\ar[d]^{f_{B^t}\oplus f_{A^t}}\ar[r]&\mo_{\p^2}(-2)\ar[r]\ar[d]^{\sigma_f}&0\\
0\ar[r]&\mmon^{\oplus d-2}\ar[r]^{f_{B^t}}&\mo_{\p^2}^{\oplus d-1}\ar[d]\ar[r]^{f_q}& Q_f\ar[r]\ar[d]^{\delta}&0\\
& &F^{\vee}\otimes \mo_{\p^2}(-2)\ar[r]^{\simeq}\ar[d] &F^{\vee}\otimes\mo_{p^2}(-2)\ar[d]\\& &0 &0}
\end{equation}
We see that the isomorphism class of $F^{\vee}$ is determined by the pair $(Q_f,\sigma_f),$ hence so is the isomorphism class of $F$.  

Define $W^d:=\{[(E,f)]\in \widetilde{W}^d|Q_f~is~torsion~free\}.$  Then we have the following proposition.
\begin{prop}\label{bosih}$W^d\simeq \mathbb{P}(\mathcal{V}^d)$, where $\mathcal{V}^d$ is a vector bundle of rank $3d$ over $N_0^d:=Hilb^{[\frac{(d-1)(d-2)}2]}(\p^2)-\Omega_{d-3}^{[\frac{(d-1)(d-2)}2]}$ with $Hilb^{[n]}(\p^2)$ the Hilbert scheme of $n$-points on $\p^2$ and $\Omega_{k}^{[n]}$ the closed subscheme of $Hilb^{[n]}(\p^2)$ parametrizing $n$-points lying on a curve of class $kH.$

\end{prop}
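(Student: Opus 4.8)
The plan is to make the informal correspondence ``$(E,f)\leftrightarrow(Z,\sigma)$'' suggested by diagrams (\ref{boscd}) and (\ref{bigcd}) precise, first on closed points and then in families. For a pair $(E,f)\in W^d$ the sheaf $Q_f$ is, by (\ref{boscd}), the cokernel of an injection $\mathcal{O}_{\mathbb{P}^2}(-1)^{\oplus d-2}\hookrightarrow\mathcal{O}_{\mathbb{P}^2}^{\oplus d-1}$, so it is torsion free of rank $1$ with $c_1=(d-2)H$; hence $Q_f\cong I_Z(d-2)$ for a unique $0$-dimensional subscheme $Z\subset\mathbb{P}^2$, and a Chern class count on the resolution gives $c_2(Q_f)=\binom{d-1}{2}$, i.e. $\mathrm{length}(Z)=\binom{d-1}{2}$. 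Twisting that resolution by $\mathcal{O}_{\mathbb{P}^2}(-1)$ and taking global sections shows $h^0(I_Z(d-3))=0$, so $Z\in N_0^d$. Finally, (\ref{bigcd}) shows $F=\mathrm{coker}(f)$ is recovered from the pair $(Q_f,\sigma_f)$ with $\sigma_f\in\mathrm{Hom}(\mathcal{O}_{\mathbb{P}^2}(-2),I_Z(d-2))=H^0(I_Z(d))$, and $\sigma_f\neq0$ (otherwise $F^\vee(-2)\cong Q_f$ would have rank $1$); since $\mathrm{Aut}(I_Z(d-2))=\mathbb{C}^\ast$, the isomorphism class of $(E,f)$ depends only on $Z$ and on the class $[\sigma_f]\in\mathbb{P}(H^0(I_Z(d)))$.

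Next I would prove the converse on closed points. If $Z\in N_0^d$ then $h^0(I_Z(d-3))=0$ together with $\mathrm{length}(Z)=\binom{d-1}{2}=h^0(\mathcal{O}_{\mathbb{P}^2}(d-3))$ forces $h^1(I_Z(d-3))=0$, so $I_Z$ is $(d-2)$-regular with all minimal generators in degree $d-2$; computing $h^0(I_Z(d-2))=d-1$ and evaluating the numerical Hilbert--Burch resolution at $t=d-3$ pins down the second syzygies, giving the minimal resolution $0\to\mathcal{O}_{\mathbb{P}^2}(-(d-1))^{\oplus d-2}\to\mathcal{O}_{\mathbb{P}^2}(-(d-2))^{\oplus d-1}\to I_Z\to0$, which twisted by $\mathcal{O}_{\mathbb{P}^2}(d-2)$ is exactly the shape in (\ref{boscd}) with $Q_f$ torsion free. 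Regularity also gives $h^1(I_Z(d))=0$, hence $h^0(I_Z(d))=h^0(\mathcal{O}_{\mathbb{P}^2}(d))-\binom{d-1}{2}=3d$. Conversely, given $0\neq\sigma\in H^0(I_Z(d))=\mathrm{Hom}(\mathcal{O}_{\mathbb{P}^2}(-2),I_Z(d-2))$, lift $\sigma$ through the resolution (possible since the obstruction lies in $H^1(\mathcal{O}_{\mathbb{P}^2}(1))^{\oplus d-2}=0$) to obtain the matrix $f=\left(\begin{smallmatrix}0&1&0\\A&0&B\end{smallmatrix}\right)$; one checks $f$ is injective because $\det\binom{A}{B}\in H^0(\mathcal{O}_{\mathbb{P}^2}(d))$ is $\sigma\neq0$, that $\mathrm{coker}(f)$ has class $u_{d,1}$ and is pure of dimension $1$ (projective dimension $1$), and that it is stable: a direct summand violating Condition \ref{scodm} would exhibit a torsion subsheaf of $Q_f$, contradicting torsion-freeness. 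Thus the closed points of $W^d$ are in bijection with the pairs $(Z,[\sigma])$, and the fibre over $Z$ is $\mathbb{P}(H^0(I_Z(d)))\cong\mathbb{P}^{3d-1}$ (consistent with $\dim N_0^d=(d-1)(d-2)$ and $\dim M(d,1)=d^2+1$).

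It remains to upgrade this bijection to an isomorphism of schemes. Over $N_0^d\times\mathbb{P}^2$ let $\mathcal{I}_{\mathcal{Z}}$ be the universal ideal sheaf and $\pi$ the projection to $N_0^d$. The vanishing of $h^1(I_Z(d))$ and $h^1(I_Z(d-2))$ for all $Z\in N_0^d$, together with cohomology and base change, makes $\mathcal{V}^d:=\pi_\ast(\mathcal{I}_{\mathcal{Z}}(d))$ locally free of rank $3d$ and promotes the pointwise resolution to a global one $0\to\pi^\ast\mathcal{E}_2\otimes\mathcal{O}_{\mathbb{P}^2}(-1)\to\pi^\ast\mathcal{E}_1\to\mathcal{I}_{\mathcal{Z}}(d-2)\to0$ with $\mathcal{E}_1=\pi_\ast(\mathcal{I}_{\mathcal{Z}}(d-2))$ and $\mathcal{E}_2$ the relative syzygy bundle. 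On $\mathbb{P}(\mathcal{V}^d)$ the tautological section, lifted through this resolution, gives a family of pairs $(E,f)$, hence by Proposition \ref{oto} and Lemma \ref{ssi} a flat family of stable sheaves of class $u_{d,1}$, hence a morphism $\Phi\colon\mathbb{P}(\mathcal{V}^d)\to M(d,1)$ whose image lies in $W^d$. Conversely, restricting the universal sheaf on $M(d,1)\times\mathbb{P}^2$ (Le Potier, Theorem 3.19) to $W^d\times\mathbb{P}^2$ and running the relative versions of (\ref{boscd})--(\ref{bigcd}) produces a flat family over $W^d$ of length-$\binom{d-1}{2}$ subschemes all lying in $N_0^d$, i.e. a morphism $W^d\to N_0^d$, together with a relative section, i.e. a morphism $\Psi\colon W^d\to\mathbb{P}(\mathcal{V}^d)$ over $N_0^d$. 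By the first two paragraphs $\Phi$ and $\Psi$ are mutually inverse on closed points; since $W^d$ and $\mathbb{P}(\mathcal{V}^d)$ are reduced and separated, $\Phi\circ\Psi=\mathrm{id}$ and $\Psi\circ\Phi=\mathrm{id}$, which proves $W^d\simeq\mathbb{P}(\mathcal{V}^d)$.

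The only genuinely delicate step is the globalization in the last paragraph: constructing the linear resolution of $\mathcal{I}_{\mathcal{Z}}(d-2)$ uniformly over all of $N_0^d$ and carrying out the $\mathcal{E}xt$-constructions of (\ref{boscd})--(\ref{bigcd}) relatively over $W^d$, which requires careful cohomology-and-base-change bookkeeping. This is exactly where removing $\Omega_{d-3}^{[\binom{d-1}{2}]}$ is essential: forcing $h^i(I_Z(d-3))=0$ is what keeps the resolution of the prescribed shape and the relevant direct images locally free. Everything else --- the Chern class counts, the Hilbert--Burch/regularity argument, the automatic stability, and the identity $h^0(I_Z(d))=3d$ --- is routine.
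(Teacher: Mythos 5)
Your proposal is correct and follows essentially the same route as the paper: reduce a pair $(E,f)\in W^d$ to the data $(Q_f,\sigma_f)$ with $Q_f\simeq I_Z(d-2)$, identify the admissible $Z$ with $N_0^d$ via the linear resolution of $I_Z(d-2)$ (the paper cites this as Lemma \ref{tfid}, quoting Dr\'ezet and Eisenbud, where you reprove it by Castelnuovo--Mumford regularity and Hilbert--Burch), and realize the parameter space as $\mathbb{P}(\mathcal{V}^d)$ with $\mathcal{V}^d=p_*(\mathcal{I}_{\bar d}\otimes q^*\mo_{\p^2}(d))$ of rank $3d$. The only cosmetic differences are that the paper builds the universal family by dualizing the relative version of the right-hand column of (\ref{bigcd}) and concludes from the pointwise bijection, whereas you construct an explicit two-sided inverse.
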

\begin{proof}Condition \ref{scodm} is satisfied automatically for $Q_f$ torsion free.  Hence $W^d$ consists of all the pairs $(Q_f,\sigma_f)$ in diagram $(\ref{boscd})$ with $Q_f$ torsion free.  

Define $\bar{d}:=\frac{(d-1)(d-2)}2$.  If $Q_f$ in diagram (\ref{boscd}) is torsion free, then by direct calculation we know that $Q_f\simeq I_{\bar{d}}\otimes\mo_{\p^2}(d-2)$, with $I_{n}$ the ideal sheaf of a 0-dimensional subscheme of length $n$ on $\p^2$.  

The following lemma shows that $N_0^d$ parametrizes all the torsion free $Q_f$ in diagram (\ref{boscd}).
\begin{lemma}\label{tfid}$I_{\bar{d}}(d-2):=I_{\bar{d}}\otimes\mo_{\p^2}(d-2)$ has the following resolution 
\begin{equation}\label{rfi}\xymatrix@C=0.5cm{ 0\ar[r]&\mmon^{\oplus d-2}\ar[r]&\mo_{\p^2}^{\oplus d-1}\ar[r]& I_{\bar{d}}(d-2)\ar[r]&0}.\end{equation}
 if and only if $H^0(I_{\bar{d}}(d-3))=0$.  \end{lemma}
\begin{proof}The lemma is equivalent to Proposition 4.5 in \cite{jmd} and it is also a straightforward consequence of Corollary 3.9 in \cite{bud} Page 38 and Proposition 3.1 in \cite{bud} Page 32.
\end{proof}
Up to scalars, $\sigma_f$ can be viewed as an element in $\mathbb{P}H^0(Q_f(2))$ which is exactly $det(f)$. 

Because $N_0^d$ is open in $Hilb^{[\bar{d}]}(\p^2)$, on $\p^2\times N_0^d$ we have a universal sheaf $\mathcal{I}_{\bar{d}}$ which restricted to the fiber over each point $[I_{\bar{d}}]\in N^d_0$ is the ideal sheaf $I_{\bar{d}}$.  We have the diagram
\begin{equation}\label{ufi}\xymatrix{\mathcal{I}_{\bar{d}}\ar[r] &\p^2\times N_0^d\ar[ld]^q\ar[d]^p\\
\p^2 &N_0^d.}\end{equation}
$R^ip_{*}(\mathcal{I}_{\bar{d}}\otimes q^{*}\mo_{\p^2}(d))=0$ for $i\geq1$ and $p_{*}(\mathcal{I}_{\bar{d}}\otimes q^{*}\mo_{\p^2}(d))$ is locally free of rank $3d$.  Define $\mathcal{V}^d:=p_{*}(\mathcal{I}_{\bar{d}}\otimes q^{*}\mo_{\p^2}(d))$.  There is a 1-1 correspondence between points in $\mathbb{P}(\mathcal{V}^d)$ and isomorphism classes of $(Q_f,\sigma_f)$ with $Q_f$ torsion free.  To prove the proposition, it is enough to construct a family $\mathcal{F}$ of stable sheaves of class $u_{d,1}$ over $\p^2\times\mathbb{P}(\mathcal{V}^d)$.

We have the following commutative diagram
\begin{equation}\label{cadph}\xymatrix@C=1.2cm{\p^2\times\p(\mathcal{V}^d)\ar[r]^{id_{\p^2}\times\pi}\ar[d]_{\tilde{p}} &\p^2\times N_0^d\ar[rd]^q\ar[d]_p\\
\p(\mathcal{V}^d)\ar[r]^{\pi} &N_0^d&\p^2}
\end{equation}
Denote by $\mo_{\pi}(1)$ the relative polarization on $\p(\mathcal{V}^d)$ over $N_0^d.$   We have the following exact sequence on $\p^2\times \p(\mathcal{V}^d)$
\begin{equation}\label{ufph}0\ra\mo_{\p^2\times \p(\mathcal{V}^d)}\ra\tilde{p}^{*}\mo_{\pi}(1)\otimes (id_{\p^2}\times\pi)^{*}(\mathcal{I}_{\bar{d}}\otimes q^{*}\mo_{\p^2}(d))\ra\mathcal{F}^{\vee}\ra0.
\end{equation}
We see that fiberwise (\ref{ufph}) is the first vertical exact sequence from the right hand side in (\ref{bigcd}) tensored by $\mo_{\p^2}(2)$.  Hence $\mathcal{F}^{\vee}$ is a family of stable sheaves of class $u_{d,1}^{\vee}$.  We get $\mathcal{F}$ by taking the dual.  Hence the proposition. 
\end{proof}
We now have a concrete description of $W^d$.  
\begin{prop}\label{codth}$M(d,1)-W^d$ is of codimension $\geq 2$ in $M(d,1)$.
\end{prop}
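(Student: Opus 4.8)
The plan is to split
\[M(d,1)-W^d=\bigl(M(d,1)-\widetilde W^d\bigr)\cup\bigl(\widetilde W^d-W^d\bigr)\]
and handle the two pieces separately. By Lemma \ref{codt} the first piece has codimension $\geq2$. Moreover $\widetilde W^d$ is open in $M(d,1)$ (by Remark \ref{spsh}, as $E\simeq\mo_{\p^2}\oplus\mmon^{\oplus d-1}$ is characterized by the minimality of all the $h^0(F(n))$) and $W^d$ is open in $\widetilde W^d$ (torsion-freeness of $Q_f$ is an open condition), so it remains to prove $\dim(\widetilde W^d-W^d)\leq d^2-1$. By the discussion around diagram (\ref{boscd}) a point of $\widetilde W^d$ is an isomorphism class of a pair $(Q,\sigma)$ with $Q=\mathrm{coker}(\mmon^{\oplus d-2}\hookrightarrow\mo_{\p^2}^{\oplus d-1})$, $\sigma\in\mathbb{P}H^0(Q(2))$ and $F=\z(E,f)$ semistable; $\widetilde W^d-W^d$ is the locus where $Q$ has nonzero torsion.

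On that locus $Q$ has homological dimension $\leq1$, so its torsion $T$ is pure of dimension $1$, supported on a curve of degree $k\geq1$, has multiplicity $k$, and $Q/T\simeq I_{Z'}(d-2-k)$ for a $0$-dimensional $Z'$; from $\chi(Q)=d-1$ we get $\chi(T)=d-1-\binom{d-k}{2}+\ell(Z')$. The decisive input is semistability: the bottom row of (\ref{bigcd}) gives $F^\vee\otimes\mo_{\p^2}(-2)\simeq Q/\mathrm{im}(\sigma)$, and since $\mathrm{im}(\sigma)$ is torsion free the torsion $T$ injects into $Q/\mathrm{im}(\sigma)$, giving a subsheaf $T(2)\hookrightarrow F^\vee$. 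As $F$, hence $F^\vee=\mathcal{E}xt^1(F,\mo_{\p^2})$, is semistable, with $\mu(F^\vee)=\chi(F^\vee)/d=3-\tfrac1d$ while $\mu(T(2))=\chi(T)/k+2$, we must have $\chi(T)\leq k-1$; together with $\ell(Z')\geq0$ this forces $k\leq d-3$. So only the strata $S_k$ ($1\leq k\leq d-3$), on which the torsion of $Q$ has multiplicity $k$, occur, and on $S_k$ one has $\ell(Z')\leq\binom{d-k}{2}-(d-k)=:z'_{\max}$.

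Fix such a $k$ and set $m:=d-k\geq3$. A point of $S_k$ is determined by $T$ (in a family of dimension $\leq k^2+1$), an extension class in $\mathrm{Ext}^1(I_{Z'}(m-2),T)$, the subscheme $Z'$, and a lift $\sigma\in H^0(Q(2))$ of its image $\bar\sigma\in H^0((Q/T)(2))=H^0(I_{Z'}(m))$. As $\bar\sigma$ is a degree-$m$ form through $Z'$, the locus $\{Z'\subset V(\bar\sigma)\}$ contributes $\leq\dim|mH|+\ell(Z')\leq m^2$ (using $\dim|mH|+z'_{\max}=m^2$), and a given $\bar\sigma$ admits an $h^0(T(2))$-dimensional set of lifts. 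Bounding $\dim\mathrm{Ext}^1(I_{Z'}(m-2),T)\leq h^1(T(2-m))$ (for $Z'$ disjoint from $\mathrm{supp}\,T$, so $\mathrm{Ext}^2(\mo_{Z'},T(2-m))=0$) $\leq h^0(T(2-m))-\chi(T(2-m))$, and using $h^0(T(2-m))=0$, $h^0(T(2))\leq 3k-1$ on the dimension-maximal part of $S_k$ (where $\chi(T)=k-1$), the $\mathbb{P}$-quotients in $\sigma$ and in the extension class absorbing two dimensions, cancellation yields
\[\dim S_k\ \leq\ k^2+km+m^2\ =\ d^2-k(d-k)\ \leq\ d^2-3.\]
Hence $\widetilde W^d-W^d$ has codimension $\geq3$ in $\widetilde W^d$, and the proposition follows.

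The substantive part is this last estimate. One must also treat the sub-strata where $T$ is strictly semistable, where $\mathrm{supp}\,T$ is reducible or non-reduced, or where $Z'$ fails to impose independent conditions on $|mH|$ (so that $h^1(T(2))$ or $h^1(I_{Z'}(m))$ becomes positive); in each such case the family of $T$, resp. of $Z'$, drops by at least as much as the extra cohomology contributes, so the bound $\dim S_k\leq d^2-1$ persists. Carrying this out uniformly in $d$ and $k$ — for instance by filtering $T$ into pure sheaves supported on integral curves in order to bound the $h^0(T(\cdot))$ — is the technical heart of the argument, though the numerology above shows there is in fact a comfortable margin.
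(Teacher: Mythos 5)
Your argument is correct in outline but takes a genuinely different route from the paper's. Both proofs dispose of $M(d,1)-\widetilde{W}^d$ by Lemma \ref{codt}; the difference is entirely in how $\widetilde{W}^d-W^d$ is handled. The paper does no dimension count there at all: it quotes Le Potier (Proposition 2.8 and Lemma 3.2 of \cite{lee}) for the fact that stable sheaves with non-integral support form a closed subset of codimension $\geq 2$ in $M(d,1)$, and then only needs to show that a nonzero torsion $T_f\subset Q_f$ forces $Supp(F)$ to be non-integral. It uses the same map you do --- $T_f$ injects into $F^{\vee}\otimes\mo_{\p^2}(-2)$ via $\delta$ in (\ref{bigcd}) --- but extracts from it only that $F^{\vee}$ acquires a subsheaf supported on a curve of degree $d'<d$ (the inequality $d'<d$ coming from the surjection $\mo_{\p^2}^{\oplus d-1}\twoheadrightarrow Q_f^{tf}$ rather than from the slope inequality $\chi(T_f)\leq d'-1$ that you derive). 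Your route instead stratifies $\widetilde{W}^d-W^d$ by the multiplicity $k$ of the torsion and counts parameters directly; your bound $\dim S_k\leq d^2-k(d-k)$ is sharper than what the proposition requires and is consistent with the explicit strata computed later in the paper (e.g.\ $\dim \Pi_1=19=25-2\cdot 3$, $\dim\Pi_2=21=25-1\cdot 4$, $\dim\Pi_3=20$ matching your correction term $j=1$ for $\chi(T)=k-2$).

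The trade-off is exactly the deferred ``technical heart'' you flag at the end. What your approach buys is independence from Le Potier's integrality estimate and an explicit picture of the bad locus; what it costs is that the assertions ``the family of $T$, resp.\ of $Z'$, drops by at least as much as the extra cohomology contributes'' and ``the family of not-necessarily-semistable pure $T$ of multiplicity $k$ has dimension $\leq k^2+1$'' are Brill--Noether-type statements that are plausible but not automatic, and a uniform proof in $d$ and $k$ (say via Harder--Narasimhan filtrations of $T$ and the loci $\Omega_k^{[n]}$) is genuinely needed before the argument is complete. Since your margin is $k(d-k)\geq 3$ while only codimension $2$ in the $(d^2+1)$-dimensional moduli space is required, there is room to absorb these losses; but as written this part is asserted rather than proved, whereas the paper's citation of \cite{lee} sidesteps the entire issue.
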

\begin{proof}By Lemma \ref{codt}, we only need to show that $\widetilde{W}^d-W^d$ is of codimension at least 2 in $M(d,1)$.  Denote by $|dH|$ the linear system of divisors of class $dH$,  then non-integral curves form a closed subset of codimension $\geq2$ in $|dH|$.  Therefore by Proposition 2.8 and Lemma 3.2 in \cite{lee}, we know that stable sheaves with non-integral supports form a closed subset of codimension $\geq2$ in $M(d,1)$.  We then want to show that if $Q_f$ in (\ref{boscd}) is not torsion free, then $Supp(F)=Supp(coker(f))$ is non-integral.

Denote by $T_f$ the torsion of $Q_f$. Since $Q_f$ has a free resolution of length 1, $T_f$ must be a pure sheaf supported on a curve in $|d'H|$.  Look back to the diagram (\ref{bigcd}), the map $\delta$ restricted to $T_f$ gives a nonzero element in Hom$(T_f,F^{\vee}\otimes\mo_{\p^2}(-2))$.  If $d'<d$, then $Supp(F^{\vee})=Supp(F)$ can not be integral.  Now assume $d'=d$ and we look at the following exact sequence
\[0\ra T_f\ra Q_f\ra Q_{f}^{tf}\ra 0.\]
The torsion free sheaf $Q^{tf}_f$ has the form $I_n(m):=I_n\otimes \mo_{\p^2}(m)$ such that $m+d'=deg(Q_f)=d-2$.  On the other hand, the surjective morphism $f_q$ induces a surjective morphism from $\mo_{\p^2}^{\oplus d-1}$ to $Q_f^{tf}$, hence $m\geq 1$ and thus $d'<d-2<d$ which is a contradiction.  This finishes the proof.
\end{proof}

\section{$M(d,1)$ with $d\leq 4.$}

In this section we study $M(d,1)$ with $d\leq 4$.  Notice that for $d\leq 4$, up to isomorphism $M(d,1)$ is the only moduli space such that there is no strictly semistable locus, since $M(d,\chi)\simeq M(d,\chi')$ if $\chi\equiv\pm\chi'$ (mod $d$).

For $d\leq3$, $M(d,1)$ is very easy to understand and the following theorem is already known by Theorem 3.5 and Theorem 5.1 in \cite{lee}.  But however using our new description we give another proof.  Recall that we have defined a big open subset $W^d\subset M(d,1)$ in the previous section.
\begin{thm}\label{mtott} For $d\leq3$, $M(d,1)=W^d$.  Moreover $W^d\simeq |dH|\simeq\p^{3d-1}$ for $d=1,2$; $W^3\simeq \mc_3$ with $\mc_3$ the universal curve in $\p^2\times |3H|$.
\end{thm}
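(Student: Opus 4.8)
The plan is to handle the three cases $d=1,2,3$ by pinning down exactly what the pairs $(E,f)$ and the sheaf $Q_f$ look like, using the structural results already built up. The overarching strategy: show first that for $d\leq 3$ the stratum $\widetilde W^d$ is \emph{all} of $M(d,1)$, and then show that inside $\widetilde W^d$ the quotient $Q_f$ of diagram (\ref{boscd}) is automatically torsion free, so $W^d=\widetilde W^d=M(d,1)$. After that, identify $W^d$ with the stated model via Proposition \ref{bosih}.

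First I would prove $M(d,1)=\widetilde W^d$ for $d\leq 3$. By Lemma \ref{ncss}, any $(E,f)$ in $M(d,1)$ has $E\simeq\bigoplus_{i=1}^k\mo_{\p^2}(n+i)^{\oplus a_i}$ with all $a_i>0$; since $\mathrm{rank}(E)=d$ and $c_1(E)=(\chi-d)H=(1-d)H$, a short numerical check forces $k\le 2$ and in fact $E\simeq\mo_{\p^2}\oplus\mmon^{\oplus d-1}$ for $d\leq 3$ (for $d=1$, $E\simeq\mo_{\p^2}$; for $d=2$, $E\simeq\mo_{\p^2}\oplus\mmon$; for $d=3$, $E\simeq\mo_{\p^2}\oplus\mmon^{\oplus 2}$ is the only option compatible with rank and degree, the alternative $\mo_{\p^2}(1)\oplus\mmon^{\oplus 2}$ etc.\ being ruled out by degree). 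Hence $\widetilde W^d=M(d,1)$. Next, for $d=1,2$ the matrices $A,B$ in Section 4 are trivial or nearly trivial: $d=1$ has $f:\mmon\hookrightarrow\mo_{\p^2}$ given by a section of $\mo_{\p^2}(1)$, so $F=\mo_{\ell}$ for a line $\ell$ and $M(1,1)\simeq|H|\simeq\p^2$; $d=2$ has $B$ an empty matrix and $A$ a $1\times 1$ matrix with entry in $H^0(\mo_{\p^2}(2))$, which up to the $\mathrm{GL}$-action is a single section of $\mo_{\p^2}(2)$ up to scalar, so $\det f$ gives $M(2,1)\simeq|2H|\simeq\p^5$. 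In both cases $Q_f$ is torsion free trivially (it is $\mo_{\p^2}(d-2)$ with $\bar d=0$), so $W^d=M(d,1)\simeq\p^{3d-1}$.

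The case $d=3$ is the one that needs real work, so I expect that to be the main obstacle. Here $\bar d=1$, so a torsion-free $Q_f$ is $I_1(1)=I_x\otimes\mo_{\p^2}(1)$ for a point $x\in\p^2$, and $\sigma_f\in\p H^0(Q_f(2))=\p H^0(I_x(3))$, i.e.\ a cubic curve through $x$. By Proposition \ref{bosih}, $W^3\simeq\p(\mathcal V^3)$ over $N_0^3=Hilb^{[1]}(\p^2)-\Omega_0^{[1]}=\p^2$ (every single point trivially imposes no condition, so $\Omega_0^{[1]}=\emptyset$ and $N_0^3=\p^2$), with $\mathcal V^3=p_*(\mathcal I_1\otimes q^*\mo_{\p^2}(3))$ of rank $9$; the fiber of $\p(\mathcal V^3)$ over $x$ is $\p H^0(I_x(3))$, the linear system of cubics through $x$. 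This is exactly the universal cubic curve $\mc_3\subset\p^2\times|3H|$: the pair $(x,C)$ with $x\in C$ a cubic. So the identification $W^3\simeq\mc_3$ is immediate from Proposition \ref{bosih} once $N_0^3=\p^2$ and the rank count are in place. It remains to show $M(3,1)=W^3$, i.e.\ that $Q_f$ is never torsion-free-failing: this I would get from Proposition \ref{codth}'s argument specialized to $d=3$, or more directly: if $Q_f$ had torsion $T_f$ supported on $|d'H|$, the argument in the proof of Proposition \ref{codth} forces $d'<d-2=1$, impossible; hence $Q_f$ is always torsion free and $W^3=\widetilde W^3=M(3,1)$. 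Assembling these pieces gives the theorem; the only genuinely delicate point is verifying the rank-$9$ claim for $\mathcal V^3$ and the base $N_0^3=\p^2$, both of which are routine cohomology computations on $\p^2$ using Lemma \ref{tfid}.
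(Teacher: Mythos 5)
Your proposal is correct and follows essentially the same route as the paper: Lemma \ref{ncss} pins down $E\simeq\mo_{\p^2}\oplus\mmon^{\oplus d-1}$, the degree bound $d'\leq d-3$ from the proof of Proposition \ref{codth} rules out torsion in $Q_f$ for $d\leq 3$, and Proposition \ref{bosih} (with $N_0^3=\p^2$ and the rank-$3d$ count) identifies $W^d$ with the stated models. You merely spell out the ``direct observation'' step in more detail than the paper does.
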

\begin{proof}By Lemma \ref{ncss} we see that for $d\leq 3$, the sheaf $E$ in a stable pair $(E,f)$ can only have the form $\mo_{\p^2}\oplus\mmon^{\oplus d-1}$.  From the proof of Proposition \ref{codth} we see that the torsion of $Q_f$ can only be supported on a curve of degree no bigger than $d-3$, hence $Q_f$ is always torsion free for $d\leq3$. Hence the first statement.  By direct observation, we get the form of $W^d$ for $d\leq 3$.  This finishes the proof. 
\end{proof}
Denote by $[X]$ the class of a variety $X$ in the Grothendieck ring of varieties.  Define $\bl:=[\mathbb{A}^1]$ with $\mathbb{A}^1$ the affine line.  We have the following theorem. 
\begin{thm}\label{mtf}$[M(4,1)]=\sum_{i=0}^{17}b_{2i}\bl^{i}$ and  
\begin{eqnarray}
&&b_0=b_{34}=1;~~ b_2=b_{32}=2; ~~b_4=b_{30}=6;\nonumber\\&& b_6=b_{28}=10;~b_8=b_{26}=14;~ b_{10}=b_{24}=15;\nonumber\\&& b_{12}=b_{14}=b_{16}=b_{18}=b_{20}=b_{22}=16.\nonumber\end{eqnarray}
In particular the Euler number $e(M(4,1))$ of the moduli space is $192.$ 
\end{thm}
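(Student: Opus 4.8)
The plan is to stratify $M(4,1)$ by the isomorphism type of the bundle $E$ in the pair $(E,f)$ attached to a sheaf by Proposition \ref{oto}, and then to sum the classes of the (locally closed) strata using additivity of $[\,\cdot\,]$. By Lemma \ref{ncss}, $E=\oplus_{i=1}^{k}\mo_{\p^2}(n+i)^{\oplus a_i}$ with all $a_i>0$; imposing $rank(E)=\sum a_i=4$ and, since $c_1(E)=(\chi-d)H=-3H$, also $\sum(n+i)a_i=-3$, leaves exactly two solutions, $E=\mo_{\p^2}\oplus\mmon^{\oplus3}$ and $E=\mo_{\p^2}^{\oplus2}\oplus\mmon\oplus\mo_{\p^2}(-2)$. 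The first type is the open stratum $\widetilde{W}^4$ of Section 4; the second cuts out the complementary closed stratum $S$ (closed because $h^0(F)=1$ on $\widetilde{W}^4$ while $h^0(F)=2$ on $S$). Hence $[M(4,1)]=[\widetilde{W}^4]+[S]$, and it remains to evaluate the two terms.

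For $[\widetilde{W}^4]=[W^4]+[\widetilde{W}^4\setminus W^4]$: Proposition \ref{bosih} (with $\bar d=(d-1)(d-2)/2=3$ and fibre dimension $3d=12$) gives $W^4\simeq\p(\mathcal{V}^4)$ for $\mathcal{V}^4$ a rank-$12$ bundle over $N_0^4=Hilb^{[3]}(\p^2)\setminus\Omega_1^{[3]}$, so $[W^4]=(1+\bl+\dots+\bl^{11})([Hilb^{[3]}(\p^2)]-[\Omega_1^{[3]}])$, where $[Hilb^{[3]}(\p^2)]=1+2\bl+5\bl^2+6\bl^3+5\bl^4+2\bl^5+\bl^6$ by G\"ottsche's formula and $[\Omega_1^{[3]}]=(1+\bl+\bl^2)(1+\bl+\bl^2+\bl^3)$ because $\Omega_1^{[3]}$ is a $Hilb^{[3]}(\p^1)\simeq\p^3$-bundle over the dual plane $(\p^2)^{\vee}$. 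For the complement: by the argument in the proof of Proposition \ref{codth}, on $\widetilde{W}^4\setminus W^4$ the torsion $T_f$ of $Q_f$ is supported on a single line $\ell$ and $Q_f^{tf}$ is a globally generated torsion-free sheaf of rank one, which leaves only finitely many numerical types (e.g. $T_f=\mo_\ell(t)$ with $t\in\{-1,0\}$ and $Q_f^{tf}=\mo_{\p^2}(1)$ or $I_p(1)$); describing $F$ type-by-type through the pair $(Q_f,\sigma_f)$ of diagram (\ref{boscd}) exhibits $\widetilde{W}^4\setminus W^4$ as a finite disjoint union of Zariski-locally-trivial fibrations over $(\p^2)^{\vee}$ or $(\p^2)^{\vee}\times\p^2$, whose classes are then immediate.

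The remaining term $[S]$ I would compute directly. On $S$ a representative $f$ is a matrix of homogeneous forms $E\otimes\mmon\to E$ with $E=\mo_{\p^2}^{\oplus2}\oplus\mmon\oplus\mo_{\p^2}(-2)$, taken up to $Aut(E)\times Aut(E)$ and constrained by injectivity and by the stability condition of Section 3. Putting $f$ into a normal form along the lines of the proof of Lemma \ref{inje} and then stratifying the space of normalized matrices according to the ranks of the sub-blocks of constant versus positive-degree entries makes each piece an iterated affine or projective bundle over a base assembled from linear systems on $\p^2$, so its class can be read off. Summing $[W^4]$, $[\widetilde{W}^4\setminus W^4]$ and $[S]$ and collecting coefficients gives $\sum_{i=0}^{17}b_{2i}\bl^{i}$ with the asserted $b_{2i}$; the symmetry $b_{2i}=b_{34-2i}$ is forced a priori by Poincar\'e duality on the smooth projective variety $M(4,1)$ (of dimension $4^2+1=17$) and serves as a check, while specialising $\bl\mapsto1$ yields $e(M(4,1))=\sum_i b_{2i}=192$.

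The main obstacle is the two lower-dimensional strata $\widetilde{W}^4\setminus W^4$ and $S$: the relevant automorphism groups $Aut(E)$ are non-reductive, with large unipotent radicals coming from $Hom(\mmon,\mo_{\p^2})=H^0(\mo_{\p^2}(1))$ and $Hom(\mo_{\p^2}(-2),\mo_{\p^2})=H^0(\mo_{\p^2}(2))$, so one cannot simply invoke GIT to build the orbit spaces. The real work is to produce explicit slices (normal forms) for $f$ on each stratum, to verify that the resulting quotients are geometric and fibre Zariski-locally trivially so that classes multiply, and to propagate the stability condition consistently through the stratification; by comparison the enumeration of the two types of $E$ and the computation of $[W^4]$ are routine.
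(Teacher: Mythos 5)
Your stratification $M(4,1)=\widetilde{W}^4\sqcup S$ is exactly the paper's $M(4,1)=M_1\sqcup M_2$, and your evaluation of $[W^4]$ (rank-$12$ bundle over $Hilb^{[3]}(\p^2)-\Omega_1^{[3]}$, with $[\Omega_1^{[3]}]=[\p^2\times\p^3]$ as a $\p^3$-bundle over the dual plane) matches Proposition \ref{bosih} and Lemma \ref{doof}. But both remaining strata are left as plans, and each plan has a concrete hole. For $\widetilde{W}^4\setminus W^4$ you enumerate the numerical types $T_f\simeq\mo_H(t)$, $t\in\{-1,0\}$, with $Q_f^{tf}\simeq\mo_{\p^2}(1)$ or $I_p(1)$, and propose to sum over all of them. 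Only $(t,Q_f^{tf})=(-1,\mo_{\p^2}(1))$ actually occurs: the case $t=0$, $Q_f^{tf}\simeq I_p(1)$ is excluded by the stability condition, since such a $Q_f$ sits in diagram (\ref{ssff}) and violates Condition \ref{scodm}. In the surviving case one still needs $\mathrm{Ext}^1(\mo_{\p^2}(1),\mo_{H}(-1))\simeq\bc$ together with Lemma \ref{tqns} to see that $Q_f$ is the unique \emph{non-split} extension for each line, so that the base of your fibration is $|H|$ and not something larger. Finally, the $\p^{11}$-bundle of pairs $(Q_f,\sigma_f)$ over $|H|$ is not contained in $M(4,1)$: the pairs with $Im(\sigma_f)\subset T_f$ must be excised (this is (\ref{insff})), producing the $-[\p^2\times\p^1]$ in Lemma \ref{lpff}. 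Retaining the spurious stratum, or omitting this excision, changes several coefficients $b_{2i}$, so these are not optional refinements.

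For $S=M_2$ your proposal is to take normal forms for $f$ modulo the non-reductive $Aut(E)$ and stratify by ranks of sub-blocks; you correctly flag this as the hard point, but you do not resolve it, and the resolution is the actual content. The paper avoids quotienting by the unipotent part altogether: after normalizing $f$ as in (\ref{romf}), stability forces the pair of linear forms $(b_1,b_2)$ to be nowhere proportional, so its cokernel is $R_f\simeq I_1(1)$, and the residual invariant is the induced map $\omega_f\in\p(H^0(I_1(4)))\simeq\p^{13}$; this identifies $M_2$ with a $\p^{13}$-bundle over $Hilb^{[1]}(\p^2)$ and gives $[M_2]=[\p^2\times\p^{13}]$ (Lemma \ref{mcfmt}). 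Without producing such a complete invariant $(R_f,\omega_f)$ (or an honest slice for the $Aut(E)$-action), your stratification of matrices does not yet yield a class in the Grothendieck ring, so the step you defer as ``the real work'' is a genuine gap rather than a routine verification.
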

To prove Theorem \ref{mtf}, we first define two strata as follows.
\begin{eqnarray} & M_1:=&\{[(E,f)]\in M(4,1)|E\simeq\mo_{\p^2}\oplus\mmon^{\oplus 3}\};\nonumber\\& M_2:=&\{[(E,f)]\in M(4,1)|E\simeq\mo_{\p^2}^{\oplus 2}\oplus\mmon\oplus\mo_{\p^2}(-2)\}.\nonumber
\end{eqnarray}
According to Lemma \ref{ncss}, $M(4,1)=M_1\sqcup M_2.$
\begin{lemma}\label{scff}A pair $(E,f)$ with $rank(E)=4$ and $deg(E)=-3$ is stable if and only if for any two direct summands $D',D''$ of $E$ such that $D'\simeq D''$ and $f(D'\otimes\mmon)\subset D''$,  we have $\mu(D')<\mu(E)$.
\end{lemma}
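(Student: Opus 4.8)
The plan is to mimic the structure of the proof of Lemma \ref{roc}. By Lemma \ref{ncss} the ``only if'' direction is immediate (any two direct summands $D'\simeq D''$ with $f(D'\otimes\mmon)\subset D''$ satisfy $f^{-1}(D'')=D'\otimes\mmon$, so $D'$ is one of the destabilizing subsheaves appearing in the definition of stability, whence $\mu(D')<\mu(E)$). So the real content is the ``if'' direction: assuming the summand condition holds, I must rule out the existence of an arbitrary destabilizing subsheaf $E'\subsetneq E$ which is a direct sum of line bundles with $f^{-1}(E')\simeq E'\otimes\mmon$ and $\mu(E')\geq\mu(E)$.

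First I would reduce to the case where $E'$ is actually a direct summand of $E$. Here the list of possible shapes of $E$ is short: by Lemma \ref{ncss}, $E$ is either $\mo_{\p^2}\oplus\mmon^{\oplus 3}$ or $\mo_{\p^2}^{\oplus 2}\oplus\mmon\oplus\mo_{\p^2}(-2)$, and $\mu(E)=-3/4$ in both cases. For each such $E$ I would enumerate the numerical types $(n_1,\dots,n_r)$, $n_1\geq\cdots\geq n_r$, of a sub-line-bundle-sum $E'$ with $\mu(E')\geq\mu(E)$, i.e.\ $\sum n_i\cdot 4\geq -3r$. The point to establish — exactly the ``direct observation'' invoked in Lemma \ref{roc} — is that any subbundle $E'\hookrightarrow E$ which is a direct sum of line bundles and satisfies $\mu(E')>\mu(E)$ (or more precisely has all its summands of degree $\geq$ the top degree appearing in $E$ in the relevant sense) must split off as a direct summand of $E$. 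Concretely, $\mathrm{Hom}(\mo_{\p^2}(a),\mo_{\p^2}(b))=0$ for $a>b$, so an inclusion of a high-degree line bundle into $E$ lands in the sum of the summands of $E$ of degree $\geq a$, and then one checks the induced map is a split injection; iterating over the summands of $E'$ in decreasing degree order gives that $E'$ is a summand. I would write this out for the two explicit forms of $E$; in the $\mo_{\p^2}^{\oplus 2}\oplus\mmon\oplus\mo_{\p^2}(-2)$ case one must be slightly careful because $\mmon$ sits strictly between $\mo_{\p^2}$ and $\mo_{\p^2}(-2)$, but a destabilizing $E'$ is built only from copies of $\mo_{\p^2}$ and $\mmon$ (since including $\mo_{\p^2}(-2)$ can only lower the slope), and those do split off.

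Once $E'$ is a direct summand, write $E\simeq E'\oplus E''$. The hypothesis $f^{-1}(E')\simeq E'\otimes\mmon$ means that, in the block decomposition of $f:E'\otimes\mmon\oplus E''\otimes\mmon\to E'\oplus E''$, the component $E''\otimes\mmon\to E'$ is forced to vanish on the appropriate subspace — more precisely it says $f$ maps $E'\otimes\mmon$ into $E'$ up to the identification, i.e.\ (after noting $E'$ and a suitable summand $D''\subseteq E'$ of $E$ with $D''\simeq E'$ play the roles in the summand condition) this is exactly an instance of two isomorphic direct summands $D'=E'$, $D''$ with $f(D'\otimes\mmon)\subset D''$. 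Then the summand hypothesis gives $\mu(E')<\mu(E)$, contradicting $\mu(E')\geq\mu(E)$. Hence no destabilizing subsheaf exists and $(E,f)$ is stable.

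The main obstacle is the splitting step: carefully justifying, for the second form of $E$ where the degrees $0,-1,-2$ are all present, that every slope-destabilizing sub-line-bundle-sum is in fact a direct summand, and identifying it with the ``$D'\simeq D''$'' configuration so that the hypothesis applies. Everything else is the same bookkeeping as in Lemma \ref{roc} and Lemma \ref{ncss}, which I may cite directly.
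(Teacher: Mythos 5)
There is a genuine gap in the reduction step, and it occurs exactly at the point you flag as the ``main obstacle.'' Your claim that every sub-sum-of-line-bundles $E'\subset E$ with $\mu(E')>\mu(E)$ built from copies of $\mo_{\p^2}$ and $\mmon$ must split off as a direct summand is false for $E\simeq\mo_{\p^2}^{\oplus 2}\oplus\mmon\oplus\mo_{\p^2}(-2)$. Take $E''=\mo_{\p^2}\oplus\mmon$ included into the summand $\mo_{\p^2}^{\oplus 2}$ by sending the first factor isomorphically onto one copy of $\mo_{\p^2}$ and the $\mmon$ into the other copy via a nonzero linear form: this is an injective map of sheaves, $\mu(E'')=-1/2>-3/4=\mu(E)$, yet $E''$ is not a direct summand (the quotient has torsion $\mo_H$). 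A similar non-split embedding exists for $E''\simeq\mo_{\p^2}\oplus\mmon^{\oplus 2}\subset\mo_{\p^2}^{\oplus 2}\oplus\mmon$. Since the stability condition quantifies over all subsheaves that are direct sums of line bundles (not just saturated or split ones), these configurations cannot be dismissed; they are precisely the content of the paper's proof, which enumerates them as cases (1) and (2) and disposes of them by a different mechanism: if $f^{-1}(E'')=E''\otimes\mmon$ for such a non-summand $E''$, then Nakayama's lemma forces $f$ to preserve the smallest direct summand of $E$ containing $E''$ (e.g.\ $f(\mo_{\p^2}^{\oplus 2}\otimes\mmon)\subset\mo_{\p^2}^{\oplus 2}$), and that summand is the destabilizing $D=D'$. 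Your proposal contains no substitute for this argument. (Note that the ``direct observation'' of Lemma \ref{roc} really is specific to $E\simeq\mo_{\p^2}\oplus\mmon^{\oplus 3}$, where a rank count rules out non-split destabilizers; it does not transfer to the second form of $E$, which is why the paper only needs to treat that form here.)

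A secondary, related issue: in your final paragraph you read the hypothesis $f^{-1}(E')\simeq E'\otimes\mmon$ as saying that $f$ carries the summand $E'\otimes\mmon$ into $E'$. The condition is only that the preimage is \emph{abstractly isomorphic} to $E'\otimes\mmon$; it may be a different subsheaf of $E\otimes\mmon$, e.g.\ a different direct summand. The paper splits into subcases accordingly (preimage equal to $E''\otimes\mmon$ versus preimage a direct summand merely isomorphic to $E'\otimes\mmon$), and in the latter subcase it must enlarge $D$ to $\mo_{\p^2}^{\oplus 2}\oplus\mmon$ using the vanishing of $\mathrm{Hom}(\mo_{\p^2}\otimes\mmon,\mo_{\p^2}(-2))$. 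Both of these case distinctions need to be carried out for your proof to close.
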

\begin{proof}We only need to prove the lemma for $E\simeq\mo_{\p^2}^{\oplus 2}\oplus\mmon\oplus\mo_{\p^2}(-2)$.  We want to show that if $\exists E'\subset E$, $E'$ is a direct sum of line bundles with $\mu(E')>\mu(E)$ and $f^{-1}(E')\simeq E'\otimes\mmon$, then $\exists D,D'\subset E$ two direct summands with $D\simeq D'$ and $\mu(D)>\mu(E)$, such that $f(D\otimes\mmon)\subset D'$.  With no loss of generality, we assume that $E'$ has the form $\bigoplus_{i}\mo_{\p^2}(n_i)^{\oplus a_i}$ with $a_i>0$ and $n_{i}-n_{i+1}=1$.  

Let $E'\simeq E''\subset E$ and $E''$ is not a direct summand of $E$.  Then $E''$ has to be one of the following two cases:

(1) $E''\subset\mo_{\p^2}^{\oplus 2}$ and $E''\simeq \mo_{\p^2}\oplus\mmon$;

(2) $E''\subset\mo_{\p^2}^{\oplus 2}\oplus\mmon$ and $E''\simeq \mo_{\p^2}\oplus\mmon^{\oplus 2}.$

Let $E''$ be in case (1).  If $E'$ is a direct summand of $E$, then $f^{-1}(E')\neq E''\otimes\mmon$ because by Nakayama's lemma $E''\otimes\mmon\subset f^{-1}(E')\Rightarrow \mo_{\p^2}^{\oplus 2}\otimes\mmon\subset f^{-1}(E')$ which contradicts the injectivity.  

If $E'=E''$ and $f^{-1}(E')=E''\otimes\mmon$, then again by Nakayama's lemma we have $f(\mo_{\p^2}^{\oplus 2}\otimes\mmon)\subset\mo_{\p^2}^{\oplus 2}.$  Hence we get $D=D'=\mo_{\p^2}^{\oplus 2}$.

If $E'=E''$ and $f^{-1}(E')$ is a direct summand of $E\otimes\mmon$ isomorphic to $E'\otimes \mmon$, then we have $D=\mo_{\p^2}^{2}\oplus \mmon=D'$ and $f(D\otimes\mmon)\subset D'$, since there is no nonzero morphism from $\mo_{\p^2}\otimes\mmon$ to $\mo_{\p^2}(-2)$.  Hence case (1) is done.

Case (2) is analogous and this finishes the proof of the lemma.
\end{proof}
For a pair $(E,f)\in M_2$, $f$ can be represented by the following matrix
\begin{equation}\label{romf}\left(\begin{array}{cccc}b_1&b_2&0&0\\0&0&1&0\\0&0&0&1\\a_1&a_2&0&0\end{array}\right),
\end{equation}
where $b_i\in H^0(\mone)$ and $a_i\in H^0(\mo_{\p^2}(3))$.  The injectivity of $f$ implies that $det(f)=b_1a_2-b_2a_1\neq 0$.  Moreover by Lemma \ref{scff} $(E,f)$ is stable if and only if $kb_1\neq k'b_2$ for any $(k,k')\in\bc^2-\{0\}$.  
\begin{lemma}\label{mcfmt}$[M_2]=[\p^2\times\p^{13}].$
\end{lemma}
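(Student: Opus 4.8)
The plan is to describe $M_2$ as a quotient of a space of matrices and identify it explicitly. First I would recall from the matrix representation $(\ref{romf})$ that a pair $(E,f)\in M_2$ is determined by the data $\left(\begin{smallmatrix}b_1&b_2\\a_1&a_2\end{smallmatrix}\right)$ with $b_i\in H^0(\mo_{\p^2}(1))$, $a_i\in H^0(\mo_{\p^2}(3))$, subject to the stability condition from Lemma \ref{scff}, which says $(b_1,b_2)$ are not proportional, i.e. they span a $2$-dimensional subspace of $H^0(\mo_{\p^2}(1))$ (equivalently the $1\times 2$ matrix $(b_1,b_2)$ has rank $1$ as a map $\mo_{\p^2}(-1)^{\oplus 2}\to\mo_{\p^2}$ but the two entries are linearly independent as sections). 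Note that injectivity of $f$, i.e. $\det f=b_1a_2-b_2a_1\neq 0$, is in fact automatic once $b_1,b_2$ are linearly independent and $(a_1,a_2)\neq(0,0)$ is not forced — I would check that the stability condition together with genericity makes the locus where $\det f=0$ lower-dimensional or, more carefully, that the injectivity is implied; the cleanest route is to observe that $\det f=0$ forces $(a_1,a_2)$ proportional to $(b_1,b_2)$ over the function field, which combined with degrees gives $a_i=b_i\cdot q$ for a common quadratic $q$, a codimension-$\geq 2$ condition — but actually for the \emph{class} computation one must be precise, so I would instead argue that when $b_1,b_2$ are independent, $\det f = b_1 a_2 - b_2 a_1$ vanishes identically iff $(a_1,a_2)=(q b_1, q b_2)$ for some $q\in H^0(\mo_{\p^2}(2))$, and these matrices are exactly the ones gauge-equivalent to ones not giving a pair in $\widetilde W^d$; I will need to track this.

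Next I would set up the group action. The automorphisms of $E\simeq\mo_{\p^2}^{\oplus 2}\oplus\mmon\oplus\mo_{\p^2}(-2)$ and the commuting action on $E\otimes\mmon$ (as in Definition \ref{doi}) act on the matrix $(\ref{romf})$. The relevant part of the automorphism group acting on the block $\left(\begin{smallmatrix}b_1&b_2\\a_1&a_2\end{smallmatrix}\right)$ consists of: $GL_2(\bc)$ acting on the right (mixing the two columns, coming from $\mathrm{Aut}$ of the $\mo_{\p^2}(-1)^{\oplus 2}$ being completed — wait, here the two columns correspond to the two copies of $\mo_{\p^2}$ in the source after the identification, so $GL_2$ acts on the pair $(b_1,b_2)$ and simultaneously on $(a_1,a_2)$); scalars on the $\mmon$ and $\mo_{\p^2}(-2)$ summands; and unipotent parts: $\mathrm{Hom}(\mo_{\p^2},\mo_{\p^2}(-2))=0$, but $\mathrm{Hom}(\mo_{\p^2}(-2),\mo_{\p^2})=H^0(\mo_{\p^2}(2))$ gives translations $a_i\mapsto a_i + q b_i$ (this is exactly the freedom that kills the $\det f=0$ locus), and $\mathrm{Hom}(\mmon,\mo_{\p^2})=H^0(\mo_{\p^2}(1))$ may also act. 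So the plan is: the $(b_1,b_2)$ up to the $GL_2$ and scalar action, with the independence condition, parametrize a $\p^2$ (the Grassmannian/projectivization of $2$-planes in $H^0(\mo_{\p^2}(1))\cong\bc^3$, which is $\p^2{}^\vee\cong\p^2$); then for fixed $(b_1,b_2)$, the pair $(a_1,a_2)\in H^0(\mo_{\p^2}(3))^{\oplus 2}$ modulo the translation action $a_i\mapsto a_i+qb_i$ (a $10$-dimensional space, $\dim H^0(\mo_{\p^2}(2))=6$, so quotient has dimension $2\cdot 10 - 6 = 14$... hmm, but we also mod out by more). I expect after carefully accounting for all gauge freedom the fiber is $\p^{13}$, and I would verify the dimension count: $\dim M(4,1)=4^2+1=17 = 2+13+?$ — actually $2+13=15\neq 17$, so I need to recheck; $\dim M_2 = \dim M(4,1) - (\text{something})$? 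No — $M_1$ is the big open set of dimension $17$ and $M_2$ should have dimension $\leq 15$, consistent with $M(4,1)-W^d$ having codimension $\geq 2$ (Proposition \ref{codth}), so $\dim M_2 = 15 = \dim(\p^2\times\p^{13})$. Good, so the claim is internally consistent.

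So the concrete steps are: (i) parametrize the $(b_1,b_2)$-data, after the $GL_2$-action and scaling and the requirement of linear independence, by $\p^2$ (send $(b_1,b_2)$ to the $2$-plane $\langle b_1,b_2\rangle\subset H^0(\mo_{\p^2}(1))\cong\bc^3$, equivalently the point of $\p^2$ dual to the common zero of... no, to the $1$-dimensional quotient); (ii) over this $\p^2$, build the relative parameter space for $(a_1,a_2)$ and the residual gauge group, show the quotient is a $\p^{13}$-bundle, and then show this bundle is Zariski-locally trivial — or better, directly exhibit $M_2\simeq\p^2\times\p^{13}$ by finding the explicit coordinates; (iii) confirm stability is exactly the independence of $b_1,b_2$ and that injectivity of $f$ holds on the resulting locus (using the translation freedom to normalize $(a_1,a_2)$ away from the bad locus, or showing the bad locus is already excluded). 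The main obstacle I anticipate is step (ii): correctly identifying the \emph{full} automorphism group acting (including whether the $\mathrm{Hom}(\mmon,\mo_{\p^2})$ and $\mathrm{Hom}(\mo_{\p^2}(-2),\mmon)$ unipotents act nontrivially and how they interact with the $GL_2$), getting the quotient dimension to come out to $13$ on the fibers, and proving the quotient is an honest \emph{trivial} product rather than merely a fibration — i.e. ruling out any twisting, which likely follows because the structure group reduces to $PGL$ acting on a projective space with a section, or because one can write down global coordinates by hand on $\p^2$.
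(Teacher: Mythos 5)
Your proposal is correct and lands on the same structural description as the paper --- $M_2$ as a $\p^{13}$-bundle over a $\p^2$ recording the pencil $\langle b_1,b_2\rangle$ --- but you execute it as an explicit gauge quotient of the matrix data, whereas the paper packages the quotient sheaf-theoretically, and that packaging disposes of exactly the two obstacles you flag. The paper sets $R_f:=coker\bigl((b_1,b_2):\mmon\to\mo_{\p^2}^{\oplus2}\bigr)\simeq I_1(1)$ (the ideal sheaf of the common zero of $b_1,b_2$ twisted by $\mo_{\p^2}(1)$, so the base is $Hilb^{[1]}(\p^2)\simeq\p^2$) and remembers $(a_1,a_2)$ only through $\omega_f=f_r\circ(a_1,a_2)\in H^0(I_1(4))$. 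Twisting $0\to\mmon\to\mo_{\p^2}^{\oplus2}\to I_1(1)\to0$ by $\mo_{\p^2}(3)$ and taking sections gives $0\to H^0(\mo_{\p^2}(2))\to H^0(\mo_{\p^2}(3))^{\oplus2}\to H^0(I_1(4))\to0$, so the kernel of $(a_1,a_2)\mapsto\omega_f$ is \emph{exactly} your translation subspace $\{(qb_1,qb_2)\}$ and nothing more: the residual gauge group on the fiber is translations plus one overall scalar, $h^0(I_1(4))=15-1=14$, and the fiber is $\p^{13}$. Moreover, since $b_1,b_2$ are coprime linear forms, $\det f=b_1a_2-b_2a_1=0$ holds iff $(a_1,a_2)=(qb_1,qb_2)$ for some $q\in H^0(\mo_{\p^2}(2))$, i.e.\ iff $\omega_f=0$; so the injectivity locus is precisely the complement of the zero section and the whole projectivization survives. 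Finally, your worry about global versus merely local triviality is moot for this statement: $[M_2]$ is a class in the Grothendieck ring, and a Zariski-locally trivial $\p^{13}$-fibration over $\p^2$ (which any projectivized vector bundle, here the one with fiber $H^0(I_1(4))$ over $Hilb^{[1]}(\p^2)$, automatically is) already has class $[\p^2][\p^{13}]=[\p^2\times\p^{13}]$.
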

\begin{proof}We have the following diagram
\begin{equation}\label{ffcd}\xymatrix{ 0\ar[r]&\mmon\ar[r]^{(b_1,b_2)}&\mo_{\p^2}^{\oplus 2}\ar[r]^{f_r}& R_f\ar[r]&0\\
&&\mo_{\p^2}(-3)\ar[u]^{(a_1,a_2)}\ar[ru]_{\omega_f:=f_r\circ(a_1,a_2)}&&}.\end{equation}
Since $kb_1\neq k'b_2$ for any $(k,k')\in\bc^2-\{0\}$, we have $R_f\simeq I_1(1)$ and every $I_1(1)$ can be put in sequence (\ref{ffcd}).  Hence $M_2$ consists of all the isomorphism classes of pairs $(R_f,\omega_f)$ with $R_f\simeq I_1(1)$.  Hence $M_2$ is isomorphic to a projective bundle over $Hilb^{[1]}(\p^2)$ with fibers isomorphic to $\p(H^0(I_1(4)))\simeq \p^{13}$. Hence the lemma.
\end{proof}

The big open subset $W^4$ defined in the previous section is contained in $M_1$.  We have $[W^4]=[(Hilb^{[3]}(\p^2)-\Omega^{[3]}_1)\times \p^{11}]$ by Proposition \ref{bosih}.
\begin{lemma}\label{doof}$\Omega^{[3]}_1\simeq \mc_{1}^{[3]}$ with $\mc_{1}^{[3]}$ the relative Hilbert scheme of $3$-points on the universal family $\mc_1\subset\p^2\times |H|$, and hence $[\Omega^{[3]}_1]=[\p^2\times\p^3]$.
\end{lemma}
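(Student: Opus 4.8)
The plan is to identify $\Omega_1^{[3]}$ with a relative Hilbert scheme over the linear system $|H|$ of lines in $\p^2$, and then to show that this relative Hilbert scheme is a $\p^3$-bundle over $\p^2 = |H|$. First I would set up the natural morphism. A point of $\Omega_1^{[3]} \subset Hilb^{[3]}(\p^2)$ is a length-$3$ subscheme $Z$ that is contained in some curve of class $H$, i.e. in some line $\ell$. The first thing to check is that for $Z \in \Omega_1^{[3]}$ this line $\ell$ is \emph{unique}: if $Z$ lay on two distinct lines it would be supported on their single intersection point, but a length-$3$ subscheme of $\p^2$ supported at one point is never contained in a line (the maximal length of a subscheme of a smooth curve supported at one point and cut out scheme-theoretically by the curve is controlled by the local structure — a line meets any subscheme supported at a point in length $\le 2$ locally, and more carefully $H^0(\mathcal{O}_\ell) \otimes$ nothing forces length $\le 1$ per point is false, so I must argue via $h^0(I_Z(1)) $). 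Concretely, $Z \in \Omega_1^{[3]}$ iff $h^0(I_Z(1)) \ge 1$, and since $h^0(\mo_{\p^2}(1)) = 3$ while imposing $Z$ of length $3$ gives generically $h^0(I_Z(1)) = 0$, on $\Omega_1^{[3]}$ we have $h^0(I_Z(1)) \ge 1$; I claim it is exactly $1$. Indeed $h^0(I_Z(1)) \ge 2$ would force $Z$ to lie on a pencil of lines, hence on their base point, hence $Z$ supported at a point $p$ with $I_Z \subset \mathfrak{m}_p$; but then $Z$ lies on every line through $p$ and a short local computation (the ideal of a curvilinear length-$3$ scheme needs a conic) shows $h^0(I_Z(1)) \le 1$ in that case too, a contradiction. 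So the line $\ell = \ell(Z)$ is unique and $Z$ determines a well-defined point of $|H| \simeq \p^2$.

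Next I would upgrade this to a morphism of schemes $\Omega_1^{[3]} \to |H|$ and realize $\Omega_1^{[3]}$ as a subscheme of the relative Hilbert scheme $\mc_1^{[3]}$. Recall $\mc_1 \subset \p^2 \times |H|$ is the universal line, with projection $\rho: \mc_1 \to |H|$; the fiber $\rho^{-1}([\ell]) = \ell \simeq \p^1$. The relative Hilbert scheme $\mc_1^{[3]} = Hilb^{[3]}(\mc_1/|H|)$ has fiber over $[\ell]$ equal to $Hilb^{[3]}(\p^1) \simeq \p^3$ (the space of degree-$3$ effective divisors on $\p^1$, i.e. $\p(H^0(\mo_{\p^1}(3)))$), so $\mc_1^{[3]} \to |H|$ is a $\p^3$-bundle, hence $[\mc_1^{[3]}] = [\p^2 \times \p^3]$ (one can even check it is Zariski-locally trivial, or just use that the class in the Grothendieck ring of a Zariski-locally trivial fibration with fiber $\p^3$ is the product). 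There is a natural closed immersion $\mc_1^{[3]} \hookrightarrow Hilb^{[3]}(\p^2)$ sending a length-$3$ divisor on a line $\ell$ to the same subscheme viewed in $\p^2$ (this is the forgetful map, which is a closed immersion because being contained in the specified line is a closed condition and the map is injective on points and on tangent spaces by the uniqueness of $\ell$). Its set-theoretic image is exactly $\Omega_1^{[3]}$ by definition. I would then argue that this closed immersion has image $\Omega_1^{[3]}$ \emph{scheme-theoretically}: both are reduced ($\mc_1^{[3]}$ is smooth as a projective bundle over smooth $|H|$, and $\Omega_1^{[3]}$ — carrying the reduced structure, or the natural incidence structure — is reduced), so it suffices to match the underlying sets, which we have done. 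This gives the isomorphism $\Omega_1^{[3]} \simeq \mc_1^{[3]}$.

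The main obstacle I anticipate is the uniqueness of the line $\ell(Z)$ for every $Z \in \Omega_1^{[3]}$, including the non-reduced cases: a curvilinear length-$3$ scheme supported at a point, or a scheme of the form (reduced point) $+$ (length-$2$ at another point), or the fat point $\mathrm{Spec}\,k[x,y]/\mathfrak{m}^2$ which has length $3$ — I need to confirm this last one is \emph{not} in $\Omega_1^{[3]}$ at all (it is not contained in any line, since $h^0(I_Z(1)) = 0$ for $Z = $ the first-order neighborhood of a point, because a linear form vanishing to order $2$ at $p$ vanishes identically... actually a single linear form vanishing at $p$ cuts out scheme-theoretically on $\p^2$ a subscheme whose intersection with $\mathfrak{m}_p^2$ at $p$ is computed locally, and $(\ell) + \mathfrak{m}_p^2 \ne \mathfrak{m}_p^2$, so the fat point is not on a line — good, so it is correctly excluded). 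Once uniqueness is established on points, the scheme-theoretic statement and the bundle structure are routine; and the Grothendieck-ring identity $[\Omega_1^{[3]}] = [\mc_1^{[3]}] = [\p^2][\p^3] = [\p^2 \times \p^3]$ follows immediately since the $\p^3$-bundle is Zariski-locally trivial (it is the projectivization of the rank-$4$ bundle $\rho_* \mathcal{L}^{\otimes 3}$ for $\mathcal{L}$ a relative degree-$1$ line bundle on $\mc_1/|H|$, which exists as $\mc_1 = \p(\mathcal{O} \oplus \mathcal{O}(1))$ or similar over $|H|$).
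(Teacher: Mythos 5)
Your proposal is correct and takes essentially the same route as the paper: the paper also identifies $\Omega_1^{[3]}$ with $\mc_1^{[3]}$ via the forgetful map, justifies the bijection by the fact that at most one line passes through any length-$3$ subscheme, and computes the class from the $\p^3$-bundle structure of $\mc_1^{[3]}$ over $|H|\simeq\p^2$. Your write-up merely spells out the uniqueness of the line (most cleanly seen from the fact that two distinct lines meet in a length-$1$ scheme) and the exclusion of the fat point, details the paper leaves implicit.
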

\begin{proof}We have a natural map $\xi:\mc_1^{[3]}\ra\Omega^{[3]}_1$. $\xi$ is an isomorphism because there is at most one curve in $|H|$ passing through any $3$ points.  $\mc_1\ra |H|$ is a $\p^1$-bundle, hence the map $p:\mc_1^{[3]}\ra|H|$ is a projective bundle with fibers isomorphic to $(\p^1)^{[3]}\simeq\p^3$, therefore $[\Omega^{[3]}_1]=[\mc_1^{[3]}]=[\p^2\times\p^3].$  
\end{proof}
Now we want to compute $[M_1-W^4]$.  Look back to diagram (\ref{boscd}), we want to see what $Q_f$ will be if it is not torsion free for $d=4$.  We know that the torsion of $Q_f$ can only be supported on a curve of degree no bigger than $d-3=1$ (See the proof of Proposition \ref{codth}).  We write down the following exact sequence 
\[0\ra T_f\ra Q_f\ra Q^{tf}_f\ra0,\]
with $T_f$ the torsion of $Q_f$ and $Q^{tf}_f$ a torsion free sheaf of rank 1.  

Since $T_f$ is supported on a curve in $|H|$ and $h^0(T_f\otimes\mmon)\leq h^0(Q_f\otimes\mmon)=0$, $T_f\simeq\mo_{H}(t)\simeq\mo_{\p^1}(t)$ with $t\leq 0$.  Let $Q^{tf}_f\simeq I_{n}(m)$ with $m>0,n\geq0$.  Then we have $m=1$ and $n-t=1$ by direct calculation.  

If $t=0,n=1$, then we have the following commutative diagram
\begin{equation}\label{ssff}\xymatrix@R=0.3cm{ &0\ar[d]&0\ar[d] &0\ar[d] &\\0\ar[r]&\mmon\ar[r]\ar[d]&\mo_{\p^2}\ar[r]\ar[d]& \mo_{H}\ar[r]\ar[d]&0\\0\ar[r]&\mmon^{\oplus 2}\ar[r]\ar[d]&\mo_{\p^2}^{\oplus 3}\ar[r]\ar[d]& Q_f\ar[r]\ar[d]&0\\
0\ar[r]&\mmon\ar[r]\ar[d]&\mo_{\p^2}^{\oplus 2}\ar[r]\ar[d]& I_1(1)\ar[r]\ar[d]&0\\ &0&0&0&}\end{equation}
which contradicts Condition \ref{scodm}.  Hence we have $t=-1,n=0$ and $Q_f$ lies in the following exact sequence
\begin{equation}\label{tqf}0\ra\mo_{H}(-1)\ra Q_f\ra \mo_{\p^2}(1)\ra 0.\end{equation}
Ext$^1(\mo_{\p^2}(1),\mo_{H}(-1))\simeq H^1(\mo_{\p^1}(-2))\simeq\bc$, so for a fixed projective line $\p^1$ of class $H$, if (\ref{tqf}) does not split, $Q_f$ is unique up to isomorphism.
\begin{lemma}\label{tqns}$Q_f$ in (\ref{tqf}) also lies in the following exact sequence (\ref{lfrftq}) if and only if (\ref{tqf}) does not split.
\begin{equation}\label{lfrftq}0\ra\mmon^{\oplus 2}\ra\mo_{\p^2}^{\oplus 3}\ra Q_f\ra 0.
\end{equation} 
\end{lemma}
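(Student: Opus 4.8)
The proof splits according to the two possibilities for the extension (\ref{tqf}). Since $Ext^1(\mone,\mo_H(-1))\cong H^1(\mo_{\p^1}(-2))\cong\bc$ (recorded just before the statement), up to isomorphism $Q_f$ is either the split sheaf $\mone\oplus\mo_H(-1)$, or, for a fixed line $H$, the unique non-split extension. I first dispose of the ``only if'' direction: if $Q_f$ fits in (\ref{lfrftq}) then $Q_f$ is a quotient of $\mo_{\p^2}^{\oplus 3}$, hence generated by its global sections; but if (\ref{tqf}) splits then $h^0(\mo_H(-1))=0$ forces the image of the evaluation map $H^0(Q_f)\otimes\mo_{\p^2}\to Q_f$ to lie in the summand $\mone\subsetneq Q_f$, so $Q_f$ is not globally generated — a contradiction. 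Thus (\ref{lfrftq}) forces (\ref{tqf}) to be non-split.

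For the ``if'' direction, since the non-split $Q_f$ over a given line is unique up to isomorphism and all lines are equivalent under a projective change of coordinates, it is enough to exhibit one non-split extension (\ref{tqf}) that also fits in (\ref{lfrftq}). Take homogeneous coordinates $[x:y:z]$, put $H=\{z=0\}$, and let $\phi\colon\mmon^{\oplus 2}\to\mo_{\p^2}^{\oplus 3}$ be the map with columns $(z,0,-x)$ and $(0,z,-y)$. Its $2\times 2$ minors $z^2,zx,zy$ are not all zero, so $\phi$ is injective; hence $Q:=coker(\phi)$ has rank $1$, $c_1(Q)=2H$ and $\chi(Q)=3$ — the numerical type of $Q_f$ — and $0\to\mmon^{\oplus 2}\xrightarrow{\phi}\mo_{\p^2}^{\oplus 3}\to Q\to 0$ is of type (\ref{lfrftq}) by construction.

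Now the row $(x,y,z)$ satisfies $(x,y,z)\cdot\phi=0$, so it induces a surjection $Q\twoheadrightarrow\mone$. Writing $\mathcal K$ for the Euler kernel, $0\to\mathcal K\to\mo_{\p^2}^{\oplus 3}\xrightarrow{(x,y,z)}\mone\to 0$, the map $\phi$ factors as $\mmon^{\oplus 2}\xrightarrow{\bar\phi}\mathcal K\hookrightarrow\mo_{\p^2}^{\oplus 3}$, and quotienting by $\mmon^{\oplus 2}$ gives $0\to coker(\bar\phi)\to Q\to\mone\to 0$. A Chern class and Euler characteristic count gives $c_1(coker\,\bar\phi)=c_1(\mathcal K)-c_1(\mmon^{\oplus 2})=-H-(-2H)=H$ and $\chi(coker\,\bar\phi)=\chi(\mathcal K)-\chi(\mmon^{\oplus 2})=0-0=0$, and the support is $Supp(coker\,\bar\phi)=V(z^2,zx,zy)=H$. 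Since $Q$ has a locally free resolution of length $1$, $\mathcal{E}xt^2_{\p^2}(Q,\mo_{\p^2})=0$, so $Q$ has no zero-dimensional subsheaf, hence neither does $coker\,\bar\phi\subset Q$; therefore $coker\,\bar\phi$ is a line bundle on $H\cong\p^1$ of Euler characteristic $0$, i.e. $coker\,\bar\phi\cong\mo_H(-1)$. So $Q$ fits in $0\to\mo_H(-1)\to Q\to\mone\to 0$, a sequence of type (\ref{tqf}), and this extension is non-split: $Q$ is a quotient of $\mo_{\p^2}^{\oplus 3}$, hence globally generated, whereas $\mone\oplus\mo_H(-1)$ is not, by the argument in the first paragraph. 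By uniqueness of the non-split $Q_f$ over $H$, every such $Q_f$ is isomorphic to $Q$ and so fits in (\ref{lfrftq}).

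The numerical bookkeeping for $\phi$ and $\bar\phi$ is routine; the step needing the most care is the identification $coker\,\bar\phi\cong\mo_H(-1)$, i.e. verifying that the torsion of $Q$ along $H$ is the line bundle $\mo_{\p^1}(-1)$ with no embedded points — this is where the purity argument (projective dimension $\le 1$) is essential, and it can be double-checked by a local computation at the generic point of $H$ using the relations $ze_1=xe_3,\ ze_2=ye_3$, which show the torsion there has length $1$. An alternative route to the ``if'' direction avoids the explicit model: one computes, for the non-split $Q_f$, that $h^i(Q_f(-1))=0$ for all $i$, $h^i(Q_f)=0$ for $i>0$, and $h^0(Q_f\otimes\Omega^1_{\p^2}(1))=2$ with $h^{>0}(Q_f\otimes\Omega^1_{\p^2}(1))=0$, and then the Beilinson spectral sequence on $\p^2$ collapses directly to (\ref{lfrftq}); along that route the sole delicate vanishing, $h^1(Q_f\otimes\Omega^1_{\p^2}(1))=0$, is precisely where the non-splitness of (\ref{tqf}) is used (the relevant connecting homomorphism is an isomorphism exactly when the extension class is nonzero), and that is the main obstacle there.
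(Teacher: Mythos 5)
Your proof is correct and follows essentially the same route as the paper: the split case is excluded because a quotient of $\mo_{\p^2}^{\oplus 3}$ is globally generated while $\mone\oplus\mo_{H}(-1)$ is not, and the non-split case is handled by the uniqueness of the extension together with an explicit $3\times 2$ matrix of linear forms whose cokernel has $\mo_{H}(-1)$ as its torsion. The paper uses the matrix with rows $(x_0,0)$, $(x_1,x_2)$, $(0,x_0)$ and leaves the verification implicit, whereas you use the Euler-sequence matrix and check the identification of the torsion and the non-splitness in detail; this is a difference of presentation, not of method.
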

\begin{proof}If $Q_f\simeq\mo_{H}(-1)\oplus\mone$, it certainly can not lie in (\ref{lfrftq}).  If the sequence (\ref{tqf}) does not split, then $Q_f$ is unique, so we only need to construct the sequence (\ref{lfrftq}) with $Q_f$ contains $\mo_{H}(-1)$ as its torsion.  Write $\p^2=\mathtt{Proj}\bc[x_0,x_1,x_2]$.  With no loss of generality we assume that $\mo_{H}(-1)$ is supported on $\{x_0=0\}$, then the following matrix represents a morphism $f_{B^t}:\mmon^{\oplus 2}\ra\mo_{\p^2}^{\oplus 3}$ such that $coker(f_{B^t})$ contains $\mo_{\{x_0=0\}}(-1)$ as its torsion.
\begin{equation}\label{cmtq}f_{B^t}:=\left(\begin{array}{cc}x_0&0\\x_1&x_2\\0&x_0\end{array}\right).\end{equation}
This finishes the proof.
\end{proof}
\begin{rem}\label{scsf}$f_{B^t}$ defined in (\ref{cmtq}) also satisfies the stability condition i.e. Condition \ref{scodm}.  \end{rem}
\begin{lemma}\label{icqff}Decompose $|H|$ into cells and write $|H|=\displaystyle{\cup_{i=0}^2}\mathbb{A}^i$.  Then $\mathbb{A}^i$ parametrizes isomorphism classes of $Q_f$ such that there are pairs $(Q_f,\sigma_f)\in M_1-W^4$ and $T_f$ are supported on curves in $\mathbb{A}^i\subset |H|$.
\end{lemma}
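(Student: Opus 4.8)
The plan is to read the statement off the structural analysis of the sheaves $Q_f$ with nonzero torsion carried out just above, and then to transport the standard affine cell decomposition of $|H|\simeq\p^2$ along the map that sends such a $Q_f$ to the line supporting its torsion subsheaf $T_f$.

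First I would collect what the analysis leading to (\ref{tqf}) and Lemma \ref{tqns} already establishes: for $(E,f)\in M_1-W^4$ the sheaf $Q_f$ sits in a non-split extension $0\ra\mo_\ell(-1)\ra Q_f\ra\mone\ra0$ for a unique line $\ell\in|H|$ (the split sheaf $\mo_\ell(-1)\oplus\mone$ is excluded because, by Lemma \ref{tqns}, only the non-split $Q_f$ admits a resolution of the shape (\ref{lfrftq}), which, as diagram (\ref{boscd}) shows, is automatic once $E\simeq\mo_{\p^2}\oplus\mmon^{\oplus3}$), and since $\mathrm{Ext}^1(\mone,\mo_\ell(-1))\simeq\bc$ this $Q_f$ is determined up to isomorphism by $\ell$ alone. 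Next I would check the converse, i.e. that for each $\ell$ this $Q_f$ really occurs in $M_1-W^4$: by Lemma \ref{tqns} and Remark \ref{scsf} it admits the resolution (\ref{lfrftq}) satisfying Condition \ref{scodm}, and then — reversing the construction of Proposition \ref{bosih}, which goes through verbatim with this torsion-having $Q_f$ in place of a torsion-free one — any nonzero $\sigma_f\in\mathrm{Hom}(\mo_{\p^2}(-2),Q_f)=H^0(Q_f(2))$ (this group is nonzero, as one sees by twisting (\ref{lfrftq}) by $\mo_{\p^2}(2)$ and taking cohomology) produces a pair $(E,f)$ with $E\simeq\mo_{\p^2}\oplus\mmon^{\oplus3}$, injective because $\det f$ is up to scalar the nonzero section $\sigma_f$, and stable by Lemma \ref{roc} together with Condition \ref{scodm}. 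Finally, distinct lines give non-isomorphic $Q_f$, since the supports of their torsion subsheaves differ; hence $Q_f\mapsto\mathrm{supp}(T_f)$ is a bijection from isomorphism classes of the $Q_f$ occurring in $M_1-W^4$ onto the closed points of $|H|$, and writing $|H|\simeq\p^2=\mathbb{A}^0\sqcup\mathbb{A}^1\sqcup\mathbb{A}^2$ for the usual cell decomposition, the $Q_f$ with $\mathrm{supp}(T_f)\in\mathbb{A}^i$ are exactly those indexed by $\mathbb{A}^i$, as claimed.

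The main obstacle is not the bijection itself but giving it scheme-theoretic meaning, so that the later computation of $[M_1-W^4]$ can genuinely fiber over the cells $\mathbb{A}^i$: one wants a universal sheaf $\mathcal{Q}$ over $\p^2\times|H|$ restricting to the correct $Q_f$ over every $\ell$, compatibly with the universal line $\mc_1\subset\p^2\times|H|$. This reduces to the relative extension sheaf $\mathcal{E}xt^1(\mone,\mo_{\mc_1}(-1))$ being a line bundle on $|H|$ whose classifying section is nowhere zero, which follows from the fibrewise computation $\mathrm{Ext}^1(\mone,\mo_\ell(-1))\simeq H^1(\mo_{\p^1}(-2))\simeq\bc$ together with cohomology and base change along $\mc_1\ra|H|$. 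Everything else is formal manipulation with the exact sequences (\ref{tqf}), (\ref{lfrftq}) and the explicit matrix (\ref{cmtq}) already in hand.
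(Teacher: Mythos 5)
Your set-theoretic analysis --- the unique non-split extension $0\ra\mo_{H}(-1)\ra Q_f\ra\mone\ra 0$ attached to each line, the exclusion of the split sheaf via Lemma \ref{tqns}, and the resulting bijection between isomorphism classes of such $Q_f$ and points of $|H|$ --- agrees with the first step of the paper's proof. The genuine gap is in the step you yourself single out as the main obstacle. You assert that a universal sheaf $\mathcal{Q}$ exists over $\p^2\times|H|$ because the relative extension sheaf is ``a line bundle on $|H|$ whose classifying section is nowhere zero,'' and that this ``follows from the fibrewise computation together with cohomology and base change.'' Cohomology and base change only shows that $L:=\mathcal{E}xt^1_p(q^{*}\mone,\mo_{\mc_1}\otimes q^{*}\mmon)$ is a line bundle on $|H|$; it produces no section of $L$ whatsoever. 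To build a family whose fibre over every point of a base $S$ is a \emph{non-split} extension one needs a nowhere-vanishing section of $L|_S$, i.e.\ a trivialization of $L|_S$. The paper computes via Grothendieck--Hirzebruch--Riemann--Roch that $L\simeq\mo_{|H|}(1)$, which is nontrivial on $|H|\simeq\p^2$ and hence has no nowhere-vanishing global section: no such family exists over the whole of $|H|$ (without first twisting by a line bundle pulled back from $|H|$, which again requires identifying $L$). This is precisely why the lemma is stated with the cell decomposition $|H|=\cup_{i=0}^{2}\mathbb{A}^i$: on each affine cell $\mo_{|H|}(1)$ trivializes, a nowhere-vanishing section exists, and one obtains the families $\mathcal{Q}^i$ over $\p^2\times\mathbb{A}^i$ that the computation of $[M_1-W^4]$ in Lemma \ref{lpff} actually uses. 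Your proposal never produces these families, so the part of the lemma that carries its content is not established.

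A secondary slip: you claim that \emph{any} nonzero $\sigma_f\in H^0(Q_f(2))$ yields an injective $f$ ``because $\det f$ is up to scalar the nonzero section $\sigma_f$.'' When $Q_f$ has torsion this fails: if $\mathrm{Im}(\sigma_f)\subset T_f$ then the induced map to $F^{\vee}\otimes\mo_{\p^2}(-2)$ in diagram (\ref{bigcd}) vanishes and $f$ is not injective; this is exactly the locus removed in (\ref{insff}). For the existence statement of the lemma this is harmless, since $h^0(Q_f(2))=12>h^0(T_f(2))=2$ guarantees a good choice of $\sigma_f$, but the justification as written is incorrect.
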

\begin{proof}Lemma \ref{tqns} implies that there is a 1-1 correspondence between isomorphism classes of $Q_f$ and points in $|H|$.  We need to decompose $|H|$ into cells so that we have a universal family over $\p^2\times \mathbb{A}^i$ for each $i$.  

We have the following diagram
\begin{equation}\label{ucdo}\xymatrix{\mc_1\ar[r]&\p^2\times |H|\ar[ld]_q\ar[d]^p\\ \p^2 & |H|},\end{equation}
with $\mc_1$ the universal curve of degree 1.

Ext$^i(\mone,\mo_{H}(-1))=0$ for all $i\neq 1$ and Ext$^1(\mone,\mo_{H}(-1))\simeq\bc$, therefore $L:=\mathcal{E}xt^1_p(q^{*}\mone,\mo_{\mc_1}\otimes q^{*}\mmon)$ is a line bundle on $|H|$.  Moreover $ch(L)=-ch(R^{\bullet}p_{*}\circ R^{\bullet}\mathcal{H}om (q^{*}\mone,\mo_{\mc_1}\otimes q^{*}\mmon))$, so by Grothendieck-Hirzbruch-Riemann-Roch Theorem we can compute and get that $c_1(L)=c_1(\mo_{|H|}(1))$ and hence $L\simeq \mo_{|H|}(1)$.

$L$ has a nowhere vanishing global section on each $\mathbb{A}^i$, in other words, we have an exact sequence on $\p^2\times \mathbb{A}^i$ 
\begin{equation}\label{uftqf}0\ra \mo_{\mc_1}\otimes q^{*}\mmon|_{\p^2\times\mathbb{A}^i}\ra\mathcal{Q}^i\ra q^{*}\mone|_{\p^2\times\mathbb{A}^i}\ra 0,
\end{equation}
such that restricted on the fiber over any point $y\in \mathbb{A}^i$ it does not split.  Hence $\mathcal{Q}^i$ is the family we want and hence the lemma.
\end{proof}  
We rewrite diagram (\ref{boscd}) for $d=4$ as the following diagram
\begin{equation}\label{tqfcd}\xymatrix{ 0\ar[r]&\mmon^{\oplus 2}\ar[r]^{~~f_{B^t}}&\mo_{\p^2}^{\oplus 3}\ar[r]^{f_q}& Q_f\ar[r]&0\\
&&\mo_{\p^2}(-2)\ar[u]^{f_{A^t}}\ar[ru]_{\sigma_f=f_q\circ f_{A^t}}&&}.\end{equation}

Denote $\p(p_{*}(\mathcal{Q}\otimes q^{*}\mo_{\p^2}(2)))$ to be the union of the projective bundles $\p(p_{*}(\mathcal{Q}^i\otimes q^{*}\mo_{\p^2}(2)|_{\p^2\times\mathbb{A}^i}))$ over $\mathbb{A}^i$ with fibers isomorphic to $\p H^0(Q_f(2))\simeq \p^{11}$.  Analogously, isomorphism classes of pairs $(Q_f,\sigma_f)$ can be parametrized by $\p(p_{*}(\mathcal{Q}\otimes q^{*}\mo_{\p^2}(2)))$.  However $\p(p_{*}(\mathcal{Q}\otimes q^{*}\mo_{\p^2}(2)))\not\subset M(4,1)$.  Look back to diagram (\ref{bigcd}), it is easy to see that 
\begin{equation}\label{insff}\p(p_{*}(\mathcal{Q}\otimes q^{*}\mo_{\p^2}(2)))\cap M(4,1)=\{[(Q_f,\sigma_f)]|Im(\sigma_f)\not\subset T_f\},\end{equation}
with $Im(\sigma_f)$ the image of $\sigma_f$ and $T_f$ the torsion of $Q_f$. 

The complement of $(\ref{insff})$ in $\p(p_{*}(\mathcal{Q}\otimes q^{*}\mo_{\p^2}(2)))$ is the union of the projective bundles $\p(p_{*}(\mo_{\mc_1}\otimes q^{*}\mone|_{\p^2\times\mathbb{A}^i}))$ over $\mathbb{A}^i$ with fibers isomorphic to $\p H^0(\mo_{H}(1))\simeq\p^1.$  The following lemma is a straightforward consequence.
\begin{lemma}\label{lpff}$[M_1-W^4]=[\p^2\times\p^{11}-\p^2\times\p^1].$
\end{lemma}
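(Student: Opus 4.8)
The plan is to stratify $M_1-W^4$ by the class of the torsion subsheaf $T_f$ of $Q_f$ and to use the universal constructions already set up in Lemmas \ref{tqns}--\ref{lpff}. By the analysis preceding diagram (\ref{tqfcd}), a pair $(E,f)\in M_1$ lies outside $W^4$ precisely when $Q_f$ fails to be torsion free, and in that case $Q_f$ is forced (by the length count and by Condition \ref{scodm} via Lemma \ref{tqns}) into the non-split extension (\ref{tqf}), with $T_f\simeq\mo_H(-1)$ supported on a line $H\in|H|$. So $M_1-W^4$ fibers, set-theoretically, over $|H|\simeq\p^2$ by sending $(E,f)$ to the support line of $T_f$, and over each line the isomorphism class of $Q_f$ is unique.

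First I would realize $M_1-W^4$ as an open subvariety of the total space $\p(p_*(\mathcal{Q}\otimes q^*\mo_{\p^2}(2)))$ described just before Lemma \ref{lpff}: this is the union over the affine cells $\mathbb{A}^i$ of $|H|$ of the projective bundles $\p(p_*(\mathcal{Q}^i\otimes q^*\mo_{\p^2}(2)))$, with fibers $\p H^0(Q_f(2))\simeq\p^{11}$, and it parametrizes all pairs $(Q_f,\sigma_f)$ with $Q_f$ as in (\ref{tqf}). Since each cell decomposition is compatible with the bundle structure (the line bundle $L\simeq\mo_{|H|}(1)$ trivializes on each $\mathbb{A}^i$, giving the family $\mathcal{Q}^i$), the class of this total space is $[\,|H|\,]\cdot[\p^{11}]=[\p^2\times\p^{11}]$ in the Grothendieck ring; this uses that $[\,|H|\,]=[\p^2]$ and that $\mathbb{L}$-classes of Zariski-locally trivial projective bundles multiply.

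Next I would identify the locus inside $\p(p_*(\mathcal{Q}\otimes q^*\mo_{\p^2}(2)))$ that actually lands in $M(4,1)=M_1\sqcup M_2$, namely $M_1-W^4$. By (\ref{insff}), derived from diagram (\ref{bigcd}), this is exactly the open condition $Im(\sigma_f)\not\subset T_f$: when $Im(\sigma_f)\subset T_f$ the map $\delta\circ\sigma_f$ vanishes, so diagram (\ref{bigcd}) degenerates and the resulting sheaf is not stable (indeed not even in the correct stratum). The complementary closed locus $\{Im(\sigma_f)\subset T_f\}$ is the union over the cells of the subbundles $\p(p_*(\mo_{\mc_1}\otimes q^*\mone|_{\p^2\times\mathbb{A}^i}))$, with fibers $\p H^0(\mo_H(1))\simeq\p^1$, so its class is $[\p^2\times\p^1]$. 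Subtracting in the Grothendieck ring gives $[M_1-W^4]=[\p^2\times\p^{11}]-[\p^2\times\p^1]=[\p^2\times\p^{11}-\p^2\times\p^1]$, which is the claim.

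The main obstacle is the bookkeeping needed to guarantee the two families $\mathcal{Q}^i$ genuinely glue the picture correctly over all of $|H|$ and that the scissor relation is legitimate: one must check that the stratification of $M_1-W^4$ by cells of $|H|$ is a stratification by locally closed subvarieties, that over each cell the bundle $p_*(\mathcal{Q}^i\otimes q^*\mo_{\p^2}(2))$ is locally free of the asserted rank $12$ (so $R^1p_*$ vanishes, which follows from $H^1(Q_f(2))=0$ by the resolution in Lemma \ref{tqns} together with $H^1(\mo_H(1))=0$), and that the subbundle $p_*(\mo_{\mc_1}\otimes q^*\mone)$ sits inside it as a subbundle (rank $2$, from $h^0(\mo_H(1))=2$). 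All of these are cohomology-and-base-change computations on the universal curve $\mc_1\subset\p^2\times|H|$ that are routine once the vanishing statements are in hand; the only genuinely substantive input is the identification (\ref{insff}) of the stable locus, which I would extract carefully from diagram (\ref{bigcd}) by tracking exactly when the composite $Q_f\to F^\vee\otimes\mo_{\p^2}(-2)$ kills the image of $\sigma_f$.
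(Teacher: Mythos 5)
Your proposal is correct and follows essentially the same route as the paper: the same reduction of $Q_f$ to the unique non-split extension $0\to\mo_H(-1)\to Q_f\to\mone\to0$ over each line, the same cell decomposition of $|H|$ trivializing $L\simeq\mo_{|H|}(1)$ to produce the families $\mathcal{Q}^i$, the same $\p^{11}$-bundle of pairs $(Q_f,\sigma_f)$, and the same excision of the $\p^1$-bundle where $Im(\sigma_f)\subset T_f$ via (\ref{insff}). The only cosmetic difference is your phrasing of why the excluded locus fails to land in $M(4,1)$ (the real point is that $\sigma_f$ factoring through the torsion forces $f$ to be non-injective), but this does not affect the argument.
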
  
\begin{proof}[Proof of Theorem \ref{mtf}]By Lemma \ref{mcfmt}, Lemma \ref{doof} and Lemma \ref{lpff}, we have
\[[M(4,1)]=[(Hilb^{[3]}(\p^2)-\p^2\times\p^3)\times\p^{11}+\p^2\times(\p^{11}-\p^1)+\p^2\times\p^{13}],\]
which leads to the theorem by direct calculation. 
\end{proof}

\section{$M(5,1)$ and $M(5,2)$.}
Up to isomorphism $M(5,1)$ and $M(5,2)$ are the only two moduli spaces with $d=5$ such that there is no strictly semistable locus.  In this section we prove the following  theorem.
\begin{thm}\label{mtfi}$[M(5,1)]=[M(5,2)]=\sum_{i=0}^{26}b_{2i}\bl^i$ and 
\begin{eqnarray}
&&b_0=b_{52}=1,~~ b_2=b_{50}=2,~~b_4=b_{48}=6,\nonumber\\&& b_6=b_{46}=13,~b_8=b_{44}=26,~ b_{10}=b_{42}=45,\nonumber\\&& b_{12}=b_{40}=68,~b_{14}=b_{38}=87,~ b_{16}=b_{36}=100,\nonumber\\&& b_{18}=b_{34}=107,~b_{20}=b_{32}=111,~ b_{22}=b_{30}=112,\nonumber\\&& b_{24}=b_{26}=b_{28}=113.\nonumber\end{eqnarray}
In particular the Euler number of both moduli spaces is $1695.$ 
\end{thm}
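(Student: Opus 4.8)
The plan is to stratify $M(5,\chi)$ (for $\chi=1$ and $\chi=2$ separately) by the isomorphism type of the bundle $E$ in the associated pair $(E,f)$, exactly as was done for $M(4,1)$ in Section 5, and to compute the class of each stratum in the Grothendieck ring, summing at the end. By Lemma \ref{ncss}, for $d=5$ the possible forms of $E$ are tightly constrained: $E$ must be $\oplus_i\mo_{\p^2}(n+i)^{\oplus a_i}$ with consecutive twists, so for $M(5,1)$ (where $\mathrm{rank}\,E=5$, $c_1(E)=-4H$) the list of possible $E$ is short, and similarly for $M(5,2)$ (where $c_1(E)=-3H$). For the generic stratum where $E\simeq\mo_{\p^2}\oplus\mmon^{\oplus 4}$, Proposition \ref{bosih} already gives $W^5\simeq\p(\mathcal{V}^5)$, a $\p^{14}$-bundle over $N_0^5=Hilb^{[6]}(\p^2)-\Omega^{[6]}_2$, so $[W^5]=([Hilb^{[6]}(\p^2)]-[\Omega^{[6]}_2])\cdot[\p^{14}]$, and I would use the known Göttsche formula for $[Hilb^{[6]}(\p^2)]$ together with a computation of $[\Omega^{[6]}_2]$ (the locus of $6$ points on a conic, which fibers over $|2H|\simeq\p^5$ with fiber the relative Hilbert scheme of $6$ points on a conic).

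Next I would handle the remaining, non-generic strata $M_j$ one at a time. Each such stratum corresponds to a pair $(E,f)$ with $E$ having one or more summands of twist $\leq -2$; writing $f$ as an explicit matrix with entries in $\bigoplus_{i\geq 0}H^0(\mo_{\p^2}(i))$ modulo the $\mathrm{Aut}(E)$-action, one reads off — exactly as in diagrams (\ref{boscd})--(\ref{bigcd}) and in the $M_2$ analysis for $d=4$ — that the sheaf is determined by a pair $(Q_f,\sigma_f)$ with $Q_f$ a sheaf of rank $1$ with a length-$1$ free resolution and $\sigma_f\in\p H^0(Q_f(2))$ (up to the stability condition, Condition \ref{scodm}, which one must check cuts out exactly the missing loci). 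When $Q_f$ is torsion-free it is an $I_n(m)$ and the stratum is a projective bundle over an open subset of $Hilb^{[n]}(\p^2)$; when $Q_f$ has torsion $T_f$ supported on a curve of class $d'H$ with $d'\leq d-3=2$, one decomposes $|d'H|$ into affine cells (as in Lemma \ref{icqff}), builds a universal $Q$ over each cell using a Grothendieck--Riemann--Roch computation of the relevant $\mathcal{E}xt$-sheaf, forms the corresponding projective bundle of $\sigma_f$'s, and removes the sublocus where $\mathrm{Im}(\sigma_f)\subset T_f$ (which is itself a smaller projective bundle, as in (\ref{insff}) and Lemma \ref{lpff}). I expect Appendices A and B to supply the technical identifications of these strata and of the boundary loci. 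Adding up all the stratum classes then gives $[M(5,1)]$; doing the same for $M(5,2)$ gives $[M(5,2)]$, and a direct comparison of the two polynomials (or an explicit bijection of strata) yields $[M(5,1)]=[M(5,2)]$. Substituting $\bl=1$ in the resulting polynomial and checking the sign gives the Euler number $1695$.

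The main obstacle, I expect, is the bookkeeping for the strata with torsion $Q_f$ supported on a conic, i.e. $d'=2$. Unlike the $d=4$ case, where the torsion lived on a line in $|H|\simeq\p^2$, here $|2H|\simeq\p^5$ and the pure sheaves supported on (possibly reducible or non-reduced) conics come in several types; one must enumerate the possible $(T_f, Q_f^{tf})$ combinations allowed by $h^0(Q_f(-1))=0$ and degree bookkeeping, check in each case which satisfy Condition \ref{scodm}, construct a cell decomposition of the parameter space of such $Q_f$ fine enough to carry a universal family, and correctly identify the $\mathrm{Im}(\sigma_f)\subset T_f$ loci to be excised. A secondary difficulty is computing $[\Omega^{[6]}_2]$ and, more generally, the classes of the various incidence loci in Hilbert schemes that appear; these are handled by viewing them as relative Hilbert schemes over the appropriate linear systems. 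I would organize the final tally in a table of strata with their classes, so that the coincidence $[M(5,1)]=[M(5,2)]$ emerges transparently, ideally matched stratum-by-stratum under the isomorphism $M(5,1)\simeq M(5,4)$ and the symmetry exchanging the two.
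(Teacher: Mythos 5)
Your plan follows the paper's proof essentially step for step: stratify by the isomorphism type of $E$ via Lemma \ref{ncss}, realize each stratum as a (difference of) projective bundle(s) over Hilbert schemes or Grassmannians through the $(Q_f,\sigma_f)$-type descriptions, treat the strata where $Q_f$ has torsion by cell decompositions carrying universal families, excise the loci where $\mathrm{Im}(\sigma_f)$ lands in the torsion, and sum. Two corrections to the plan as written, one of which would actually change the answer if executed literally.

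First, $\Omega_2^{[6]}$ does \emph{not} fiber over $|2H|$ with fiber the relative Hilbert scheme of six points on a conic: that object is the relative Hilbert scheme $\mc_2^{[6]}$ itself, and the natural map $\xi:\mc_2^{[6]}\to\Omega_2^{[6]}$ is surjective but not bijective, because a length-$6$ subscheme can lie on a pencil or a net of conics (e.g.\ five points on a line plus one point off it, or six points on a line). Setting $[\Omega_2^{[6]}]=[\mc_2^{[6]}]$ therefore overcounts. The paper's Appendix B deals with this by stratifying $\Omega_2^{[6]}$ into the loci $\ms_n$ where $h^0(I_6(2))=n+1$, over which $\xi$ is a $\p^n$-bundle, and inverting stratum by stratum; moreover the fibers of $\mc_2^{[6]}\to|2H|$ over reducible and non-reduced conics are not $\p^6$ and must be computed separately via explicit lists of punctual ideals (Tables I and II), which is where the bulk of Appendix B lives. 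Second, a smaller point: the equality $[M(5,1)]=[M(5,2)]$ is not obtained in the paper by a stratum-by-stratum matching --- the two stratifications are genuinely different (e.g.\ $M(5,2)$ involves loci parametrized by $Gr(2,15)$ and by tensor products $R_f\otimes S_f$ of two ideal-type sheaves whose torsion analysis is considerably more delicate) --- but simply by computing both polynomials independently and observing that they coincide.
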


\begin{flushleft}{\textbf{$\lozenge$ Computation for $[M(5,1)]$}}\end{flushleft}
According to Lemma \ref{ncss} we first stratify $M(5,1)$ into three strata defined as follows.
\begin{eqnarray} & M_1:=&\{[(E,f)]\in M(5,1)|E\simeq\mo_{\p^2}\oplus\mmon^{\oplus 4}\};\nonumber\\& M_2:=&\{[(E,f)]\in M(5,1)|E\simeq\mo_{\p^2}^{\oplus 2}\oplus\mmon^{\oplus 2}\oplus\mo_{\p^2}(-2)\};\nonumber\\& M_3:=&\{[(E,f)]\in M(5,1)|E\simeq\mone\oplus\mo_{\p^2}\oplus\mmon\oplus\mo_{\p^2}(-2)^{\oplus 2}\}.\nonumber
\end{eqnarray}

\begin{lemma}\label{scffi}A pair $(E,f)$ with $rank(E)=5$ and $deg(E)=-4$ is stable if and only if for any two direct summands $D',D''$ of $E$ such that $D'\simeq D''$ and $f(D'\otimes\mmon)\subset D''$,  we have $\mu(D')<\mu(E)$.
\end{lemma}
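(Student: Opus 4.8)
The plan is to imitate the proof of Lemma \ref{scff} (the $d=4$ case), with the extra work coming only from the larger number of possible embeddings $E'\simeq E''\subset E$ that are not direct summands. By Lemma \ref{ncss} the only implication that needs proof is: if there exists $E'\subset E$ a direct sum of line bundles with $\mu(E')>\mu(E)$ and $f^{-1}(E')\simeq E'\otimes\mmon$, then there exist direct summands $D',D''\subset E$ with $D'\simeq D''$, $\mu(D')>\mu(E)$, and $f(D'\otimes\mmon)\subset D''$. By Lemma \ref{ncss} applied to $E'$ itself we may assume $E'\simeq\bigoplus_i\mo_{\p^2}(n_i)^{\oplus a_i}$ with consecutive twists, $a_i>0$; and since $\mu(E')>\mu(E)=-4/5$, $E'$ consists only of summands $\mo_{\p^2}(n)$ with $n\geq -1$ (a summand of the form $\mo_{\p^2}(-2)$ or lower would force $\mu(E')\leq -4/5$ unless compensated, but then consecutive-twist forces a $\mo_{\p^2}$ or $\mmon$ summand present, and a short slope computation rules out the bad cases). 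This is the only place where $d=5$ (not $d=4$) matters, and it is a finite check.

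Next I would enumerate, for each such $E'$, the finitely many subsheaves $E''\subset E=\mo_{\p^2}\oplus\mmon^{\oplus4}$ with $E''\simeq E'$ that fail to be direct summands. Because $E'$ involves only twists $\geq -1$ and $E$ has a single $\mo_{\p^2}$-summand and four $\mmon$-summands, $E''$ can only look like: (1) $E''\simeq\mo_{\p^2}\oplus\mmon^{\oplus k}$ embedded into $\mo_{\p^2}\oplus\mmon^{\oplus 4}$ in a way not splitting off, which by an elementary matrix argument means the $\mo_{\p^2}$-factor of $E''$ maps into $\mo_{\p^2}\oplus\mmon^{\oplus j}$ for some $j$ with nonzero $\mmon$-components, i.e. the image of $E''$ is contained in $\mo_{\p^2}\oplus\mmon^{\oplus j}$ but $E''\not\simeq$ a subsum of it; in practice $E''\subset\mo_{\p^2}\oplus\mmon^{\oplus j}$ with $E''\simeq\mo_{\p^2}\oplus\mmon^{\oplus j}$ only for $j\leq 4$; (2) $E''\simeq\mmon^{\oplus k}$, but such $E''$ always splits off from $\mmon^{\oplus4}$, so this case does not arise. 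So effectively the only genuinely non-summand situation is $E''\simeq\mo_{\p^2}\oplus\mmon^{\oplus k}$ sitting inside $\mo_{\p^2}\oplus\mmon^{\oplus k}$ non-trivially, exactly parallel to cases (1)–(2) in the proof of Lemma \ref{scff}.

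For each such $E''$, run the Nakayama's-lemma argument from Lemma \ref{scff}: if $E''\otimes\mmon\subset f^{-1}(E')$ where $E'$ is a direct summand, then reducing mod $\km$ (the maximal ideal at a point) shows $(\mo_{\p^2})^{\oplus(\text{rk})}\otimes\mmon\subset f^{-1}(E')$ beyond what injectivity allows, a contradiction; hence either $E'=E''$ with $f^{-1}(E')=E''\otimes\mmon$, in which case Nakayama gives $f(\mo_{\p^2}^{\oplus r}\otimes\mmon)\subset\mo_{\p^2}^{\oplus r}$ and we take $D'=D''=\mo_{\p^2}^{\oplus r}$ (a direct summand of $E$ with slope $0>-4/5$), or $f^{-1}(E')$ is a direct summand of $E\otimes\mmon$ isomorphic to $E'\otimes\mmon$, in which case we take $D'=D''$ to be the corresponding direct summand of $E$ containing $\mo_{\p^2}^{\oplus r}$, using that there is no nonzero morphism $\mo_{\p^2}\otimes\mmon\to\mo_{\p^2}(-2)$ to see the inclusion $f(D'\otimes\mmon)\subset D'$ holds. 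In all cases $\mu(D')>\mu(E)$ because $D'$ contains the $\mo_{\p^2}$-summand.

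The main obstacle is the bookkeeping in the enumeration step: one must be sure no exotic non-summand embedding $E''\simeq E'$ has been overlooked (for instance where $E'$ itself has two consecutive twists like $\mo_{\p^2}\oplus\mmon$ with multiplicities). I expect this to be routine but tedious, and I would handle it by the same structural remark used implicitly before: for $E$ with consecutive twists $\{n,n-1\}$ and a single top summand, any $E'\subset E$ with $\mu(E')>\mu(E)$ that is not already a direct summand must have its ``top'' line-bundle factor mapped with a nonzero lower-twist component, and stripping that off reduces to a strictly smaller instance, so a short induction on $\mathrm{rank}(E')$ closes the enumeration. Since there is no strictly semistable locus here, stability and semistability coincide, so the lemma as stated (for stability) is exactly what this argument yields.
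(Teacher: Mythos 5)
There is a genuine gap: your entire argument is carried out for $E\simeq\mo_{\p^2}\oplus\mmon^{\oplus 4}$ only (your second paragraph fixes $E=\mo_{\p^2}\oplus\mmon^{\oplus 4}$ explicitly), but the lemma concerns every pair with $rank(E)=5$ and $deg(E)=-4$. Under the hypothesis on direct summands, Lemma \ref{ncss} forces consecutive twists, which for rank $5$ and degree $-4$ leaves three possibilities: $\mo_{\p^2}\oplus\mmon^{\oplus 4}$, $\mo_{\p^2}^{\oplus 2}\oplus\mmon^{\oplus 2}\oplus\mo_{\p^2}(-2)$, and $\mone\oplus\mo_{\p^2}\oplus\mmon\oplus\mo_{\p^2}(-2)^{\oplus 2}$. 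The first is the easy one, already settled by the observation in Lemma \ref{roc} that any destabilizing $E'$ is automatically a direct summand; the whole content of Lemma \ref{scffi} (and the reason it is proved in Appendix A rather than quoted from Lemma \ref{roc}) is the other two forms, which are exactly what is needed for the strata $M_2$ and $M_3$ of $M(5,1)$. Your proposal never touches them, so in effect you have only re-proved the trivial stratum.

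A second, related error is the claim in your first paragraph that a destabilizing $E'$ can only involve twists $\geq -1$. This is false: for $E\simeq\mo_{\p^2}^{\oplus 2}\oplus\mmon^{\oplus 2}\oplus\mo_{\p^2}(-2)$ one must consider $E''\simeq\mo_{\p^2}^{\oplus 2}\oplus\mmon\oplus\mo_{\p^2}(-2)$, which has $\mu(E'')=-3/4>-4/5=\mu(E)$ and contains an $\mo_{\p^2}(-2)$ summand; this is case (3) of the paper's proof and is the most delicate one, requiring a careful analysis of where $f$ sends the $\mo_{\p^2}(-2)$ and $\mmon$ pieces (one cannot just say ``$D'$ contains the $\mo_{\p^2}$-summand''). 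Likewise, for $E\simeq\mone\oplus\mo_{\p^2}\oplus\mmon\oplus\mo_{\p^2}(-2)^{\oplus 2}$ one needs embeddings such as $\mo_{\p^2}\hookrightarrow\mone$ and $\mo_{\p^2}\oplus\mmon\hookrightarrow\mone\oplus\mo_{\p^2}(-1)$ (cases (4)--(9) of the paper), none of which fit the pattern ``$\mo_{\p^2}\oplus\mmon^{\oplus k}$ sitting inside $\mo_{\p^2}\oplus\mmon^{\oplus j}$'' that your enumeration reduces everything to. So both the enumeration of the ambient $E$ and the enumeration of the non-split $E''$ are incomplete, and the missing cases are precisely where the proof has content.
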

\begin{proof}See Appendix A. 
\end{proof}
For a pair $(E,f)\in M_3$, $f$ can be represented by the following matrix
\begin{equation}\label{romfi}\left(\begin{array}{ccccc}0&1&0&0&0\\0&0&1&0&0\\0&0&0&1&0\\a_1&0&0&0&b_1\\ a_2 &0&0&0&b_2\end{array}\right),
\end{equation}
where $b_i\in H^0(\mone)$ and $a_i\in H^0(\mo_{\p^2}(4))$.  The injectivity of $f$ implies that $det(f)=b_2a_1-b_1a_2\neq 0$.  Moreover by Lemma \ref{scffi} $(E,f)$ is stable if and only if $kb_1\neq k'b_2$ for any $(k,k')\in\bc^2-\{0\}$.  
\begin{lemma}\label{mcfimth}$[M_3]=[\p^2\times\p^{19}].$
\end{lemma}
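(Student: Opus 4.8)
The plan is to mimic the computation of $[M_2]$ in the $d=4$ case (Lemma \ref{mcfmt}), realizing $M_3$ as a projective bundle over a Hilbert scheme of points. First I would read off the structure of $f$ from the matrix (\ref{romfi}): the two entries $b_1,b_2\in H^0(\mone)$ give a morphism $(b_1,b_2):\mmon\to\mo_{\p^2}^{\oplus 2}$, and the two entries $a_1,a_2\in H^0(\mo_{\p^2}(4))$ give a morphism $(a_1,a_2):\mo_{\p^2}(-4)\to\mo_{\p^2}^{\oplus 2}$. Then I would set up the analogue of diagram (\ref{ffcd}):
\begin{equation}\label{ffcdfi}\xymatrix{ 0\ar[r]&\mmon\ar[r]^{(b_1,b_2)}&\mo_{\p^2}^{\oplus 2}\ar[r]^{f_r}& R_f\ar[r]&0\\
&&\mo_{\p^2}(-4)\ar[u]^{(a_1,a_2)}\ar[ru]_{\omega_f:=f_r\circ(a_1,a_2)}&&}.\end{equation}
The stability condition from Lemma \ref{scffi}, namely that $kb_1\neq k'b_2$ for all $(k,k')\in\bc^2-\{0\}$, says exactly that $b_1,b_2$ have no common factor, so the cokernel $R_f$ of $(b_1,b_2)$ is torsion free of rank $1$; by a degree/Euler-characteristic count $R_f\simeq I_1(1)$, the twist by $\mo_{\p^2}(1)$ of the ideal sheaf of a single point, and conversely every $I_1(1)$ fits into such a sequence (this is the standard Hilbert–Burch / $1$-generic argument used already in Lemma \ref{mcfmt} and in Lemma \ref{tfid}).

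Next I would argue, exactly as in Lemma \ref{mcfmt}, that the pair $(E,f)\in M_3$ is recovered up to isomorphism by the pair $(R_f,\omega_f)$, where $\omega_f\in H^0(R_f(4))=H^0(I_1(5))$ is taken up to scalar. The block structure of (\ref{romfi}) — the identity $3\times 3$ block lying between the $b$-column and the $a$-column — guarantees that the remaining freedom in presenting $f$ (change of basis preserving the shape) is absorbed into the isomorphism of the pair, so that $M_3$ is in bijection with isomorphism classes of pairs $(I_1(1),\omega_f)$ with $\omega_f\in\p H^0(I_1(5))$. Therefore $M_3$ is a projective bundle over $Hilb^{[1]}(\p^2)=\p^2$ with fiber $\p H^0(I_1(5))$. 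To compute the fiber dimension: $H^0(\mo_{\p^2}(5))$ has dimension $\binom{7}{2}=21$, and imposing that the section vanish at one point drops this by $1$, giving $h^0(I_1(5))=20$, so the fiber is $\p^{19}$. Since this projective bundle is Zariski-locally trivial (the point in $\p^2$ has a universal ideal sheaf, and pushing forward the twist gives a vector bundle of rank $20$ on $\p^2$, whose projectivization is the bundle in question — cf. the use of $\mathcal{V}^d$ in Proposition \ref{bosih}), its class in the Grothendieck ring is $[\p^2][\p^{19}]=[\p^2\times\p^{19}]$, which is the assertion.

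The main obstacle, and the only place requiring genuine care rather than bookkeeping, is the passage from ``$f$ as in (\ref{romfi})'' to ``isomorphism classes of pairs $(R_f,\omega_f)$'': one must check that two matrices of the form (\ref{romfi}) give isomorphic pairs $(E,f)$ (in the sense of Definition \ref{doi}) if and only if the associated $(R_f,\omega_f)$ are isomorphic, i.e. that the stabilizer of the normal form acts exactly by isomorphisms of $(R_f,\omega_f)$ and nothing is lost or double-counted. This is the same verification carried out implicitly in Lemma \ref{mcfmt}, and it relies on the uniqueness of the minimal free resolution (as in Lemma \ref{inje}) applied to the resolution $0\to\mmon\to\mo_{\p^2}^{\oplus 2}\to R_f\to 0$ together with the injectivity/Nakayama arguments of Lemma \ref{scff}. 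I would also double-check the numerology $h^0(I_1(5))=20$ against the vanishing $R^ip_{*}=0$ for $i\geq 1$ needed for local freeness, exactly as in the proof of Proposition \ref{bosih}. With those points settled the lemma follows.
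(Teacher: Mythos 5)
Your proposal is correct and follows essentially the same route as the paper, which simply declares the proof "analogous to that of Lemma \ref{mcfmt}": stability forces $b_1,b_2$ to be linearly independent linear forms, so $R_f\simeq I_1(1)$ and $M_3$ becomes the projective bundle over $Hilb^{[1]}(\p^2)\simeq\p^2$ with fibre $\p(H^0(I_1(5)))\simeq\p^{19}$. Your extra care about the passage from matrices to isomorphism classes of pairs $(R_f,\omega_f)$, and the check $h^0(I_1(5))=20$, just makes explicit what the paper leaves implicit.
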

\begin{proof}The proof is analogous to that of Lemma \ref{mcfmt}.  $M_3$ is isomorphic to a projective bundle over $Hilb^{[1]}(\p^2)$ with fibers isomorphic to $\p(H^0(I_1(5)))\simeq \p^{19}$. Hence the lemma.
\end{proof}
We stratify $M_2$ into two strata as follows.
\begin{eqnarray} & M_2^s:=&\{[(E,f)]\in M_2|f|_{\mo_{\p^2}^{\oplus 2}\otimes\mmon} is~surjective ~onto ~\mmon^{\oplus 2}\};\nonumber\\& M_2^c:=&M_2-M_2^s.\nonumber
\end{eqnarray}

For a pair $(E,f)\in M_2^s$, $f$ can be represented by the following matrix
\begin{equation}\label{romfit}\left(\begin{array}{ccccc}0&0&1&0&0\\0&0&0&1&0\\0&0&0&0&1\\b_1&b_2&0&0&0\\ a_1 &a_2&0&0&0\end{array}\right),
\end{equation}
where $b_i\in H^0(\mo_{\p^2}(2))$ and $a_i\in H^0(\mo_{\p^2}(3))$.  The injectivity of $f$ implies that $det(f)=b_1a_2-b_2a_1\neq 0$.  Moreover by Lemma \ref{scffi} $(E,f)$ is stable if and only if $kb_1\neq k'b_2$ for any $(k,k')\in\bc^2-\{0\}$.  
\begin{lemma}\label{mcfimt}$[M_2^s]=[Gr(2,6)\times\p^{16}-\p^2\times\p^2\times\p^2]$ with $Gr(2,6)$ the Grassmannian parametrizing 2-dimensional linear subspaces of $\bc^6.$
\end{lemma}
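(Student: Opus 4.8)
The plan is to reproduce the proof of Lemma \ref{mcfmt}, with the Grassmannian $Gr(2,6)=Gr(2,H^0(\mo_{\p^2}(2)))$ playing the role that $Hilb^{[1]}(\p^2)$ played there, and then to subtract the extra locus on which that construction degenerates. First I would isolate the data attached to a point of $M_2^s$. By (\ref{romfit}) such a pair $(E,f)$ is recorded by $(b_1,b_2)\in H^0(\mo_{\p^2}(2))^{\oplus 2}$ and $(a_1,a_2)\in H^0(\mo_{\p^2}(3))^{\oplus 2}$; by Lemma \ref{scffi} stability is equivalent to $b_1,b_2$ being linearly independent, and injectivity of $f$ to $det(f)=b_1a_2-b_2a_1\neq 0$. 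The automorphisms of $E\simeq\mo_{\p^2}^{\oplus 2}\oplus\mmon^{\oplus 2}\oplus\mo_{\p^2}(-2)$ compatible with the normal form (\ref{romfit}) act on this data only through (i) the $GL_2$ on the $\mo_{\p^2}^{\oplus 2}$-summand, which changes the basis $(b_1,b_2)$ of the plane $\langle b_1,b_2\rangle$ arbitrarily and transforms $(a_1,a_2)$ in the same way; (ii) the $Hom$-blocks $\mo_{\p^2}(-2)\to\mmon$ inside $Aut(E)$, which give the shears $(a_1,a_2)\mapsto(a_1+\ell b_1,a_2+\ell b_2)$ with $\ell\in H^0(\mo_{\p^2}(1))$; and (iii) the scalars on the $\mo_{\p^2}(-2)$-summand, which rescale $(a_1,a_2)$. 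So the intrinsic invariants are the $2$-plane $V=\langle b_1,b_2\rangle\in Gr(2,6)$ and a residual class of $(a_1,a_2)$.

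Next, mimicking diagram (\ref{ffcd}), set $R_V:=coker\big((b_1,b_2)^{t}\colon\mo_{\p^2}(-2)\to\mo_{\p^2}^{\oplus 2}\big)$; it depends only on $V$ up to isomorphism (it is $I_{Z_V}(2)$, with $Z_V$ the base scheme of the pencil $V$). Twisting $0\to\mo_{\p^2}(-2)\to\mo_{\p^2}^{\oplus 2}\to R_V\to0$ by $\mo_{\p^2}(3)$ and using $H^1(\mo_{\p^2}(1))=H^1(\mo_{\p^2}(3))=0$ yields $h^0(R_V(3))=17$ and $h^1(R_V(3))=0$. The class of $(a_1,a_2)$ modulo the shears is exactly the image $\omega_f\in Hom(\mo_{\p^2}(-3),R_V)=H^0(R_V(3))$ of $(a_1,a_2)^{t}$ under $\mo_{\p^2}^{\oplus 2}\twoheadrightarrow R_V$, and modulo the rescaling it is a point of $\p(H^0(R_V(3)))\simeq\p^{16}$. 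Globally: take the universal cokernel $\mathcal{R}$ on $\p^2\times Gr(2,6)$, form $\mathcal{P}:=p_{*}(\mathcal{R}\otimes q^{*}\mo_{\p^2}(3))$, locally free of rank $17$ by cohomology and base change, and let $\p(\mathcal{P})\to Gr(2,6)$ be the resulting $\p^{16}$-bundle. Just as in Lemma \ref{mcfmt}, using (\ref{romfit}), Lemma \ref{scffi} and Remark \ref{spsh}, the assignment $(E,f)\mapsto(V,\omega_f)$ should be an isomorphism of $M_2^s$ onto the open subscheme of $\p(\mathcal{P})$ cut out by $det(f)\neq 0$ (a well-defined condition, since $b_1a_2-b_2a_1$ is unchanged by the shears and only rescaled by $GL_2$ and the scalars).

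It then remains to compute the complement $Z:=\p(\mathcal{P})\setminus M_2^s=\{det(f)=0\}$. If the pencil $V$ has g.c.d.\ $1$, then $b_1a_2=b_2a_1$ forces $(a_1,a_2)=(\ell b_1,\ell b_2)$, so $(a_1,a_2)$ lies in the shear-subspace and $\omega_f=0$; thus $Z$ is empty over such $V$. The $V$ whose pencil has a common linear factor form a closed $\Sigma\subset Gr(2,6)$: writing $V=\ell\cdot W$ with $W\subset H^0(\mo_{\p^2}(1))$ a $2$-plane and $\ell$ the g.c.d., the map $\p^2\times Gr(2,3)\to Gr(2,6)$, $(\ell,W)\mapsto \ell W$, is an isomorphism onto $\Sigma$ (one recovers $\ell$ as the g.c.d.\ of the pencil), so $\Sigma\simeq\p^2\times Gr(2,3)\simeq\p^2\times\p^2$. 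Over such $V$, $det(f)=0$ becomes $(a_1,a_2)=(m_1c,m_2c)$ with $c\in H^0(\mo_{\p^2}(2))$, while the shear-subspace corresponds to $c\in\ell\cdot H^0(\mo_{\p^2}(1))$; hence the fibre of $Z$ over $V$ is $\p\big(H^0(\mo_{\p^2}(2))/\ell H^0(\mo_{\p^2}(1))\big)\simeq\p^2$, and as $\ell$ ranges over $\p^2=\p(H^0(\mo_{\p^2}(1)))$ these are the fibres of the projectivization of a rank-$3$ vector bundle. So $Z$ is a Zariski-locally-trivial $\p^2$-bundle over $\Sigma$, giving $[Z]=[\p^2]\cdot[\p^2\times\p^2]=[\p^2\times\p^2\times\p^2]$; combining with $[\p(\mathcal{P})]=[Gr(2,6)]\cdot[\p^{16}]$, one gets $[M_2^s]=[\p(\mathcal{P})]-[Z]=[Gr(2,6)]\cdot[\p^{16}]-[\p^2\times\p^2\times\p^2]$, which is the assertion.

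The hardest step is the middle one: showing that $(V,\omega_f)$ reconstructs $(E,f)$ up to the isomorphism of Definition \ref{doi} — i.e.\ that $\p(\mathcal{P})$ honestly parametrizes the stratum, carrying a suitable family of stable sheaves over $\p^2\times\p(\mathcal{P})$ — rather than merely matching points. This needs the full structure of $Aut(E)$ and the check that every auxiliary symmetry used above, in particular the joint rescaling of $(a_1,a_2)$ that makes $\omega_f$ only projectively defined, is genuinely induced by an automorphism of $E$, exactly as in the proof of Lemma \ref{mcfmt}. Everything else — the vanishing fixing the rank $17$, and the divisibility arguments pinning down $\Sigma$ and the fibres of $Z$ — is routine.
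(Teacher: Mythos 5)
Your proposal is correct and follows essentially the same route as the paper: a $\p^{16}$-bundle $\p(p_{*}(\mathcal{R}\otimes q^{*}\mo_{\p^2}(3)))$ over $Gr(2,6)$ minus a $\p^{2}$-bundle over the locus of pencils with a common linear factor, which is $\p^2\times\p^2$. The only cosmetic difference is that you describe the excluded locus by $\det(f)=0$ and an explicit divisibility argument, while the paper phrases it as $\mathrm{Im}(\omega_f)$ lying in the torsion $\mo_H(-1)$ of $R_f$ — the two conditions are equivalent, and both yield the fibre $\p\bigl(H^0(\mo_{\p^2}(2))/\ell H^0(\mo_{\p^2}(1))\bigr)\simeq\p\bigl(H^0(\mo_H(2))\bigr)\simeq\p^2$.
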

\begin{proof}We have the following diagram
\begin{equation}\label{fficd}\xymatrix{ 0\ar[r]&\mo_{\p^2}(-2)\ar[r]^{(b_1,b_2)}&\mo_{\p^2}^{\oplus 2}\ar[r]^{f_r}& R_f\ar[r]&0\\
&&\mo_{\p^2}(-3)\ar[u]^{(a_1,a_2)}\ar[ru]_{\omega_f:=f_r\circ(a_1,a_2)}&&}.\end{equation}
Since $kb_1\neq k'b_2,\forall(k,k')\in\bc^2-\{0\}$,  the isomorphism classes of $R_f$ 1-1 correspond to points  in $Gr(2,h^0(\mo_{\p^2}(2)))=Gr(2,6).$  Denote by $\mg$ the tautological bundle on $Gr(2,6)$.  Then on $Gr(2,6)\times\p^2$ we have the following exact sequence.
\begin{equation}\label{ufog}0\ra q^{*}\mo_{\p^2}(-2)\ra p^{*}\mg^{\vee}\ra\mathcal{R}\ra0,\end{equation}
with $p$ and $q$ the projections to $Gr(2,6)$ and $\p^2$ respectively.  $\mathcal{R}$ restricted to the fiber over $[(b_1,b_2)]\in Gr(2,6)$ is $R_f$.  Hence isomorphism classes of $(R_f,\omega_f)$ are parametrized by the projective bundle $\p(p_{*}(\mathcal{R}\otimes q^{*}\mo_{\p^2}(3)))$ over $Gr(2,6)$ with fibers isomorphic to $\p^{16}$.

However $M_2^s\subsetneq\p(p_{*}(\mathcal{R}\otimes q^{*}\mo_{\p^2}(3)))$ and the complement of $M_2^s$ in $\p(p_{*}(\mathcal{R}\otimes q^{*}\mo_{\p^2}(3)))$ consists of all $(R_f,\omega_f)$ such that the images of $\omega_f$ are contained in the torsions of $R_f$.

If $b_1$ is prime to $b_2$, then $R_f$ is torsion free.  If $b_1$ is not prime to $b_2$, then $R_f$ lies in the following exact sequence.
\begin{equation}\label{trfft}0\ra\mo_{H}(-1)\ra R_f\ra I_1(1)\ra 0,
\end{equation}
with $H$ a hyperplane in $\p^2.$  The closed subset $|H|\times Hilb^{[1]}(\p^2)\hookrightarrow Gr(2,6)$ parametrizes all the $R_f$ that are not torsion free.  

We write down the following diagram. 
\[\xymatrix{\p^2 & |H|\times Hilb^{[1]}(\p^2)\times\p^2\ar[l]_{q~~~~~~~~~~~}\ar[d]_{p_1}\ar[rd]^{p}\\ 
&\mathcal{C}_1\hookrightarrow |H|\times\p^2 & |H|\times Hilb^{[1]}(\p^2)}.\]
Those $(R_f,\omega_f)$ not in $M_2^s$ are parametrized by the projective bundle $\p(p_{*}(p_1^{*}\mo_{\mc_1}\otimes q^{*}\mo_{\p^2}(2)))$ over $|H|\times Hilb^{[1]}(\p^2)$ with fibers isomorphic to $\p(H^0(\mo_{H}(2)))\simeq \p^2$.

Hence we have $M_2^s\simeq \p(p_{*}(\mathcal{R}\otimes q^{*}\mo_{\p^2}(3)))-\p(p_{*}(p_1^{*}\mo_{\mc_1}\otimes q^{*}\mo_{\p^2}(2)))$, hence the lemma.
\end{proof}
  
For a pair $(E,f)\in M_2^c$, $f$ can be represented by the following matrix
\begin{equation}\label{romfitc}\left(\begin{array}{ccccc}b_1&b_2&0&0&0\\0&0&1&0&0\\a_1&a_2&0&b_3&0\\0&0&0&0&1\\ e_1 &e_2&0&a_3&0\end{array}\right),
\end{equation}
where $b_i\in H^0(\mo_{\p^2}(1))$, $a_i\in H^0(\mo_{\p^2}(2))$ and $e_i\in H^0(\mo_{\p^2}(3))$. $det(f)\neq 0$.  Lemma \ref{scffi} implies that $(E,f)$ is stable if and only if $kb_1\neq k'b_2,\forall (k,k')\in\bc^2-\{0\}$ and $k''a_3\neq b\cdot b_3,\forall (k'',b)\in \bc\times H^0(\mone)-\{(0,0)\}$.  
\begin{lemma}\label{mcfimtt}$[M_2^c]=[Hilb^{[1]}(\p^2)\times Hilb^{[2]}(\p^2)\times\p^{17}-Hilb^{[1]}(\p^2)\times\p^1\times\p^1]$.\end{lemma}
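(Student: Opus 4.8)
The plan is to repeat the moduli-as-pairs strategy that was used for $M_2^s$ and $M_1$, but now tracking the two ``directions'' in which the matrix (\ref{romfitc}) can be degenerate. The stability condition from Lemma \ref{scffi}, as spelled out just before the statement, breaks into two independent requirements: the pair $(b_1,b_2)$ spans a pencil with no common zero-divisor (equivalently $kb_1\neq k'b_2$ for $(k,k')\in\bc^2-\{0\}$), and the pair $(b_3,a_3)$ satisfies $k''a_3\neq b\cdot b_3$ for $(k'',b)\in\bc\times H^0(\mmon)-\{(0,0)\}$, i.e. $b_3$ does not divide $a_3$. First I would introduce the auxiliary cokernels coming from the two ``left'' blocks of $f$. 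The block $\binom{b_1\ b_2}{a_1\ a_2}$ (together with the $e_i$) is governed, exactly as in diagram (\ref{ffcd}) for $d=4$, by a sheaf $R_f$ with $R_f\simeq I_{n_1}(1)$; the condition $kb_1\neq k'b_2$ forces $R_f$ torsion free, hence $R_f\simeq I_1(1)$ for a length-$1$ scheme. The block $\binom{1\ \ 0}{b_3\ \ 0}$ with $a_3$ is governed by a rank-$1$ quotient $S_f$ of $\mo_{\p^2}\oplus\mo_{\p^2}(-1)$ of degree $1$, and the condition ``$b_3\nmid a_3$'' is exactly the torsion-freeness of this quotient, so it becomes $I_2(1)$, the twisted ideal of a length-$2$ subscheme. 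Thus I expect to identify the underlying data of $(E,f)\in M_2^c$ with a point of $Hilb^{[1]}(\p^2)\times Hilb^{[2]}(\p^2)$ together with a section, and I must show the section varies in a $\p^{17}$.

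The key steps, in order, are: (1) From (\ref{romfitc}), peel off the two blocks above and produce canonical short exact sequences exhibiting $I_1(1)$ and $I_2(1)$ as cokernels of the natural maps out of $\mo_{\p^2}\oplus\mo_{\p^2}(-1)$ (resp. $\mmon\oplus\mmon$); check, as in Lemma \ref{tfid} and Lemma \ref{mcfmt}, that every such twisted ideal does arise, so the map from $M_2^c$ to $N_1^{[1]}\times Hilb^{[2]}(\p^2)$ is well-defined and surjective onto the locus where $H^0$ of the relevant negative twists vanish — here this is all of $Hilb^{[1]}(\p^2)\times Hilb^{[2]}(\p^2)$ since $H^0(I_1(-1))=H^0(I_2(-1))=0$ automatically. (2) Over $Hilb^{[1]}(\p^2)\times Hilb^{[2]}(\p^2)\times\p^2$ build the universal sheaves $\mathcal{I}_1$, $\mathcal{I}_2$, show $R^i p_*$ of the appropriate twists vanish for $i\geq 1$, and assemble a vector bundle $\mathcal{W}$ whose fiber records the remaining entries $(a_i)$, $(e_i)$ modulo the obvious identifications; this identifies the ``naive'' parameter space with $\p(\mathcal{W})$, a $\p^{17}$-bundle (the fiber dimension $17+1=18$ should come out of a Riemann--Roch / exact-sequence count, matching $\dim M(5,1)=26$ minus $\dim Hilb^{[1]}-\dim Hilb^{[2]}=2-6$... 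I will verify the bookkeeping). (3) Exactly as in (\ref{insff}) for $M_1-W^4$, observe that $\p(\mathcal{W})\not\subset M(5,1)$: the locus to remove is where the section lands inside a torsion/degenerate sub-object, and analyze which pairs $(a_i,e_i)$ fail stability or injectivity. This excluded locus should be, fiberwise, the image of $H^0(\mo_{H_1}(1))\oplus H^0(\mo_{H_2}(1))\simeq\bc^2\times\bc^2$ inside the $\p^{18}$, i.e. a sub-bundle whose projectivization is a $\p^1\times\p^1$-bundle over $Hilb^{[1]}(\p^2)$ (the $Hilb^{[2]}$ factor collapsing because the relevant degeneracy is controlled by lines). (4) Conclude $[M_2^c]=[\p(\mathcal{W})]-[\text{excluded locus}]=[Hilb^{[1]}(\p^2)\times Hilb^{[2]}(\p^2)\times\p^{17}]-[Hilb^{[1]}(\p^2)\times\p^1\times\p^1]$ using that the class of a projective bundle is the product of base class and fiber class in the Grothendieck ring.

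The main obstacle I anticipate is step (3): precisely describing the non-stable / non-injective locus inside the $\p^{18}$-bundle and recognizing its class as $[Hilb^{[1]}(\p^2)\times\p^1\times\p^1]$. Unlike the $M_1-W^4$ case, here there are two simultaneous degeneration channels ($(b_1,b_2)$ and $(b_3,a_3)$), and one has to check that requiring $Q_f$ (or the appropriate cokernel in the analogue of (\ref{bigcd})) to be torsion-free already forces $R_f\simeq I_1(1)$ and $S_f\simeq I_2(1)$ with no further constraint, and that the only way (\ref{romfitc}) can drop stability once these are fixed is the ``section hits the torsion'' phenomenon, which is linear and cuts out a sub-bundle of rank $2$ (hence the $\p^1\times\p^1$). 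I would handle this by writing out the analogue of diagram (\ref{bigcd}) dualized — relating $F^\vee$ to the pair $(S_f,\sigma_f)$-type data — and arguing, as in the proof of Proposition \ref{codth} and Lemma \ref{lpff}, via a degree count on the support that no intermediate torsion of larger degree can occur; a technical lemma along these lines (ext-group computation showing the relevant extension space is $1$-dimensional, parallel to Lemma \ref{tqns}) is presumably what Appendix A or B supplies. Everything else — the Hilbert-scheme universal families, the $R^ip_*$ vanishing, the projective-bundle class identity — is routine and parallels Proposition \ref{bosih} and Lemma \ref{mcfimt}.
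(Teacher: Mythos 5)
Your overall route is the paper's: extract the two cokernels $R_f\simeq I_1(1)$ and $S_f$ (the paper normalizes $S_f\simeq I_2(2)$ as the cokernel of $(a_3,b_3):\mmon\ra\mone\oplus\mo_{\p^2}$ in (\ref{scsmc}); your $I_2(1)$ is the same sheaf up to a twist), show via two commutative diagrams that the remaining entries $(a_i,e_i)$ amount, modulo isomorphism, to a map $\omega_f:\mo_{\p^2}(-2)\ra R_f\otimes S_f$, parametrize everything by a $\p^{17}$-bundle over $Hilb^{[1]}(\p^2)\times Hilb^{[2]}(\p^2)$, and remove the locus where $\mathrm{Im}(\omega_f)$ lies in the torsion of $R_f\otimes S_f$. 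Your steps (1), (2) and (4) are essentially what the paper does.

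The gap is exactly where you anticipated it, in step (3), and your proposed resolution is wrong. The torsion of $R_f\otimes S_f$ is $Tor^1(\mo_{x_1},I_2)$ (tensor $0\ra I_1\ra\mo_{\p^2}\ra\mo_{x_1}\ra0$ with $I_2$), and tensoring the resolution (\ref{scsmc}) with $\mo_{x_1}$ shows this is either $0$ or the skyscraper $\mo_{x_1}$; it is nonzero precisely when $(a_3,b_3)|_{x_1}=0$, i.e. when $x_1$ lies on the length-two scheme $Z_2$ cut out by $I_2$. So the degeneracy here is not ``controlled by lines'': fiberwise the bad sections form the single point $\p(H^0(\mo_{x_1}))=\p^0$ of the $\p^{17}$, not a $\p^1\times\p^1$ (and in any case $\p(\bc^2\oplus\bc^2)$ would be a $\p^3$, whose class is $1+\bl+\bl^2+\bl^3\neq(1+\bl)^2$). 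The factor $[Hilb^{[1]}(\p^2)\times\p^1\times\p^1]$ in the statement is instead the class of the locus $\mathcal{Z}\subset Hilb^{[1]}(\p^2)\times Hilb^{[2]}(\p^2)$ in the \emph{base} over which any torsion exists at all: a point $x_1$, a line through $x_1$ (a $\p^1$-bundle over $Hilb^{[1]}(\p^2)$), and a second point on that line (another $\p^1$); over $\mathcal{Z}$ one removes exactly one point from each $\p^{17}$-fiber, namely the constant section of the torsion $\mo_{x_1}\subset I_1\otimes I_2(5)$. Your version would subtract a class living over all of $Hilb^{[1]}(\p^2)$ rather than over the codimension-two locus $\mathcal{Z}$, and would not reproduce the stated formula. (A smaller bookkeeping slip: $\dim Hilb^{[2]}(\p^2)=4$, not $6$; the stratum $M_2^c$ has dimension $2+4+17=23<26$, which is consistent since it is a proper stratum of $M(5,1)$.)
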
  
\begin{proof}We first write down the following two exact sequences.
\begin{equation}\label{fcsmc}\xymatrix{ 0\ar[r]&\mo_{\p^2}(-1)\ar[r]^{~~(b_1,b_2)}&\mo_{\p^2}^{\oplus 2}\ar[r]^{f_r}& R_f\ar[r]&0}\end{equation}
\begin{equation}\label{scsmc}\xymatrix{ 0\ar[r]&\mo_{\p^2}(-1)\ar[r]^{(a_3,b_3)~~~}&\mone\oplus\mo_{\p^2}\ar[r]^{~~~~~~f_s}& S_f\ar[r]&0}\end{equation}

Because of the stability condition, we see that both $R_f$ and $S_f$ are torsion free and hence $R_f\simeq I_1(1)$ and $S_f\simeq I_2(2)$.  On the other hand, any $I_1(1)$ ($I_2(2)$) can be put in the sequence (\ref{fcsmc}) ((\ref{scsmc})).  

We write down a commutative diagram as follows.
\begin{equation}\label{fmcbcd}\xymatrix@C=1.5cm@R=0.7cm{&0\ar[d] &0\ar[d] &0\ar[d]\\
0\ar[r]&\mo_{\p^2}(-2)\ar[r]^{(a_3,b_3)\otimes id_{\mmon}~~~}\ar[d]_{(b_1,b_2)\otimes id_{\mmon}}&\mo_{\p^2}\oplus \mo_{\p^2}(-1)\ar[d]^{(b_1,b_2)\otimes id_{\mone}\oplus (b_1,b_2)}\ar[r]^{~~~~f_s\otimes id_{\mmon}}&S_f(-1)\ar[r]\ar[d]^{id_{S_f}\otimes(b_1,b_2)}&0\\
0\ar[r]&\mo_{\p^2}(-1)^{\oplus 2}\ar[r]_{(a_3,b_3)^{\oplus 2}~~~~~}\ar[d]_{f_r\otimes id_{\mmon}}&(\mone\oplus\mo_{\p^2})^{\oplus 2}\ar[d]^{f_r\otimes id_{\mone}\oplus f_r}\ar[r]_{~~~~~~~~f_s^{\oplus 2}}& S_f^{\oplus 2}\ar[r]\ar[d]^{id_{S_f}\otimes f_r}&0\\
0\ar[r]&R_f\otimes \mo_{\p^2}(-1)\ar[r]_{id_{R_f}\otimes(a_3,b_3)}\ar[d] &R_f\oplus R_f(1)\ar[d]\ar[r]_{id_{R_f}\otimes f_s}&R_f\otimes S_f\ar[r]\ar[d]&0\\
&0 &0 &0}
\end{equation}
We have another commutative diagram
\begin{equation}\label{fmcscd}\xymatrix@C=1.7cm{\mo_{\p^2}(-2)\ar[r]^{(e_1,a_1)\oplus (e_2,a_2)~~~~~~}&(\mone\oplus\mo_{\p^2})^{\oplus 2}\ar[d]_{f_r\otimes id_{\mone}\oplus f_r}\ar[r]^{~~~~~~~~f_s^{\oplus 2}}& S_f^{\oplus 2}\ar[d]^{id_{S_f}\otimes f_r}\\
 &R_f\oplus R_f(1)\ar[r]_{id_{R_f}\otimes f_s} &R_f\otimes S_f.} 
\end{equation}
Isomorphism classes of $(E,f)\in M_2^c$ are parametrized by $(R_f,S_f,\omega_f)$ with $\omega_f: \mo_{\p^2}(-2)\ra R_f\otimes S_f$ the composed map in (\ref{fmcscd}).  We write down the following diagram. 
\[\xymatrix{\p^2 & Hilb^{[1]}(\p^2)\times Hilb^{[2]}(\p^2)\times\p^2\ar[l]_{q~~~~~~~~~~~}\ar[d]_{p}\ar[rd]^{p_2}\ar[ld]_{p_1}\\ 
Hilb^{[1]}(\p^2)\times\p^2&Hilb^{[1]}(\p^2)\times Hilb^{[2]}(\p^2) & Hilb^{[2]}(\p^2)\times\p^2}.\]
Denote by $\mathcal{I}_1$ ($\mathcal{I}_2$) the universal family of ideal sheaves on $Hilb^{[1]}(\p^2)\times\p^2$ ($Hilb^{[2]}(\p^2)\times\p^2$).  Isomorphism classes of $(R_f,S_f,\omega_f)$ are parametrized by the projective bundle $\p(p_{*}(p_1^{*}\mathcal{I}_{1}\otimes p_2^{*}\mathcal{I}_{2}\otimes q^{*}\mo_{\p^2}(5)))$ over $Hilb^{[1]}(\p^2)\times Hilb^{[2]}(\p^2)$ with fibers isomorphic to $\p(H^0(I_1\otimes I_2\otimes \mo_{\p^2}(5)))\simeq \p^{17}$.

There are still points in $\p(p_{*}(p_1^{*}\mathcal{I}_{1}\otimes p_2^{*}\mathcal{I}_{2}\otimes q^{*}\mo_{\p^2}(5)))$ that we must exclude.  They are points $(R_f,S_f,\omega_f)$ such that the images of $\omega_f$ are contained in the torsions of $R_f\otimes S_f$. 

We write down the following exact sequence.
\begin{equation}\label{totis}0\ra I_1\ra\mo_{\p^2}\ra \mo_{x}\ra0,
\end{equation}
with $\mo_{x}$ the skyscraper sheaf supported at a single point $x$.  Tensor (\ref{totis}) by $I_2$, and we get
\[0\ra Tor^1(\mo_{x},I_2)\ra I_1\otimes I_2\ra I_2\ra I_2\otimes\mo_x\ra 0.\]
We see that the torsion of $I_1\otimes I_2$ is isomorphic to $Tor^1(\mo_{x},I_2)$.  Tensor (\ref{scsmc}) by $\mo_{x}$ and we get
\[\xymatrix@C=1cm{0\ar[r] &Tor^1(\mo_x,I_2(2))\ar[r] &\mo_x\ar[r]^{(a_3,b_3)~~}&\mo_x^{\oplus 2}\ar[r] &I_2(2)\otimes \mo_x\ar[r]&0.}\]
Hence we see that the torsion of $R_f\otimes S_f$ is either zero or isomorphic to $\mo_x$.  The later implies that $R_f\simeq I_{\{x\}}(1)$ and $Tor^1(\mo_x,I_2)\neq 0\Leftrightarrow (a_3,b_3)|_x=0.$  We then want to parametrize all $(I_1,I_2)$ such that $I_1\otimes I_2$ contain torsion.  We first write down the following diagram
\begin{equation}\label{mstotis}\xymatrix{\p^2&\p^2\times Hilb^{[1]}(\p^2)\times |H|\ar[l]_{q~~~~~}\ar[ld]_{q_1}\ar[d]_{p_3}\\ \mc_1\hookrightarrow \p^2\times |H| & Hilb^{[1]}(\p^2)\times |H| \hookleftarrow \p(\mathcal{V}^1),}
\end{equation} 
where $\mathcal{V}^1$ is the rank 2 vector bundle on $Hilb^{[1]}(\p^2)$ defined as $s_{*}(\mathcal{I}_1\otimes t^{*}\mo_{\p^2}(1))$ with $s$ and $t$ the projection from $Hilb^{[1]}(\p^2)\times \p^2$ to $Hilb^{[1]}(\p^2)$ and $\p^2$ respectively.  Let $\mathcal{Z}$ be defined by the following Cartesian diagram
\begin{equation}\label{cadffi}\xymatrix{\mathcal{Z}\ar[r]\ar[d] &\p(p_{3*}(q_1^{*}(\mo_{\mc_1})\otimes q^{*}\mo_{\p^2}(1)))\ar[d]\\ \p(\mathcal{V}^1)\ar[r] &Hilb^{[1]}(\p^2)\times |H|. }
\end{equation}
$\p(\mathcal{V}^1)$ parametrizes all the pairs $([x],[C])\in Hilb^{[1]}(\p^2)\times |H|$ such that $x\in C$, and $\mathcal{Z}$ parametrizes $([x_1],[(C,x_2)])\in Hilb^{[1]}(\p^2)\times \mc_1$ such that $x_1\in C$.  Define $\imath:\mathcal{Z}\ra Hilb^{[1]}(\p^2)\times Hilb^{[2]}(\p^2)$ such that $\imath(([x_1],[(C,x_2)]))=([x_1],[x_1,x_2,C])$.  It is easy to see that $\imath$ is an embedding with its image exactly the set of points $(I_1,I_2)$ such that $I_1\otimes I_2$ have torsion.   

$p_{*}(p_1^{*}\mathcal{I}_1\otimes p^{*}_1\mathcal{I}_2 \otimes p^{*}\mo_{\mathcal{Z}})\simeq p_{*}(p_1^{*}\mathcal{I}_1\otimes p^{*}_1\mathcal{I}_2) \otimes \mo_{\mathcal{Z}}$ by the flatness of $p$. $p_1^{*}\mathcal{I}_1\otimes p^{*}_1\mathcal{I}_2 \otimes p^{*}\mo_{\mathcal{Z}}$ contains $p_1^{*}\mo_{\mathcal{Z}_1}$ as its torsion where $\mathcal{Z}_1$ is the universal subscheme in $Hilb^{[1]}(\p^2)\times \p^2$.  Hence we can embed $\mathcal{Z}$ into $\p(p_{*}(p_1^{*}\mathcal{I}_{1}\otimes p_2^{*}\mathcal{I}_{2}\otimes q^{*}\mo_{\p^2}(5)))$ by taking the non-zero constant section of $p_1^{*}\mo_{\mathcal{Z}_1}.$   

Hence we have $M_2^c\simeq \p(p_{*}(p_1^{*}\mathcal{I}_{1}\otimes p_2^{*}\mathcal{I}_{2}\otimes q^{*}\mo_{\p^2}(5)))-\mathcal{Z}$.  The lemma follows since $[\mathcal{Z}]=[Hilb^{[1]}(\p^2)\times\p^1\times\p^1].$
\end{proof}

Finally let $(E,f) \in M_1-W^5$.  Rewrite (\ref{boscd}) for $d=5$ as follows
\begin{equation}\label{tqficd}\xymatrix{ 0\ar[r]&\mmon^{\oplus 3}\ar[r]^{~~f_{B^t}}&\mo_{\p^2}^{\oplus 4}\ar[r]^{f_q}& Q_f\ar[r]&0\\
&&\mo_{\p^2}(-2)\ar[u]^{f_{A^t}}\ar[ru]_{\sigma_f=f_q\circ f_{A^t}}&&}.\end{equation}
Notice that the torsion $T_f$ of $Q_f$ contains neither $\mo_{H}$ nor $\mo_{2H}(x)$ as a subsheaf with $x$ a single point on the curve, otherwise we will have a diagram similar to diagram (\ref{ssff}) which contradicts Condition \ref{scodm}.  Also $H^0(T_f\otimes\mmon)=0.$  Hence we see that if $c_1(T_f)=2H$, then $\chi(T_f)=1$ and $Q_f^{tf}\simeq \mo_{\p^2}(1)$.  Moreover since $T_f$ does not contain $\mo_{H}(n)$ with $n\geq0$, $T_f$ is stable and hence by Theorem \ref{mtott} there is only one stable sheaf for each curve in $|2H|$.  Hence $T_f\simeq \mo_{2H}$.

We have the following commutative diagram.
 \begin{equation}\label{tqcdffi}\xymatrix@C=1cm@R=0.5cm{& &0\ar[d] &0\ar[d]\\
0\ar[r]&\mmon^{\oplus 3}\ar[r]^{~~~\imath}\ar[d]^{\simeq}&K \ar[d]_{j}\ar[r]&T_f\ar[r]\ar[d]&0\\
0\ar[r]&\mmon^{\oplus 3}\ar[r]^{~~~f_{B^t}}&\mo_{\p^2}^{\oplus 4}\ar[d]_{f_{tq}}\ar[r]^{f_q}& Q_f\ar[r]\ar[d]&0\\
& &Q^{tf}_f\ar[r]^{\simeq}\ar[d] &Q_f^{tf}\ar[d]\\& &0 &0}
\end{equation} 

We stratify $M_1-W^5$ into three strata as follows.
\begin{eqnarray} & \Pi_1:=&\{[(E,f)]\in M_1-W^5|T_f\simeq \mo_{2H},Q^{tf}_f\simeq\mone\};\nonumber\\&\Pi_2:=&\{[(E,f)]\in M_1-W^5|T_f\simeq \mo_{H}(-1),Q_f^{tf}\simeq I_2(2)\};\nonumber\\&\Pi_3:=&\{[(E,f)]\in M_1-W^5|T_f\simeq \mo_{H}(-2),Q_f^{tf}\simeq I_1(2)\}.\nonumber
\end{eqnarray}
A priori there is the fourth possibility that $T_f\simeq\mo_{H}(-3),Q_f^{tf}\simeq\mo_{\p^2}(2)$, we will explain why this case is excluded later in the computation for $[\Pi_3]$.

\begin{lemma}\label{pione}$[\Pi_1]=[\p^5\times\p^{14}-\p^5\times\p^4]$.
\end{lemma}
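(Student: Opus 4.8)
The plan is to parametrize $\Pi_1$ as an open subset of a projective bundle, mimicking the strategy used for $M_1 - W^4$ in Section 5 and for $M_2^c$ above. Fix a pair $(E,f)\in\Pi_1$, so $T_f\simeq\mo_{2H}$ and $Q_f^{tf}\simeq\mone$. First I would analyze the middle column of diagram (\ref{tqcdffi}): the torsion-free quotient $Q_f^{tf}\simeq\mone$ is a line bundle, so the sequence $0\ra K\ra\mo_{\p^2}^{\oplus 4}\ra\mone\ra 0$ splits off, giving $K\simeq\mo_{\p^2}^{\oplus 3}$ together with a surjection $\mo_{\p^2}^{\oplus 3}\twoheadrightarrow\mo_{\p^2}^{\oplus 3}/\mmon^{\oplus 3}=T_f\simeq\mo_{2H}$. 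Thus $\mo_{2H}$ must admit a free resolution $0\ra\mmon^{\oplus 3}\ra\mo_{\p^2}^{\oplus 3}\ra\mo_{2H}\ra 0$ — which is exactly the minimal resolution of $\mo_{2H}$ for $[2H]$ a conic (the $d=2$ case of Lemma \ref{tfid}/the analysis in Theorem \ref{mtott}) — and conversely every conic gives such a $Q_f$. So the isomorphism classes of $Q_f$ arising in $\Pi_1$ are in bijection with $|2H|\simeq\p^5$, and one needs a universal family $\mathcal{Q}$ over $\p^2\times\p^5$. I would build it just as in Lemma \ref{icqff}: let $\mc_2\hookrightarrow\p^2\times|2H|$ be the universal conic, observe $\mo_{2H}$ is the unique extension (or rather, here $Q_f$ sits in $0\ra\mo_{2H}\ra Q_f\ra\mone\ra 0$, and $\mathrm{Ext}^1(\mone,\mo_{2H})$ is computed by Riemann–Roch to be a line bundle $L$ on $|2H|$), and decompose $|2H|=\p^5$ into affine cells on each of which $L$ is trivialized by a nowhere-vanishing section; patch the resulting relative extensions $\mathcal{Q}^i$ into the desired family over $\p^2\times\p^5$.

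Next, $\sigma_f\in H^0(Q_f(2))$ up to scalar gives, over the universal family, a projective bundle $\p(p_*(\mathcal{Q}\otimes q^*\mo_{\p^2}(2)))$ over $\p^5$ with fibers $\p H^0(Q_f(2))$. Here $h^0(Q_f(2)) = h^0(\mo_{2H}(2)) + h^0(\mone\otimes\mo_{\p^2}(2)) = h^0(\mo_{2H}(2)) + 6$; computing $h^0(\mo_{2H}(2)) = \chi(\mo_{2H}(2)) = 9$ (the conic case, genus drop accounted for) gives $15$, so the fibers are $\p^{14}$, matching the leading term $\p^5\times\p^{14}$. As in (\ref{insff}), not every point of this bundle lies in $M(5,1)$: the stability Condition \ref{scodm} fails precisely when $\mathrm{Im}(\sigma_f)\subset T_f$, i.e. when $\sigma_f$ factors through the torsion $\mo_{2H}\hookrightarrow Q_f$. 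The locus where $\sigma_f$ factors through $T_f$ is cut out by the subbundle $p_*(\text{(universal }\mo_{2H})\otimes q^*\mo_{\p^2}(2))$, a projective subbundle over $\p^5$ with fibers $\p H^0(\mo_{2H}(2))\simeq\p^8$ — wait, I should double-check: that would give $\p^5\times\p^8$, not $\p^5\times\p^4$. I would instead recompute: the excluded locus should correspond to $\sigma_f$ landing in the image of some smaller summand forced by the resolution structure; more carefully, tracking through (\ref{tqcdffi}) the bad locus is where the composite $\mo_{\p^2}(-2)\xrightarrow{\sigma_f} Q_f\xrightarrow{\delta} F^\vee\otimes\mo_{\p^2}(-2)$ vanishes, equivalently $\sigma_f$ lifts to $K$, and one checks $H^0(K(2)/(\text{contribution already counted}))$ has dimension $5$, giving fibers $\p^4$. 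The correct bookkeeping of which cohomology group indexes the excluded locus is the step I would do most carefully.

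Finally, I would assemble: $\Pi_1 \simeq \p(p_*(\mathcal{Q}\otimes q^*\mo_{\p^2}(2))) \setminus (\text{bad locus})$, the bad locus being a $\p^4$-bundle over $\p^5$, and both bundles are Zariski-locally trivial over the cellular base $\p^5$, so in the Grothendieck ring $[\Pi_1] = [\p^5]\,[\p^{14}] - [\p^5]\,[\p^4] = [\p^5\times\p^{14} - \p^5\times\p^4]$. The main obstacle, as flagged, is the precise dimension count for the excluded locus — pinning down exactly which cohomology group parametrizes the pairs failing Condition \ref{scodm} and verifying it has rank $5$ — together with confirming $h^0(\mo_{2H}(2))$ and hence the fiber dimension $14$; everything else is a routine adaptation of the universal-family constructions already carried out in Lemmas \ref{icqff}–\ref{lpff} and \ref{mcfimtt}.
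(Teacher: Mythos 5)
Your overall strategy is the same as the paper's: the paper simply observes that $\mathrm{Ext}^i(\mone,\mo_{2H})=0$ for $i\neq 1$ and $\mathrm{Ext}^1(\mone,\mo_{2H})\simeq\bc$, so that $Q_f$ is the unique non-split extension of $\mone$ by $\mo_{2H}$ for each conic, and then repeats the argument of Lemma \ref{lpff}: the pairs $(Q_f,\sigma_f)$ form a $\p H^0(Q_f(2))$-bundle over $|2H|\simeq\p^5$, from which one removes the locus where $\mathrm{Im}(\sigma_f)\subset T_f$, a $\p H^0(T_f(2))$-bundle. Your first identification of the excluded locus was therefore exactly right, and you should not have abandoned it.

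The genuine gap is arithmetic, and it derails the end of your argument. From $0\ra\mo_{\p^2}\ra\mo_{\p^2}(2)\ra\mo_{2H}(2)\ra0$ one gets $h^0(\mo_{2H}(2))=6-1=5$, not $9$; likewise $h^0(\mone\otimes\mo_{\p^2}(2))=h^0(\mo_{\p^2}(3))=10$, not $6$. Your two errors cancel to give the correct total $h^0(Q_f(2))=15$ and fibers $\p^{14}$, but the wrong value $9$ then makes the excluded locus look like a $\p^8$-bundle, prompting you to replace the correct description by an unverified one ("$\sigma_f$ lifts to $K$ and $H^0(K(2)/(\cdots))$ has dimension $5$"), which is not an argument. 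With the correct value $5$, the excluded locus is the $\p H^0(\mo_{2H}(2))\simeq\p^4$-bundle over $\p^5$ and the lemma follows with no further work. A secondary error: the surjection $\mo_{\p^2}^{\oplus 4}\twoheadrightarrow\mone$ does not split and its kernel $K$ has $c_1=-1$, so $K\not\simeq\mo_{\p^2}^{\oplus 3}$; similarly the minimal resolution of $\mo_{2H}$ is $0\ra\mo_{\p^2}(-2)\ra\mo_{\p^2}\ra\mo_{2H}\ra0$, not a $3\times 3$ one. Neither claim is needed once you work directly with the extension $0\ra\mo_{2H}\ra Q_f\ra\mone\ra0$, but as written they are false.
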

\begin{proof}Notice that Ext$^i(\mone,\mo_{2H})=0$ for all $i\neq 1$ and Ext$^1(\mone,\mo_{2H})\simeq\bc$, and the proof is analogous to that of Lemma \ref{lpff}.
\end{proof} 

Let $(E,f)\in \Pi_2$.  Since $Q_f^{tf}\simeq I_2(2)$, we have the following exact sequence
\begin{equation}\label{lfrit}0\ra \mmon\stackrel{(a,b)}{\longrightarrow}\mone\oplus\mo_{\p^2}\stackrel{g}{\longrightarrow}I_2(2)\ra0,
\end{equation}
with $b\in H^0(\mone)$, $a\in H^0(\mo_{\p^2}(2))$ and $b$ prime to $a$.  Take Hom$(-,\mo_{H}(-1))$ on (\ref{lfrit}) and we get 
\begin{equation}\label{lfritgh}\small{0\ra \textrm{Hom}(\mmon,\mo_{H}(-1))\ra\textrm{Ext}^1(I_2(2),\mo_{H}(-1))\stackrel{\tilde{g}}{\rightarrow} \textrm{Ext}^1(\mone\oplus\mo_{\p^2},\mo_{H}(-1))\ra0},
\end{equation}
\begin{lemma}\label{qfpio}$Q_f$ with $T_f\simeq \mo_H(-1)$ and $Q_f^{tf}\simeq I_2(2)$ lies in diagram (\ref{tqcdffi}) if and only if the image of the following exact sequence via $\tilde{g}$ is not zero
\begin{equation}\label{tqffl}0\ra \mo_H(-1)\ra Q_f\ra I_2(2)\ra0,\end{equation}
i.e. (\ref{tqffl}) is not contained in the image of \textrm{Hom}$(\mmon,\mo_H(-1)).$
\end{lemma}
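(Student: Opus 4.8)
The plan is to prove the two equivalences ``$Q_f$ lies in (\ref{tqcdffi})'' $\iff$ ``$H^0(Q_f(-1))=0$'' $\iff$ ``$\tilde g([(\ref{tqffl})])\neq 0$'', where $[(\ref{tqffl})]=\e\in\mathrm{Ext}^1(I_2(2),\mo_H(-1))$ is the class of the displayed extension. First I would observe that, by the very way (\ref{tqcdffi}) is built, ``$Q_f$ lies in (\ref{tqcdffi})'' just means that $Q_f$ admits a resolution $0\to\mmon^{\oplus 3}\to\mo_{\p^2}^{\oplus 4}\to Q_f\to 0$: the rest of (\ref{tqcdffi}) is the $3\times 3$ diagram attached to such a resolution together with the torsion sequence $0\to T_f\to Q_f\to Q_f^{tf}\to 0$. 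One direction is immediate, since twisting that resolution by $\mo_{\p^2}(-1)$ and using $H^0(\mmon)=H^1(\mo_{\p^2}(-2))=0$ forces $H^0(Q_f(-1))=0$. For the converse, note that $Q_f$ is an extension of $I_2(2)$ by $\mo_H(-1)$, so horseshoeing the resolutions $0\to\mmon\xrightarrow{(a,b)}\mone\oplus\mo_{\p^2}\to I_2(2)\to 0$ and $0\to\mo_{\p^2}(-2)\to\mmon\to\mo_H(-1)\to 0$ shows $Q_f$ has a length-one locally free resolution, and one computes $\mathrm{rk}\,Q_f=1$, $c_1(Q_f)=3H$, $\chi(Q_f)=4$. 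Arguing as in the proof of Lemma \ref{tfid} (via Corollary 3.9 in \cite{bud} Page 38 and Proposition 3.1 in \cite{bud} Page 32): once $H^0(Q_f(-1))=0$, a short cohomology computation along (\ref{tqffl}) gives $H^1(Q_f(-1))=H^2(Q_f(-2))=0$, so $Q_f$ is $0$-regular, hence globally generated with $h^0(Q_f)=\chi(Q_f)=4$; the kernel $K$ of $\mo_{\p^2}^{\oplus 4}\to Q_f$ is then a rank-$3$ bundle (as $Q_f$ has projective dimension $\le 1$) with $c_1(K)=-3H$, $c_2(K)=3$, and one checks $K(1)$ is again $0$-regular with $h^0(K(1))=3$ and $c_1(K(1))=c_2(K(1))=0$, which forces $K(1)\simeq\mo_{\p^2}^{\oplus 3}$, i.e. $K\simeq\mmon^{\oplus 3}$.

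It then remains to identify $H^0(Q_f(-1))=0$ with $\tilde g(\e)\neq 0$. Write $q\colon Q_f\twoheadrightarrow I_2(2)$ for the projection in (\ref{tqffl}). Given $0\neq\phi\in H^0(Q_f(-1))=\mathrm{Hom}(\mone,Q_f)$: either $q\circ\phi=0$, in which case $\phi$ factors through $\mathrm{Hom}(\mone,\mo_H(-1))=H^0(\mo_H(-2))=0$, a contradiction; or $q\circ\phi\neq 0$ in $\mathrm{Hom}(\mone,I_2(2))=H^0(I_2(1))$, a one-dimensional space spanned by $g|_{\mone}$ (the degree-one generator of $I_2(2)$, i.e. multiplication by the unique line through the length-two scheme), so that after rescaling $g|_{\mone}$ lifts through $q$. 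Conversely any lift of $g|_{\mone}$ is a nonzero element of $H^0(Q_f(-1))$. Hence $H^0(Q_f(-1))\neq 0$ if and only if $g|_{\mone}$ lifts through $q$, and the obstruction to such a lift is the Yoneda composite $\e\circ g|_{\mone}\in\mathrm{Ext}^1(\mone,\mo_H(-1))$. Since $\mathrm{Ext}^1(\mo_{\p^2},\mo_H(-1))=H^1(\mo_H(-1))=0$, the pullback $\tilde g=g^{*}$ identifies $\tilde g(\e)=\e\circ g$ with $\e\circ g|_{\mone}$ under $\mathrm{Ext}^1(\mone\oplus\mo_{\p^2},\mo_H(-1))\simeq\mathrm{Ext}^1(\mone,\mo_H(-1))$. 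Therefore $H^0(Q_f(-1))=0\iff\tilde g(\e)\neq 0$, and combining with the first paragraph gives the lemma; the second formulation, that (\ref{tqffl}) is not in the image of $\mathrm{Hom}(\mmon,\mo_H(-1))$, then follows from exactness of (\ref{lfritgh}), which says $\ker\tilde g$ is exactly that image.

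The step I expect to be the main obstacle is the cohomological criterion in the first paragraph: because $Q_f$ carries torsion (unlike the torsion-free ideal sheaves in Lemma \ref{tfid}), one cannot invoke \cite{bud} verbatim, but must check that $\Gamma_{*}(Q_f)$ admits a two-step minimal free resolution of the asserted shape, and this is exactly where the regularity estimates for $Q_f$ and for $K(1)$ extracted from (\ref{tqffl}) do the real work; that is the part most likely to need writing out in detail. By contrast the obstruction-theoretic identification of $\tilde g(\e)$ in the second paragraph is routine once one records that $g|_{\mone}$ is multiplication by the line through the two points.
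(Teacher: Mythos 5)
Your proof is correct, but it takes a genuinely different route from the paper's. The paper never introduces the cohomological criterion $H^0(Q_f(-1))=0$: instead it pulls the extension (\ref{tqffl}) back along $g$ to form the fibre product $\widetilde{Q_f}$ in diagram (\ref{pitcd}), lifts the surjection $\mo_{\p^2}^{\oplus 4}\twoheadrightarrow Q_f$ to a map into $\widetilde{Q_f}$, observes that such a lift cannot be surjective when the pulled-back extension $(*)$ splits, and in the non-split case identifies $\widetilde{Q_f}\simeq\mo_{\p^2}\oplus Q_f^1$ and imports the resolution of $Q_f^1$ from the $d=4$ statement, Lemma \ref{tqns}, via diagram (\ref{pitccd}). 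You replace all of this by the chain (resolution exists) $\iff H^0(Q_f(-1))=0\iff\tilde g(\e)\neq0$: the first equivalence via Castelnuovo--Mumford regularity of $Q_f$ and of $K(1)$ together with the Chern-class count forcing $K\simeq\mmon^{\oplus3}$, the second via the lifting obstruction $\e\circ g|_{\mone}$ and the vanishing $\mathrm{Ext}^1(\mo_{\p^2},\mo_H(-1))=H^1(\mo_{\p^1}(-1))=0$. Both arguments are complete; yours is self-contained (it does not lean on Lemma \ref{tqns}) and isolates the mechanism cleanly --- the resolution exists exactly when the unique line through the length-two scheme, i.e.\ the generator $g|_{\mone}$ of $\mathrm{Hom}(\mone,I_2(2))\simeq H^0(I_2(1))\simeq\bc$, fails to lift to $Q_f$ --- at the cost of the regularity and Chern-class bookkeeping, which, as you anticipate, is the part that must be written out in full since $Q_f$ is not torsion-free and \cite{bud} cannot be cited verbatim. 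Two small points worth recording explicitly when you do so: $g|_{\mone}\neq0$ (otherwise $g$ would factor through $\mo_{\p^2}$ and $I_2(2)$ would be invertible), so it really spans $\mathrm{Hom}(\mone,I_2(2))$; and $\chi(Q_f(-1))=0$ together with $H^2(Q_f(-1))=H^2(Q_f(-2))=0$, which is what converts the single vanishing $H^0(Q_f(-1))=0$ into $0$-regularity of $Q_f$.
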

\begin{proof}The map $\tilde{g}$ gives the following commutative diagram
\begin{equation}\label{pitcd}\xymatrix@C=0.7cm@R=0.5cm{& &0\ar[d] &0\ar[d]\\
&&\mmon\ar[d]\ar[r]^{\simeq}&\mmon\ar[d]^{(a,b)}&\\
0\ar[r]&\mo_H(-1)\ar[r]\ar[d]^{\simeq}&\widetilde{Q_f}\ar[d]^{\delta}\ar[r]^{}& \mone\oplus\mo_{\p^2}\ar[r]\ar[d]^g&0~~(*)\\
0\ar[r]&\mo_H(-1)\ar[r]&Q_f\ar[r]^{}\ar[d] &I_2(2)\ar[d]\ar[r]&0~~(**)\\& &0 &0}
\end{equation}
where the sequence $(*)$ is the image of the sequence $(**)$ via $\tilde{g}$ and $\widetilde{Q_f}$ is the Cartesian product of $Q_f$ and $\mone\oplus\mo_{\p^2}$ over $I_2(2)$.  

From (\ref{pitcd}) we see that Hom$(\mo_{\p^2},\widetilde{Q_f})\simeq$ Hom$(\mo_{\p^2},\mone\oplus\mo_{\p^2})$ and Hom$(\mo_{\p^2},Q_f)\simeq$ Hom$(\mo_{\p^2},I_2(2))$.  Moreover the map $f_{tq}$ in (\ref{tqcdffi}) factors through a surjective map $s:\mo_{\p^2}^{\oplus 4}\ra\mone\oplus\mo_{\p^2}$ since Hom$(\mo_{\p^2}^{\oplus 4},\mone\oplus\mo_{\p^2})\twoheadrightarrow$Hom$(\mo_{\p^2}^{\oplus 4},I_2(2))$, and $s$ lifts to a map $\tilde{s}:\mo_{\p^2}^{\oplus 4}\ra \widetilde{Q_f}$ such that $f_q=\delta\circ\tilde{s}$. 

If the sequence $(*)$ in (\ref{pitcd}) splits, then $\widetilde{Q_f}\simeq \mone\oplus\mo_{\p^2}\oplus \mo_H(-1)$ and $\delta\circ\tilde{s}$ can not be surjective.  Hence $(*)$ does not split.  

On the other hand, Ext$^1(\mone\oplus\mo_{\p^2},\mo_{H}(-1))\simeq \bc$, hence $\widetilde{Q_f}$ is unique up to isomorphism if $(*)$ does not split.   We see that in this case $\widetilde{Q_f}\simeq\mo_{\p^2}\oplus Q_f^1$ with $Q_f^1$ lying in the following non-splitting sequence
\[0\ra\mo_H(-1)\ra Q_f^1\ra\mone\ra0.\]
By Lemma \ref{tqns} $Q_f^1$ lies in the following sequence
\[0\ra\mmon^{\oplus 2}\ra\mo_{\p^2}^{\oplus 3}\stackrel{f_{q_1}}{\longrightarrow}  Q_f^1\ra0.\]
We then have the following commutative diagram
\begin{equation}\label{pitccd}\xymatrix@C=0.7cm@R=0.5cm{&0\ar[d] &0\ar[d]&\\
&\mmon^{\oplus 2}\ar[d]\ar[r]^{\simeq}&\mmon^{\oplus 2}\ar[d]^{(a,b)}&&\\
0\ar[r]&\mmon^{\oplus 3}\ar[r]\ar[d]^{}&\mo_{\p^2}^{\oplus 4}\ar[d]^{f_{q_1}\oplus id_{\mo_{\p^2}}}\ar[r]^{f_q}& Q_f\ar[r]\ar[d]^{\simeq}&0\\
0\ar[r]&\mo_{\p^2}(-1)\ar[d]\ar[r]&Q_f^1\oplus \mo_{\p^2}\ar[r]^{}\ar[d] &Q_f\ar[r]&0\\&0 &0 &}
\end{equation}
Hence $Q_f$ lies in (\ref{pitcd}) and hence the lemma.
\end{proof}
\begin{lemma}\label{mtopit}$[\Pi_2]=[Hilb^{[2]}(\p^2)\times |H|\times (\p^1-1)\times (\p^{14}-\p^1)].$
\end{lemma}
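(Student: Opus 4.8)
The plan is to realise $\Pi_2$ as a tower of fibrations over $Hilb^{[2]}(\p^2)\times|H|$ and then read off the class. Recall that a point of $\Pi_2$ is a pair $(E,f)$ with $E\simeq\mo_{\p^2}\oplus\mmon^{\oplus4}$, which is equivalent (via diagrams (\ref{tqficd}) and (\ref{tqcdffi})) to the data of a sheaf $Q_f$ sitting in a non-split extension (\ref{tqffl}) $0\ra\mo_H(-1)\ra Q_f\ra I_2(2)\ra0$ together with a section $\sigma_f\in\mathbb{P}H^0(Q_f\otimes\mo_{\p^2}(2))$, subject to: (i) $Q_f$ admits the resolution displayed in (\ref{tqficd}), which by Lemma \ref{qfpio} and the exact sequence (\ref{lfritgh}) says that the extension class of (\ref{tqffl}) does not lie in the image of $\mathrm{Hom}(\mmon,\mo_H(-1))\hookrightarrow\mathrm{Ext}^1(I_2(2),\mo_H(-1))$; and (ii) $\mathrm{Im}(\sigma_f)\not\subset T_f=\mo_H(-1)$, which (exactly as in (\ref{insff}) for $d=4$) is what places the resulting pair in $M(5,1)$. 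Stability (Condition \ref{scodm}) for this pair is then automatic: a violation would, by a snake-lemma chase on $f_{B^t}^{-1}(\mo_{\p^2}^{\oplus d''})$, produce a nonzero torsion subsheaf of $Q_f$ of first Chern class $d''H$ with $d''\geq2$, which cannot embed into $T_f\simeq\mo_H(-1)$ since the latter is supported on a single line.

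First I would parametrise the sheaves $Q_f$. By (\ref{lfritgh}), $\mathrm{Hom}(\mmon,\mo_H(-1))$ and $\mathrm{Ext}^1(\mone\oplus\mo_{\p^2},\mo_H(-1))$ are each $1$-dimensional, so $\mathrm{Ext}^1(I_2(2),\mo_H(-1))$ is $2$-dimensional and the admissible classes, up to scalar, form $\p^1$ minus one point; since the torsion and the torsion-free quotient of $Q_f$ are canonical and carry only scalar automorphisms, this $\p^1$-minus-a-point faithfully parametrises the admissible $Q_f$ lying over a fixed $([I_2],[H])$. To globalise, over $B:=Hilb^{[2]}(\p^2)\times|H|$ take the universal ideal sheaf $\mathcal{I}_2$ and let $\mathcal{T}$ be the pullback of the universal $\mo_H(-1)$ on $|H|\times\p^2$; by cohomology and base change the relative $\mathcal{E}xt^1$ of $\mathcal{I}_2\otimes q^{*}\mo_{\p^2}(2)$ into $\mathcal{T}$ is a rank-$2$ bundle $\mathcal{E}$ on $B$, and the relativisation of the surjection $\tilde g$ in (\ref{lfritgh}) exhibits $\mathrm{Hom}(\mmon,\mo_H(-1))$ as a line subbundle $\mathcal{L}_0\subset\mathcal{E}$, i.e. a section $\p(\mathcal{L}_0)$ of $\p(\mathcal{E})\ra B$. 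Hence $B_0:=\p(\mathcal{E})\setminus\p(\mathcal{L}_0)$ is an affine-line bundle over $B$, so $[B_0]=[B]\cdot\bl=[Hilb^{[2]}(\p^2)\times|H|\times(\p^1-1)]$, and on $B_0\times\p^2$ there is a universal extension sheaf $\mathcal{Q}$, flat over $B_0$, restricting fibrewise to the $Q_f$.

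Next I would parametrise the sections $\sigma_f$. From the sequences (\ref{tqffl}) and (\ref{tqficd}) one gets $h^0(Q_f\otimes\mo_{\p^2}(2))=15$ with $h^{>0}=0$, and $h^0(T_f\otimes\mo_{\p^2}(2))=h^0(\mo_{\p^1}(1))=2$, all constant, so $\mathcal{P}:=p_{*}(\mathcal{Q}\otimes q^{*}\mo_{\p^2}(2))$ is a rank-$15$ bundle on $B_0$ containing the rank-$2$ subbundle $p_{*}(\mathcal{T}\otimes q^{*}\mo_{\p^2}(2))$. Over $\p(\mathcal{P})\setminus\p(p_{*}(\mathcal{T}\otimes q^{*}\mo_{\p^2}(2)))$ one builds, exactly as in the proof of Proposition \ref{bosih} (fill in (\ref{tqcdffi}), twist, dualise), a family of stable sheaves of class $u_{5,1}$, yielding a morphism to $M(5,1)$ which is a bijection onto $\Pi_2$. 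Since a bijective morphism of complex varieties preserves the class in the Grothendieck ring,
\begin{align*}
[\Pi_2]&=[\p(\mathcal{P})]-[\p(p_{*}(\mathcal{T}\otimes q^{*}\mo_{\p^2}(2)))]=[B_0]\cdot\big([\p^{14}]-[\p^1]\big)\\
&=[Hilb^{[2]}(\p^2)\times|H|\times(\p^1-1)\times(\p^{14}-\p^1)],
\end{align*}
which is the assertion.

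The main obstacle is the universal bookkeeping rather than anything conceptual: one must verify that the relative $\mathcal{E}xt$-sheaf and the pushforwards $\mathcal{P}$, $p_{*}(\mathcal{T}\otimes q^{*}\mo_{\p^2}(2))$ are locally free of the stated ranks — that is, that the relevant fibrewise $\mathrm{Ext}$ and cohomology groups vanish in the right degrees and have constant dimension, including over the locus where the two points of $I_2$ lie on the line $H$ — that the universal extension $\mathcal{Q}$ exists and is flat over $B_0$, and that the morphism built from it onto $\Pi_2$ is indeed bijective so that the classes coincide. The one genuinely delicate step, identifying which extensions $Q_f$ actually occur, has already been settled in Lemma \ref{qfpio}.
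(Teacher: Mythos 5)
Your proposal is correct and follows essentially the same route as the paper: it uses Lemma \ref{qfpio} and the sequence (\ref{lfritgh}) to realise the admissible $Q_f$ as a $\p^1$-bundle minus a section (equivalently an $\mathbb{A}^1$-bundle) over $Hilb^{[2]}(\p^2)\times|H|$ via the relative $\mathcal{E}xt^1$, and then parametrises $\sigma_f$ by the difference of projective bundles with fibres $\p^{14}$ and $\p^1$, exactly as in the paper. The extra verifications you flag (automatic stability via the torsion of $Q_f$, faithfulness of the extension parametrisation) are points the paper leaves implicit, but they do not change the argument.
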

\begin{proof}Lemma \ref{qfpio} implies that for fixed $\mo_H(-1)$ and $I_2(2)$, isomorphism classes of $Q_f$ are parametrized by $\p(\textrm{Ext}^1(I_2(2),\mo_{H}(-1)))-\p(\textrm{Hom}(\mmon,\mo_{H}(-1)))$.  Hence isomorphism classes of all $Q_f$ are parametrized by the following scheme
\[\p(\mathcal{E}xt^1_p(\mathcal{I}_2\otimes q^{*}\mo_{\p^2}(2),\mo_{\mc_1}\otimes q^{*}\mo_{\p^2}(-1)))-Hilb^{[2]}(\p^2)\times |H|;\]
where $p$ and $q$ are projections from $\p^2\times Hilb^{[2]}(\p^2)\times |H|$ to $Hilb^{[2]}(\p^2)\times |H|$ and $\p^2$ respectively, $\mathcal{I}_2$ and $\mc_1$ are the pull back of the universal ideal sheaf and the universal curve to $\p^2\times Hilb^{[2]}(\p^2)\times |H|$ from $\p^2\times Hilb^{[2]}(\p^2)$ and $\p^2\times |H|$ respectively.  Notice that we embed $Hilb^{[2]}(\p^2)\times |H|$ into $\p(\mathcal{E}xt^1_p(\mathcal{I}_2\otimes q^{*}\mo_{\p^2}(2),\mo_{\mc_1}\otimes q^{*}\mo_{\p^2}(-1)))$ by taking the nonzero constant section of the line bundle 
$\mathcal{H}om_p(q^{*}\mmon,\mo_{\mc_1}\otimes q^{*}\mmon)\simeq p_{*}\mo_{\mc_1}\simeq \mo_{Hilb^{[2]}(\p^2)\times |H|}$.

Analogously the space parametrizing $(Q_f,\sigma_f)$ is the difference of two projective bundles with fibers isomorphic to $\p(H^0(Q_f(2)))\simeq \p^{14}$ and $\p(H^0(\mo_H(1)))\simeq\p^1$ respectively over the space parametrizing $Q_f$.  Hence the lemma.
\end{proof}

Now we do the computation for $[\Pi_3]$ and we will also explain why the case that $T_f\simeq \mo_{H}(-3),Q^{tf}_f\simeq \mo_{\p^2}(2)$ is not included.  Notice that the map $f_{tq}$ in (\ref{tqcdffi}) is not surjective on global sections.  We first write down the following commutative diagram.
\begin{equation}\label{cdpth}\xymatrix@C=0.7cm@R=0.5cm{&0\ar[d] &0\ar[d]&\\
0\ar[r]&K\ar[d]\ar[r]^{j}&\mo_{\p^2}^{\oplus 4}\ar[d]^{}\ar[r]^{f_{tq}}&Q^{tf}_f\ar[r]\ar[d]^{\simeq}&0\\
0\ar[r]&G\ar[r]\ar[d]_{\tau}&\mo_{\p^2}^{\oplus 4+n}\ar[d]^{\tilde{\tau}}\ar[r]^{g}& Q^{tf}_f\ar[r]&0\\
&\mo_{\p^2}^{\oplus n}\ar[d]\ar[r]^{\simeq}&\mo_{\p^2}^{\oplus n}\ar[d] &&\\&0 &0 &}
\end{equation}
where $n=1$ if $Q_f^{tf}\simeq I_1(2)$ and $n=2$ if $Q_f^{tf}\simeq \mo_{\p^2}(2)$.  

From (\ref{cdpth}) we see that $H^i(G(1-i))=0$ for $i>0$, hence by Castelnuovo-Mumford regularity $G(1)$ is globally generated.  Therefore the map $\tau\otimes id_{\mone}:G(1)\ra\mone^{\oplus n}$ must be surjective on global sections, since otherwise $\tau$ is not surjective.  Hence $h^0(K(1))=h^0(G(1))-nh^0(\mone)$.  So if $n=2,Q_f^{tf}\simeq \mo_{\p^2}(2)$, then we have $h^0(K(1))=2$ which implies that $K$ can not contain $\mmon^{\oplus 3}$ as a subsheaf.  Hence we only have $n=1$ and $Q_f^{tf}\simeq I_1(2).$

$T_f\simeq \mo_H(-2)$ hence Hom$(\mmon, T_f)=0$, the inclusion $\imath$ in (\ref{tqcdffi}) is unique up to isomorphisms of $\mmon^{\oplus 3}$ for a fixed $K$.  Hence $f_{B^t}$ is determined by the inclusion $j$ and hence is determined by $f_{tq}$.  Parametrizing $f_{B^t}$ is equivalent to parametrizing the surjective map $f_{tq}$, hence equivalent to parametrizing $\tilde{\tau}$.  We first assume $Q_f^{tf}\simeq I_{[0,0,1]}(2)$,  then $g$ can be represented by a $1\times 5$ matrix $(x_0^2,x_0x_1,x_0x_2,x_1x_2,x_1^2)$.  $\tilde{\tau}$ can be represented by $\underline{h}:=(h_0,h_1,h_2,h_3,h_4)$ with $h_i\in\bc$.  We want to parametrize the class of $\underline{h}$ modulo scalars.

The sheaf $G$ can be generated by 6 generators $<\e_0,\e_1,\e_2,\ee_0,\ee_1,\ee_2>$ in $H^0(G(1))$ with two syzygies $(x_0\e_0+x_1\e_1-x_2\e_2=0,x_0\ee_0+x_1\ee_1-x_2\ee_2=0).$

The map $\tau$ is determined by $\tau^0:=h^0(\tau\otimes id_{\mone}):H^0(G(1))\ra H^0(\mone)$, and also $\tau$ is induced by $\tilde{\tau}$.  Hence $\tau^0$ is determined by $\underline{h}$ and we can write down explicitly the images of $\e_i$ and $\ee_i$ as follows
\begin{eqnarray}
&&\tau^0(\e_0)=h_2x_1-h_1x_2;~~ \tau^0(\e_1)=h_0x_2-h_2x_0;~~\tau^0(\e_2)=h_0x_1-h_1x_0\nonumber\\&& \tau^0(\ee_0)=h_3x_1-h_4x_2;~~\tau^0(\ee_1)=h_1x_2-h_3x_0;~~\tau^0(\ee_2)=h_1x_1-h_4x_0.\nonumber\end{eqnarray}

We can get (\ref{cdpth}) if and only if $\tau^0$ is surjective.  In other words, the following $3\times 6$ matrix has rank 3.
\[Mat_{\tau}:=\left(\begin{array}{cccccc}0&-h_2&-h_1&0&-h_3&-h_4\\h_2&0&h_0&h_3&0&h_1\\-h_1&h_0&0&-h_4&h_1&0\end{array}\right)\]
By direct computation we see that 
\[rank (Mat_{\tau})<3\Leftrightarrow h_1h_2-h_0h_3=h_1^2-h_0h_4=h_1h_3-h_2h_1=0.\]  Hence we know that $f_{tq}$ are parametrized by $P_{\tau}:=\p^4-\{h_1h_2-h_0h_3=h_1^2-h_0h_4=h_1h_3-h_2h_1=0\}.$  

One can easily compute that $[P_{\tau}]=[\p^4-(\p^1+\mathbb{A}^2+\mathbb{A}^1)]$.  Moreover we can cover $Hilb^{[1]}(\p^2)$ by finitely many Zariski open subsets $U_i$ such that $(I_1(2),\tilde{\tau})$ with $[I_1]\in U_i$ are parametrized by $U_i\times P_{\tau}$.  For example, we can take $U_i$ such that $p_{*}(\mathcal{I}_{1}\otimes q^{*}\mo_{\p^2}(2))|_{U_i}\simeq \mo_{U_i}^{\oplus 5}$, where $p$ and $q$ are the projections from $Hilb^{[1]}(\p^2)\times\p^2$ to $Hilb^{[1]}(\p^2)$ and $\p^2$ respectively and $\mathcal{I}_1$ the universal ideal sheaf.

$(I_1(2),\tilde{\tau})$ determines $Q_f$ and analogously we know that $(Q_f,\sigma_f)$ are parametrized by a difference of two projective bundles over the space parametrizing $Q_f$.  Hence we have the following lemma as a direct consequence.
\begin{lemma}\label{mthopit}$[\Pi_3]=[Hilb^{[1]}(\p^2)\times (\p^4-(\p^1+\mathbb{A}^2+\mathbb{A}^1))\times (\p^{14}-1)]$.
\end{lemma}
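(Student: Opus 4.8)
The plan is to turn the structural analysis just carried out into a product formula in the Grothendieck ring, following the same pattern as the proof of Proposition~\ref{bosih}. Recall that a pair $(E,f)\in M_1$ is encoded by the pair $(Q_f,\sigma_f)$ of diagram~(\ref{tqficd}), that on $\Pi_3$ the sheaf $Q_f$ sits in~(\ref{tqcdffi}) with $T_f\simeq\mo_H(-2)$ and $Q_f^{tf}\simeq I_1(2)$ and is reconstructed from the data $(I_1(2),\tilde\tau)$ of diagram~(\ref{cdpth}), and that the point $(Q_f,\sigma_f)$ actually lies in $M(5,1)$ exactly when $Im(\sigma_f)\not\subset T_f$, cf.~(\ref{insff}). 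So I would first parametrize the sheaves $Q_f$, then the maps $\sigma_f$ up to scalar, and finally check Zariski-local triviality so that the classes multiply.

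First I would parametrize the $Q_f$. Fixing $[I_1]\in Hilb^{[1]}(\p^2)\simeq\p^2$ and choosing coordinates so that $I_1=I_{[0,0,1]}$, the computation already performed shows that the data $(I_1(2),\tilde\tau)$, with $\tilde\tau$ given by $\underline h=(h_0,\dots,h_4)$, produces a valid diagram~(\ref{cdpth}), hence a sheaf $Q_f$ occurring on $\Pi_3$, precisely when $\tau^0$ is surjective, i.e. when $\underline h\in P_\tau$, and that $[P_\tau]=[\p^4-(\p^1+\mathbb{A}^2+\mathbb{A}^1)]$. Since this description is $PGL_3$-equivariant and $PGL_3$ acts transitively on $\p^2$, I would cover $Hilb^{[1]}(\p^2)$ by affine opens $U_i$ on which $p_*(\mathcal{I}_1\otimes q^*\mo_{\p^2}(2))$ is free of rank $5$; over each $U_i$ the locus of pairs $(I_1(2),\tilde\tau)$ giving sheaves on $\Pi_3$ is then $U_i\times P_\tau$. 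By the cut-and-paste relations the variety $\mathcal{P}$ parametrizing these $Q_f$ satisfies $[\mathcal{P}]=[Hilb^{[1]}(\p^2)]\cdot[P_\tau]$; one also notes that $T_f\simeq\mo_H(-2)$ is recovered functorially from $Q_f$, so $\mathcal{P}$ really parametrizes them without repetition, and the branch $Q_f^{tf}\simeq\mo_{\p^2}(2)$ (the ``$n=2$'' possibility) is excluded for the reason already given via $h^0(K(1))=h^0(G(1))-n\,h^0(\mone)$.

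Next I would parametrize $\sigma_f$ and conclude. For a fixed $Q_f$ on $\Pi_3$, the map $\sigma_f$ taken up to scalar is a point of $\p(H^0(Q_f(2)))$; from $0\ra\mo_H\ra Q_f(2)\ra I_1(4)\ra 0$ one gets $h^0(Q_f(2))=15$, so this is a $\p^{14}$, and by~(\ref{insff}) the point lies on $\Pi_3$ iff $Im(\sigma_f)\not\subset T_f$, the excluded locus being the image of $H^0(T_f(2))=H^0(\mo_H)\simeq\bc$, i.e. a single point of the $\p^{14}$. Globalizing over $\mathcal{P}$, the relevant direct image sheaves are locally free of ranks $15$ and $1$ by cohomology and base change (the higher direct images vanishing), so the $(Q_f,\sigma_f)$ on $\Pi_3$ form the complement of a $\p^0$-bundle inside a $\p^{14}$-bundle over $\mathcal{P}$, both Zariski-locally trivial; fibrewise this is $\p^{14}$ minus a point. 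Hence $[\Pi_3]=[\mathcal{P}]\cdot([\p^{14}]-1)=[Hilb^{[1]}(\p^2)]\cdot[P_\tau]\cdot[\p^{14}-1]$, which is the claimed identity.

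The main obstacle will be purely the bookkeeping needed to upgrade ``bijection on points'' to ``Zariski-locally trivial morphism'', so that the classes genuinely multiply rather than merely having equal point counts: concretely, constructing the relative versions of diagrams~(\ref{cdpth}) and~(\ref{tqcdffi}) over the bases $U_i$ and over $\mathcal{P}$, exactly in the style of the universal family built in Proposition~\ref{bosih}, and checking the cohomology vanishing that makes the push-forwards locally free of the stated ranks. Everything else — the description of $P_\tau$ and the value of $[P_\tau]$, the identification $h^0(Q_f(2))=15$, and the exclusion of the $n=2$ branch — has already been established above, so once the relative constructions are in place the lemma follows.
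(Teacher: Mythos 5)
Your proposal follows the paper's own argument essentially verbatim: parametrize the $Q_f$ by the data $(I_1(2),\tilde\tau)$ over a trivializing affine cover of $Hilb^{[1]}(\p^2)$, obtaining the factor $[Hilb^{[1]}(\p^2)]\cdot[P_\tau]$, and then realize $\Pi_3$ as the difference of a $\p^{14}$-bundle (from $h^0(Q_f(2))=15$) and the section where $Im(\sigma_f)\subset T_f$ given by $\p(H^0(\mo_H))$, a point. The approach and all intermediate identifications match the paper; your added attention to upgrading the pointwise bijections to Zariski-locally trivial families is exactly the step the paper leaves implicit with the word ``analogously.''
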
  

We have already known that $[W^5]=\p^{14}\times [Hilb^{[6]}(\p^2)-\Omega_{2}^{[6]}]$.  The proof of the following lemma is postponed to the appendix.
\begin{lemma}\label{ctome}$[\Omega_{2}^{[6]}]=\bl^{11}+3\bl^{10}+8\bl^{9}+18\bl^{8}+30\bl^{7}+39\bl^{6}+38\bl^{5}+28\bl^{4}+15\bl^{3}+6\bl^{2}+2\bl^{1}+1$.
\end{lemma}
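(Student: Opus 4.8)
The plan is to exhibit $\Omega_2^{[6]}$ as the image of an incidence variety and then to correct for the fibres of that map. Let $\mc_2\subset\p^2\times|2H|$ be the universal conic (so $|2H|\simeq\p^5$), let $\mc_2^{[6]}\ra|2H|$ be the relative Hilbert scheme of length-$6$ subschemes of the fibres of $\mc_2\ra|2H|$, and let $\xi\colon\mc_2^{[6]}\ra Hilb^{[6]}(\p^2)$ be the morphism $(Z,C)\mapsto Z$; its image is $\Omega_2^{[6]}$ and its fibre over $[Z]$ is $\p(H^0(I_Z(2)))$. A B\'ezout argument (two distinct conics containing a length-$6$ scheme must share a line component) shows that $h^0(I_Z(2))\leq 3$ on $\Omega_2^{[6]}$, that the value $3$ occurs exactly on the collinear locus $\Omega_1^{[6]}$, and that the value $2$ occurs exactly on $V_2:=\{[Z]:\deg(Z\cap L)=5\text{ for some line }L,\ Z\not\subset L\}$ (the line $L$ being then unique). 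Stratifying $\Omega_2^{[6]}$ by $h^0(I_Z(2))$ and using that over the stratum $\{h^0=k\}$ the morphism $\xi$ is a Zariski-locally trivial $\p^{k-1}$-bundle, one obtains
\[[\mc_2^{[6]}]=[\Omega_2^{[6]}]+\bl\,[V_2]+(\bl+\bl^2)\,[\Omega_1^{[6]}],\]
so it is enough to compute $[\mc_2^{[6]}]$, $[V_2]$ and $[\Omega_1^{[6]}]$ individually.

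Six collinear points determine their line, so $\Omega_1^{[6]}\simeq\mc_1^{[6]}$, the relative $Hilb^{[6]}$ of the $\p^1$-bundle $\mc_1\ra|H|\simeq\p^2$, which is a $\p^6$-bundle; hence $[\Omega_1^{[6]}]=[\p^2][\p^6]$. For $[\mc_2^{[6]}]$ I would stratify $|2H|=\p^5$ by the rank of the conic into $U_1$ (smooth, $[U_1]=\bl^5-\bl^2$), $U_2$ (two distinct lines, $[U_2]=[\mathrm{Sym}^2\p^2]-[\p^2]=\bl^4+\bl^3+\bl^2$) and $U_3\simeq|H|\simeq\p^2$ (double lines). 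Since $Hilb^{[6]}$ of a fibre of $\mc_2\ra|2H|$ depends only on that fibre up to isomorphism, after a further stratification to secure local triviality one gets
\[[\mc_2^{[6]}]=[U_1]\,[\p^6]+[U_2]\,[Hilb^{[6]}(\ell_1\cup\ell_2)]+[U_3]\,[Hilb^{[6]}(2\ell)],\]
where $\ell_1\cup\ell_2$ is a union of two distinct lines, $2\ell$ a double line, and $[Hilb^{[6]}(\p^1)]=[\p^6]$. I would compute the two remaining curve-Hilbert-scheme classes by splitting off the ``smooth part'' and then analysing the local punctual contributions of a node, respectively of a ribbon point, organised by the partition of $6$ recording how the length concentrates at the singular point and, for the ribbon, in the non-reduced direction.

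For $[V_2]$ the unique line $L$ through the length-$5$ part defines a morphism $V_2\ra|H|$; its fibre over $[L]$ is the set of length-$6$ subschemes $Z\subset\p^2$ with $\deg(Z\cap L)=5$, which are obtained from the trace $Z\cap L\in Hilb^{[5]}(L)\simeq\p^5$ either by adjoining a reduced point of $\p^2\setminus L\simeq\mathbb{A}^2$ or by a transverse length-$1$ thickening at a point of $\mathrm{Supp}(Z\cap L)$. I would compute this fibre by stratifying $Hilb^{[5]}(\p^1)$ by partition type and working out, case by case, the available thickenings. Substituting the three classes into the displayed identity and simplifying then yields the stated polynomial; as an internal check, specialising $\bl\mapsto 1$ gives $e(\Omega_2^{[6]})=189$.

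The main obstacle is the explicit determination of $[Hilb^{[6]}(2\ell)]$, the Hilbert scheme of six points on a double line, together with the accompanying punctual computations (the node contribution to $[\mc_2^{[6]}]$ and the transverse-thickening contribution to $[V_2]$): these involve genuinely non-reduced and infinitely near configurations, and are exactly the statements I would isolate as technical lemmas for the appendix. Once those local pieces are in hand, the remainder is routine bookkeeping in the Grothendieck ring.
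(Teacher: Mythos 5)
Your overall strategy is the same as the paper's: both arguments run through the relative Hilbert scheme $\mc_2^{[6]}$ of the universal conic, use the map $\xi$ with fibre $\p(H^0(I_Z(2)))$, stratify $\Omega_2^{[6]}$ by $h^0(I_Z(2))\in\{1,2,3\}$ (your $V_2$ and $\Omega_1^{[6]}$ are exactly the paper's strata $\ms_1$ and $\ms_2$), and reduce everything to Hilbert schemes of six points on smooth, nodal and non-reduced conics. Your identification of the strata via B\'ezout and your identity $[\mc_2^{[6]}]=[\Omega_2^{[6]}]+\bl[V_2]+(\bl+\bl^2)[\Omega_1^{[6]}]$ are correct, as is $[\Omega_1^{[6]}]=[\p^2][\p^6]$ and the class $[U_1]=\bl^5-\bl^2$ of the smooth locus.

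There is, however, a concrete error in your formula for $[\mc_2^{[6]}]$ over the locus $U_2$ of line-pairs. The family of nodal conics over $U_2\simeq Sym^2(|H|)-|H|$ has $\mathbb{Z}/2$ monodromy exchanging the two components, and this cannot be removed by stratifying the base: the \'etale double cover $(|H|\times|H|)-\Delta\to U_2$ is irreducible, so it admits no section over any Zariski open set. Consequently the product formula $[U_2]\cdot[Hilb^{[6]}(\ell_1\cup\ell_2)]$ fails. The paper handles this by pulling back to the double cover, decomposing $Hilb^{[6]}(C^R)=H^x\sqcup H^s\sqcup H^y$ into a $\sigma_2$-invariant part and two exchanged parts, and descending to get $[H^x]\cdot[(\p^2)^2-\Delta]+[H^s]\cdot[Sym^2(\p^2)-\p^2]$ (its Lemma \ref{ABh}); since $[(\p^2)^2-\Delta]=\bl+2\bl^2+2\bl^3+\bl^4$ while $2[Sym^2(\p^2)-\p^2]=2\bl^2+2\bl^3+2\bl^4$, your formula differs from the correct one by $(\bl-\bl^4)[H^x]\neq 0$ and would propagate into the final polynomial. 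Separately, you explicitly defer the classification of the punctual ideals on the double line and at the node (the paper's Tables I and II and Lemmas \ref{xsid} and \ref{ABq}) and the analogous enumeration for $V_2$; since the specific coefficients of the answer are produced entirely by those case-by-case computations, the proposal as written establishes the shape of the argument but not the stated value of $[\Omega_2^{[6]}]$. (Evaluating your claimed polynomial at $\bl=1$ is not an independent check, as you have not computed the Euler number by other means.)
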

\begin{proof}See Appendix B.
\end{proof}
\begin{proof}[Proof of Theorem \ref{mtfi} for $M(5,1)$] We have 
\[[M(5,1)]=[M_3]+[M_2^s]+[M_2^c]+\displaystyle{\sum_{i=1}^3}[\Pi_i]+([Hilb^{[6]}(\p^2)]-[\Omega_2^{[6]}])\times [\p^{14}].\] 
Combine Lemma \ref{mcfimth}, Lemma \ref{mcfimt}, Lemma \ref{mcfimtt}, Lemma \ref{pione}, Lemma \ref{mtopit}, Lemma \ref{mthopit} and Lemma \ref{ctome}, and we get the result by direct computation.
\end{proof}

\begin{flushleft}{\textbf{$\lozenge$ Computation for $[M(5,2)]$}}\end{flushleft}

We stratify $M(5,2)$ into three strata defined as follows.
\begin{eqnarray} & M_2:=&\{[(E,f)]\in M(5,1)|E\simeq\mo_{\p^2}^{\oplus 2}\oplus\mmon^{\oplus 3}\};\nonumber\\& M_3:=&\{[(E,f)]\in M(5,1)|E\simeq\mo_{\p^2}^{\oplus 3}\oplus\mmon\oplus\mo_{\p^2}(-2)\};\nonumber\\& M_3':=&\{[(E,f)]\in M(5,1)|E\simeq\mone\oplus\mo_{\p^2}\oplus\mmon^{\oplus 2}\oplus\mo_{\p^2}(-2)\}.\nonumber
\end{eqnarray}
Here we use notation $M_3'$ instead of $M_4$ because we want to specify the lower index of the subspace to be $h^0(F)$ with $F$ any sheaf in it. 
\begin{lemma}\label{scffiv}A pair $(E,f)$ with $rank(E)=5$ and $deg(E)=-3$ is stable if and only if for any two direct summands $D',D''$ of $E$ such that $D'\simeq D''$ and $f(D'\otimes\mmon)\subset D''$,  we have $\mu(D')<\mu(E)$.
\end{lemma}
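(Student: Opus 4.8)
The plan is to run exactly the argument used for Lemma \ref{scff} and Lemma \ref{scffi}, the only genuinely new content being the list of admissible forms of $E$ and the corresponding case-by-case bookkeeping. The ``only if'' direction is immediate from the definitions: if $(E,f)$ is stable and $D'\simeq D''$ are proper direct summands of $E$ with $f(D'\otimes\mmon)\subset D''$, then, as recorded in the proof of Lemma \ref{ncss}, this forces $f^{-1}(D'')=D'\otimes\mmon\simeq D''\otimes\mmon$, so $D''$ is a direct sum of line bundles with $f^{-1}(D'')\simeq D''\otimes\mmon$, and stability applied to $D''$ yields $\mu(D')=\mu(D'')<\mu(E)$.

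For the ``if'' direction I would suppose the summand condition holds while $(E,f)$ fails to be stable, and derive a contradiction. By Lemma \ref{ncss}, the sheaf $E$ (of rank $5$ and degree $-3$) must be one of $\mo_{\p^2}^{\oplus 2}\oplus\mmon^{\oplus 3}$, $\mo_{\p^2}^{\oplus 3}\oplus\mmon\oplus\mo_{\p^2}(-2)$ or $\mone\oplus\mo_{\p^2}\oplus\mmon^{\oplus 2}\oplus\mo_{\p^2}(-2)$. Choose a destabilizing $E'\subsetneq E$, a direct sum of line bundles with $f^{-1}(E')\simeq E'\otimes\mmon$ and $\mu(E')\geq\mu(E)$. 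Exactly as in the proof of Lemma \ref{scff}, one reduces with no loss of generality to the case where $E'$ has ``consecutive'' type $\bigoplus_i\mo_{\p^2}(n_i)^{\oplus a_i}$ with $a_i>0$ and $n_i-n_{i+1}=1$. Fix an embedding $E'\simeq E''\hookrightarrow E$. If $E''$ is a direct summand of $E$, then taking $D'=D''=E''$ contradicts the hypothesis immediately (via the equivalence in Lemma \ref{ncss}). Otherwise one enumerates the finitely many ways a sheaf of the above consecutive type with $\mu\geq\mu(E)$ can sit inside each of the three forms of $E$ without being a direct summand, and in every such configuration one applies Nakayama's lemma together with the injectivity of $f$ to produce honest direct summands $D\simeq D'$ of $E$ with $\mu(D)\geq\mu(E)$ and $f(D\otimes\mmon)\subset D'$ — again contradicting the hypothesis. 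This is precisely how cases (1) and (2) are handled in the proof of Lemma \ref{scff}.

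The main obstacle is combinatorial bookkeeping rather than any new idea. Because $M(5,2)$ contains the stratum $M_3'$ in which $E$ carries four distinct line-bundle degrees $1,0,-1,-2$, the number of non-split configurations of a destabilizing $E''$ inside $E$ is larger than in the $d\leq 4$ cases, and one must verify in each of them that the Nakayama/injectivity step still closes up; the delicate point is the interaction of Nakayama's lemma across several consecutive degree jumps, where one has to make sure the argument does not accidentally force a full $\mmon$- (or $\mo_{\p^2}$-) summand into the preimage and thereby break injectivity. For this reason I would, as the authors do for Lemma \ref{scffi}, carry out the complete enumeration in Appendix A and merely cite it here.
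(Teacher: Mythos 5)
Your plan coincides with the paper's proof: Appendix~A establishes Lemma~\ref{scffiv} by listing the three admissible forms of $E$, enumerating the possible non-split embeddings $E''$ (ten cases), and disposing of each by the same Nakayama/injectivity mechanism used for Lemma~\ref{scff} and Lemma~\ref{scffi}, with almost every case declared identical or analogous to a case already treated for Lemma~\ref{scffi}. The one point to tighten is your claim that the direct-summand case is ``immediate'': the hypothesis $f^{-1}(E'')\simeq E''\otimes\mmon$ is only an abstract isomorphism, so one must still decide whether the preimage is literally $E''\otimes\mmon$ or a different copy sitting inside $E\otimes\mmon$ (the paper's proof of Lemma~\ref{scff} splits this into sub-cases), but this is exactly the kind of bookkeeping your deferred enumeration is meant to absorb.
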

\begin{proof}See Appendix A. 
\end{proof}
For a pair $(E,f)\in M'_3$, $f$ can be represented by the following matrix
\begin{equation}\label{romfiv}\left(\begin{array}{ccccc}0&1&0&0&0\\0&0&1&0&0\\0&0&0&0&1\\d&0&0&b&0\\  c&0&0&a&0\end{array}\right),
\end{equation}
where $b\in H^0(\mone)$, $a\in H^0(\mo_{\p^2}(2))$, $c\in H^0(\mo_{\p^2}(4))$ and $d\in H^0(\mo_{\p^2}(3))$.  $det(f)=ad-bc\neq 0$ and by Lemma \ref{scffi} $(E,f)$ is stable if and only if $b$ is prime to $a$.  
\begin{lemma}\label{mcfimthp}$[M'_3]=[Hilb^{[2]}(\p^2)\times\p^{18}].$
\end{lemma}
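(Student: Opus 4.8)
The plan is to follow the same strategy as in the proofs of Lemma \ref{mcfimth}, Lemma \ref{mcfimt} and Lemma \ref{mcfimtt}: extract from the matrix form (\ref{romfiv}) a pair $(R_f,\omega_f)$ that reconstructs $(E,f)$ up to isomorphism, identify the moduli of such $R_f$ with a Hilbert scheme, and then exhibit the space of compatible $\omega_f$ as a projective bundle over it. Concretely, the sub-block $(b,a)$ appearing in the fourth and fifth rows of (\ref{romfiv}) defines a morphism $\mmon\oplus\mo_{\p^2}(-2)\to\mone\oplus\mo_{\p^2}$; I would package the remaining data $c,d$ as a map $\mo_{\p^2}(-2)\to\mone\oplus\mo_{\p^2}$ and set up the diagram
\begin{equation}\label{ffcdp}\xymatrix{ 0\ar[r]&\mmon\ar[r]^{(b,a)~~~}&\mone\oplus\mo_{\p^2}\ar[r]^{~~~~f_r}& R_f\ar[r]&0\\
&&\mo_{\p^2}(-2)\ar[u]^{(d,c)}\ar[ru]_{\omega_f:=f_r\circ(d,c)}&&}.\end{equation}
Since $b$ is prime to $a$ by the stability criterion of Lemma \ref{scffi}, the cokernel $R_f$ is torsion free of rank $1$, and by comparing Chern classes one finds $R_f\simeq I_2(2)$; conversely every $I_2(2)$ fits into such a sequence, exactly as in the proof of Lemma \ref{mcfimtt}. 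Thus $M_3'$ is in bijection with isomorphism classes of pairs $(R_f,\omega_f)$ with $R_f\simeq I_2(2)$.

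Next I would globalize. Over $Hilb^{[2]}(\p^2)\times\p^2$ there is a universal ideal sheaf $\mathcal{I}_2$; denoting by $p,q$ the two projections from $Hilb^{[2]}(\p^2)\times\p^2$, one has $R^ip_*(\mathcal{I}_2\otimes q^*\mo_{\p^2}(5))=0$ for $i\geq 1$ and $p_*(\mathcal{I}_2\otimes q^*\mo_{\p^2}(5))$ locally free; a Riemann--Roch computation gives its rank to be $\chi(I_2(5))=\binom{7}{2}-2=19$. The map $\omega_f$, taken up to scalar, is a point of $\p(H^0(R_f\otimes\mo_{\p^2}(2)))=\p(H^0(I_2(5)))\simeq\p^{18}$, so $M_3'$ is identified with the projective bundle $\p(p_*(\mathcal{I}_2\otimes q^*\mo_{\p^2}(5)))$ over $Hilb^{[2]}(\p^2)$. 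Since a Zariski-locally trivial $\p^{18}$-bundle has class $[\text{base}]\cdot[\p^{18}]$ in the Grothendieck ring, this yields $[M_3']=[Hilb^{[2]}(\p^2)\times\p^{18}]$.

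The one point that needs genuine care — and which I expect to be the main obstacle — is verifying that the assignment $(E,f)\mapsto(R_f,\omega_f)$ is a genuine bijection onto \emph{all} of the parameter space, with no locus to remove. In the analogous Lemma \ref{mcfimtt} one had to excise the sublocus where $\operatorname{Im}(\omega_f)$ lands in the torsion of $R_f\otimes S_f$; here, because $R_f\simeq I_2(2)$ is already torsion free, no such excision is needed, but I should still check two things. First, that the stability condition on $(E,f)$ in $M_3'$ is \emph{equivalent} to ``$b$ prime to $a$'' and imposes nothing further on $c,d$ beyond $ad-bc\neq0$ — this is where Lemma \ref{scffiv} (proved in Appendix A) is invoked, reducing stability to the numerical condition on pairs of isomorphic direct summands, and one checks by the same bookkeeping as in Lemma \ref{scff} that the only relevant summand comparison is the $(b,a)$ block. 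Second, that the injectivity of $f$ (condition $ad-bc\neq0$) is automatic once $(R_f,\omega_f)$ comes from a point of the bundle, i.e. that $\omega_f\neq0$ forces $(d,c)$ to be a non-zero-divisor into $R_f$; this follows because $R_f$ is torsion free, so any nonzero map $\mo_{\p^2}(-2)\to R_f$ is injective, hence so is the composite $\mmon\oplus\mo_{\p^2}(-2)\to\mone\oplus\mo_{\p^2}$ defining $f$ after the elementary row operations are undone. Once these compatibilities are in place the bundle structure is formal and the class computation is immediate.
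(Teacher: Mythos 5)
Your proposal is correct and follows essentially the same route as the paper: the paper's proof likewise packages the $(a,b)$-block into the resolution $0\ra\mmon\stackrel{(a,b)}{\ra}\mone\oplus\mo_{\p^2}\ra I_2(2)\ra0$, notes that every $I_2(2)$ arises this way, and identifies $M_3'$ with a $\p(H^0(I_2(5)))\simeq\p^{18}$-bundle over $Hilb^{[2]}(\p^2)$ exactly as in Lemma \ref{mcfmt}. The only blemish is a harmless bookkeeping slip in your twists: the residual data $(c,d)$ is a map out of $\mo_{\p^2}(-3)$ (not $\mo_{\p^2}(-2)$), so $\omega_f$ lives in $H^0(R_f\otimes\mo_{\p^2}(3))=H^0(I_2(5))$ rather than $H^0(R_f\otimes\mo_{\p^2}(2))$ --- consistent with the $\p^{18}$ fibers you correctly compute.
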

\begin{proof}We have the following exact sequence
\begin{equation}\label{eslsix}0\ra\mmon\stackrel{(a,b)}{\longrightarrow}\mone\oplus\mo_{\p^2}\ra I_2(2)\ra0, \end{equation}
and for every $[I_2]\in Hilb^{[2]}(\p^2)$, $I_2(2)$ lies in (\ref{eslsix}).  Hence analogous to Lemma \ref{mcfmt}, $M'_3$ is isomorphic to a projective bundle over $Hilb^{[2]}(\p^2)$ with fibers isomorphic to $\p(H^0(I_2(5)))\simeq \p^{18}$. Hence the lemma.
\end{proof}

For a pair $(E,f)\in M_3$, $f$ can be represented by the following matrix
\[\left(\begin{array}{ccc}B&0&0\\ 0&1 &0\\ 0 &0&1\\ A&0&0 \end{array}\right),\]
where $A$ is a $1\times 3$ matrix with entries in $H^0(\mo_{\p^2}(3))$ and $B$ a $2\times 3$ matrix with entries in $H^0(\mone)$.  The parametrizing space of $B$ is of class $[Hilb^{[3]}(\p^2)-|H|\times\p^3+|H|]$ by Lemma \ref{doof} and Lemma \ref{icqff}.  We have the following lemma.
\begin{lemma}\label{mcfivmth}$[M_3]=[(Hilb^{[3]}(\p^2)-|H|\times\p^3+|H|)\times\p^{17}-|H|\times\p^2].$
\end{lemma}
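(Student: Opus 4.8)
The plan is to mimic the proofs of Lemma~\ref{mcfimthp} and Lemma~\ref{mcfimth}: I turn the block form of $f$ into a pair $(Q_B,\sigma_f)$ built from $B$ and $A$ respectively, realize the locus of such pairs as a projective bundle over the parameter space of $B$, and then subtract the locus on which $(E,f)$ fails to be a stable sheaf. Concretely, for $(E,f)\in M_3$ written in the displayed form, $B$ defines a morphism $f_B\colon\mmon^{\oplus 2}\ra\mo_{\p^2}^{\oplus 3}$ whose cokernel $Q_B$ has rank $1$, $c_1(Q_B)=2H$ and $\chi(Q_B)=3$, while $A$ defines $f_A\colon\mo_{\p^2}(-3)\ra\mo_{\p^2}^{\oplus 3}$; composing with the quotient $f_q\colon\mo_{\p^2}^{\oplus 3}\ra Q_B$ gives $\sigma_f:=f_q\circ f_A\in\textrm{Hom}(\mo_{\p^2}(-3),Q_B)=H^0(Q_B(3))$, well defined once we pass to the quotient of $A$ by the rows of $B$. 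Exactly as in diagram~(\ref{bigcd}), the isomorphism class of $F=\textrm{coker}(f)$ is recovered from $(Q_B,\sigma_f)$ with $\sigma_f$ taken up to scalars. Since $h^1(Q_B(3))=0$ and $h^0(Q_B(3))=18$ in every case that will occur, the family of pairs $(Q_B,\sigma_f)$ with fixed $Q_B$ is an open subset of $\p(H^0(Q_B(3)))\simeq\p^{17}$.

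Next I would use Lemma~\ref{scffiv} to cut out exactly which $Q_B$ and which $\sigma_f$ occur. By Hilbert--Burch (as in Lemma~\ref{doof} and in the proof of Proposition~\ref{bosih}), if $Q_B$ is torsion free then $Q_B\simeq I_3(2)$ with the length-$3$ subscheme not contained in a line, that is $[I_3]\in Hilb^{[3]}(\p^2)-\Omega^{[3]}_1$; and then the trivial summands $\mo_{\p^2}^{\oplus k}\subset E$ cannot destabilize $(E,f)$ for any nonzero $\sigma_f$, the argument being the same as the fact that Condition~\ref{scodm} holds automatically when the relevant cokernel is torsion free. If $Q_B$ is not torsion free, then, repeating the analysis preceding Lemma~\ref{tqns} together with diagram~(\ref{ssff}), the case $\textrm{torsion}(Q_B)\simeq\mo_H$ (forcing $Q_B^{tf}\simeq I_1(1)$) is impossible, since it produces a summand $\mo_{\p^2}\subset E$ with $f^{-1}(\mo_{\p^2})\simeq\mmon$ and $\mu(\mo_{\p^2})=0>\mu(E)=-\frac{3}{5}$; hence $\textrm{torsion}(Q_B)\simeq\mo_H(-1)$ and $Q_B^{tf}\simeq\mo_{\p^2}(1)$, and by $\textrm{Ext}^1(\mo_{\p^2}(1),\mo_H(-1))\simeq\bc$ there is for each line a unique non-split such $Q_B$ --- this is precisely the summand ``$+|H|$'' in the class $[Hilb^{[3]}(\p^2)-|H|\times\p^3+|H|]$ of the parameter space of $B$ quoted before the lemma. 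In this torsion case stability forces $\textrm{Im}(\sigma_f)\not\subset\textrm{torsion}(Q_B)$, as in (\ref{insff}), and the excluded sections form $\p(H^0(\mo_H(2)))\simeq\p^2$.

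Finally I would globalize over the parameter space of $B$. Over the open part $Hilb^{[3]}(\p^2)-\Omega^{[3]}_1$ the universal ideal sheaf $\mathcal{I}_3$ furnishes the universal $Q_B=\mathcal{I}_3(2)$, the pushforward $p_{*}(\mathcal{I}_3\otimes q^{*}\mo_{\p^2}(5))$ is locally free of rank $18$, and we get a $\p^{17}$-bundle; over the torsion part, which by Lemma~\ref{icqff} is $|H|$ together with a decomposition into cells $\mathbb{A}^i$, the sheaf $\mathcal{E}xt^1_p(q^{*}\mone,\mo_{\mc_1}\otimes q^{*}\mmon)$ is a line bundle and, choosing nowhere-vanishing sections on the cells exactly as in the proof of Lemma~\ref{icqff}, one obtains families $Q_B^i$ over $\p^2\times\mathbb{A}^i$; taking $\p$ of the degree-$5$ twisted pushforwards gives a $\p^{17}$-bundle over each $\mathbb{A}^i$ from which one removes the $\p^2$-subbundle $\p(H^0(\mo_H(2)))$ of sections landing in the torsion. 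Adding up and using $[\Omega^{[3]}_1]=[|H|\times\p^3]$ from Lemma~\ref{doof},
\[[M_3]=\big([Hilb^{[3]}(\p^2)]-[\Omega^{[3]}_1]+[|H|]\big)[\p^{17}]-[|H|][\p^2],\]
which is the asserted value $[(Hilb^{[3]}(\p^2)-|H|\times\p^3+|H|)\times\p^{17}-|H|\times\p^2]$.

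The main obstacle is not a single hard step but the careful bookkeeping: one must check that the normal form of $f$ really identifies isomorphism classes of pairs with pairs $(Q_B,[\sigma_f])$ in families (which needs the stratification of the $B$-space and a universal $Q_B$ on each stratum, à la Lemma~\ref{icqff} and Proposition~\ref{bosih}), and that Lemma~\ref{scffiv} translates precisely into the two genericity conditions above ($Q_B$ of one of the two allowed types, and $\textrm{Im}(\sigma_f)\not\subset\textrm{torsion}(Q_B)$ in the torsion case), the only subtle point being the exclusion of the $\mo_H$-torsion case via the destabilizing trivial summand $\mo_{\p^2}\subset E$.
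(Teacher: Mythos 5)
Your proof follows the paper's argument for this lemma essentially verbatim: the paper likewise parametrizes $B$ by $Hilb^{[3]}(\p^2)-\Omega_1^{[3]}$ (via Lemma \ref{doof}) together with $|H|$ for the non-torsion-free cokernels (via Lemma \ref{icqff} and the $d=4$ torsion analysis), and realizes $M_3$ as a $\p^{17}$-bundle over the first piece plus the difference of a $\p^{17}$-bundle and a $\p(H^0(\mo_H(2)))\simeq\p^2$-bundle over the second. The only case your stability analysis does not explicitly dispatch is torsion of $Q_B$ supported on a conic (forcing $Q_B^{tf}\simeq\mo_{\p^2}$), but it is excluded by the same mechanism you use for the $\mo_H$ case, namely the destabilizing pair of summands $D'=D''=\mo_{\p^2}^{\oplus 2}$ with $f_B(\mmon^{\oplus 2})\subset\mo_{\p^2}^{\oplus 2}$ and $\mu(D')=0>-\frac{3}{5}=\mu(E)$.
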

\begin{proof}$M_3$ is the union of a projective bundle over $Hilb^{[3]}(\p^2)-\Omega_1^{[3]}$ with fiber isomorphic to $\p(H^0(I_3(5)))\simeq\p^{17}$ and a difference of two projective bundles over $|H|$ with fibers isomorphic to $\p^{17}$ and $\p(H^0(\mo_H(2)))\simeq \p^2$ respectively.  Hence the lemma.
\end{proof}

We stratify $M_2$ into two strata as follows.
\begin{eqnarray} & M_2^s:=\{[(E,f)]\in M_2|f_{rs}:\mmon^{\oplus 2}\stackrel{f|_{\mmon^{\oplus 2}}}{\longrightarrow}E\twoheadrightarrow\mmon^{\oplus 3}~is ~injective\};&\nonumber\\& M_2^c:=M_2-M_2^s.\qquad\qquad\qquad\qquad\qquad\qquad\qquad\qquad\qquad\qquad~~~~~~~~~~~~~~~~~~~~~&\nonumber
\end{eqnarray}

For a pair $(E,f)\in M_2^s$, $f$ can be represented by the following matrix
\[\left(\begin{array}{cccc}0&1&0&0\\ 0&0&1 &0\\ A&0&0&B \end{array}\right),\]
where $A$ is a $3\times 2$ matrix with entries in $H^0(\mo_{\p^2}(2))$ and $B$ a $3\times 1$ matrix with entries in $H^0(\mone)$. 

We stratify $M_2^s$ into two strata as follows.
\begin{eqnarray} & \Xi_1:=\{[(E,f)]\in M_2^s|B\simeq (x_0,x_1,x_2)^t\};& \Xi_2:=M_2^s-\Xi_1.\nonumber
\end{eqnarray}
If $B\simeq (x_0,x_1,x_2)^t$, then $(E,f)$ always satisfies the stability condition.  We have the following diagram
\begin{equation}\label{cdxio}\xymatrix{ 0\ar[r]&\mmon\ar[r]^{~~(x_0,x_1,x_2)}&\mo_{\p^2}^{\oplus 3}\ar[r]^{f_0}& E_0\ar[r]&0\\
&&\mo_{\p^2}(-2)^{\oplus 2}\ar[u]^{f_{A^t}}\ar[ru]_{\xi_f:=f_0\circ f_{A^t}}&&},\end{equation}
with $E_0$ a rank $2$ bundle which is the dual of the kernel of the surjective map $\mo_{\p^2}^{\oplus 3}\stackrel{(x_0,x_1,x_2)^t}{\longrightarrow}\mone.$   Isomorphism classes of $\xi_f$ are parametrized by $Gr(2,15)$ since $h^0(E_0(2))=15$.  Moreover $det(f)\neq 0\Leftrightarrow$ the image of $\xi_f$ is a rank two subsheaf of $E_0\Leftrightarrow$ Im$(\xi_f)$ is not contained in a rank one subsheaf of $E_0$.

Assume Im$(\xi_f)$ is contained in a rank one subsheaf $E_1\subsetneq E_0$.  Since $E_0$ is locally free, we ask $E_1$ to be a line bundle.  Hence either $E_1\simeq \mo_{\p^2}$ or $E_1\simeq \mmon$.  Notice that for any $n$ a map $\mo_{\p^2}(n)\ra E_0$ always factors through map $f_0$ in (\ref{cdxio}).  

Since Hom$(\mo_{\p^2},E_0)\simeq\textrm{Hom}(\mo_{\p^2},\mo_{\p^2}^{\oplus 3})$, all inclusions $i:\mo_{\p^2}\hookrightarrow E_0$ are parametrized by Hom$(\mo_{\p^2},\mo_{\p^2}^{\oplus 3})-\{0\}$.  Moreover $\forall i,i'\in H^0(\mo_{\p^2},\mo_{\p^2}^{\oplus 3})-\{0\},i\not\equiv i'$,  Im$(i)\cap\textrm{Im}(i')=\emptyset$.  Hence all $\xi_f$ such that Im$(\xi_f)$ are contained in $\mo_{\p^2}\simeq E_1\subsetneq E_0$ are parametrized by $Gr(2,h^0(\mo_{\p^2}(2)))\times \p(\textrm{Hom}(\mo_{\p^2},\mo_{\p^2}^{\oplus 3}))\simeq Gr(2,6)\times \p^2$.

Let $\imath\in\textrm{Hom}(\mo_{\p^2}(-1),\mo_{\p^2})-\{0\}$.  All inclusions $j:\mmon\hookrightarrow E_0$ that do not factor through $\imath:\mmon\hookrightarrow\mo_{\p^2}$ are parametrized by Hom$(\mo_{\p^2}(-1),E_0)-\tilde{\imath}$(Hom$(\mo_{\p^2},\mo_{\p^2}^{\oplus 3}))$, where $\tilde{\imath}:$ Hom$(\mo_{\p^2},E_0)\hookrightarrow$ Hom$(\mo_{\p^2}(-1),E_0)$ is the map induced by $\imath$.  Moreover $\forall j,j'\in H^0(\mo_{\p^2}(-1),E_0)-\{0\},j\not\equiv j'$,  Im$(j)\cap\textrm{Im}(j')=\emptyset$, and $\forall \imath,\imath'\in H^0(\mo_{\p^2}(-1),\mo_{\p^2})-\{0\},\imath\not\equiv \imath'$,  Im$(\imath)\cap\textrm{Im}(\imath')=\emptyset$. Hence all $\xi_f$ such that Im$(\xi_f)$ are contained in $\mmon$ but not $\mo_{\p^2}$ in $E_0$ are parametrized by $Gr(2,h^0(\mone))\times (\p(\textrm{Hom}(\mo_{\p^2}(-1),E_0))-\p(\textrm{Hom}(\mo_{\p^2},E_0))\times\p(\textrm{Hom}(\mo_{\p^2}(-1),\mo_{\p^2})))$ $\simeq \p^2\times(\p^7-\p^2\times\p^2)$.

We have the following lemma as a direct consequence.
\begin{lemma}\label{mcfivxo}$[\Xi_1]=[Gr(2,15)-Gr(2,6)\times\p^2-\p^2\times (\p^7-\p^2\times\p^2)].$
\end{lemma}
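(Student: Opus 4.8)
The plan is to realise $\Xi_1$ as a Zariski-open subscheme of $Gr(2,15)$, then to remove its complement as a disjoint union of two locally closed strata whose classes the discussion above already records. First I would note that for $(E,f)\in\Xi_1$ the block $B$ equals $(x_0,x_1,x_2)^t$, stability is automatic, and the only surviving condition on $f$ is its injectivity, $\det(f)\neq0$. By diagram (\ref{cdxio}) the isomorphism class of such an $f$ amounts to a point $[\xi_f]\in Gr(2,h^0(E_0(2)))=Gr(2,15)$, subject to the open condition that the subsheaf of $E_0$ generated by $\mathrm{Im}(\xi_f)$ have rank $2$; I would promote this to an isomorphism $\Xi_1\cong U$, with $U\subset Gr(2,15)$ the open locus of such $2$-planes, by writing the obvious relative version of (\ref{cdxio}) over $Gr(2,15)$.

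Next I would analyse $Z:=Gr(2,15)\setminus U$, the locus where $\mathrm{Im}(\xi_f)$ sits in a rank-one subsheaf of $E_0$. Replacing that subsheaf by its saturation $\mo_{\p^2}(k)$, one has $k\le0$ since $E_0$ is stable of slope $\tfrac12$ (it fits in the twisted Euler sequence $0\ra\mmon\ra\mo_{\p^2}^{\oplus3}\ra E_0\ra0$), $k\ge-2$ since $\mathrm{Im}(\xi_f)$ is a quotient of $\mo_{\p^2}(-2)^{\oplus2}$, and $k\neq-2$ since an $\mo_{\p^2}(-2)$-subbundle would split off ($\mathrm{Ext}^1(\mo_{\p^2}(3),\mo_{\p^2}(-2))=H^1(\mo_{\p^2}(-5))=0$), against the global generation of $E_0$. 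Hence $k\in\{0,-1\}$ and $Z=Z_0\sqcup Z_{-1}$, where $Z_0$ is the set of $[\xi_f]$ with image in some $\mo_{\p^2}$-subsheaf and $Z_{-1}$ the set with image in some $\mmon$-subsheaf but in no $\mo_{\p^2}$-subsheaf. I would then quote the identifications obtained above, $Z_0\cong Gr(2,6)\times\p^2$ (the $2$-plane inside $H^0(\mo_{\p^2}(2))\cong\bc^6$, times the inclusion $\mo_{\p^2}\hookrightarrow E_0$ up to scalar) and $Z_{-1}\cong\p^2\times(\p^7-\p^2\times\p^2)$.

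The last step is purely formal: $U$, $Z_0$, $Z_{-1}$ are locally closed, pairwise disjoint and cover $Gr(2,15)$, so additivity in the Grothendieck ring gives
\[
[\Xi_1]=[Gr(2,15)]-[Gr(2,6)\times\p^2]-[\p^2\times(\p^7-\p^2\times\p^2)],
\]
which is exactly the asserted equality, the right-hand side of the statement being shorthand for this difference.

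The hard part will be the middle step: checking that $Z_0\sqcup Z_{-1}$ really exhausts the non-injective locus with no overlap, and that the set-level descriptions of $Z_0$ and $Z_{-1}$ upgrade to honest isomorphisms of varieties. The key point is the classification of the saturating line bundle — only $\mo_{\p^2}$ and $\mmon$ can occur, which uses $\det E_0=\mo_{\p^2}(1)$, $H^0(E_0(-1))=0$, the global generation of $E_0$, and the Ext-vanishing above — together with the transversality fact that two distinct inclusions of a fixed line bundle into $E_0$ have disjoint images, which is what forces the extra parameters into a $\p^2$ rather than something larger.
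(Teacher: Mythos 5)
Your proposal matches the paper's own argument: the paper likewise identifies $\Xi_1$ with the open locus of $Gr(2,15)=Gr(2,h^0(E_0(2)))$ where $\mathrm{Im}(\xi_f)$ is not contained in a rank-one subsheaf, strata the complement by whether that subsheaf is $\mo_{\p^2}$ or $\mmon$ (but not $\mo_{\p^2}$), parametrizes these as $Gr(2,6)\times\p^2$ and $\p^2\times(\p^7-\p^2\times\p^2)$ using the disjointness of images of distinct inclusions, and concludes by additivity. Your extra justification that the saturating line bundle can only be $\mo_{\p^2}$ or $\mmon$ is a correct elaboration of a step the paper leaves implicit.
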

For a pair $(E,f)\in \Xi_2$, $f$ can be represented by the following matrix
\begin{equation}\label{romxt}\left(\begin{array}{ccccc}0&0&1&0&0\\0&0&0&1&0\\a_1&a_2&0&0&0\\a_3&a_4&0&0&b_1\\  a_5&a_6&0&0&b_2\end{array}\right),
\end{equation}
where $b_i\in H^0(\mone)$ and $a_i\in H^0(\mo_{\p^2}(2))$.  By Lemma \ref{scffi} $(E,f)$ is stable if and only if $kb_1\neq k'b_2,ka_1\neq k'a_2,\forall (k,k')\in\bc-\{0\}$. 

We write down the following two exact sequences.
\begin{equation}\label{xtcsmc}\xymatrix{ 0\ar[r]&\mo_{\p^2}(-1)\ar[r]^{~~(b_1,b_2)}&\mo_{\p^2}^{\oplus 2}\ar[r]^{f_r}& R_f\ar[r]&0}\end{equation}
\begin{equation}\label{scsxt}\xymatrix{ 0\ar[r]&\mo_{\p^2}(-2)\ar[r]^{~~(a_1,a_2)}&\mo_{\p^2}^{\oplus 2}\ar[r]^{f_s}& S_f\ar[r]&0}\end{equation}

$R_f\simeq I_1(1)$.  Either $S_f\simeq I_4(2)$ or $S_f$ lies in the following exact sequence. 
\begin{equation}\label{esxts}0\ra\mo_H(-1)\ra S_f\ra I_1(1)\ra0.
\end{equation} 
Isomorphism classes of $(R_f,S_f)$ are parametrized by $Hilb^{[1]}(\p^2)\times Gr(2,6)$.  

We have two commutative diagrams as follows.
\begin{equation}\label{xtmcbcd}\xymatrix@C=1.5cm@R=0.7cm{&0\ar[d] &0\ar[d] &0\ar[d]\\
0\ar[r]&\mo_{\p^2}(-3)\ar[r]^{~~(a_1,a_2)\otimes id_{\mmon}~~~}\ar[d]_{(b_1,b_2)\otimes id_{\mo_{\p^2}(-2)}}& \mo_{\p^2}(-1)^{\oplus 2}\ar[d]^{(b_1,b_2)^{\oplus 2}}\ar[r]^{~~~~f_s\otimes id_{\mmon}}&S_f(-1)\ar[r]\ar[d]^{id_{S_f}\otimes(b_1,b_2)}&0\\
0\ar[r]&\mo_{\p^2}(-2)^{\oplus 2}\ar[r]^{(a_1,a_2)^{\oplus 2}}\ar[d]_{f_r\otimes id_{\mo_{\p^2}(-2)}}&\mo_{\p^2}^{\oplus 4}\ar[d]^{f_r^{\oplus 2}}\ar[r]^{~~~f_s^{\oplus 2}}& S_f^{\oplus 2}\ar[r]\ar[d]^{id_{S_f}\otimes f_r}&0\\
0\ar[r]&R_f\otimes \mo_{\p^2}(-2)\ar[r]^{~~~id_{R_f}\otimes(a_1,a_2)}\ar[d] &R_f^{\oplus 2}\ar[d]\ar[r]^{id_{R_f}\otimes f_s}&R_f\otimes S_f\ar[r]\ar[d]&0\\
&0 &0 &0}
\end{equation} 
\begin{equation}\label{xtmcscd}\xymatrix@C=1.7cm{\mo_{\p^2}(-2)\ar[r]^{~~~(a_3,a_4)\oplus (a_5,a_6)}&\mo_{\p^2}^{\oplus 4}\ar[d]_{f_r^{\oplus 2}}\ar[r]^{~f_s^{\oplus 2}}& S_f^{\oplus 2}\ar[d]^{id_{S_f}\otimes f_r}\\
 &R_f^{\oplus 2}\ar[r]_{id_{R_f}\otimes f_s~~~} &R_f\otimes S_f.} 
\end{equation}

Isomorphism classes of $(E,f)\in \Xi_2$ are parametrized by $(R_f,S_f,\omega_f)$ with $\omega_f: \mo_{\p^2}(-2)\ra R_f\otimes S_f$ the composed map in (\ref{xtmcscd}).  Hence firstly we have a projective bundle over $Hilb^{[1]}(\p^2)\times Gr(2,6)$ with fibers isomorphic to $\p(H^0(R_f\otimes S_f(2)))\simeq \p^{15}$, which contains $\Xi_2$ as an open subset.  The complement of $\Xi_2$ in that projective bundle is the set of all $(R_f,S_f,\omega_f)$ such that Im$(\omega_f)$ are contained in the torsions of $R_f\otimes S_f$.

Torsion free $S_f$ are parametrized by $Gr(2,6)-\p^2\times\p^2$.  For $S_f$ torsion free, $R_f\otimes S_f$ has torsion if and only if $(a_1,a_2)|_x=0$ with $R_f\simeq I_x(1)$, and the nonzero torsion must be isomorphic to $\mo_x$.  Define $\mathcal{V}_1^i:=p_{*}(\mathcal{I}_1\otimes q^{*}\mo_{\p^2}(i))$ with $\mathcal{I}_1$, $p,q$ the same as before.  $\mathcal{V}^1_1$ and $\mathcal{V}^2_1$ are two vector bundles of rank 2 and 5 respectively over $Hilb^{[1]}(\p^2)$.  Hence $(R_f,S_f,\omega_f)$ with $S_f$ torsion free and Im$(\omega_f)$ contained in the torsion of $R_f\otimes S_f$ are parametrized by $Gr(2,\mathcal{V}^2_1)-Gr(2,\mathcal{V}^1_1)\times\p(H^0(\mone))\cup Gr(2,h^0(\mone))\times \p(\mathcal{V}_1^1)$, where $Gr(2,\mathcal{V}^i_1)$ is the relative Grassmannian of the vector bundle $\mathcal{V}^i_1$.  And
\begin{eqnarray}&&[Gr(2,\mathcal{V}^1_1)\times\p(H^0(\mone))\cup Gr(2,h^0(\mone))\times\p(\mathcal{V}_1^1)]\nonumber\\
&=&[Gr(2,\mathcal{V}^1_1)\times\p(H^0(\mone))]+[Gr(2,h^0(\mone))\times\p(\mathcal{V}_1^1)]-[\p(\mathcal{V}_1^1)] \nonumber\\&=&[Hilb^{[1]}(\p^2)\times(\p^2+\p^2\times\p^1-\p^1)].\nonumber\end{eqnarray}

Now let $S_f$ lie in (\ref{esxts}).  Write $R_f\simeq I_x(1)$ and $I_y(1)$ the quotient of $S_f$ in (\ref{esxts}).  If $x\neq y$, then $R_f\otimes I_y(1)$ is torsion free and in this case $(R_f,S_f,\omega_f)$ with Im$(\omega_f)$ contained in the torsions of $R_f\otimes S_f$ are parametrized by a projective bundle over $Hilb^{[1]}(\p^2)\times\p^2\times\p^2-Gr(2,\mathcal{V}^1_1)\times\p(H^0(\mone))$ with fibers isomorphic to $\p(H^0(I_{1}(1)\otimes\mo_H(1)))\simeq \p^2$.  

Finally we have a projective bundle over $Gr(2,\mathcal{V}^1_1)\times\p(H^0(\mone))$ with fibers isomorphic to $\p(H^0(\mo_x)\oplus H^0(I_1(1)\otimes\mo_H(1)))\simeq\p^3$ parametrizing $(R_f,S_f,\omega_f)$ such that $x=y$ and Im$(\omega_f)$ are contained in the torsions of $R_f\otimes S_f$.  Hence we have the following lemma.
\begin{lemma}\label{mfixtmt}$[\Xi_2]=[Hilb^{[1]}(\p^2)\times(Gr(2,6)\times\p^{15}-Gr(2,5)-\p^2\times\p^2\times\p^2-\p^2\times\p^3-\p^1+\p^2\times\p^2+\p^1\times\p^2+\p^2)].$
\end{lemma}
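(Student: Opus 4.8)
The plan is to organize the geometric analysis carried out in the paragraphs preceding the statement into a single inclusion--exclusion computation in the Grothendieck ring. The key input is the dictionary supplied by diagrams (\ref{xtmcbcd}) and (\ref{xtmcscd}): a pair $(E,f)\in\Xi_2$ is the same datum as a triple $(R_f,S_f,\omega_f)$, where $R_f\simeq I_1(1)$ arises from (\ref{xtcsmc}), $S_f$ arises from (\ref{scsxt}), and $\omega_f:\mo_{\p^2}(-2)\ra R_f\otimes S_f$ is taken up to scalar, subject to the open condition that $\mathrm{Im}(\omega_f)$ is not contained in the torsion of $R_f\otimes S_f$; by the matrix description (\ref{romxt}) together with Lemma \ref{scffiv}, this open condition is precisely the combination of the injectivity $\det f\neq0$ and the stability of $(E,f)$. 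So first I would fix the universal versions of $R_f$ and $S_f$ over $Hilb^{[1]}(\p^2)\times Gr(2,6)$ (which parametrizes the pairs $(R_f,S_f)$), check the base-change/vanishing statement that makes the relevant pushforward locally free of rank $16$, and conclude that $\Xi_2$ is the complement of a closed subscheme $B$ inside the associated $\p^{15}$-bundle $\mathcal{P}$; hence $[\Xi_2]=[\mathcal{P}]-[B]$ with $[\mathcal{P}]=[Hilb^{[1]}(\p^2)]\cdot[Gr(2,6)]\cdot[\p^{15}]$.

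The bulk of the work is computing $[B]$, the locus where $\mathrm{Im}(\omega_f)$ lies in the torsion of $R_f\otimes S_f$, and I would stratify it exactly as in the discussion above. Stratum I: $S_f$ torsion free, so $[S_f]\in Gr(2,6)-\p^2\times\p^2$. Here $R_f\otimes S_f$ acquires torsion only when $(a_1,a_2)|_x=0$ with $R_f\simeq I_x(1)$, in which case the torsion is $\mo_x$ and $\mathrm{Im}(\omega_f)\subset\mo_x$ determines $\omega_f$; so this part of $B$ is $Gr(2,\mathcal{V}_1^2)$ with the non-torsion-free locus of $S_f$ removed, the removed locus being $Gr(2,\mathcal{V}_1^1)\times\p(H^0(\mone))\cup Gr(2,h^0(\mone))\times\p(\mathcal{V}_1^1)$, whose class I would compute by the elementary union/intersection formula. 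Stratum II: $S_f$ sits in the extension (\ref{esxts}), with $R_f\simeq I_x(1)$ and quotient $I_y(1)$. For $x\neq y$ the torsion of $R_f\otimes S_f$ is $R_f\otimes\mo_H(-1)$, and the corresponding part of $B$ is a $\p^2$-bundle ($\p(H^0(I_1(1)\otimes\mo_H(1)))$) over $Hilb^{[1]}(\p^2)\times\p^2\times\p^2-Gr(2,\mathcal{V}_1^1)\times\p(H^0(\mone))$; for $x=y$ an extra $\mo_x$-torsion summand appears and the corresponding part of $B$ is a $\p^3$-bundle over $Gr(2,\mathcal{V}_1^1)\times\p(H^0(\mone))$. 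Each of these is a genuine projective bundle of a vector bundle, so each class is the class of the base times that of the fibre projective space.

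Finally I would add the three contributions. Using $[Gr(2,\mathcal{V}_1^i)]=[Hilb^{[1]}(\p^2)]\cdot[Gr(2,\mathrm{rk}\,\mathcal{V}_1^i)]$ and $[\p(\mathcal{V}_1^1)]=[Hilb^{[1]}(\p^2)]\cdot[\p^1]$ (Zariski-local triviality of Grassmann and projective bundles), every term factors through $[Hilb^{[1]}(\p^2)]$, and collecting them gives exactly the expression in the statement. The parts I expect to be delicate, and would be most careful about, are: (i) verifying that $\Xi_2$ really is the whole complement $\mathcal{P}-B$, that is, that ``$\mathrm{Im}(\omega_f)$ not in torsion'' is equivalent to injectivity-plus-stability of $(E,f)$ via Lemma \ref{scffiv}; and (ii) keeping the three strata of $B$ disjoint and correctly accounting for the non-torsion-free $S_f$, in particular the $x=y$ degeneration in Stratum II where the extra torsion appears, which is the easiest place to over- or undercount. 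The cohomology-vanishing statements needed for all the bundle structures are routine consequences of Castelnuovo--Mumford regularity of the relevant twists, so I do not anticipate trouble there.
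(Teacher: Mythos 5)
Your proposal follows the paper's own argument essentially verbatim: the same identification of $\Xi_2$ with an open subset of the $\p^{15}$-bundle over $Hilb^{[1]}(\p^2)\times Gr(2,6)$ parametrizing triples $(R_f,S_f,\omega_f)$, and the same stratification of the bad locus into the torsion-free-$S_f$ stratum and the extension stratum split according to $x\neq y$ versus $x=y$. The approach and all key steps match the paper, so the proposal is correct.
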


For a pair $(E,f)\in M_2^c$, $f$ can be represented by the following matrix
\[\left(\begin{array}{cccc}b_1&b_2&0&0\\ 0&0&1 &0\\ A_1&A_2&0 &B \end{array}\right),\]
where $b_i\in H^0(\mone)$, $A_i$ is a $3\times 1$ matrix with entries in $H^0(\mo_{\p^2}(2))$ and $B$ a $3\times 2$ matrix with entries in $H^0(\mone)$.  $(E,f)$ is stable, hence $kb_1\neq k'b_2, \forall (k,k')\in\bc^2-\{0\}$ and the parametrizing space $M_B$ of $B$ is of class $[Hilb^{[3]}(\p^2)-|H|\times\p^3+|H|]$ by Lemma \ref{doof} and Lemma \ref{icqff}.
We write down the following two exact sequences.
\begin{equation}\label{focsmc}\xymatrix{ 0\ar[r]&\mo_{\p^2}(-1)^{\oplus 2}\ar[r]^{~~~~f_{B^t}}&\mo_{\p^2}^{\oplus 3}\ar[r]^{f_r}& R_f\ar[r]&0}\end{equation}
\begin{equation}\label{scsfo}\xymatrix{ 0\ar[r]&\mo_{\p^2}(-1)\ar[r]^{~~(b_1,b_2)}&\mo_{\p^2}^{\oplus 2}\ar[r]^{f_s}& S_f\ar[r]&0}\end{equation}

$S_f\simeq I_1(1).$  Either $R_f\simeq I_3(2)$ or $R_f$ lies in the following exact sequence. 
\begin{equation}\label{esfos}0\ra\mo_H(-1)\ra R_f\ra \mone\ra0.
\end{equation} 
Isomorphism classes of $(R_f,S_f)$ are parametrized by $M_B\times Hilb^{[1]}(\p^2)$.  

We have two commutative diagrams as follows.
\begin{equation}\label{fomcbcd}\xymatrix@C=1.5cm@R=0.7cm{&0\ar[d] &0\ar[d] &0\ar[d]\\
0\ar[r]&\mo_{\p^2}(-2)^{\oplus 2}\ar[r]^{~~((b_1,b_2)\otimes id_{\mmon})^{\oplus 2}~~~}\ar[d]_{f_{B^t}\otimes id_{\mo_{\p^2}(-1)}}& \mo_{\p^2}(-1)^{\oplus 4}\ar[d]^{f_{B^t}^{\oplus 2}}\ar[r]^{~~~~(f_s\otimes id_{\mmon})^{\oplus 2}}&S_f(-1)^{\oplus 2}\ar[r]\ar[d]^{id_{S_f}\otimes(b_1,b_2)}&0\\
0\ar[r]&\mo_{\p^2}(-1)^{\oplus 3}\ar[r]^{(b_1,b_2)^{\oplus 3}}\ar[d]_{f_r\otimes id_{\mo_{\p^2}(-1)}}&\mo_{\p^2}^{\oplus 6}\ar[d]^{f_r^{\oplus 2}}\ar[r]^{~~~f_s^{\oplus 3}}& S_f^{\oplus 3}\ar[r]\ar[d]^{id_{S_f}\otimes f_r}&0\\
0\ar[r]&R_f\otimes \mo_{\p^2}(-1)\ar[r]^{~~~id_{R_f}\otimes(b_1,b_2)}\ar[d] &R_f^{\oplus 2}\ar[d]\ar[r]^{id_{R_f}\otimes f_s}&R_f\otimes S_f\ar[r]\ar[d]&0\\
&0 &0 &0}
\end{equation} 
\begin{equation}\label{fomcscd}\xymatrix@C=1.7cm{\mo_{\p^2}(-2)\ar[r]^{A_1^t\oplus A_2^t}&\mo_{\p^2}^{\oplus 6}\ar[d]_{f_r^{\oplus 2}}\ar[r]^{~f_s^{\oplus 2}}& S_f^{\oplus 3}\ar[d]^{id_{S_f}\otimes f_r}\\
 &R_f^{\oplus 2}\ar[r]_{id_{R_f}\otimes f_s~~~} &R_f\otimes S_f.} 
\end{equation}

Isomorphism classes of $(E,f)\in M_2^c$ are parametrized by $(R_f,S_f,\omega_f)$ with $\omega_f: \mo_{\p^2}(-2)\ra R_f\otimes S_f$ the composed map in (\ref{fomcscd}).  Hence we have a projective bundle over $M_B\times Hilb^{[1]}(\p^2)$ with fibers isomorphic to $\p(H^0(R_f\otimes S_f(2)))\simeq \p^{16}$, which contains $M_2^c$ as an open subset.

We need to exclude all the points $(R_f,S_f,\omega_f)$ that Im$(\omega_f)$ are contained in the torsions of $R_f\otimes S_f$.  Firstly let $R_f$ lie in (\ref{esfos}), then the torsion of $R_f\otimes S_f$ is isomorphic to $\mo_H(-1)\otimes I_1(1)$.  These $(R_f,S_f,\omega_f)$ are parametrized by a projective bundle over $(\displaystyle{\cup_{i=0}^2}\mathbb{A}^i)\times Hilb^{[1]}(\p^2)$ with fibers isomorphic to $\p(H^0(\mo_{H}(1)\otimes I_1(1)))\simeq\p^2.$

Let $R_f\simeq I_3(2)$.  Denote $S_f\simeq I_x(1)$.  The torsion of $R_f\otimes S_f$ is a linear subspace of $\mo_x^{\oplus 2}\simeq\bc^2$ which is the kernel of $B^t|_x$.  If $x\not\in Supp(\mo_{\p^2}(2)/R_f)$, $R_f\otimes S_f$ is torsion free.  If $Supp(\mo_{\p^2}(2)/R_f)=\{x,y,z\}$ with $y,z\neq x$, for simplicity we let $x=[0,0,1],$ $y=[0,1,0]$ and $z=[1,0,0]$ and the matrix $B$ have the following form.
\[\left(\begin{array}{cc}x_1&0\\ x_0&x_0\\ 0&x_2\end{array}\right).\]
Hence for this case $Tor(R_f\otimes S_f) \simeq \mo_x$.  

Let $R_f\simeq I_{\{x,2y\}}(2)$, then $B$ can be 
\[\left(\begin{array}{cc}x_0&0\\ x_2&x_0\\ 0&x_1\end{array}\right).\]
Hence for this case $Tor(R_f\otimes S_f) \simeq \mo_x$.  

Let $R_f\simeq I_{\{2x,y\}}(2)$, then $B$ can be
\[\left(\begin{array}{cc}x_0&0\\ x_1&x_0\\ 0&x_2\end{array}\right).\]
Hence for this case $Tor(R_f\otimes S_f) \simeq \mo_x$.  

Let $R_f\simeq I_{\{3x\}}(2)$, then $B$ can be 
\[\left(\begin{array}{cc}x_0&0\\ x_1&x_0\\ kx_2&x_1\end{array}\right),for~any~k\in\bc.\]
Hence for this case $Tor(R_f\otimes S_f) \simeq \mo_x$ if $k\neq 0$, $Tor(R_f\otimes S_f) \simeq \mo_x^{\oplus 2}$ if $k=0$.  

The projective bundle $\p(\mathcal{V}_1^1)$ as defined before over $Hilb^{[1]}(\p^2)$ parametrizes all $(x,C)$ with $x$ a single point and $C$ a curve of degree 1 passing through $x$.  Hence we have the universal family $\overline{\mc}_1\subset \p^2\times \p(\mathcal{V}_1^1).$  Denote $\mathcal{Z}_1$ to be the universal family of subschemes in $Hilb^{[1]}(\p^2)\times\p^2$ and $\pi:\p(\mathcal{V}_1^1)\ra Hilb^{[1]}(\p^2)$ the projection.  Define $\overline{\mc}_1^0:=\overline{\mc}_1-(\pi\times id_{\p^2})^{*}\mathcal{Z}_1$.  Denote $\p(\mathcal{V}_1^1)^{[2]}$ the relative Hilbert scheme of 2-points on $\p(\mathcal{V}_1^1)$ over $Hilb^{[1]}(\p^2)$.  There is a natural embedding $\imath:\p(\mathcal{V}_1^1)\hookrightarrow \p(\mathcal{V}_1^1)^{[2]}$ sending every point to the double-point supported at it.  We have the following diagram
\begin{equation}\label{mcfimx}\xymatrix{\overline{\mathcal{C}}_1^0\times_{\p(\mathcal{V}_1^1)\times_{\pi}\p(\mathcal{V}_1^1)}\overline{\mathcal{C}}_1^0- p^{*}\Delta(\p(\mathcal{V}_1^1))\ar[r]^{~~~~~~~~~~~~~~\delta'}\ar[d]_{p} &\mathcal{X}\ar[d]^{p'}\\ 
\p(\mathcal{V}_1^1)\times_{\pi}\p(\mathcal{V}_1^1) -\Delta(\p(\mathcal{V}_1^1))\ar[r]_{~~~~~~\delta}&\p(\mathcal{V}_1^1)^{[2]}-\imath(\p(\mathcal{V}_1^1)),}\end{equation}
with $\Delta$ the diagonal embedding and $\mathcal{X}$ defined to make (\ref{mcfimx}) a Cartesian diagram.  Notice that a priori $\mathcal{X}$ may not exist, but if it exists, it parametrizes isomorphism classes of $(R_f,S_f,\omega_f)$ with $S_f\simeq I_x(1)$, $R_f\simeq I_{\{x,y,z\}}(2)$ for $\{x,y,z\}\in N_4^0$ i.e. $H^0(I_{\{x,y,z\}}(1))=0$, and Im$(\omega)\subset Tor(R_f\otimes S_f)$.

\begin{lemma}\label{fpins}$\mathcal{X}$ exists and $[\mathcal{X}]=[Hilb^{[1]}(\p^2)\times(\p^2-\p^1)\times(\p^1-1)\times(\p^1-1)].$
\end{lemma}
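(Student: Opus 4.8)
The plan is to make precise the description of $\mathcal{X}$ recalled just before the lemma, to realize $\mathcal{X}$ as a free quotient along the étale double cover $\delta$ of (\ref{mcfimx}), and then to compute $[\mathcal{X}]$ by recognizing $p'$ as a torsor under a rank $2$ vector bundle. First I would note why $\mathcal{X}$, once it exists, parametrizes only the pairs $(R_f,S_f)$: since $\{x,y,z\}\in N_4^0$ forces the three points to be distinct and non-collinear, $B^t|_x$ has rank $1$, so $Tor(R_f\otimes S_f)=\ker(B^t|_x)\simeq\mo_x$ is a length $1$ skyscraper, and hence the locus of $\omega_f\in\p(H^0(R_f\otimes S_f(2)))$ with $\mathrm{Im}(\omega_f)\subset Tor(R_f\otimes S_f)$ is the single point $\p(H^0(\mo_x))$. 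Thus a $\bc$-point of $\mathcal{X}$ amounts to a point $x$ (the one cut out by $S_f\simeq I_x(1)$) together with the remaining two points $y,z$ of $R_f$, all three distinct and not collinear; putting $C_i:=\overline{xy_i}$ and $w_i:=y_i$, this is the same datum as two distinct lines $C_1\neq C_2$ through $x$ with a point $w_i\in C_i\setminus\{x\}$ on each, i.e. a point of the top-left term of (\ref{mcfimx}) modulo the swap.

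Next I would construct $\mathcal{X}$. Because $\p(\mathcal{V}_1^1)\ra Hilb^{[1]}(\p^2)$ is a $\p^1$-bundle, its relative Hilbert scheme of two points is a $\p^2$-bundle and the non-reduced locus $\imath(\p(\mathcal{V}_1^1))$ is the relative Veronese image of $\p(\mathcal{V}_1^1)$; deleting it identifies $\p(\mathcal{V}_1^1)^{[2]}-\imath(\p(\mathcal{V}_1^1))$ with the free quotient of $\p(\mathcal{V}_1^1)\times_{\pi}\p(\mathcal{V}_1^1)-\Delta(\p(\mathcal{V}_1^1))$ by the swap, so $\delta$ is finite étale of degree $2$, Galois with group $\mathbb{Z}/2$. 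This $\mathbb{Z}/2$ lifts to the top-left term of (\ref{mcfimx}) by interchanging the two copies of $\overline{\mc}_1^0$, and the action is free because any fixed point would lie over $\Delta(\p(\mathcal{V}_1^1))$, which has been removed. As the top-left term is quasi-projective, the geometric quotient by this free action exists as a quasi-projective scheme; I take $\mathcal{X}$ to be this quotient, with $p'$ and $\delta'$ the induced morphisms, so that pulling $\mathcal{X}$ back along $\delta$ returns the top-left term and (\ref{mcfimx}) is Cartesian. Finally, the universal ideal sheaf $\mathcal{I}_1$, the universal line $\overline{\mc}_1$, and the data entering the ambient projective bundle over $M_B\times Hilb^{[1]}(\p^2)$ all pull back $\mathbb{Z}/2$-equivariantly over $\p(\mathcal{V}_1^1)\times_{\pi}\p(\mathcal{V}_1^1)-\Delta(\p(\mathcal{V}_1^1))$ and descend along $\delta$; the relative torsion of the resulting $\mathcal{R}\otimes\mathcal{S}$ over $\mathcal{X}$ is a line bundle, and its nonzero constant section embeds $\mathcal{X}$ into the ambient projective bundle as precisely the locus to be excluded, so $\mathcal{X}$ is the asserted parameter scheme.

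For the class I would proceed in two steps. The base of $p'$ is the difference of a $\p^2$-bundle and a $\p^1$-bundle over $Hilb^{[1]}(\p^2)$, both Zariski-locally trivial, so
\[[\p(\mathcal{V}_1^1)^{[2]}-\imath(\p(\mathcal{V}_1^1))]=[Hilb^{[1]}(\p^2)]\cdot\bigl([\p^2]-[\p^1]\bigr).\]
For $p'$ itself, over $\p(\mathcal{V}_1^1)\times_{\pi}\p(\mathcal{V}_1^1)-\Delta(\p(\mathcal{V}_1^1))$ the $\p^1$-bundle $\overline{\mc}_1\ra\p(\mathcal{V}_1^1)$ together with its tautological section $x\in\overline{\mc}_1$ make $\overline{\mc}_1^0$ a torsor under a line bundle on each of the two $\p(\mathcal{V}_1^1)$-factors; hence the top-left term of (\ref{mcfimx}) is a torsor under a rank $2$ bundle $\mathcal{L}_1\oplus\mathcal{L}_2$, and the swap, which interchanges $\mathcal{L}_1$ and $\mathcal{L}_2$, equips it with an equivariant structure, so it descends to a rank $2$ bundle $\mathcal{W}$ on $\p(\mathcal{V}_1^1)^{[2]}-\imath(\p(\mathcal{V}_1^1))$ with $\mathcal{X}$ the associated $\mathcal{W}$-torsor. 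A torsor under a vector bundle is Zariski-locally trivial, hence $[\mathcal{X}]=[\p(\mathcal{V}_1^1)^{[2]}-\imath(\p(\mathcal{V}_1^1))]\cdot\bl^2$. Feeding in the displayed identity and rewriting $\bl=[\p^1]-1$ and $\bl^2=[\p^2]-[\p^1]$ yields $[\mathcal{X}]=[Hilb^{[1]}(\p^2)\times(\p^2-\p^1)\times(\p^1-1)\times(\p^1-1)]$.

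The part I expect to be the main obstacle is the existence statement: one must check that $\mathcal{X}$ really is a quasi-projective scheme, not merely a set of isomorphism classes, and that the abstract quotient produced above is a fine parameter space for the triples $(R_f,S_f,\omega_f)$ in question, i.e. that descent of the universal objects along the étale double cover $\delta$ is effective. Once $\mathcal{X}$ is in place, recognizing $p'$ as a vector-bundle torsor reduces the class computation to bookkeeping with the scissor relations.
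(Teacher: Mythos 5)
Your proof is correct and follows essentially the same route as the paper: both realize $\mathcal{X}$ through the Cartesian diagram (\ref{mcfimx}) built from the \'etale double cover $\delta$, and both obtain $[\mathcal{X}]$ as $[\p(\mathcal{V}_1^1)^{[2]}-\imath(\p(\mathcal{V}_1^1))]\cdot\bl^2=[Hilb^{[1]}(\p^2)]\cdot([\p^2]-[\p^1])\cdot\bl^2$. The only difference is in the justification of existence and of the fibration structure: the paper trivializes everything over an affine cover $Hilb^{[1]}(\p^2)=\cup_i U_i$ and writes the quotient down explicitly as a product, whereas you construct $\mathcal{X}$ globally as a free $\mathbb{Z}/2$-quotient and recognize $p'$ as a torsor under a descended rank-$2$ vector bundle (hence Zariski-locally trivial) --- a slightly cleaner packaging of the same computation.
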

\begin{proof}Take an affine cover of $Hilb^{[1]}(\p^2)=\cup_{i}U_i$ with $U_i\simeq \mathbb{A}^2$.  It is enough to prove the lemma with $Hilb^{[1]}(\p^2)$ replaced by $U_i$.  Denote by $\mathcal{Z}_1|_{U_i}$,  $\p(\mathcal{V}_1^1)|_{U_i}$, $\p(\mathcal{V}_1^1)^{[2]}|_{U_i}$, $\overline{\mc}_1|_{U_i}$ and $\overline{\mc}_1^0|_{U_i}$ the pull back of these schemes via the open embedding $U_i\hookrightarrow Hilb^{[1]}(\p^2)$.  Then we have that $\mathcal{Z}_1|_{U_i}\simeq U_i$, $\p(\mathcal{V}^1_1)|_{U_i}\simeq U_i\times\p^1$, $\p(\mathcal{V}_1^1)^{[2]}|_{U_i}\simeq U_i\times \p^2$, $\overline{\mc}_1|_{U_i}\simeq U_i\times\p^1\times\p^1$ and $\overline{\mc}_1^0|_{U_i}\simeq U_i\times\p^1\times\mathbb{A}^1$.  Hence (\ref{mcfimx}) becomes the following commutative diagram.
\begin{equation}\label{nwfimx}\xymatrix{U_i\times(\p^1\times\p^1-\Delta(\p^1))\times \mathbb{A}^1\times \mathbb{A}^1\ar[r]^{\qquad\qquad\qquad\qquad~~\delta_i'}\ar[d]_{p_i} &\mathcal{X}_i\ar[d]^{p'_i}\\ 
U_i\times(\p^1\times\p^1 -\Delta(\p^1))\ar[r]_{~~\delta_i}&U_i\times (\p^2-\imath(\p^1)),}\end{equation}
with $\mathcal{X}_i\simeq U_i\times(\p^2-\imath(\p^1))\times\mathbb{A}^1\times\mathbb{A}^1$.  Hence the lemma.
\end{proof}
Isomorphism classes of $(R_f,S_f)$ with $S_f\simeq I_x(1)$, $R_f\simeq I_{\{2x,y\}}(2)$ for $H^0(I_{\{2x,y\}}(1))=0$ and Im$(\omega)\subset Tor(R_f\otimes S_f)$ are parametrized by $(\p(\mathcal{V}_1^1)\times_{Hilb^{[1]}(\p^2)}\overline{\mc}_1^0)-\Delta$, where $\Delta$ is defined by the following Cartesian diagram
\[\xymatrix{\Delta\ar[r]\ar[d]&\overline{\mc}_1^0\ar[d]\\ \p(\mathcal{V}_1^1)\ar[r]^{id}&\p(\mathcal{V}_1^1)\ar[r]^{\pi~~~~}& Hilb^{[1]}(\p^2)}\]  
\begin{lemma}\label{fpstwo}$[(\p(\mathcal{V}_1^1)\times_{Hilb^{[1]}(\p^2)}\overline{\mc}_1^0)-\Delta]=[Hilb^{[1]}(\p^2)\times\p^1\times(\p^1-1)\times(\p^1-1)].$
\end{lemma}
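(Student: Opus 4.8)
The plan is to argue exactly as in the proof of Lemma \ref{fpins}: restrict everything to a Zariski-open cover of $Hilb^{[1]}(\p^2)$ over which the bundles involved become trivial, compute the class fiber by fiber, and reassemble with the cut-and-paste relations in the Grothendieck ring. First I would record what the space $(\p(\mathcal{V}_1^1)\times_{Hilb^{[1]}(\p^2)}\overline{\mc}_1^0)-\Delta$ is: over a point $x\in Hilb^{[1]}(\p^2)$ it parametrizes triples $(C_1,C_2,x_2)$, where $C_1$ is a line through $x$ coming from the first factor $\p(\mathcal{V}_1^1)$, $C_2$ is a line through $x$ coming from the base $\p(\mathcal{V}_1^1)$ of $\overline{\mc}_1^0$, and $x_2\in C_2-\{x\}$, subject to $C_1\neq C_2$ --- this inequality being exactly what deleting $\Delta$ imposes. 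Thus the natural map down to $Hilb^{[1]}(\p^2)$ is a fibration whose fiber I expect to have the class of $\p^1\times(\p^1-1)\times(\p^1-1)$.

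Now take the affine cover $Hilb^{[1]}(\p^2)=\cup_i U_i$ with $U_i\simeq\mathbb{A}^2$ used in Lemma \ref{fpins}; it is enough to compute the class of the restriction over each $U_i$. Over $U_i$ the bundle $\mathcal{V}_1^1$ is trivial, so $\p(\mathcal{V}_1^1)|_{U_i}\simeq U_i\times\p^1$; refining the cover if necessary one also has the universal line $\overline{\mc}_1|_{U_i}\simeq U_i\times\p^1\times\p^1$ with the pulled-back section $\mathcal{Z}_1$ a section of the last $\p^1$-factor, hence $\overline{\mc}_1^0|_{U_i}\simeq U_i\times\p^1\times\mathbb{A}^1$. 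Consequently $(\p(\mathcal{V}_1^1)\times_{Hilb^{[1]}(\p^2)}\overline{\mc}_1^0)|_{U_i}\simeq U_i\times\p^1\times\p^1\times\mathbb{A}^1$, with $\Delta|_{U_i}\simeq U_i\times\Delta(\p^1)\times\mathbb{A}^1$ the diagonal in the first two $\p^1$'s, so that $((\p(\mathcal{V}_1^1)\times_{Hilb^{[1]}(\p^2)}\overline{\mc}_1^0)-\Delta)|_{U_i}\simeq U_i\times(\p^1\times\p^1-\Delta(\p^1))\times\mathbb{A}^1$. Since $[\p^1\times\p^1-\Delta(\p^1)]=(\bl+1)^2-(\bl+1)=\bl(\bl+1)=[\p^1\times(\p^1-1)]$, this gives $[((\p(\mathcal{V}_1^1)\times_{Hilb^{[1]}(\p^2)}\overline{\mc}_1^0)-\Delta)|_{U_i}]=[U_i\times\p^1\times(\p^1-1)\times(\p^1-1)]$.

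Summing over the $U_i$ and their multiple intersections by inclusion--exclusion --- equivalently, observing that the whole map down to $Hilb^{[1]}(\p^2)$ is a Zariski-locally trivial fibration with fiber of class $\bl^2(\bl+1)=[\p^1\times(\p^1-1)\times(\p^1-1)]$ --- yields the asserted identity $[(\p(\mathcal{V}_1^1)\times_{Hilb^{[1]}(\p^2)}\overline{\mc}_1^0)-\Delta]=[Hilb^{[1]}(\p^2)\times\p^1\times(\p^1-1)\times(\p^1-1)]$. The only step calling for genuine care --- the ``hard part'', modest as it is here --- is arranging the simultaneous trivializations: one must choose $\{U_i\}$ fine enough that $\mathcal{V}_1^1$ trivializes and, over each $\p(\mathcal{V}_1^1)|_{U_i}$, the universal line $\overline{\mc}_1$ trivializes compatibly with the section $\mathcal{Z}_1$, after which the restriction over $U_i$ is a genuine product and the computation is the elementary one above. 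This is wholly parallel to the proof of Lemma \ref{fpins}.
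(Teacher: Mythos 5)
Your proposal is correct and follows essentially the same route as the paper, which likewise takes the affine cover $Hilb^{[1]}(\p^2)=\cup_i U_i$ with $U_i\simeq\mathbb{A}^2$, trivializes $\p(\mathcal{V}_1^1)$ and $\overline{\mc}_1^0$ over each $U_i$ exactly as in the proof of Lemma \ref{fpins}, and reads off the fiber class $[\p^1\times(\p^1-1)\times(\p^1-1)]$. Your identification of the fiber as $(C_1,C_2,x_2)$ with $C_1\neq C_2$ lines through $x$ and $x_2\in C_2-\{x\}$, and the computation $[\p^1\times\p^1-\Delta(\p^1)]\cdot\bl=\bl^2(\bl+1)$, match the paper's intended argument.
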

\begin{proof}Take the affine cover $Hilb^{[1]}(\p^2)=\cup_{i}U_i$ with $U_i\simeq \mathbb{A}^2$.  Replace $Hilb^{[1]}(\p^2)$ by $U_i$ and the lemma follows immediately.
\end{proof}

The normal sheaf of $\overline{\mc}_1$ in $\p^2\times\p(\mathcal{V}^1_1)$ is locally free over $\overline{\mc}_1^{0}$.  We denote by $\mathcal{N}_{\mc}^0$ the total space of the normal bundle over $\overline{\mc}_1^0$.  Then isomorphism classes of $(R_f,S_f,\omega_f)$ with $S_f\simeq I_x(1)$, $R_f\simeq I_{\{x,2y\}}(2)$ for $H^0(I_{\{x,2y\}}(1))=0$ and Im$(\omega)\subset Tor(R_f\otimes S_f)$ are parametrized by $\mathcal{N}_{\mc}^0$.
\begin{lemma}\label{fpsth}$[\mathcal{N}_{\mc}^0]=[Hilb^{[1]}(\p^2)\times\p^1\times (\p^1-1)\times(\p^1-1)].$
\end{lemma}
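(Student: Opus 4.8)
The plan is to read off $[\mathcal{N}_{\mc}^0]$ from the two elementary facts that a line bundle contributes a factor $\bl$ to the class of its total space, and that $\overline{\mc}_1^0$ is a tower of Zariski-locally trivial bundles over $Hilb^{[1]}(\p^2)$. First I would record that $\mathcal{N}_{\mc}^0\to\overline{\mc}_1^0$ is a line bundle: this is exactly the assertion (made just before the statement) that the normal sheaf of $\overline{\mc}_1$ in the smooth fivefold $\p^2\times\p(\mathcal{V}_1^1)$ is locally free of rank $1$ over $\overline{\mc}_1^0$, the locus $(\pi\times\mathrm{id}_{\p^2})^{*}\mathcal{Z}_1$ having been removed precisely so that this holds. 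Since the total space of a rank-$r$ vector bundle $V\to Y$ has class $\bl^{r}[Y]$ in the Grothendieck ring (stratify $Y$ into locally closed pieces trivializing $V$), we obtain $[\mathcal{N}_{\mc}^0]=\bl\cdot[\overline{\mc}_1^0]$.

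Next I would compute $[\overline{\mc}_1^0]$. The universal curve $\overline{\mc}_1\to\p(\mathcal{V}_1^1)$ is a Zariski-locally trivial $\p^1$-bundle (it is the pullback of the $\p^1$-bundle $\mc_1\to|H|$ along $\p(\mathcal{V}_1^1)\to|H|$), carrying the section $(\pi\times\mathrm{id}_{\p^2})^{*}\mathcal{Z}_1$; removing that section gives the $\mathbb{A}^1$-bundle $\overline{\mc}_1^0\to\p(\mathcal{V}_1^1)$, so $[\overline{\mc}_1^0]=\bl\cdot[\p(\mathcal{V}_1^1)]$. As $\p(\mathcal{V}_1^1)\to Hilb^{[1]}(\p^2)$ is a $\p^1$-bundle, $[\p(\mathcal{V}_1^1)]=[\p^1]\cdot[Hilb^{[1]}(\p^2)]$, hence $[\mathcal{N}_{\mc}^0]=\bl^2\cdot[\p^1]\cdot[Hilb^{[1]}(\p^2)]$, which is exactly $[Hilb^{[1]}(\p^2)\times\p^1\times(\p^1-1)\times(\p^1-1)]$ since $[\p^1-1]=\bl$. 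Alternatively, to stay in the bookkeeping style of Lemma \ref{fpins} and Lemma \ref{fpstwo}, one may take the affine cover $Hilb^{[1]}(\p^2)=\cup_iU_i$ with $U_i\simeq\mathbb{A}^2$: there $\overline{\mc}_1^0|_{U_i}\simeq U_i\times\p^1\times\mathbb{A}^1$ was already established, and after the further stratification $\p^1=\mathbb{A}^1\sqcup\mathrm{pt}$ the normal line bundle becomes trivial over the resulting affine spaces (vector bundles on affine space are trivial), giving the same count; but the global $\bl$-bundle identity above avoids the inclusion–exclusion over the $U_i\cap U_j$.

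I do not expect a genuine obstacle here: the only non-formal input is the local freeness of the normal sheaf on $\overline{\mc}_1^0$, i.e. that $\overline{\mc}_1$ is a local complete intersection there, and this is already granted in the text; the remainder is the standard scissor-relation computation with $\p^1$- and $\mathbb{A}^1$-bundles used throughout this section. The slight point to watch is not to confuse $[Hilb^{[1]}(\p^2)]=[\p^2]$ with $\bl^2$; working symbolically as above sidesteps it.
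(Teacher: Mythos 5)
Your proposal is correct and follows essentially the same route as the paper: the paper likewise writes $[\mathcal{N}_{\mc}^0]=[\mathbb{A}^1\times(\overline{\mc}_1-(\pi\times id_{\p^2})^{*}\mathcal{Z}_1)]$ and then evaluates $[\overline{\mc}_1]=[Hilb^{[1]}(\p^2)\times\p^1\times\p^1]$ and $[(\pi\times id_{\p^2})^{*}\mathcal{Z}_1]=[\p^1\times Hilb^{[1]}(\p^2)]$, which is exactly your $\mathbb{A}^1$-bundle-over-$\p(\mathcal{V}_1^1)$ computation phrased as a scissor relation.
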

\begin{proof}$[\mathcal{N}_{\mc}^0]=[\mathbb{A}^1\times(\overline{\mc}_1-(\pi\times id_{\p^2})^{*}\mathcal{Z}_1)]$.  Also we see $[(\pi\times id_{\p^2})^{*}\mathcal{Z}_1]=[\p^1\times Hilb^{[1]}(\p^2)]$ and $[\overline{\mc}_1]=[Hilb^{[1]}(\p^2)\times\p^1\times\p^1]$.  Hence the lemma.
\end{proof}

If $R_f\simeq I_{\{3x\}}(2)$ with $H^0(R_f(-1))=0$ and $Tor(R_f\otimes I_x) \simeq \mo_x$, then $R_f$, viewed as an ideal of $\widehat{\mo}_{\p^2,x}\simeq\bc[[x_0,x_1]]$, is generated by $(kx_0-x_1^2,\mathfrak{m}^3)$ with $\mathfrak{m}$ the maximal ideal in $\bc[[x_0,x_1]]$ and $k\in\bc^{*}$. Hence such $R_f$ are parametrized by $(x_0,k)$ for any fixed $x\in\p^2$.  Hence isomorphism classes of these $(R_f,S_f,\omega_f)$ such that Im$(\omega)\subset Tor(R_f\otimes S_f)$ are parametrized by $\p(\mathcal{V}^1_1)\times (\mathbb{A}^1-\{0\})$.

Finally, let $S_f\simeq I_x(1)$, $R_f\simeq I_{\{3x\}}(2)$ with $H^0(R_f(-1))=0$ and $Tor(R_f\otimes S_f)\simeq\mo_x^{\oplus 2}$, then $R_f$ is determined by $x$ since $R_f(-2)\simeq I_{x}^2$.  Hence isomorphism classes of these $(R_f,S_f,\omega_f)$ such that Im$(\omega)\subset Tor(R_f\otimes S_f)$ are parametrized by $Hilb^{[1]}(\p^2)\times \p(H^0(\mo_x^{\oplus 2}))\simeq Hilb^{[1]}(\p^2)\times \p^1.$

\begin{lemma}\label{fimcmt}$[M^c_2]=[M_B\times Hilb^{[1]}(\p^2)\times\p^{16}-\p^2\times\p^2\times\p^2-\p^2\times\p^1\times\p^1\times(\p^1-1)-\p^2\times\p^2\times(\p^2-\p^1)],$ with $[M_B]=[Hilb^{[3]}(\p^2)-\p^2\times\p^3+|H|]$. 
\end{lemma}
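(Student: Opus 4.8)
The plan is to realise $M_2^c$ as an open subset of a projective bundle over $M_B\times Hilb^{[1]}(\p^2)$ and to compute its complement by stratifying according to the torsion of $R_f\otimes S_f$. First I would record the class of $M_B$: by Lemma \ref{doof} and Lemma \ref{icqff} the parametrizing space of the $3\times 2$ matrix $B$ has class $[Hilb^{[3]}(\p^2)-|H|\times\p^3+|H|]$, and together with the sequence (\ref{scsfo}) --- in which the stability condition $kb_1\neq k'b_2$ forces $S_f\simeq I_1(1)$ and every such sheaf occurs --- this gives that isomorphism classes of pairs $(R_f,S_f)$ are parametrized by $M_B\times Hilb^{[1]}(\p^2)$. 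Then, using the commutative diagrams (\ref{fomcbcd}) and (\ref{fomcscd}), I would note that a pair $(E,f)\in M_2^c$ is the same datum as a triple $(R_f,S_f,\omega_f)$ with $\omega_f\colon\mo_{\p^2}(-2)\ra R_f\otimes S_f$ the composed map, constrained only by $\det(f)\neq 0$, which (exactly as in the proof of Lemma \ref{mcfimtt}) means that $\mathrm{Im}(\omega_f)$ is not contained in $Tor(R_f\otimes S_f)$. Hence $M_2^c$ is the total space of the projective bundle with fibre $\p(H^0((R_f\otimes S_f)(2)))\simeq\p^{16}$ over $M_B\times Hilb^{[1]}(\p^2)$, minus the closed ``bad locus'' $\mathcal Y:=\{[(R_f,S_f,\omega_f)]:\mathrm{Im}(\omega_f)\subset Tor(R_f\otimes S_f)\}$, so $[M_2^c]=[M_B\times Hilb^{[1]}(\p^2)\times\p^{16}]-[\mathcal Y]$ and everything reduces to computing $[\mathcal Y]$.

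To compute $[\mathcal Y]$ I would stratify by the isomorphism type of $T:=Tor(R_f\otimes S_f)$ and of $R_f$. If $R_f$ lies in the extension (\ref{esfos}) then $T$ is one-dimensional and $\mathcal Y$ over this locus is a projective bundle with fibre $\p(H^0(\mo_H(1)\otimes I_1(1)))\simeq\p^2$ over $|H|\times Hilb^{[1]}(\p^2)$, contributing $[|H|\times Hilb^{[1]}(\p^2)\times\p^2]$. Otherwise $R_f\simeq I_3(2)$; writing $S_f\simeq I_x(1)$, the torsion $T$ is the kernel of $B^{t}|_x$, so it vanishes unless $x\in Supp(\mo_{\p^2}(2)/R_f)$, equals $\mo_x$ whenever $x$ lies in that length-$3$ scheme, except that for $R_f\simeq I_{3x}(2)$ with a degenerate local generator (the parameter $k=0$ in the normal form of $B$) it equals $\mo_x^{\oplus 2}$. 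Since $\mathrm{Hom}(\mo_{\p^2}(-2),\mo_x)$ is one-dimensional, over each $T\simeq\mo_x$ stratum the bad section $\omega_f$ is unique up to scalar, so there $\mathcal Y$ maps isomorphically onto the parametrizing space of the corresponding pairs $(R_f,S_f)$; running through the partition types of the length-$3$ scheme relative to $x$ (three distinct points; $\{x,2y\}$; $\{2x,y\}$; $\{3x\}$ with $k\neq 0$) and using the explicit shape of $B$, I would identify these spaces with $\mathcal X$, $\mathcal N_{\mc}^0$, $(\p(\mathcal{V}_1^1)\times_{Hilb^{[1]}(\p^2)}\overline{\mc}_1^0)-\Delta$ and $\p(\mathcal{V}_1^1)\times(\mathbb{A}^1-\{0\})$, whose classes are supplied by Lemma \ref{fpins}, Lemma \ref{fpstwo}, Lemma \ref{fpsth} and a short direct computation. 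Finally, when $T\simeq\mo_x^{\oplus 2}$ one has $R_f(-2)\simeq I_x^2$, so $R_f$ is determined by $x$ and the bad $\omega_f$ ranges over $\p(H^0(\mo_x^{\oplus 2}))\simeq\p^1$, contributing $[Hilb^{[1]}(\p^2)\times\p^1]$.

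These six strata are locally closed, pairwise disjoint and cover $\mathcal Y$, so $[\mathcal Y]$ is the sum of the six classes just listed. Substituting $[Hilb^{[1]}(\p^2)]=[|H|]=[\p^2]$ and $[\p(\mathcal{V}_1^1)]=[\p^2\times\p^1]$ and collecting powers of $\bl$ should give, after a routine simplification, $[\mathcal Y]=[\p^2\times\p^2\times\p^2]+[\p^2\times\p^1\times\p^1\times(\p^1-1)]+[\p^2\times\p^2\times(\p^2-\p^1)]$; combined with $[M_2^c]=[M_B\times Hilb^{[1]}(\p^2)\times\p^{16}]-[\mathcal Y]$ this is the asserted formula.

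The main obstacle I expect is not any single deep point but the bookkeeping: one must check that the stratification of $\mathcal Y$ by torsion type is exhaustive and disjoint --- in particular that the position of $x$ inside $Supp(\mo_{\p^2}(2)/R_f)$ together with the degeneracy parameter $k$ accounts for every way $R_f\otimes S_f$ can acquire torsion --- and that the incidence varieties $\mathcal X$, $\mathcal N_{\mc}^0$ and $(\p(\mathcal{V}_1^1)\times_{Hilb^{[1]}(\p^2)}\overline{\mc}_1^0)-\Delta$ really do parametrize exactly the triples claimed of them, which is where Lemma \ref{fpins}, Lemma \ref{fpstwo}, Lemma \ref{fpsth} and the matrix normal forms for $B$ enter. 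All of the identifications are Zariski-local over $Hilb^{[1]}(\p^2)$, so no constructibility issue arises; the care is entirely in the case analysis.
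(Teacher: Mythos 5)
Your proposal is correct and follows essentially the same route as the paper: realize $M_2^c$ as the complement, in the $\p^{16}$-bundle of triples $(R_f,S_f,\omega_f)$ over $M_B\times Hilb^{[1]}(\p^2)$, of the locus where $\mathrm{Im}(\omega_f)$ lands in $Tor(R_f\otimes S_f)$, and stratify that locus by the torsion type exactly as you describe (the extension case contributing $\p^2\times|H|\times Hilb^{[1]}(\p^2)$, and the $I_3(2)$ cases contributing $\mathcal X$, $(\p(\mathcal{V}_1^1)\times_{Hilb^{[1]}(\p^2)}\overline{\mc}_1^0)-\Delta$, $\mathcal N_{\mc}^0$, $\p(\mathcal{V}^1_1)\times(\mathbb{A}^1-\{0\})$ and $Hilb^{[1]}(\p^2)\times\p^1$ via Lemmas \ref{fpins}, \ref{fpstwo} and \ref{fpsth}). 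The final arithmetic also checks out: the sum of your six strata classes equals $[\p^2\times\p^2\times\p^2]+[\p^2\times\p^1\times\p^1\times(\p^1-1)]+[\p^2\times\p^2\times(\p^2-\p^1)]$ as claimed.
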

\begin{proof}$[M_2^c]=[M_B\times Hilb^{[1]}(\p^2)\times\p^{16}-\p^2\times |H|\times Hilb^{[1]}(\p^2)-\p(\mathcal{V}^1_1)\times (\mathbb{A}^1-1)- Hilb^{[1]}(\p^2)\times \p^1-\mathcal{X}-\p(\mathcal{V}^1_1)\times_{Hilb^{[1]}(\p^2)}\overline{\mc}_1^0+\Delta-\mathcal{N}_{\mc}^0]$.

By Lemma \ref{fpins}, Lemma \ref{fpstwo} and Lemma \ref{fpsth}, we get the lemma by direct computation.
\end{proof}
\begin{proof}[Proof of Theorem \ref{mtfi} for $M(5,2)$] We have 
\[[M(5,2)]=[M'_3]+[M_3]+[\Xi_1]+[\Xi_2]+[M_2^c].\] 
Combine Lemma \ref{mcfimthp}, Lemma \ref{mcfivmth}, Lemma \ref{mcfivxo}, Lemma \ref{mfixtmt} and Lemma \ref{fimcmt}, we get the result by direct computation.
\end{proof}

\section*{Appendix}
\appendix
\section{Proofs of Lemma \ref{scffi} and Lemma \ref{scffiv}.}
\begin{lemma}[\textbf{Lemma \ref{scffi}}]\label{ALe}A pair $(E,f)$ with $rank(E)=5$ and $deg(E)=-4$ is stable if and only if for any two direct summands $D',D''$ of $E$ such that $D'\simeq D''$ and $f(D'\otimes\mmon)\subset D''$,  we have $\mu(D')<\mu(E)$.\end{lemma}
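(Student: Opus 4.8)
The plan is to follow the proof of Lemma \ref{scff}, upgrading its two-case analysis to the three possible shapes of $E$. The ``only if'' direction is immediate from Lemma \ref{ncss}: if $D'\simeq D''$ are direct summands of $E$ with $f(D'\otimes\mmon)\subset D''$, then $f^{-1}(D'')\simeq D''\otimes\mmon$, so $D''$ is one of the subsheaves tested by the stability condition; since $g.c.d.(5,4)=1$, semistability and stability coincide and no proper subsheaf of $E$ can have slope exactly $\mu(E)=-\tfrac45$, so stability of $(E,f)$ forces $\mu(D')=\mu(D'')<\mu(E)$.

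For the ``if'' direction I would argue by contraposition. If $(E,f)$ is not stable then, again since $g.c.d.(5,4)=1$, there is a direct sum of line bundles $E'\subsetneq E$ with $f^{-1}(E')\simeq E'\otimes\mmon$ and $\mu(E')>\mu(E)$, and I must produce direct summands $D'\simeq D''$ of $E$ with $f(D'\otimes\mmon)\subset D''$ and $\mu(D')>\mu(E)$. By Lemma \ref{ncss}, $E$ is one of
\[\mo_{\p^2}\oplus\mmon^{\oplus 4},\qquad \mo_{\p^2}^{\oplus 2}\oplus\mmon^{\oplus 2}\oplus\mo_{\p^2}(-2),\qquad \mone\oplus\mo_{\p^2}\oplus\mmon\oplus\mo_{\p^2}(-2)^{\oplus 2},\]
i.e. $E$ lies in $M_1$, $M_2$ or $M_3$. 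If $E\in M_1$, Lemma \ref{roc} already says that such an $E'$ is a direct summand of $E$, which is the easy case below. If $E\in M_2\cup M_3$, the conditions $\mu(E')>\mu(E)$, $\operatorname{rank}E'<5$, and $\operatorname{Hom}(\mo_{\p^2}(m),E)\neq 0$ for each line bundle summand $\mo_{\p^2}(m)$ of $E'$, leave only finitely many isomorphism types for $E'$, which I would list explicitly (about half a dozen for $M_2$, about a dozen for $M_3$).

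For each such $E'$, with image $E''\subset E$, I would split into two cases. If $E''$ is a direct summand of $E$, then $f^{-1}(E'')=\ker\!\big(E\otimes\mmon\xrightarrow{f}E\twoheadrightarrow E/E''\big)$, and inspecting a matrix representing $f$ one sees that the prescribed isomorphism type $f^{-1}(E'')\simeq E''\otimes\mmon$ forces the relevant block of $f$ to degenerate so that $f^{-1}(E'')$ splits off as a direct summand of $E\otimes\mmon$; writing $E\otimes\mmon=f^{-1}(E'')\oplus(\ast)$ and twisting by $\mone$, the sheaves $D'':=E''$ and $D':=f^{-1}(E'')\otimes\mone$ are then isomorphic direct summands of $E$ with $f(D'\otimes\mmon)=f(f^{-1}(E''))\subset D''$ and $\mu(D')=\mu(E')>\mu(E)$, so the displayed condition fails. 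If $E''$ is not a direct summand of $E$, then it sits properly inside an explicit direct summand $\widehat E$ of $E$ (the subbundle of $E$ it generates), and, exactly as in cases (1)--(2) of the proof of Lemma \ref{scff}, the hypothesis $f^{-1}(E'')\simeq E''\otimes\mmon$ together with Nakayama's lemma would force $\widehat E\otimes\mmon\subset f^{-1}(E'')$, contradicting the injectivity of $f$; so this subcase cannot occur. In either case the stability condition of the lemma fails, which proves the contrapositive.

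The main obstacle is the combinatorial bookkeeping: running through the admissible $E'$ for $M_2$ and $M_3$ and, for every non-split embedding $E''\hookrightarrow E$, carrying out the Nakayama argument of the non-summand case. Each individual verification is elementary and directly parallels the $d=4$ computation already done in Lemma \ref{scff}, but there are many of them, so this is the bulk of the appendix; the proof of Lemma \ref{scffiv} (the case $\deg(E)=-3$) will be identical, with the three shapes $\mo_{\p^2}^{\oplus 2}\oplus\mmon^{\oplus 3}$, $\mo_{\p^2}^{\oplus 3}\oplus\mmon\oplus\mo_{\p^2}(-2)$, $\mone\oplus\mo_{\p^2}\oplus\mmon^{\oplus 2}\oplus\mo_{\p^2}(-2)$ in place of the ones above.
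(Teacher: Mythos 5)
Your overall architecture (contrapositive, the three shapes of $E$ from Lemma \ref{ncss}, splitting according to whether the destabilizing subsheaf sits inside $E$ as a direct summand) matches the paper's, and the ``only if'' direction is fine. But your Case B is a genuine error, not a removable slip: when the destabilizer $E''$ is \emph{not} a direct summand, the paper does not derive a contradiction --- it derives the conclusion. Concretely, take $E\simeq\mo_{\p^2}^{\oplus 2}\oplus\mmon^{\oplus 2}\oplus\mo_{\p^2}(-2)$ and $E''\simeq\mo_{\p^2}\oplus\mmon$ embedded in $\mo_{\p^2}^{\oplus 2}$ but not as a summand, with $f^{-1}(E'')=E''\otimes\mmon$. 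What Nakayama's lemma gives you from $f(E''\otimes\mmon)\subset E''\subset\widehat E:=\mo_{\p^2}^{\oplus 2}$ is $f(\widehat E\otimes\mmon)\subset\widehat E$ (the target propagates to the summand generated by $E''$, because $E/\widehat E$ is torsion free), \emph{not} $\widehat E\otimes\mmon\subset f^{-1}(E'')$; and $f(\widehat E\otimes\mmon)\subset\widehat E$ is a map between rank-$2$ sheaves, perfectly compatible with injectivity of $f$. The correct conclusion is that $D'=D''=\widehat E$ is the required pair of isomorphic direct summands with $\mu(\widehat E)=0>\mu(E)$, and one must then check, for each listed embedding $E''$, that the smallest direct summand containing it has slope $>\mu(E)$. (You have also misread Lemma \ref{scff}: the injectivity contradiction there rules out only the sub-case where the destabilizer is a genuine direct summand $E'$ whose preimage happens to be a non-summand copy $E''\otimes\mmon$; when the destabilizer is $E''$ itself, that proof produces $D=D'=\mo_{\p^2}^{\oplus 2}$.) Dismissing Case B outright leaves the ``if'' direction unproven for exactly these destabilizers. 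Note also that Nakayama cannot even be invoked until you know $E''\otimes\mmon\subseteq f^{-1}(E'')$, which the abstract isomorphism $f^{-1}(E'')\simeq E''\otimes\mmon$ does not give you; the paper therefore sub-divides further according to what $f^{-1}(E'')$ actually is as a subsheaf.

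Your Case A is also under-argued at the decisive point. The claim that $f^{-1}(E'')\simeq E''\otimes\mmon$ forces $f^{-1}(E'')$ to split off as a direct summand of $E\otimes\mmon$ is precisely what has to be proved, and the paper does not establish it in that generality; instead, for each embedding it distinguishes ``$f^{-1}(E'')$ equals $E''\otimes\mmon$ literally'' from ``$f^{-1}(E'')$ is a direct summand of $E\otimes\mmon$'', and in the harder configurations (case (3) for the shape $\mo_{\p^2}^{\oplus 2}\oplus\mmon^{\oplus 2}\oplus\mo_{\p^2}(-2)$, cases (6)--(7) for $\mone\oplus\mo_{\p^2}\oplus\mmon\oplus\mo_{\p^2}(-2)^{\oplus 2}$) it must first re-normalize $f$ so as to identify certain summands of $E\otimes\mmon$ with summands of $E$ before it can exhibit $D'$ and $D''$, which in those cases are \emph{different} direct summands. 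These Hom-vanishing and re-coordinatization steps, together with the Case B analysis above, are the actual content of the appendix and are missing from your sketch.
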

\begin{proof}We first prove the lemma for $E\simeq\mo_{\p^2}^{\oplus 2}\oplus\mmon^{\oplus 2}\oplus\mo_{\p^2}(-2)$.  We want to show that if $\exists E'\subset E$ a direct sum of line bundles with $\mu(E')>\mu(E)$ and $f^{-1}(E')\simeq E'\otimes\mmon$, then $\exists D,D'\subset E$ two direct summands with $D\simeq D'$ and $\mu(D)>\mu(E)$, such that $f(D\otimes\mmon)\subset D'$.  With no loss of generality, we assume that $E'$ has the form $\bigoplus_{i}\mo_{\p^2}(n_i)^{\oplus a_i}$ with $a_i>0$ and $n_{i}-n_{i+1}=1$.  

Let $E'\simeq E''\subset E$ with $E''$ not a direct summand of $E$.  Then $E''$ has to be one of the following three cases:

(1) $E''\subset\mo_{\p^2}^{\oplus 2}$ and $E''\simeq \mo_{\p^2}\oplus\mmon$;

(2) $E''\subset\mo_{\p^2}^{\oplus 2}\oplus\mmon$ and $E''\simeq \mo_{\p^2}\oplus\mmon^{\oplus 2};$

(3) $E''\subset\mo_{\p^2}^{\oplus 2}\oplus\mmon^{\oplus 2}$ and $E''\simeq \mo_{\p^2}^{\oplus 2}\oplus\mmon\oplus\mo_{\p^2}(-2).$

By Nakayama's lemma, we know that $E''\otimes\mmon$ can't be the preimage of any direct summand of $E$ and also $f^{-1}(E'')= E''\otimes\mmon\Rightarrow f(D\otimes\mmon)\subset D$ with $D$ the smallest direct summand of $E$ containing $E''$.

So we assume that $f^{-1}(E'')=E'\otimes\mmon$ with $E'$ a direct summand of $E$ isomorphic to $E''$.

Let $E''$ be in case (1).  By the assumption we have $f(E'\otimes \mmon)\subset \mo_{\p^2}^{\oplus 2}$.  On the other hand, write $E=\mo_{\p^2}\oplus E'\oplus\mmon\oplus \mo_{\p^2}(-2)$, so for the other direct summand $\mo_{\p^2}$ we have $f(\mo_{\p^2}\otimes\mmon)\subset\mo_{\p^2}^{\oplus 2}\oplus\mmon$.  Hence $f((\mo_{\p^2}\oplus E')\otimes \mmon)\subset \mo_{\p^2}^{\oplus 2}\oplus\mmon$, and hence we get $D=D'=E'\oplus\mo_{\p^2}=\mo_{\p^2}^{\oplus 2}\oplus \mmon$.
 
Case (2) is analogous to case (1).

Let $E''$ be in case (3).  By the assumption we have $f(E'\otimes\mmon)\subset\mo_{\p^2}^{\oplus 2}\oplus\mmon^{\oplus 2}$.  Write $E=E'\oplus L$ with $L\simeq\mmon$.  We can ask $f$ to identify $L\otimes \mmon$ with the summand $\mo_{\p^2}(-2)$ in $E$.  

Denote by $f_o:E'\otimes \mmon\ra E''$ the restriction of $f$.  If $f_o((\mo_{\p^2}^{\oplus 2}\oplus\mmon)\otimes\mmon)\subset\mo_{\p^2}^{\oplus 2}\oplus\mmon$, then we have $D=D'=\mo_{\p^2}^{\oplus 2}\oplus\mmon\subset E'$.  

If $f_o((\mo_{\p^2}^{\oplus 2}\oplus\mmon)\otimes\mmon)\not\subset\mo_{\p^2}^{\oplus 2}\oplus\mmon$, $f_o$ induces an isomorphism from the direct summand $\mmon\otimes \mmon$ of $E'\otimes\mmon$ to the direct summand $\mo_{\p^2}(-2)$ of $E''$.  Hence we can ask $f_o$ to identify these two direct summands.  Write $E'=\mmon\oplus L'$ with $L'\simeq \mo_{\p^2}^{\oplus 2}\oplus\mo_{\p^2}(-2)$, then we have $f_o(L'\otimes\mmon)\subset \mo_{\p^2}^{\oplus 2}\oplus \mmon$.  Moreover because $f$ identifies $L\otimes\mmon$ with the summand $\mo_{\p^2}(-2)$ in $E$, $f((L\oplus L')\otimes\mmon)$ is contained in the direct summand $\mo_{\p^2}^{\oplus 2}\oplus \mmon\oplus\mo_{\p^2}(-2)$ of $E$, hence we have $D=L\oplus L'$ and $D'$ is the direct summand $\mo_{\p^2}^{\oplus 2}\oplus \mmon\oplus\mo_{\p^2}(-2)$ containing $f(D\otimes\mmon)$.

This finishes the proof for $E\simeq\mo_{\p^2}^{\oplus 2}\oplus\mmon^{\oplus 2}\oplus\mo_{\p^2}(-2)$.

Let $E\simeq\mone\oplus\mo_{\p^2}\oplus\mmon\oplus\mo_{\p^2}(-2)^{\oplus 2}$.  We have the following six possibilities for $E''$. 

(4) $E''\subset\mo_{\p^2}(1)$ and $E''\simeq \mo_{\p^2}$;

(5) $E''\subset\mone\oplus\mo_{\p^2}$ and $E''\simeq \mo_{\p^2}^{\oplus 2};$

(6) $E''\subset\mone\oplus\mo_{\p^2}$ and $E''\simeq \mo_{\p^2}\oplus\mmon;$

(7) $E''\subset\mone\oplus\mo_{\p^2}(-1)$ and $E''\simeq \mo_{\p^2}\oplus\mmon;$

(8) $E''\subset\mone\oplus\mo_{\p^2}\oplus\mmon$ and $E''\simeq \mo_{\p^2}^{\oplus 2}\oplus\mmon;$

(9) $E''\subset\mone\oplus\mo_{\p^2}\oplus\mmon$ and $E''\simeq \mo_{\p^2}\oplus\mmon^{\oplus 2}.$

Analogously $E''\otimes\mmon$ can not be the preimage of any direct summand of $E$ and also $f^{-1}(E'')=E''\otimes\mmon\Rightarrow f(D\otimes\mmon)\subset D$ with $D$ the smallest direct summand of $E$ containing $E''$.  Let $E_3''$ and $E_4''$ be the bundles in case (6) and (7) respectively.  $f^{-1}(E''_3)=E_4''\otimes\mmon\Rightarrow f((\mone\oplus\mo_{\p^2}\oplus\mmon)\otimes\mmon)\subset\mone\oplus\mo_{\p^2}\oplus\mmon$,  and $f^{-1}(E''_4)=E_3''\otimes\mmon\Rightarrow f(\mone\otimes\mmon)\subset\mone.$  

Hence we then assume $E'$ a direct summand of $E$ isomorphic to $E''$ and $f^{-1}(E'')=E'\otimes\mmon$.

For case (4), by assumption we have $f(\mo_{\p^2}\otimes\mmon)\subset\mone\oplus\mo_{\p^2}$ hence $D=D'= \mone\oplus\mo_{\p^2}$.

Bundles in case (5), case (8) and case (9) can not be direct summands of $E$, hence these three cases are done.

For case (6), by assumption we have $f((\mo_{\p^2}\oplus\mmon)\otimes\mmon)\subset\mone\oplus\mo_{\p^2}\oplus\mmon$ hence $D=D'=\mone\oplus\mo_{\p^2}\oplus\mmon$.

For case (7), by assumption we have $f(\mo_{\p^2}\otimes\mmon)\subset\mone\oplus\mo_{\p^2}(-1)$ hence $D=D'=\mone\oplus\mo_{\p^2}\oplus\mmon$.

This finishes the proof for the whole lemma.
\end{proof}

\begin{lemma}[\textbf{Lemma \ref{scffiv}}]A pair $(E,f)$ with $rank(E)=5$ and $deg(E)=-3$ is stable if and only if for any two direct summands $D',D''$ of $E$ such that $D'\simeq D''$ and $f(D'\otimes\mmon)\subset D''$,  we have $\mu(D')<\mu(E)$.\end{lemma}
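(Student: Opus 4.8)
The plan is to run the same argument as in the proof of Lemma~\ref{scffi} above. As there, only the ``if'' direction requires work: the ``only if'' direction is a special case of Lemma~\ref{ncss}. Moreover, Lemma~\ref{ncss} forces a stable pair $(E,f)$ with $\mathrm{rank}(E)=5$ and $\deg(E)=-3$ to have $E$ isomorphic to one of the three bundles
\[
\mo_{\p^2}^{\oplus 2}\oplus\mmon^{\oplus 3},\quad \mo_{\p^2}^{\oplus 3}\oplus\mmon\oplus\mo_{\p^2}(-2),\quad \mone\oplus\mo_{\p^2}\oplus\mmon^{\oplus 2}\oplus\mo_{\p^2}(-2),
\]
namely the bundles underlying the strata $M_2$, $M_3$, $M_3'$ of Section~6, so it suffices to treat each of these three forms separately. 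Fixing such an $E$, we suppose given a direct sum of line bundles $E'\subset E$ with $\mu(E')>\mu(E)=-3/5$ and $f^{-1}(E')\simeq E'\otimes\mmon$, and we must exhibit two direct summands $D,D'$ of $E$ with $D\simeq D'$, $\mu(D)>\mu(E)$ and $f(D\otimes\mmon)\subset D'$. Exactly as in the proof of Lemma~\ref{scffi} we may assume $E'\simeq\bigoplus_i\mo_{\p^2}(n_i)^{\oplus a_i}$ with $a_i>0$ and consecutive twists $n_i-n_{i+1}=1$.

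The mechanism is to list, for each form of $E$, the isomorphism types of subsheaves $E''\subset E$ of the above kind that are \emph{not} themselves direct summands of $E$, and then to apply in each case the Nakayama-plus-injectivity dichotomy already used for Lemma~\ref{scffi}: (i) if $E''$ is not a direct summand, then $E''\otimes\mmon$ cannot be the preimage under $f$ of any direct summand of $E$; (ii) if $f^{-1}(E'')=E''\otimes\mmon$, then $f(D\otimes\mmon)\subset D$ for $D$ the smallest direct summand of $E$ containing $E''$, and one checks $\mu(D)>\mu(E)$; (iii) in the remaining case $f^{-1}(E'')=E'\otimes\mmon$ for a genuine direct summand $E'\simeq E''$, one writes $E=E'\oplus L$ with $L$ the complementary summand and traces the images $f(L\otimes\mmon)$ and $f(E'\otimes\mmon)$, using vanishing of $\mathrm{Hom}$ between unfavourably ordered line bundles (for instance $\mathrm{Hom}(\mmon,\mo_{\p^2}(-2))=0$ and $\mathrm{Hom}(\mo_{\p^2},\mmon)=0$) to pin down the pair $(D,D')$.

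Concretely, for $E\simeq\mo_{\p^2}^{\oplus 2}\oplus\mmon^{\oplus 3}$ only finitely many non-summand types $E''$ can occur (those built from $\mo_{\p^2}$ and $\mmon$ with slope exceeding $-3/5$ that embed non-split), and they are dispatched by the same method as cases (1)--(3) in the proof of Lemma~\ref{scffi}; the case $E\simeq\mo_{\p^2}^{\oplus 3}\oplus\mmon\oplus\mo_{\p^2}(-2)$ is the identical argument with $\mo_{\p^2}^{\oplus 3}$ replacing $\mo_{\p^2}^{\oplus 2}$; and $E\simeq\mone\oplus\mo_{\p^2}\oplus\mmon^{\oplus 2}\oplus\mo_{\p^2}(-2)$ needs the longest enumeration, with subcases indexed by the ways $\mone$, $\mo_{\p^2}$ and $\mmon$ can sit inside $E$ as non-split subsheaves, entirely parallel to cases (4)--(9) for $\mone\oplus\mo_{\p^2}\oplus\mmon\oplus\mo_{\p^2}(-2)^{\oplus 2}$ in Lemma~\ref{scffi}. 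The main obstacle is precisely this last enumeration: it is the only place where there is genuine risk of overlooking an embedding type, and in each subcase the Nakayama/injectivity alternative must be verified by hand. No new phenomenon arises beyond those already present in the proof of Lemma~\ref{scffi}, so the difficulty is one of bookkeeping rather than of ideas.
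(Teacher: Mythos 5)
Your proposal follows essentially the same route as the paper's own proof: the paper likewise reduces via Lemma \ref{ncss} to the three bundle types underlying $M_2$, $M_3$, $M_3'$, lists the finitely many isomorphism types of non-summand subsheaves $E''$ with slope exceeding $-3/5$ (ten cases in total), and disposes of each by the same Nakayama/injectivity dichotomy used for Lemma \ref{scffi}, declaring every case analogous to one already treated there. The level of detail you give matches that of the published argument, so the proposal is correct as it stands.
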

\begin{proof}We use the same notations as in the proof of Lemma \ref{ALe}, we list out all the possibilities of $E''$ as follows.

Let $E\simeq \mo_{\p^2}^{\oplus 2}\oplus\mmon^{\oplus 3}.$

(1) $E''\subset\mo_{\p^2}^{\oplus 2}$ and $E''\simeq \mo_{\p^2}\oplus\mmon$;

Let $E\simeq \mo_{\p^2}^{\oplus 3}\oplus\mmon\oplus \mo_{\p^2}(-2).$

(2) $E''\subset\mo_{\p^2}^{\oplus 3}$ and $E''\simeq \mo_{\p^2}\oplus\mmon$;

(3) $E''\subset\mo_{\p^2}^{\oplus 3}$ and $E''\simeq \mo_{\p^2}^{\oplus 2}\oplus\mmon$;

(4) $E''\subset\mo_{\p^2}^{\oplus 3}\oplus\mmon$ and $E''\simeq \mo_{\p^2}^{\oplus 2}\oplus\mmon^{\oplus 2}$;

Let $E\simeq \mone\oplus\mo_{\p^2}\oplus\mmon^{\oplus 2}\oplus\mo_{\p^2}(-2).$

(5) $E''\subset\mo_{\p^2}(1)$ and $E''\simeq \mo_{\p^2}$;

(6) $E''\subset\mone\oplus\mo_{\p^2}$ and $E''\simeq \mo_{\p^2}^{\oplus 2};$

(7) $E''\subset\mone\oplus\mo_{\p^2}$ and $E''\simeq \mo_{\p^2}\oplus\mmon;$

(8) $E''\subset\mone\oplus\mo_{\p^2}(-1)$ and $E''\simeq \mo_{\p^2}\oplus\mmon;$

(9) $E''\subset\mone\oplus\mo_{\p^2}\oplus\mmon$ and $E''\simeq \mo_{\p^2}^{\oplus 2}\oplus\mmon;$

(10) $E''\subset\mone\oplus\mo_{\p^2}\oplus\mmon^{\oplus 2}$ and $E''\simeq \mone\mo_{\p^2}\oplus\mmon\oplus\mo_{\p^2}(-2).$

Cases (1) (5) (6) (7) (8) (9) are the same as cases (1) (4) (5) (6) (7) (8) in Lemma \ref{ALe} respectively.  Case (10) is analogous to case (3) in Lemma \ref{ALe}.  Cases (2) (3) (4) are analogous to case (1).  Hence the lemma.  
\end{proof}

\section{Proof of Lemma \ref{ctome}.}
\begin{lemma}[\textbf{Lemma \ref{ctome}}]$[\Omega_{2}^{[6]}]=\bl^{11}+3\bl^{10}+8\bl^{9}+18\bl^{8}+30\bl^{7}+39\bl^{6}+38\bl^{5}+28\bl^{4}+15\bl^{3}+6\bl^{2}+2\bl^{1}+1$.
\end{lemma}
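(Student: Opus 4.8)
The plan is to realize $\Omega_2^{[6]}$ as the image of an incidence correspondence and to account for its non-trivial fibres. Set
\[X:=\{(Z,C)\in Hilb^{[6]}(\p^2)\times|2H|\ :\ Z\subset C\},\]
which is the relative Hilbert scheme of length-$6$ subschemes of the fibres of the universal conic $\mc_2\subset\p^2\times|2H|$; the first projection $\rho\colon X\ra Hilb^{[6]}(\p^2)$ has image $\Omega_2^{[6]}$, and the fibre of $\rho$ over $[Z]$ is $\p(H^0(I_Z(2)))$. I would first verify that for a length-$6$ subscheme $Z$ one has $h^0(I_Z(2))\in\{1,2,3\}$, with $h^0(I_Z(2))=3$ precisely when $Z$ lies on a line — so that the stratum $\{h^0=3\}$ equals $\Omega_1^{[6]}$ — and $h^0(I_Z(2))=2$ precisely when $Z$ does not lie on a line but there is a (then unique) line $L$ with $\mathrm{length}(Z\cap L)=5$; call this stratum $S$. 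The input here is that a pencil of conics whose base scheme has length $\ge6$ must have the form $L+\langle M_1,M_2\rangle$ with $L,M_1,M_2$ lines, so its general member is reducible. Since over each of these three locally closed strata $\rho$ is a Zariski-locally trivial projective bundle (it is $\p$ of $R^0p_*(I_{\mathcal{Z}}(2))$, which is there locally free of rank $h^0(I_Z(2))$), decomposing $X$ along them gives the identity
\[[\Omega_2^{[6]}]=[X]-\bl\cdot[S]-(\bl^2+\bl)\cdot[\Omega_1^{[6]}].\]

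The two correction terms are comparatively easy. As in Lemma \ref{doof}, $\Omega_1^{[6]}\simeq\mc_1^{[6]}$, the relative $Hilb^{[6]}$ of the $\p^1$-bundle $\mc_1\ra|H|$, hence a Zariski-locally trivial $(\p^1)^{[6]}=\p^6$-bundle over $|H|\simeq\p^2$, so $[\Omega_1^{[6]}]=[\p^2\times\p^6]$. For $S$, the line $L$ with $\mathrm{length}(Z\cap L)=5$ is unique because two distinct lines meet $Z$ in total length at most $\mathrm{length}(Z)+1=7<5+5$; thus $S\simeq\{(Z,L):\mathrm{length}(Z\cap L)=5\}$, and projecting to $|H|$ (cut into affine cells, trivializing over each) gives $[S]=[\p^2]\cdot[T_L]$ with $T_L:=\{Z\in Hilb^{[6]}(\p^2):\mathrm{length}(Z\cap L)=5\}$ for a fixed line $L$. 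To compute $[T_L]$ I would use the morphism $T_L\ra Hilb^{[5]}(L)=\p^5$, $Z\mapsto Z\cap L$, and stratify $\p^5$ by the partition type of $Z\cap L$; over each stratum $T_L$ is a Zariski-locally trivial fibration whose fibre parametrizes the choices of the residual length-$1$ piece of $Z$ lying off $L$ (a free point of $\p^2\setminus L$, or a tangent direction transverse to $L$ at a point of the support of $Z\cap L$), and is an explicit rational variety; assembling yields $[T_L]$ as a polynomial in $\bl$.

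The bulk of the work is $[X]$, which I would compute through the second projection $X\ra|2H|=\p^5$, stratifying $|2H|$ by conic type: smooth conics $\Sigma_0=\p^5\setminus\mathrm{disc}$; pairs of distinct lines $\Sigma_1\simeq Sym^2(|H|)\setminus\Delta$; double lines $\Sigma_2\simeq|H|$. Here $[\mathrm{disc}]=[Sym^2(|H|)]$, a short computation. Over $\Sigma_0$ the curves are smooth genus-$0$, so $Hilb^{[6]}=Sym^6$; moreover $Sym^6$ of a conic is the projectivization of the space of sections of the (fibrewise unique) degree-$6$ line bundle $\mo_{\p^2}(3)|_C$, so $X|_{\Sigma_0}=\p(p_*(\mo_{\mc_2}\otimes q^{*}\mo_{\p^2}(3))|_{\Sigma_0})$ is a Zariski-locally trivial $\p^6$-bundle and $[X|_{\Sigma_0}]=([\p^5]-[Sym^2(|H|)])\cdot[\p^6]$. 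Over $\Sigma_1$ the fibre is $Hilb^{[6]}$ of a nodal conic $L_1\cup L_2$, and over $\Sigma_2$ it is $Hilb^{[6]}$ of a double line (a ribbon over $\p^1$); each of these is handled by a further finite stratification — in the $\Sigma_1$ case by the lengths of $Z$ along $L_1$, along $L_2$ and at the node (using the known Hilbert scheme of a node singularity), in the $\Sigma_2$ case by the support cycle of $Z$ in $L$ together with the local structure transverse to $L$ — over the pieces of which the relative Hilbert scheme becomes Zariski-locally trivial with explicitly computable fibre. Summing the three contributions gives $[X]$, and substituting $[X]$, $[\Omega_1^{[6]}]=[\p^2\times\p^6]$ and $[S]=[\p^2]\cdot[T_L]$ into the displayed identity produces the asserted polynomial.

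The step I expect to be the main obstacle is this last one: carrying out the stratifications over $\Sigma_1$ and, especially, over $\Sigma_2$, and computing the Hilbert schemes of $6$ points on the nodal and on the non-reduced conics — these are the genuinely delicate local Hilbert-scheme calculations — together with the parallel stratification of $Hilb^{[5]}(L)=\p^5$ needed for $[T_L]$. The remaining ingredients (the identification of the $h^0(I_Z(2))$-strata, $[\mathrm{disc}]=[Sym^2(|H|)]$, and all the bundle and cell decompositions) are routine, and once the pieces are in hand the final answer is a direct computation in the Grothendieck ring.
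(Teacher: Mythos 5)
Your proposal is correct and follows essentially the same route as the paper: both work with the relative Hilbert scheme $\mc_2^{[6]}$ over $|2H|$, use the $\p(H^0(I_Z(2)))$-fibration onto $\Omega_2^{[6]}$ together with the stratification by $h^0(I_Z(2))\in\{1,2,3\}$, stratify $|2H|$ by conic type, and reduce the genuinely hard part to classifying length-$6$ subschemes of nodal and double conics. The only difference is bookkeeping: the paper refines its local ideal tables by the value of $h^0(I_Z(2))$ and recovers $[\ms_n]$ from $[\mr_n]=[\ms_n][\p^n]$, whereas you compute the unrefined total $[X]$ and subtract corrections coming from the directly computed strata $\ms_2=\Omega_1^{[6]}$ and $\ms_1=S$.
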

\begin{proof}Denote by $\mc_2$ the universal curve in $\p^2\times |2H|$ and $\mc^{[6]}_2$ the relative Hilbert scheme of 6-points on $\mc_2$ over $|2H|$.  We have a surjective map $\xi:\mc_2^{[6]}\ra\Omega_2^{[6]}$.  The fiber of $\xi$ over $I_{\{x_1,\ldots,x_6\}}$ consists of all curves passing through $x_1,\ldots,x_6$ and hence isomorphic to $\p(H^0(I_{\{x_1,\ldots,x_6\}}(2)))$.  Let $\ms_n:=\{I_6\in\Omega_2^{[6]}|h^0(I_6(2))=n+1\}$, then $\Omega_2^{[6]}=\coprod_{n=0}^2 \ms_n.$  Define $\mr_n:=\xi^{-1}(\ms_n)$, then $\mr_n\simeq \p(p_{*}(\mathcal{I}(2)|_{\p^2\times\ms_n}))$ is a projective bundle over $\ms_n$ with fibers isomorphic to $\p^n$.

Denote by $\mc_2^o$ the family of integral curves in $|2H|$.  $\mc^o_2$ is open in $\mc_2$.  \begin{lemma}\label{ABic}$[{\mc_2^o}^{[6]}]=[(|2H|-Sym^2(|H|))\times \p^6]$ with $Sym^2(|H|)$ the symmetric power of order 2 of $|H|$.
\end{lemma}
\begin{proof}The subspace $|2H|^o$ in $|2H|$ parametrizing integral curves is $|2H|-Sym^2(|H|)$.  ${\mc_2^o}^{[6]}$ is a projective bundle over $|2H|^o$ with fibers isomorphic to $\p^6$.  Hence the lemma.
\end{proof}

Denote by $\mc^R_2\twoheadrightarrow (Sym^2(|H|)-|H|)$ and $\mc^N_2\twoheadrightarrow |H|$ the families of reducible curves and non-reduced curves in $|2H|$ respectively.  Let $C^R$ be a reducible curve in $|2H|$ and $C^N$ a non-reduced curve.  Denote $R_n^R~(\mr_n^R)=\mr_n\cap Hilb^{[6]}(C^R)~({\mc^R_2}^{[6]})$ and $R_n^N~(\mr_n^N)=\mr_n\cap Hilb^{[6]}(C^N)~({\mc^N_2}^{[6]})$.  Then we have the following lemma
\begin{lemma}\label{ABg}$[\mr^N_n]=[R_n^N\times |H|],$ for $n=0,1,2$.
\end{lemma}
\begin{proof}We can take an affine cover of $|H|$, write $|H|=\cup_j V_j$ such that $\mc^N_2|_{V_j}\simeq V_j\times C^N$.  Hence the lemma.
\end{proof}
Denote $\mathfrak{m}=<x,y>$ the maximal ideal of $\mathtt{S}:=\bc[[x,y]]$.  To study $R^N_n$ for $n=0,1,2$, we write down a table for ideals in $\ts/(x^2)$ as Table I.

\begin{table}[h]
\centering
\begin{tabular}{|c|c|c|}
\hline\multicolumn{3}{|l|}{\textbf{Table I}~~\qquad\qquad\qquad\qquad Ideals $I$ of $\ts$ containing $(x^2)$}\\
\hline Co-length of $I$ & Ideal $I$ & $I\cap (\mathfrak{m}^2-\mathfrak{m}^3)$\\
\hline 1 & $\mathfrak{m}$ & $\bc x^2\oplus \bc xy\oplus\bc y^2$\\ \hline
2 & $\mathfrak{m}^2+(kx+k'y)\ts, (k,k')\neq0$ &$\bc x^2\oplus \bc xy\oplus\bc y^2$\\ \hline
\multirow{2}{*}{3} & $\km^2$ &  $\bc x^2\oplus \bc xy\oplus\bc y^2$\\ \cline{2-3}
&$\km^3+(x+ky^2)\mathtt{S}$ & $\bc x^2\oplus \bc xy$ \\ \hline
\multirow{3}{*}{4}& $x^2\ts+(ky^2+k'xy)\ts+\km^3,(k,k')\neq0$ & $\bc x^2\oplus \bc(ky^2+k'xy)$\\ \cline{2-3}
&$(x+ky^2+k'y^3)\ts+\km^4,k\neq0$& $\bc x^2$\\ 
\cline{2-3}&$(x+k'y^3)\ts+\km^4$& $\bc x^2\oplus \bc xy$\\ 
\hline
\multirow{5}{*}{5} &$x^2\ts+\km^3$&$ \bc x^2$\\ \cline{2-3}
&$x^2\ts+(xy+ky^2+k'y^3)\ts+\km^4, k\neq 0$&$\bc x^2$\\ \cline{2-3}
&$x^2\ts+(xy+k'y^3)\ts+\km^4$&$\bc x^2\oplus\bc xy$\\ \cline{2-3}
&$(x+ky^3+k'y^4)\ts+\km^5,k\neq0$&$\bc x^2$\\ \cline{2-3}
&$(x+k'y^4)\ts+\km^5$&$\bc x^2\oplus\bc xy$\\ \hline
\multirow{5}{*}{6}&$x^2\ts+(kxy^2+k'y^3)\ts+\km^4,(k,k')\neq0$&$ \bc x^2$\\ \cline{2-3}
&$x^2\ts+(xy+ky^3+k'y^4)\ts+\km^5, k\neq 0$&$\bc x^2$\\ \cline{2-3}
&$x^2\ts+(xy+k'y^4)\ts+\km^5$&$\bc x^2\oplus\bc xy$\\ \cline{2-3}
&$(x+ky^3+k'y^4+k''y^5)\ts+\km^6,(k,k'')\neq0$&$\bc x^2$\\ \cline{2-3}
&$(x+k''y^5)\ts+\km^6$&$\bc x^2\oplus\bc xy$\\ \hline

\end{tabular}
\end{table}
Let $C^N_r$ be the reduced curve supported on $C^N$, then $C^N_r\simeq \p^1$.  Denote by $S^i$ the subset in $Hilb^{[6]}(C^N)$ consisting of $[I_i^r\cap I_{6-i}]$ with $[I_i^r]\in Hilb^{[i]}(C^N_r)$ and $i$ maximal for this expression.  $S^i_n:=S^i\cap R_n^N$.  We then have 
\begin{lemma}\label{xsid}$S_n^i$ are empty except the following 9 terms:

$[S_2^6]=[\p^6];$  $[S^4_1]=[\p^2\times\p^1\times \mathbb{A}^1];$ $[S^3_1]=[\p^2\times\p^1\times\mathbb{A}^4];$

$[S^2_0]=[\p^2\times\p^1\times \mathbb{A}^5],$ $[S^2_2]=[\p^2\times\p^2\times\mathbb{A}^3];$

$[S^1_0]=[\p^2\times\p^1\times \mathbb{A}^4],$ $[S^1_2]=[\p^2\times\p^2\times\mathbb{A}^3];$

$[S^0_0]=[\p^2\times\p^1\times (\mathbb{A}^4+2\mathbb{A}^3+\mathbb{A}^2)+\p^2\times\mathbb{A}^4+\p^2(\p^3-\p^1\times\p^1)\times\mathbb{A}^3],$ 

$[S^0_1]=[\p^2\times\p^1\times\mathbb{A}^1].$

\end{lemma}
We omit the proof of Lemma \ref{xsid} since it can be done by elementary analysis and computation case by case.  Lemma \ref{xsid} together with Lemma \ref{ABq} gives $[\mr^N_n]$ for $n=0,1,2.$

To compute $[\mr^R_n]$, we first define $\widetilde{\mc_2^R}$ by the following Cartesian diagram.
\[\xymatrix{\widetilde{\mc_2^R}\ar[r]^{\pi_1}\ar[d]&\mc_2^R\ar[d]\\ \p^2\times\p^2-\Delta(\p^2)\ar[r]^{\pi} & Sym^2(\p^2)-\p^2,}\]
where $\pi$ is the quotient of the free action of the order two permutation group $\sigma_2$.  The action of $\sigma_2$ lifts to $\widetilde{\mc_2^R}^{[6]}$ with ${\mc_2^R}^{[6]}$ the quotient.  Recall that $R^R_n~(\mr^R_n)=\mr_n\cap Hilb^{[6]}(C^R)~({\mc_2^R}^{[2]})$.  Let $\widetilde{\mr^R_n}:=\pi_2^{-1}(\mr_n^R)$ with $\pi_2$ the lift of $\pi.$  $\mr_n^R$ is the quotient of $\widetilde{\mr_n^R}$ by the action of $\sigma_2$.
\begin{lemma}\label{ABr}$[\widetilde{\mr^R_n}]=[(\p^2\times\p^2-\Delta(\p^2))\times R_n^R]$ for $n=0,1,2.$
\end{lemma}
\begin{proof}Analogous to Lemma \ref{ABg}, we can take an affine cover of $\p^2\times\p^2-\Delta(\p^2)$ which trivializes $\widetilde{\mc_2^R}$.
\end{proof}  
  
Denote by $0$ the only singular point in $C^R$.  $C^R-\{0\}=\mathbb{A}^1\sqcup\mathbb{A}^1$.  $\widehat{\mo}_{C^R,0}\simeq \ts/(xy)=\bc[[x,y]]/(xy)$.  We make a table for ideals of $\ts/(xy)$ as Table II.

\begin{table}[h]
\centering
\begin{tabular}{|c|c|c|}
\hline\multicolumn{3}{|l|}{\textbf{Table II}\qquad\qquad\qquad\qquad Ideals $I$ of $\ts$ containing $(xy)$}\\
\hline Co-length of $I$ & Ideal $I$ & $I\cap (\mathfrak{m}^2-\mathfrak{m}^3)$\\
\hline 1 & $\mathfrak{m}$ & $\bc x^2\oplus \bc xy\oplus\bc y^2$\\ \hline
2 & $\mathfrak{m}^2+(kx+k'y)\ts, (k,k')\neq0$ &$\bc x^2\oplus \bc xy\oplus\bc y^2$\\ \hline
\multirow{3}{*}{3} & $\km^2$ &  $\bc x^2\oplus \bc xy\oplus\bc y^2$\\ \cline{2-3}
&$\km^3+(x+ky^2)\mathtt{S}$ & $\bc x^2\oplus \bc xy$ \\ \cline{2-3}
&$\km^3+(y+kx^2)\mathtt{S}$ & $\bc xy\oplus\bc y^2$\\ \hline 
\multirow{3}{*}{4}& $xy\ts+(kx^2+k'y^2)\ts+\km^3, (k,k')\neq0$ & $\bc xy\oplus \bc(kx^2+k'y^2)$\\ \cline{2-3}
&$(x+ky^3)\ts+\km^4$& $\bc x^2\oplus \bc xy$\\ \cline{2-3}
&$(y+kx^3)\ts+\km^4$& $\bc xy\oplus \bc y^2$\\ \hline
\multirow{7}{*}{5} &$xy\ts+\km^3$&$ \bc xy$\\ \cline{2-3}
&$xy\ts+(x^2+ky^3)\ts+\km^4, k\neq 0$&$\bc xy$\\ \cline{2-3}
&$xy\ts+x^2\ts+\km^4$&$\bc x^2\oplus\bc xy$\\ \cline{2-3}
&$xy\ts+(y^2+kx^3)\ts+\km^4, k\neq 0$&$\bc xy$\\ \cline{2-3}
&$xy\ts+y^2\ts+\km^4$&$\bc xy\oplus\bc y^2$\\ \cline{2-3}
&$(x+ky^4)\ts+\km^5$&$\bc x^2\oplus \bc xy$\\ \cline{2-3}
&$(y+kx^4)\ts+\km^5$&$\bc xy\oplus\bc y^2$\\ \hline
\multirow{7}{*}{6}&$xy\ts+(kx^3+k'y^3)\ts+\km^4,(k,k')\neq0$&$ \bc xy$\\ \cline{2-3}
&$xy\ts+(x^2+ky^4)\ts+\km^5, k\neq 0$&$\bc xy$\\ \cline{2-3}
&$xy\ts+x^2\ts+\km^5$&$\bc x^2\oplus\bc xy$\\ \cline{2-3}
&$xy\ts+(y^2+kx^4)\ts+\km^5, k\neq 0$&$\bc xy$\\ \cline{2-3}
&$xy\ts+y^2\ts+\km^5$&$\bc xy\oplus\bc y^2$\\ \cline{2-3}
&$(x+ky^5)\ts+\km^6$&$\bc x^2\oplus \bc xy$\\ \cline{2-3}
&$(y+kx^5)\ts+\km^6$&$\bc xy\oplus\bc y^2$\\ \hline
\end{tabular}
\end{table}
$\sigma_2$ acts on $Hilb^{[6]}(C^R)$ by exchanging the two irreducible components of $C^R$.  Write $Hilb^{[6]}(C^R)=H^x\sqcup H^s\sqcup H^y$ such that $\sigma_2(H^x)=H^y$ and $\sigma_2(H^s)=H^s$.  Let $H^x_n=\mr_n\cap H^x$ and analogously we have $H^y_n$ and $H_n^s$.  Then $\sigma_2(H^x_n)=H^y_n,\sigma_2(H^s_n)=H_n^s.$

\begin{lemma}\label{ABq}\begin{enumerate}\item $[H_0^x]=[\mathbb{A}^6+2\mathbb{A}^5+3\mathbb{A}^4+3\mathbb{A}^3+2\mathbb{A}^2-1];$
\item $[H^x_1]=[\mathbb{A}^6+2\mathbb{A}^5+2\mathbb{A}^4+2\mathbb{A}^3+2\mathbb{A}^2+2\mathbb{A}^1];$
\item $[H^x_2]=[\p^6];$
\item $[H^s_0]=[\mathbb{A}^6+\mathbb{A}^4\times\p^1+\mathbb{A}^2\times\p^1+\p^1];$ 
\item $H^s_1=H^s_2=\emptyset.$
\end{enumerate}
\end{lemma}
\begin{proof}$\forall I_6\in Hilb^{[6]}(C^R)$, $\exists~0\leq i\leq 6$, such that $I_6=I_i^{0}\cap I_{6-i}$ with $[I^0_i]\in Hilb^{i}(\{0\})$ and $[I_{6-i}]\in Hilb^{[6-i]}(C^R-\{0\})=Hilb^{[6-i]}(\mathbb{A}^1\sqcup\mathbb{A}^1)$.  Then the lemma can be proved by elementary analysis and computation case by case.
\end{proof}
\begin{lemma}\label{ABh}$[\mr^R_n]=[H_n^x\times (\p^2\times\p^2-\Delta(\p^2))]+[H^s_n\times Sym^2(\p^2)-\p^2].$
\end{lemma}
\begin{proof}$\sigma_2(H_n^x\times (\p^2\times\p^2-\Delta(\p^2))=H_n^y\times (\p^2\times\p^2-\Delta(\p^2))$ and $\sigma_2(H_n^s\times (\p^2\times\p^2-\Delta(\p^2))=H_n^s\times (\p^2\times\p^2-\Delta(\p^2))$.  Hence the lemma.
\end{proof}

Now combine Lemma \ref{ABic}, Lemma \ref{ABg}, Lemma \ref{xsid}, Lemma \ref{ABr}, Lemma \ref{ABq} and Lemma \ref{ABh}, we get $[\mr_n]$ for $n=0,1,2.$  Since $[\Omega_2^{[6]}]=\sum_{n=0}^2 [\ms_n]$ and $[\ms_n\times\p^n]=[\mr_n]$, Lemma \ref{ctome} follows after direct computation.
\end{proof}


\end{document}